\definecolor{orchid}{RGB}{143,40,194}
\definecolor{lava}{RGB}{207,16,32}
\definecolor{mydarkblue}{RGB}{10,10,150}
\numberwithin{equation}{section}
\DeclareMathOperator{\Mor}{Mor}
\DeclareMathOperator{\KR}{KR}
\newcommand{\bN}{\mathbb{N}}
\newcommand{\bZ}{\mathbb{Z}}
\newcommand{\bQ}{\bar{\mathbb{Q}}}
\newcommand{\bC}{\mathbb{C}}
\newcommand{\onel}{{\textbf 1}_{\vec{k}}}
\def\ui{{\text{$\underline{i}$}}}
\def\idm{\text{\bfseries 1}}
\newcommand{\refequal}[1]{\xy {\ar@{=}^{#1}
(-1,0)*{};(1,0)*{}};
\endxy}
\newcommand{\spid}[1]{\textbf{Sp}(\Uu_q(\mathfrak{gl}_{{#1}}))}
\newcommand{\spidn}[2]{\textbf{Sp}^{{#2}}(\Uu_q(\mathfrak{gl}_{{#1}}))}
\newcommand{\fspidn}[1]{\textbf{Sp}_f(\Uu_q(\mathfrak{gl}_{{#1}}))}
\newcommand{\fspidnn}[2]{\textbf{Sp}^{{#2}}_f(\Uu_q(\mathfrak{gl}_{{#1}}))}
\newcommand{\Um}{\dot{{\Uu}}_q(\mathfrak{gl}_m)}
\newcommand{\Ucatm}{\mathcal{U}(\mathfrak{gl}_m)}
\newcommand{\Ucatt}{\mathcal{U}(\mathfrak{gl}_2)}
\newcommand{\Ucattd}{\dot{\mathcal{U}}(\mathfrak{gl}_2)}
\newcommand{\Ucattc}{\check{\mathcal{U}}(\mathfrak{gl}_2)}
\newcommand{\Ucatmc}{\check{\mathcal{U}}(\mathfrak{gl}_m)}
\newcommand{\Ucatmd}{\dot{\mathcal{U}}(\mathfrak{gl}_m)}
\newcommand{\Ucatmm}{\mathcal{U}^-(\mathfrak{gl}_m)}
\newcommand{\qbin}[2]{
\left[
\begin{array}{c}
#1 \\
#2 \\
\end{array}
\right]
}
\DeclareMathOperator{\Uu}{\textbf{U}}
\newcommand{\Uun}[1][n]{\textbf{U}_q(\mathfrak{gl}_{#1})}
\DeclareMathOperator{\Rep}{\textbf{Rep}}
\DeclareMathOperator{\Ob}{Ob}
\DeclareMathOperator{\RPMOD}{R_{\Lambda}-p\textbf{Mod}_{\mathrm{gr}}}
\DeclareMathOperator{\RcPMOD}{\check{R}_{\Lambda}-p\textbf{Mod}_{\mathrm{gr}}}
\DeclareMathOperator{\KAR}{\textbf{Kar}}
\DeclareMathOperator{\Kom}{\textbf{Kom}}
\newtheorem{prop}{Proposition}[section]
\newtheorem{thm}[prop]{Theorem}
\newtheorem{lem}[prop]{Lemma}
\newtheorem{cor}[prop]{Corollary}
\theoremstyle{remark}
\newtheorem{rem}[prop]{Remark}
\theoremstyle{remark}
\newtheorem{ex}[prop]{\textbf{Example}}
\theoremstyle{remark}
\theoremstyle{definition}
\newtheorem{defn}[prop]{Definition}
\numberwithin{equation}{subsection}
\newcommand{\F}{\mathcal{F}}
\newcommand{\overcrossing}{\xy
(0,0)*{\includegraphics[height=.02\textheight]{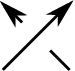}};
\endxy}
\newcommand{\undercrossing}{\xy
(0,0)*{\includegraphics[height=.02\textheight]{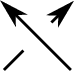}};
\endxy}
\newcommand{\cupcap}{\xy
(0,0)*{\includegraphics[height=.085\textheight]{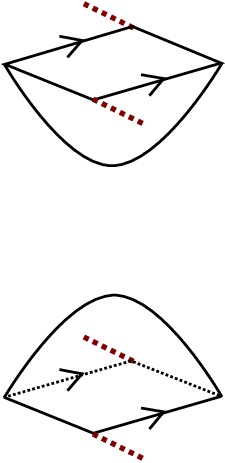}};
\endxy}
\newcommand{\saddleup}{\xy
(0,0)*{\includegraphics[height=.085\textheight]{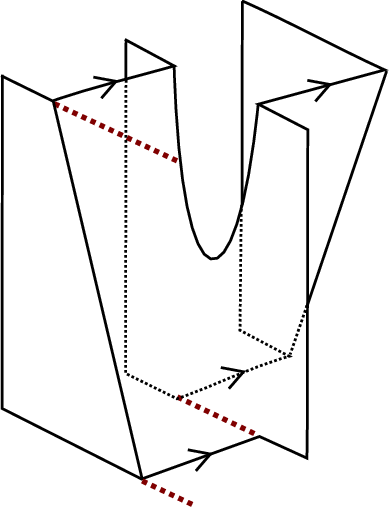}};
\endxy} 
\newcommand{\saddledown}{\xy
(0,0)*{\includegraphics[height=.085\textheight]{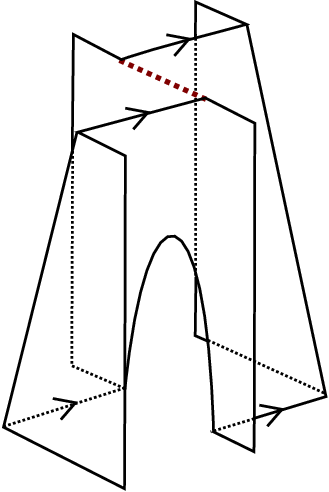}};
\endxy} 
\newcounter{In}
\title{$\mathfrak{gl}_n$-webs, categorification and Khovanov--Rozansky homologies}
\author{Daniel Tubbenhauer}
\thanks{The author was supported by the ``Centre for 
Quantum Geometry of Moduli Spaces'' of the Aarhus 
University that is granted by the Danish National Research Foundation (DNRF)}
\begin{document}
\begin{abstract}
In this paper we define an explicit basis for the 
$\mathfrak{gl}_n$-web algebra $H_n(\vec{k})$ (the 
$\mathfrak{gl}_n$ generalization of Khovanov's arc 
algebra) using categorified $q$-skew Howe duality.

Our construction is a $\mathfrak{gl}_n$-web version 
of Hu--Mathas' graded cellular basis and has two major applications: 
it gives rise to an explicit isomorphism between a certain 
idempotent truncation of a thick calculus cyclotomic KLR 
algebra and $H_n(\vec{k})$, and it gives an explicit graded 
cellular basis of the $2$-hom space between two $\mathfrak{gl}_n$-webs. 
We use this to give a (in principle) computable 
version of colored Khovanov--Rozansky $\mathfrak{gl}_n$-link 
homology, obtained from a complex defined purely combinatorially 
via the (thick cyclotomic) KLR 
algebra and needs only $F$.
\end{abstract}

\maketitle

\tableofcontents

\section{Introduction}\label{sec-intro}
\setcounter{subsection}{1}
\subsubsection{The framework}\label{subsub-frame}
The arc algebra $H_2(m)$ was introduced by Khovanov in his influential paper \cite{kh4} in order to extend his celebrated categorification of the Jones polynomial \cite{kh1} to tangles. The algebra realizes the homology of a tangle with $2m$ top boundary points and $2m^{\prime}$ bottom boundary points as certain $H_2(m)\text{-}H_2(m^{\prime})$-bimodules. His algebra consists of $\mathfrak{sl}_2$-cobordisms in the sense of Bar-Natan \cite{bn2} and has a beautiful diagrammatic calculus.

In the same vein, the so-called $\mathfrak{sl}_3$-web algebra $K_S$, introduced in \cite{mpt}, consists of $\mathfrak{sl}_3$-foams in the sense of Khovanov \cite{kh3} and is related to the $\mathfrak{sl}_3$-version of Khovanov homology from \cite{kh3}. Shortly after the definition of $K_S$, Mackaay introduced \cite{mack1} the $\mathfrak{gl}_n$-version of the arc algebra, denoted by $H_n(\vec{k})$. (It is more convenient for us to work with 
the general linear Lie algebra and not with the special linear Lie algebra; the difference for us is not crucial and the reader, following history, is invited to think about $\mathfrak{sl}_n$ instead of $\mathfrak{gl}_n$.) These algebras use the matrix factorization framework introduced to the field of link homologies by Khovanov--Rozansky \cite{kr1}. We should note that, using results of Queffelec--Rose \cite{qr1} (their results became available shortly after the first preprint of this paper appeared. But everything stated in this paper is also true using $\mathfrak{gl}_n$-foams instead of matrix factorizations), $H_n(\vec{k})$ could also be described using $\mathfrak{gl}_n$-foams introduced to the field by Mackaay--Sto\v{s}i\'{c}--Vaz \cite{msv}.

These algebras can be seen as the underlying algebraic structure for $2$-categories of cobordisms or foams 
or matrix factorizations in the sense that these $2$-categories are equivalent to certain bimodule categories of these algebras, see in the literature cited above for details.

Moreover, the work of Brundan--Stroppel on generalizations of the arc algebra, intensively studied in the series of papers \cite{bs1}, \cite{bs2}, \cite{bs3}, \cite{bs4} and \cite{bs5} (and additionally studied e.g. \cite{chkh}, \cite{kh5}, \cite{st2} and \cite{sw}), suggested that these algebras, in addition to their relations to knot theory, also have an interesting underlying representation theoretical and combinatorial structure. After their influential work the study of these algebras was carried out in great detail, e.g. the type $A_2$ variant was studied \cite{mpt}, \cite{rob2}, \cite{rob1}, \cite{tub2} and \cite{tub3} as well as the type $A_n$-web algebra \cite{mack1}. There is also a type $D$ version of the arc algebra, see \cite{ehst1}, \cite{ehst2} and \cite{ehst3}, and a $\mathfrak{gl}(1|1)$ variant \cite{sar}.

In this paper we consider the $\mathfrak{gl}_n$-web algebra $H_n(\vec{k})$ from both sides: we study its combinatorial and representation theoretical structure and discuss its relation to the $\mathfrak{gl}_n$-link polynomials/link homologies. And, although we restrict ourselves to $\bC$, everything should work over $\bZ$ as well.

\subsubsection{Some history}\label{subsub-hist}
In order to get more precise let us recall that these algebras categorify the $\mathfrak{gl}_n$-web spaces $W_n(\vec{k})$. These spaces consist of $\mathfrak{gl}_n$-webs which give a diagrammatic presentation of the representation category $\Rep(\Uun)$ of $\Uu_q(\mathfrak{gl}_n)$. In the case $n=2$ this is well-known and already appeared in work of Rumer--Teller--Weyl \cite{rtw} (in the non-quantum setting, of course). For $n=3$ the diagrammatic calculus was introduced by Kuperberg \cite{kup}, but, in the $n>3$ case, it was only proven much later case by Cautis--Kamnitzer--Morrison \cite{ckm}, using $q$-skew Howe duality, that the $\mathfrak{gl}_n$-webs give rise to a diagrammatic presentation of $\Rep(\Uun)$. (To be precise, the papers \cite{rtw}, \cite{kup} and \cite{ckm} work in the special linear setting.)

These $\mathfrak{gl}_n$-webs are also related to the MOY-calculus, introduced by 
Murakami--Ohtsuki--Yamada \cite{moy}. Therefore, these $\mathfrak{gl}_n$-webs can also be used in the context of the colored (we always mean $k$-colored with $\Lambda^k\bC^n$ (we usually write $\bC$ instead of $\bC(q)$ for simplicity of notation), i.e. colored with the fundamental $\Uu_q(\mathfrak{gl}_n)$-representations) Reshetikhin--Turaev $\mathfrak{gl}_n$-link polynomials. 
The uncolored polynomials were categorified by Khovanov--Rozansky \cite{kr1} using the language of matrix factorizations. Later Wu \cite{wu} and independently Yonezawa \cite{yo2} have categorified the colored version. Thus, the $\mathfrak{gl}_n$-web algebras $H_n(\vec{k})$ have a direct connection to (colored) $\mathfrak{gl}_n$-link polynomials and $\mathfrak{gl}_n$-link homologies.

It is worth noting that matrix factorizations are not the only way to define the $\mathfrak{gl}_n$-link homologies. In fact, there are many, e.g. using $\mathfrak{gl}_n$-foams \cite{msv}, there is an approach using category $\mathcal{O}$, see \cite{mast}, \cite{st1} and \cite{su1}, while another approach uses derived categories associated to certain projective varieties, see \cite{ck1} and \cite{ck2}. Cautis--Kamnitzer's $\mathfrak{gl}_n$-link homologies are related to constructions by Manolescu \cite{mano1} and Seidel and Smith \cite{sesm1} via mirror symmetry. And there is a version for $n=2,3$ by Lauda--Queffelec--Rose \cite{lqr1} that uses $q$-skew Howe duality and higher representation theory of $\dot{\Uu}_q(\mathfrak{gl}_m)$. Moreover, the approach of Webster \cite{web1} to categorify the Reshetikhin--Turaev $\mathfrak{g}$-polynomial for arbitrary simple Lie algebra $\mathfrak{g}$, is another example.

But in all cases, including Khovanov--Rozansky's approach, calculations seem to be (very) hard for $n>3$, see \cite{cm1},  \cite{ras1} 
and \cite{wed1} for some approaches. Moreover, the calculations in the $n=2$, see \cite{bn3}, and $n=3$, see \cite{le1}, cases are based on the $\mathfrak{sl}_2$-cobordism or $\mathfrak{sl}_3$-foam framework respectively, where it has been known for some time (see \cite{mv2}) that the matrix factorization and the $\mathfrak{sl}_2$-cobordism or $\mathfrak{sl}_3$-foam approach give the same result.

\subsubsection{Our motivation and approach}\label{subsub-moti}
Our approach is to obtain the Khovanov--Rozansky $\mathfrak{gl}_n$-link homologies using (thick) cyclotomic KLR algebras and categorified $q$-skew Howe duality. Since these algebras have an explicit basis, one can write down the differentials explicitly with respect to these bases. Moreover, our complex is completely combinatorial in nature: neither the matrix factorization framework nor $\mathfrak{gl}_n$-foams or any of the other techniques 
mentioned above are needed.

Our motivation originated from the viewpoint of the combinatorial and representation theoretical structure of the $\mathfrak{gl}_n$-web algebra $H_n(\vec{k})$. To be more precise, it is known that the $\mathfrak{gl}_n$-web algebras are graded cellular algebras for any $n>1$, see \cite{mpt} and \cite{my}. But only an explicit graded cellular basis would make it (in principle) possible to write down the set of graded projective indecomposables which, under the identification mentioned above, correspond to indecomposable $\mathfrak{gl}_n$-web modules which categorify the dual canonical basis of $W_n(\vec{k})$.

But only in the $n=2$ case there was a construction of an explicit graded cellular basis by Brundan and Stroppel \cite{bs1}. That was the reason why the author used categorified $q$-skew Howe duality \cite{tub3}, loosely called $\mathfrak{sl}_3$-foamation, to define an explicit graded cellular basis of the $\mathfrak{sl}_3$-web algebra by giving a foamy version of Hu--Mathas' \cite{hm} graded cellular basis 
(HM basis) of the cyclotomic KLR algebra $R_{\Lambda}$ (see Khovanov--Lauda \cite{kl1}, \cite{kl3} or Rouquier \cite{rou}), where $\Lambda$ denotes a dominant $\mathfrak{gl}_m$-weight. (Note that Hu--Mathas results depend on Dipper--James--Mathas standard basis \cite{djm} of the cyclotomic Hecke algebra and thus, on \cite{bk1}.)

It is worth noting that the construction \cite{tub3} can be easily adopted to the $\mathfrak{sl}_2$-cobordism framework using the $\mathfrak{sl}_2$-foamation of Lauda--Queffelec--Rose \cite{lqr1} (and Blanchet's $\mathfrak{gl}_2$ foams \cite{bla1} due to sign issues). Moreover, it turns out that the relation between $\mathfrak{sl}_3$-webs and the multitableaux language is surprisingly useful to study for example dual canonical bases of the $\mathfrak{sl}_3$-web spaces.

Thus, the starting motivation of the author was to extend this explicit basis to the $\mathfrak{gl}_n$-web algebras. In order to do so, we follow the approach already indicated for $n=3$ in \cite{tub3}, i.e. the usage of categorified, diagrammatic quantum skew Howe duality studied independently in the $\mathfrak{gl}_n$-web framework in for example \cite{ckm}, \cite{lqr1} and \cite{mpt} (and later extended to all $n>1$ cf. \cite{my}).

\subsubsection{\texorpdfstring{$\mathfrak{gl}_n$}{gln}-webs, \texorpdfstring{$q$}{q}-skew Howe duality and combinatorics}\label{subsub-introwebs}
Let $\Lambda$ denote $n$-times the $\ell$-th fundamental $\dot{\Uu}_q(\mathfrak{gl}_m)$-weight. The point is now that the $q$-skew Howe duality realizes the $\mathfrak{gl}_n$-web space $W_n(\Lambda)$ as the $\dot{\Uu}_q(\mathfrak{gl}_m)$-module of highest weight $\Lambda$. In Lemma \ref{lem-webasF} we show something stronger, i.e. we give an explicit way to write any $\mathfrak{gl}_n$-web $u\in W_n(\vec{k})$ as a ($\bC(q)$-multiple of a) certain string of only $F_i^{(j)}$ acting as elements of $\dot{\Uu}_q(\mathfrak{gl}_m)$ under $q$-skew Howe duality: $\dot{\Uu}_q^-(\mathfrak{gl}_m)$ suffices (in fact, all $\mathfrak{gl}_n$-web relations follow only from the Serre relations) and we can see the $\mathfrak{gl}_n$-web spaces $W_n(\vec{k})$ as instances of $\dot{\Uu}_q(\mathfrak{gl}_m)$-highest weight theory.

Using this explicit description in terms of $F_i^{(j)}$, it was not too hard to extend the relations between $3$-multiparti\-tions and $\mathfrak{sl}_3$-webs, $3$-multitableaux and $\mathfrak{sl}_3$-flows, and Brundan, Kleshchev and Wang's degree of $3$-multitableaux (that comes from their work on graded Specht modules \cite{bkw}) and weights of $\mathfrak{sl}_3$-flows (as the authors has worked out in detail \cite{tub3}) to all $n>3$.
Moreover, recall that the $\mathfrak{gl}_n$-webs $u\in W_n(\vec{k})$ diagrammatically represent the invariant tensors $\mathrm{Inv}_{\Uu_q(\mathfrak{gl}_n)}(\bigotimes_{i}\Lambda^{k_i}\bC^n)\cong\mathrm{hom}_{\Uun}(\bC,\bigotimes_{i}\Lambda^{k_i}\bC^n)$ and the $\mathfrak{gl}_n$-flows and their weights are a combinatorial way to express these vectors explicitly in terms of the elementary tensors. Thus, since the $n$-multipartition and $n$-multitableaux framework comes naturally when working with some kind of Hecke algebras, one can loosely say that the Hecke algebra ``knows'' the $\mathfrak{gl}_n$-web framework.

It is clear, using $\mathrm{hom}_{\Uun}(A,B)\cong\mathrm{hom}_{\Uun}(\bC,A^*\otimes B)$ for $A,B\in\Ob(\Rep(\Uun))$ and the bijection between $n$-multitableaux and flows on $\mathfrak{gl}_n$-webs, cf. Section \ref{sec-tabwebs}, that the $\Uun$-intertwiners can be explained completely combinatorial using tableaux combinatorics.

Note now that for a closed $\mathfrak{gl}_n$-web $w$ these $\mathfrak{gl}_n$-flows give the decomposition into elementary tensors of the trivial $\Uu_q(\mathfrak{gl}_n)$-representation $\bC$, i.e. a certain quantum number. This number is the evaluation (up to a shift) of the $\mathfrak{gl}_n$-web $w$ using the relations found in \cite{ckm} - something that cannot be done directly by an algorithm yet. But we state in Theorem \ref{thm-evaluation} an \textit{inductive evaluation algorithm} for arbitrary closed $\mathfrak{gl}_n$-webs by using only $F$. Our algorithm uses the $q$-skew Howe duality and can be either stated in the combinatorial language of $n$-multitableaux (as we do) or in the algebraic language as the actions of the $F_i^{(j)}$ of $\dot{\Uu}_q(\mathfrak{gl}_m)$ on a highest weight vector $v_h$. As an almost direct consequence we are able to prove an explicit if-and-only-if condition for a $\mathfrak{gl}_n$-web $u\in W_n(\vec{k})$ to be a dual canonical basis element, see Theorem \ref{thm-dualcanwebs}.

We discuss another application of our algorithm in Section \ref{sec-linkswebs}\,: the evaluation of $\mathfrak{gl}_n$-webs is connected to colored Reshetikhin--Turaev $\mathfrak{gl}_n$-link polynomial $\langle L_D\rangle_n$ (see e.g. \cite{wu}), but the usual translation of an $a,b$-colored crossing $\overcrossing$ into sums of $\mathfrak{gl}_n$-webs would use $E$ and $F$, e.g.
\[
\left\langle
\xy
(0,0)*{\includegraphics[scale=.75]{figs/linkpoly/positive.eps}};
(2,-4.1)*{\scriptstyle b};
(-2,-4.35)*{\scriptstyle a};
\endxy
\right\rangle_n=\sum_{k=0}^b\underbrace{(-1)^{k+(a+1)b}q^{-b+k}}_{\alpha(k)}\cdot
\scalebox{.7}{$\xy(0,0)*{\includegraphics[scale=.75]{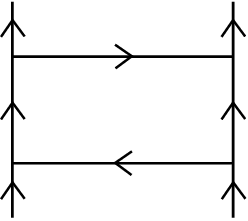}};(-11.5,-11)*{\scriptstyle a};(11.5,-11)*{\scriptstyle b};(6,8)*{\scriptstyle a+k-b};(2,-5)*{\scriptstyle k};(-9.8,0)*{\scriptstyle a+k};(10,0)*{\scriptstyle b-k};(-11.5,10)*{\scriptstyle b};(11.5,10)*{\scriptstyle a}\endxy$}
\leftrightsquigarrow
\sum_{k=0}^b \alpha(k)\cdot F^{(a+k-b)}_iE^{(k)}_iv_{\dots a,b \dots}
\]
Thus, we had to rearrange it (this corresponds to an embedding of $\dot{\Uu}_q(\mathfrak{gl}_i)$ into $\dot{\Uu}_q(\mathfrak{gl}_{i+1})$ and then use the relations in $\dot{\Uu}_q(\mathfrak{gl}_{i+1})$ to rewrite $F^{(a+k-b)}_iE^{(k)}_i$ in $\dot{\Uu}_q(\mathfrak{gl}_{i+1})$), using the observation that any $\mathfrak{gl}_n$-web can be obtained by a string of $F_i^{(j)}$, to
\[
\sum_{k=0}^b \alpha(k)\cdot 
\scalebox{.7}{$\xy
(0,0)*{\includegraphics[scale=.75]{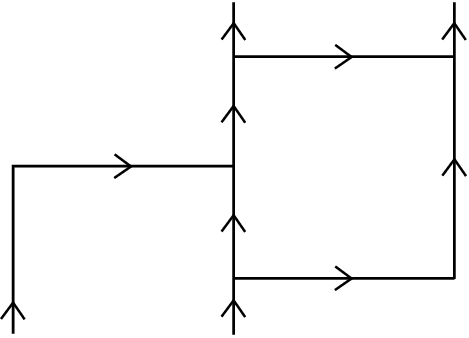}};
(17,18.5)*{F_{i+1}^{(a+k-b)}};
(-26,-20)*{a};
(2,-19.5)*{b};
(30,-19.5)*{0};
(-12,4)*{F_{i}^{(a)}};
(-26,-6.5)*{a};
(2.5,-7)*{k};
(33.5,-6.5)*{b-k};
(15,-9.5)*{F_{i+1}^{(b-k)}};
(-26,6.5)*{0};
(6,6.5)*{a+k};
(33.5,6)*{b-k};
(-26,19.5)*{0};
(2,19.5)*{b};
(30,19)*{a};
\endxy$}\leftrightsquigarrow \sum_{k=0}^b \alpha(k)\cdot F_{i+1}^{(a+k-b)}F_{i}^{(a)}F_{i+1}^{(b-k)}v_{\dots a,b \dots}.
\]
A neat fact is that the invariance under the Reidemeister moves, as we sketch in the proof of Theorem \ref{thm-evoflinks}, are then just instances of the higher quantum Serre relations (which can be found e.g. in \cite[Chapter 7]{lu}).

Using this, we give, as we explain in Section \ref{sec-linkswebs}, an explicit algorithm to compute the colored $\mathfrak{gl}_n$-MOY graph polynomials $\langle\cdot\rangle_{\mathrm{MOY}}$, and thus, the colored Reshetikhin--Turaev $\mathfrak{gl}_n$-polynomials.

Our version is completely combinatorial in nature and has the nice upshot that there is no conceptual difference between different $n$ and between the uncolored and colored setting.

\subsubsection{Categorified \texorpdfstring{$q$}{q}-skew Howe duality}\label{subsub-introcatwebs}
Categorified $q$-skew Howe duality in the $\mathfrak{gl}_n$ case means that there is a strong $\mathfrak{gl}_m$-$2$-representation $\Gamma_{m,n\ell,n}\colon\Ucatm\to\mathcal W^p_{\Lambda}$ of Khovanov--Lauda's \cite{kl5} categorification of $\dot{\Uu}_q(\mathfrak{gl}_m)$, that we denote by $\Ucatm$, on a certain category of matrix factorizations (see \cite{my} Definition 9.1) equivalent to a (suitable) module category $\mathcal W^p_{\Lambda}$ of the $\mathfrak{gl}_n$-web algebra $H_n(\Lambda)$ (see \cite{mack1} Definition 7.1). This functor was used \cite{mack1} to show that $H_n(\Lambda)$ is Morita equivalent to a certain block of the cyclotomic KLR algebra $R_{\Lambda}$. (We note that we follow \cite{mpt}, \cite{my} and \cite{tub3} with our notation for $\Ucatm$, $\Gamma_{m,n\ell,n}$ and $R_{\Lambda}$.)

Roughly speaking, on the categorified level the observations above allow us to extend the construction of the foamy version of HM graded cellular basis to the $\mathfrak{gl}_n$ setting. We do this by giving a growth algorithm for homomorphisms (modulo null-homotopic maps) of matrix factorizations in Definition \ref{defn-growthfoam}. These form a graded cellular basis, see Theorem \ref{thm-cellular}. The procedure is explicit and two immediate advantages are that the growth algorithm gives a basis of $\mathrm{HOM}_{\mathrm{nh}}(\widehat{u},\widehat{v})$ for any $u,v\in W_n(\vec{k})$ (here $\widehat{u},\widehat{v}$ are certain associated matrix factorizations) and computations can be done completely locally using the cyclotomic KLR relations, see \cite{kl1} or \cite{hm} for a list of these relations in terms of diagrams or multitableaux. Another direct advantage of using only the cyclotomic quotients is that everything is finite dimensional and can be done using explicit bases.
And, as before, our construction is completely combinatorial 
and one does not need the matrix factorization (or $\mathfrak{gl}_n$-foams).

\subsubsection{Divided powers and extended graphical calculus}\label{subsub-divthick}
A main difference between the $\mathfrak{gl}_n$-web setting and the categorified quantum groups $\Ucatm$ is that the first is closer to its Karoubi envelope. That is, it is possible to use divided powers in the $\mathfrak{gl}_n$-web setting, but not directly for $\Ucatm$. For $\Ucatm$ one has to go to a full $2$-subcategory of the Karoubi envelope $\Ucatmd$, denoted by $\Ucatmc$, which we briefly recall in Section \ref{sec-higherstuff}. Diagrammatically $\Ucatmc$ is given by a version, called thick calculus, of the extended graphical calculus from \cite{klms} where the reader can find more details.

In order to work with it, we extend Mackaay--Yonezawa's $2$-functor to $\Ucatmc$, see Theorem \ref{thm-thick}. Moreover, using Lemma \ref{lem-webasF} and Corollary \ref{cor-foambasis}, we show in Theorem \ref{thm-iso} explicitly (by giving a thick version of the HM basis) that the extended $2$-functor gives rise to an equivalence between the categories of modules over a certain block of a thickened cyclotomic KLR algebra, that we denote by $\check{R}_{\Lambda}$, and a suitable category of $H_n(\Lambda)$-modules. 

In fact, we show in Theorem \ref{thm-iso} that the $\mathfrak{gl}_n$-web algebra $H_n(\vec{k})$ is isomorphic to a (certain idempotent truncation) of $\check{R}_{\Lambda}$. Since $\check{R}_{\Lambda}$ can be studied completely combinatorially using thick KLR calculus and the thick  combinatorics of the HM basis, we can see this as a categorification of the corresponding results from the $\mathfrak{gl}_n$-web framework: elements of $\mathrm{HOM}_{\mathrm{nh}}(\widehat{u},\widehat{v})$ are parameterized by pairs of $n$-multitableaux of a certain shape.

An interesting remark is that working with $\Ucatmc$ (which is combinatorially not much more complicated than $\Ucatm$) suffices. That is, we can avoid working in the full Karoubi envelope $\Ucatmd$ where no diagrammatic or combinatorial definition is available for $n>2$ yet.

\subsubsection{\texorpdfstring{$\mathfrak{gl}_n$}{gln}-link homologies using combinatorics}\label{subsub-introlinkhom}
For the $\mathfrak{gl}_n$-link homologies this means that, using a complex as for example
\[
\begin{xy}
\xymatrix{\scalebox{.7}{$\xy
(0,0)*{\includegraphics[scale=.75]{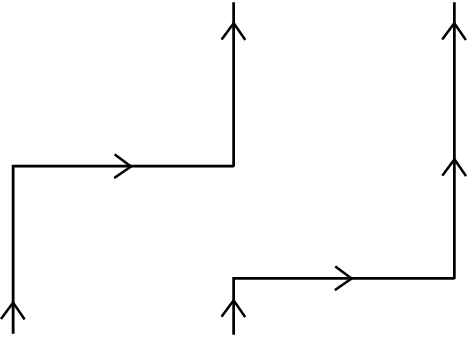}};
(-26,-20)*{1};
(2,-19.5)*{1};
(30,-19.5)*{0};
(-14,3.5)*{F_{i}};
(-26,-6.5)*{1};
(2.5,-7)*{0};
(30,-6.5)*{1};
(14,-10.5)*{F_{i+1}};
(-26,6.5)*{0};
(2.5,6.5)*{1};
(30,6)*{1};
(-26,19.5)*{0};
(2,19.5)*{1};
(30,19)*{1};
(0,-26)*{k=0; a=b=1};
\endxy$}
\{-1\}\ar[r]^/.4em/{d}   &    \scalebox{.7}{$\xy
(0,0)*{\includegraphics[scale=.75]{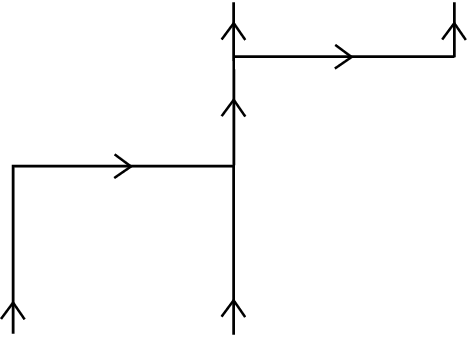}};
(14,17.5)*{F_{i+1}};
(-26,-20)*{1};
(2,-19.5)*{1};
(30,-19.5)*{0};
(-14,3.5)*{F_{i}};
(-26,-6.5)*{1};
(2.5,-7)*{1};
(30,-6.5)*{0};
(-26,6.5)*{0};
(2.5,6.5)*{2};
(30,6)*{0};
(-26,19.5)*{0};
(2,19.5)*{1};
(30,19)*{1};
(0,-26)*{k=1; a=b=1};
\endxy$}}
\end{xy}
\]
with differential $d=\xy(0,0)*{\Gamma_{m,n\ell,n}(\raisebox{-0.15em}{\includegraphics[width=10px]{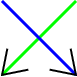}})};\endxy\colon F_iF_{i+1}v_{\dots 1,1,0 \dots}\{-1\}\to F_{i+1}F_iv_{\dots 1,1,0 \dots}$, we can define a complex that only uses the lower part $\Ucatmm$. Since categorified $q$-skew Howe duality descends to the cyclotomic KLR algebra, we can define the complex using only the cyclotomic KLR algebra with $d=\tilde{\Gamma}(\xy(0,0)*{\raisebox{-0.15em}{\includegraphics[width=10px]{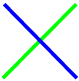}}};\endxy)\colon F_iF_{i+1}v_{\dots 1,1,0 \dots}\{-1\}\to F_{i+1}F_iv_{\dots 1,1,0 \dots}$. Thus, we obtain in this way Khovanov--Rozansky's $\mathfrak{gl}_n$-link homology using categorified $\dot{\Uu}_q(\mathfrak{gl}_m)$-highest weight theory.

The same works in the colored setup using thick calculus and the ($n$-multitableaux combinatorics of the) thick cyclotomic KLR algebra. And, as before for the colored Reshetikhin--Turaev $\mathfrak{gl}_n$-polynomials, everything is completely combinatorial in nature and there is no conceptual difference between different $n$ and between the uncolored and colored setting.

The explicit calculation of this complex is then a straightforward application of linear algebra: use the $\mathfrak{gl}_n$-web version of the HM basis to write an explicit basis for all resolutions. The differential is then just given by applying a thick cyclotomic KLR diagram from the left (stacking it on top) to the basis elements of the source. Then pairing the result with the dual of the thick HM basis for the target gives the differentials as a matrix. This gives an explicit way to compute the homology. It is worth noting again that for these calculations, due to the local properties of the construction, the matrix factorizations framework is not really needed: the homology is governed by the combinatorics of the (thick) HM basis and the (thick) cyclotomic KLR algebra. We explain how this works in Section \ref{sec-catpart2}.

\subsubsection{A remark about foams}\label{subsub-foam}
While typing this paper, the author was informed by Queffelec--Rose about their paper \cite{qr1} where the authors have independently obtained similar results for the $\mathfrak{gl}_n$-link homologies (especially, they independently discovered that the $\mathfrak{gl}_n$-link homology can be obtained in the KLR setting), but using $\mathfrak{gl}_n$-foams instead of matrix factorizations.

Note that Section \ref{sec-catpart2}, by similar arguments as \cite{lqr1}, can be extended to show that some of the aforementioned link homologies are the same. But this is not our purpose and is discussed \cite{qr1}. In fact, I like to thank Queffelec--Rose to point out to me that Chuang--Rouquier's version of the Rickard complex and the $F$-braiding complex I use (based on the observations above) are the same when passing to the (thick) Schur quotient (see \cite{msv2} for the definition of the $2$-Schur algebra). 

Moreover, everything in this paper can be done with their $\mathfrak{gl}_n$-foams too, since the combinatorics of the (thick) cyclotomic KLR and $n$-multitableaux suffices. In fact, as before with the Serre relations on the uncategorified level, all the $\mathfrak{gl}_n$-foam relations are consequences of the (thick cyclotomic) KLR relations. Although formally one would not need $\mathfrak{gl}_n$-foams: some facts are easier to see using $\mathfrak{gl}_n$-foams (e.g. the isotopies) and others using $n$-multitableaux (e.g. the combinatorics). So we claim that both perspectives are worthwhile.

A neat fact about the $\mathfrak{gl}_n$-foam framework is that Brundan--Kleshchev--Wang's degree of multitableaux (which originated from their work on graded Specht theory \cite{bkw}) is, under the translation we discuss in Section \ref{sec-tabwebs} together with the $\mathfrak{gl}_n$-foamation of Queffelec--Rose and their Definition 3.3, then nothing else than a (slightly adjusted) Euler characteristic of foams.
~\newline
\paragraph*{Acknowledgments} I especially would like to 
thank Anna Beliakova, Nils Carqueville, Lukas Lewark, 
Marco Mackaay, Jean-Baptiste Meilhan, Weiwei Pan, Hoel Queffelec, 
Louis-Hadrien Robert, David Rose, Antonio Sartori, 
Marko Sto\v{s}i\'{c}, Catharina Stroppel, Anne-Laure Thiel, 
Pedro Vaz and Paul Wedrich for helpful comments, questions 
and discussions about (higher) $q$-skew Howe duality, (cycl.) 
Hecke/KLR algebras, matrix factorizations and $\mathfrak{gl}_n$-link 
homologies. Special thanks to Marco Mackaay, an anonymous referee, Pedro Vaz and Paul Wedrich for numerous helpful comments on a draft of this paper.

I have also benefited from a lot of support from all 
members of the QGM who created a working atmosphere 
that encouraged me to continue my research. Moreover, 
I want to thank the TIFR in Mumbai for their 
hospitality - a big part of typing this paper has taken place at their Institute.

Which leaves open the question of what my personal contribution to this paper is.
\section{A short summary of the paper}\label{sec-summary}
\setcounter{subsection}{1}
\setcounter{subsubsection}{0}
\subsubsection{Summary of our notation}\label{sec-notation}
We start by summarizing our notation to avoid confusion due to the fact that we are working in the overlap of different worlds, i.e. the diagrammatic framework of $\Ucatm$ that consists of string diagrams, the combinatorial framework of the cyclotomic Hecke algebra that consists of multipartitions or multitableaux and the $\mathfrak{gl}_n$-web/matrix factorization framework that uses pictures (that is, the $\mathfrak{gl}_n$-webs) and the algebraic notion of matrix factorizations.

Since we tend to use highest and not lowest weight theory and $F$ and not $E$, we think of a $\Uu_q(\mathfrak{gl}_2)$-representation $V(N)$ of highest weight $N$ as
\begin{equation}\label{eq-hweight}
\xymatrix{
V_{-N}  \ar@<3pt>[r]^/-.2em/E  &  V_{-N+2}  \ar@<3pt>[l]^/.25em/F \ar@<3pt>[r]^/-.01em/E &V_{-N+4}\ar@<3pt>[l]^/.01em/F\ar@<3pt>[r]^/.3em/E&\dots\ar@<3pt>[l]^/-.35em/F\ar@<3pt>[r]^/-.2em/E & V_{N-4}\ar@<3pt>[l]^/.3em/F \ar@<3pt>[r]^/.01em/E& V_{N-2} \ar@<2pt>[l]^/-.01em/F \ar@<3pt>[r]^/.25em/E  &  V_N,  \ar@<3pt>[l]^/-.25em/F  \\
}
\end{equation}
That is, we usually read from right to left. This is our reading convention for all diagrams of $\Ucatm$ and the cyclotomic KLR algebra (thick ones as well): we think of them as being a sequence of $E$ and $F$ ordered from right to left. Moreover, we read them from bottom to top, i.e.
\[
\psi_3=\xy
(0,1)*{\includegraphics[width=25px]{figs/higherstuff/downcross}};
(-5,-3)*{\scriptstyle i};
(5,-3)*{\scriptstyle j};
(5.5,0)*{\scriptstyle\vec{k}};
\endxy\hspace*{0.25cm}\text{is}\hspace*{0.25cm}\psi_3\colon\mathcal{F}_{i}\mathcal{F}_{j}\onel\Rightarrow\mathcal{F}_{j}\mathcal{F}_{i}\onel\{\alpha^{ij}\}.
\]
However, we read $\mathfrak{gl}_n$-webs from right to left such that a turn of the diagrams by $\frac{\pi}{2}$ in clockwise direction matches the conventions before.

For example we read the string $F_1F_2F_1v_{(4,0,0)}$ as a $\mathfrak{gl}_4$-web (here the numbers on the grid correspond to the labels of the closed edges with the convention that we do not draw edges labeled $0$ and the edges labeled $n$ are pictured as a Bordeaux colored dotted line) as
\[
\scalebox{.7}{$\xy
(0,-1.5)*{\includegraphics[scale=0.65]{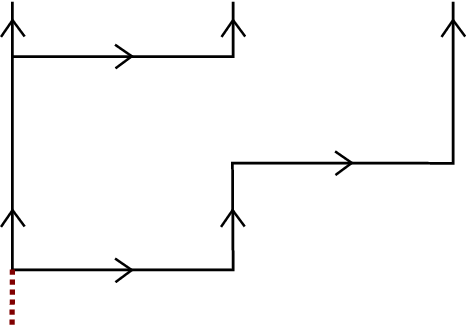}};
(-12,-11)*{F_1};
(-12,13)*{F_1};
(12,1)*{F_2};
(-22,-19)*{4};
(-22,-7.3)*{3};
(-22,5.7)*{3};
(-22,16)*{2};
(2,-19)*{0};
(2,-7.3)*{1};
(2,5.7)*{0};
(2,16)*{1};
(26,-19)*{0};
(26,-7.3)*{0};
(26,5.7)*{1};
(26,16)*{1};
\endxy$}
\]
Note that the labels of the middle and horizontal edges can easily be read off, since they are just the difference of the top right (left) and bottom right (left) numbers for the $F$ (the $E$).

Thus, since we can see a $\mathfrak{gl}_n$-web $u$ as a certain matrix factorization $\widehat{u}$ (see for example Section 5.4 in \cite{my}), we can read a $\Ucatm$ diagram as a certain (equivalence class of) homomorphisms of matrix factorizations from the bottom $\mathfrak{gl}_n$-web $u_b$ to the top $\mathfrak{gl}_n$-web $u_t$. Here the two $\mathfrak{gl}_n$-webs are obtained by letting the $E$ and $F$ for the bottom and top act on the weight vector $\vec{k}$.

We use the highest weight notation for the cyclotomic Hecke algebra too, i.e. reading multipartitions and multitableaux from right to left (the first entry is the rightmost etc.). Moreover, the elements of the $\mathfrak{gl}_n$-web algebra ${}_{u_t}H_n(\vec{k})_{u_b}$ are certain (equivalence classes of) homomorphisms of matrix factorizations $\mathcal{F}=\widehat{u}_b\to\widehat{u}_t$ that we inductively build from right to left. As an example, we decompose the whole morphism into steps
\begin{equation}\label{eq-decom}
u_b=u_1\to u_2\to u_3\to\dots\to u_{k-2}\to u_{k-1}\to u_k=u_t.
\end{equation}
Then we use stepwise certain homomorphisms of matrix factorizations $\phi_i\colon\widehat{u}_i\to\widehat{u}_{i+1}$ and we set $\mathcal{F}=\phi_k\circ\dots\circ\phi_1$. For example
\[
\widehat{CR}_{1,12}\colon
\scalebox{.7}{$\xy
(0,0)*{\includegraphics[scale=.75]{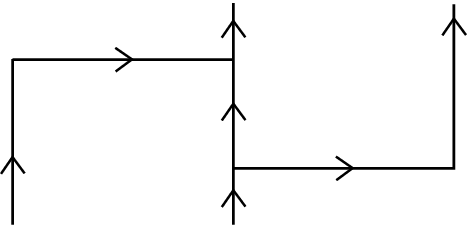}};
(-26,-13)*{1};
(2.5,-13)*{2};
(30,-13)*{0};
(14,-4)*{F_2};
(-26,0)*{1};
(2.5,0)*{1};
(30,0)*{1};
(-14,10)*{F_1};
(-26,13)*{0};
(2.5,13)*{2};
(30,13)*{1};
\endxy$}
\to
\scalebox{.7}{$\xy
(0,0)*{\includegraphics[scale=.75]{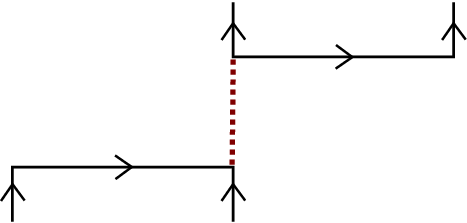}};
(-26,-13)*{1};
(2.5,-13)*{2};
(30,-13)*{0};
(14,10)*{F_2};
(-26,0)*{0};
(2.5,0)*{3};
(30,0)*{0};
(-14,-4)*{F_1};
(-26,13)*{0};
(2.5,13)*{2};
(30,13)*{1};
\endxy$}
\]
is such a local step. Here $n=3$. The reader familiar with the $\mathfrak{sl}_2$ or $\mathfrak{sl}_3$ framework (see for example \cite{kh4} or \cite{mpt}) may think of it as building a $\mathfrak{sl}_2$-cobordism or $\mathfrak{sl}_3$-foam by composing (in a certain way) basic pieces such as saddles, zips, unzips and dotted identities. Roughly the same works for $\mathfrak{gl}_n$-foams and the reader can always think in terms of foams - if (s)he prefers foams.

\subsubsection{A rough sketch of our approach: the uncategorified world}\label{subsub-sketch}
We start by giving a short summary of the relations between the three worlds mentioned above on the uncategorified level. The crucial diagram is
\[
\xymatrix{
n\text{-multitableaux}  \ar@{<->}[rrr]^/.3em/{\mbox{Sections \ref{sec-slnwebs} and \ref{sec-tabwebs}}}  & & & \Rep(\Uun)  \ar@{<->}[rrr]^/.3em/{\mbox{Section \ref{sec-slnwebs}}}  & & &  \mathfrak{gl}_n\text{-webs}  \ar@{<->}@/^0.5cm/[llllll]^/-3em/{\mbox{Section \ref{sec-tabwebs}}}  \\
}
\]
We call the three worlds loosely combinatorics, representation theory and topology. In our opinion all of them have their own advantages:
\begin{itemize}
\item For $n$-multitableaux everything is very explicit and can be done inductively/algorithmically by certain operations on $n$-multitableaux motivated by the classical story of the representation theory of the symmetric group.
\item $\Rep(\Uun)$ is the category that we want to understand.
\item The third one is the category of $\mathfrak{gl}_n$-webs. Here it is easy to see the topology, e.g. isotopies and the connection to $\mathfrak{gl}_n$-link polynomials. In fact, it is non-trivial that the rather ``rigid'' $n$-multitableaux framework is isotopy invariant and on the other hand the $\mathfrak{gl}_n$-link polynomials are completely determined by this ``rigid'' combinatorics. This follows from the non-trivial translations in Sections \ref{sec-tabwebs} and \ref{sec-linkswebs}.
\end{itemize}
Let us focus on $n=2$ for now. The following table summarizes the uncategorified story.

Note that $\mathrm{hom}_{\Uun[2]}(\bC,\bC^2\otimes\bC^2)\cong \mathrm{Inv}_{\Uun[2]}(\bC^2\otimes\bC^2)\subset \bC^2\otimes\bC^2$. Fix for the $\Uun[2]$-vector representation $\bC^2$ the basis $x_{\{1\}}$ and $x_{\{2\}}$ with the first vector in the $+1$- and the second in the $-1$-weight space of $\bC^2$. We write $x_{12}=x_{\{1\}}\otimes x_{\{2\}}$ and $x_{21}=x_{\{2\}}\otimes x_{\{1\}}$.

The dotted line (leash) represents $\Lambda^2\bC^2$, the empty space $\Lambda^0\bC^2$ and the line $\Lambda^1\bC^2\cong \bC^2$. Since the first two spaces are trivial (the reader should think of the leash as non-existing: it encodes certain signs): the bottom/top of the right column is the source/target of the hom-space in the middle.
\begin{center}
\renewcommand{\arraystretch}{1.7}
\scalebox{.9}{$\begin{tabular}{ p{5cm} | p{5cm} | p{5cm} }
\hline
Combinatorics & Representation theory & Topology \\ \hline
$r(\left(\;\emptyset\;,\;\xy (0,0)*{\begin{Young}1 \cr\end{Young}}\endxy\;\right))=r(\left(\;\xy (0,0)*{\begin{Young}1 \cr\end{Young}}\endxy\;,\;\emptyset\;\right))$ & $u\in\mathrm{hom}_{\Uun[2]}(\bC,\bC^2\otimes\bC^2)$ & $u=\xy(0,0)*{\includegraphics[width=40px]{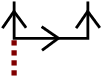}};(-4,-4)*{\scriptstyle 2};(6.5,-4)*{\scriptstyle 0};(-4,4)*{\scriptstyle 1};(6.5,4)*{\scriptstyle 1};(2,1.5)*{\scriptstyle F};\endxy\in W_2((1,1))$ \\ \hline
$\left(\;\emptyset\;,\;\xy (0,0)*{\begin{Young}1 \cr\end{Young}}\endxy\;\right)\neq\left(\;\xy (0,0)*{\begin{Young}1 \cr\end{Young}}\endxy\;,\;\emptyset\;\right)$ & $u=x_{21}-qx_{12}\in\bC^2\otimes\bC^2$ & $\xy(0,0)*{\includegraphics[width=40px]{figs/intro/intro-web}};(-2.5,4)*{\scriptstyle \{2\}};(8,4)*{\scriptstyle \{1\}};(1.5,2.5)*{\scriptstyle \{1\}};(-1.5,-4)*{\scriptstyle \{2,1\}};\endxy\neq \xy(0,0)*{\includegraphics[width=40px]{figs/intro/intro-web}};(-2.5,4)*{\scriptstyle \{1\}};(8,4)*{\scriptstyle \{2\}};(1.5,2.5)*{\scriptstyle \{2\}};(-1.5,-4)*{\scriptstyle \{2,1\}};\endxy$ \\ \hline
degree 0 and degree 1 & coefficients $q^0$ and $q^1$ & weight 0 and weight 1\\
\hline
\end{tabular}$}
\end{center}
To summarize, $2$-multitableaux of the same residue sequence $r(\cdot)$ represent 1:1 certain $\mathfrak{gl}_2$-webs, $2$-multitableaux represent 1:1 flows on these $\mathfrak{gl}_2$-webs and the degree of the $2$-multitableaux gives the weight of the flows. It follows from the middle and the right columns that one can hope to get information about dual canonical bases (for $\mathfrak{gl}_n$-webs a dual canonical basis in our notation is a ``good basis for $q\to 0$'', i.e. having a positive exponent property) and about $\mathfrak{gl}_n$-link polynomials using the left column. This is what we show in Sections \ref{sec-tabwebs} and \ref{sec-linkswebs}.

\subsubsection{A rough sketch of our approach: the categorified world}\label{subsub-sketch2}
From the categorified viewpoint one can hope that the $n$-multitableaux framework can be used to define cellular bases (since they give rise to a method to obtain the indecomposable modules that decategorify to the dual canonical basis) and an explicit method to obtain the $\mathfrak{gl}_n$-link homologies. 
This is what we show in Sections \ref{sec-catpart1} and \ref{sec-catpart2}.

The crucial question is how to generate the string in \eqref{eq-decom}. To do this we use (a thick) 
HM basis. This works roughly as follows. Fix two $\mathfrak{gl}_n$-webs $u,v\in W_n(\vec{k})$. There is a homogeneous $\bC$-basis (that, even from the cyclotomic Hecke side, also works over $\bZ$, see Theorem 3.14 in \cite{hm1} or \cite{li1}) of $\mathrm{HOM}_{\mathrm{nh}}(\widehat{u},\widehat{v})$ (or alternatively of $\mathfrak{gl}_n$-foam spaces) where each basis element is determinated by two $n$-multitableaux $\vec{T},\vec{T}^{\prime}$, one for $u$ and one for $v$, with a certain fixed number of boxes $c(\vec{k})$. The string in \eqref{eq-decom} is generated by actions $\sigma,\sigma^{\prime}$ of elements of $S_{c(\vec{k})}$ by permuting nodes. The different basic pieces then depend on the difference of the residue of the permuted nodes. This can be seen as an analog of classical Specht theory.

The actions are roughly obtained as follows. The $n$-multitableaux $\vec{T},\vec{T}^{\prime}$ are of the same shape, since the shape only depends on the boundary of the $\mathfrak{gl}_n$-webs. Then there is a $n$-multitableaux $T_{\vec{\lambda}}$ in between of the same shape with all its nodes filled in an ordered way. The actions are then given by applying a suitable sequence of transpositions $\tau_k(i,j),\tau_k^{\prime}(i,j)$ from $T_{\vec{\lambda}}$ to $\vec{T},\vec{T}^{\prime}$.

Let us sketch in a diagram how the ``higher'' Specht basis works. Here we focus on $n=2$ and, as in Example \ref{ex-cheating}, use Bar-Natan's $\mathfrak{sl}_2$-cobordisms \cite{bn2}. (They are useful to illustrate the concepts, although we do not work with them due to sign issues.) In general one works with $n$-multitableaux, thick calculus and $\mathfrak{gl}_n$-matrix factorizations or $\mathfrak{gl}_n$-foams. Below we read again from bottom to top, i.e. the reader may think of the $\mathfrak{sl}_2$-web $u$ sitting at the bottom and the $\mathfrak{sl}_2$-web $v$ at the top (the colors in the middle column indicate the different residues of the nodes, e.g. $r(T_{\vec{\lambda}})=(2,2,3,1)$). The element below is in $\mathrm{hom}_{\check{R}(\Lambda)}(F_1F_2F_3F_2,F_3F_1F_2F_2)$.
\[
\begin{xy}
\xymatrix@!C{
\text{cycl. Hecke algebra}\ar@{.}[d]|/-.35em/{2\text{-multitableaux}} & \text{cycl. KLR algebra}\ar@{.}[d]|/-.35em/{\text{string diagrams}} & \mathfrak{sl}_2\text{-webs}\ar@{.}[d]|/-.35em/{\mathfrak{sl}_2\text{-cobordisms}}\\
\vec{T}^{\prime}=\left(\;\xy(0,0)*{\begin{Young}1\cr\end{Young}}\endxy\;,\xy(0,0)*{\begin{Young}2&4\cr 3\cr\end{Young}}\endxy\;\right)\ar@{<->}[r] & \raisebox{0.75ex}{\xy(0,0)*{\includegraphics[width=62.5px]{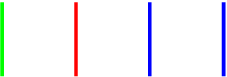}}\endxy}\ar@{<->}[r] & \raisebox{0.75ex}{\xy(0,0)*{\includegraphics[width=62.5px]{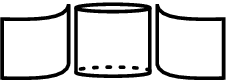}}\endxy}\\
T_{\vec{\lambda}}=\left(\;\xy(0,0)*{\begin{Young}1\cr\end{Young}}\endxy\;,\xy(0,0)*{\begin{Young}2&3\cr 4\cr\end{Young}}\endxy\;\right)\ar[u]|{\tau_3(1,3)}\ar@{<->}[r] & \raisebox{0.75ex}{\xy(0,0)*{\includegraphics[width=62.5px]{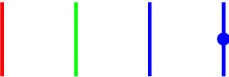}};(9.5,-2)*{\scriptstyle 2};(4.5,-2)*{\scriptstyle 2};(-4.5,-2)*{\scriptstyle 3};(-9.75,-2)*{\scriptstyle 1};\endxy}\ar[u]|{\raisebox{0.5ex}{\xy(0,0)*{\includegraphics[width=62.5px]{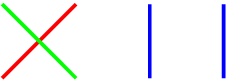}}\endxy}}\ar@{<->}[r] & \raisebox{0.75ex}{\xy(0,0)*{\includegraphics[width=62.5px]{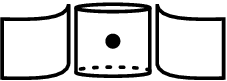}}\endxy}\ar[u]|{\raisebox{0.5ex}{\xy(0,0)*{\includegraphics[width=62.5px]{figs/intro/cob-top}}\endxy}}\\
\vec{T}^{\prime\prime}=\left(\;\xy(0,0)*{\begin{Young}2\cr\end{Young}}\endxy\;,\xy(0,0)*{\begin{Young}1&3\cr 4\cr\end{Young}}\endxy\;\right)\ar[u]|{\tau_1(2,2)}\ar@{<->}[r] & \raisebox{0.75ex}{\xy(0,0)*{\includegraphics[width=62.5px]{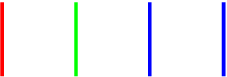}}\endxy}\ar[u]|{\raisebox{0.5ex}{\xy(0,0)*{\includegraphics[width=62.5px]{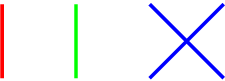}}\endxy}}\ar@{<->}[r] & \raisebox{0.75ex}{\xy(0,0)*{\includegraphics[width=62.5px]{figs/intro/cob-top}}\endxy}\ar[u]|{\raisebox{0.5ex}{\xy(0,0)*{\includegraphics[width=62.5px]{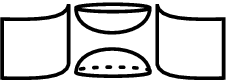}}\endxy}}\\
\vec{T}=\left(\;\xy(0,0)*{\begin{Young}3\cr\end{Young}}\endxy\;,\xy(0,0)*{\begin{Young}1&2\cr 4\cr\end{Young}}\endxy\;\right)\ar[u]|{\tau_2(3,2)}\ar@{<->}[r] & \raisebox{0.75ex}{\xy(0,0)*{\includegraphics[width=62.5px]{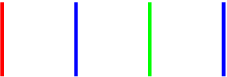}}\endxy}\ar[u]|{\raisebox{0.5ex}{\xy(0,0)*{\includegraphics[width=62.5px]{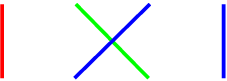}}\endxy}}\ar@{<->}[r] & \raisebox{0.75ex}{\xy(0,0)*{\includegraphics[width=62.5px]{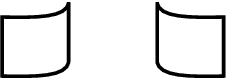}}\endxy}\ar[u]|{\raisebox{0.5ex}{\xy(0,0)*{\includegraphics[width=62.5px]{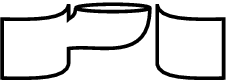}}\endxy}}
}
\end{xy}
\]
We stress again: given$\tau_k(i,j)$ one uses a certain $\mathfrak{sl}_2$-cobordism whose position depends on $k$ and whose shape depends on the difference between the residues $|i-j|$. From bottom to top we see a saddle (difference $1$), a cup-cap (difference $0$) and a shift (difference $>1$). The shift is hard to illustrate here but it just shifts the relative positions of the right and left arc, see also third diagram in Example \ref{ex-cheating}. Another important fact is that all possible dots are just given by $T_{\vec{\lambda}}$. This corresponds to an identity with dots that determines the cell in the cellular basis.

We can use this basis for the colored $\mathfrak{gl}_n$-link homologies as follows. In the language of Bar-Natan from \cite{bn2}: the Khovanov chain complex has chain groups consisting of certain $\mathfrak{sl}_2$-webs and the differentials are $\mathfrak{sl}_2$-cobordisms between them. Thus, using the approach indicated above, we can formulate a chain complex whose chain groups are strings of $F_i^{(j)}$ and whose differentials are $\Ucatmc$ diagrams between them. For example (compare to Example \ref{ex-honestcalc})
\[
\begin{xy}
\xymatrix{
F_2F_1F_2F_1v_{(2^1)}\{-2\}\ar[rr]^{\xy(0,0)*{\raisebox{-0.15em}{\includegraphics[width=10px]{figs/higherstuff/downcross}}};\endxy\colon F_1F_2\to F_2F_1} & & F_2F_2F_1F_1v_{(2^1)}\{-1\} \\    
}
\end{xy}
\]
would be such a complex. This can be thought of as the local version of colored $\mathfrak{gl}_n$-link homology.

In Bar-Natan's picture: in order to do calculations one applies $\mathrm{hom}_{\check{R}(\Lambda)}(\emptyset,\cdot)$ and the chain groups are then given by (possibly dotted) cups and the differential $d$ is just given by gluing the $\mathfrak{sl}_2$-cobordism $d$ on top of the cups. Then use the dual (possibly dotted) cap basis of the target, evaluate the closed $\mathfrak{sl}_2$-cobordism and obtain numbers $\bC$. This gives $d$ as a matrix.

We do literally the same: we apply $\mathrm{hom}_{\check{R}(\Lambda)}(F^c_{(n^{\ell})},\cdot)$ (where $F^c_{(n^{\ell})}$ is a certain canonical string of leash-shifts that can be thought of as non-existent). Now the chain groups are certain $\check{R}_{\Lambda}$-modules and the differentials are $\check{R}_{\Lambda}$-module maps given by composition from the right (gluing to the top).

The rest is also the same as in Bar-Natan's picture. That is, write a thick HM basis element $m_s$ (the cup basis) of the source, glue the differential $d$ to its top and pair it with a thick HM dual basis element $m_t$ (the cap basis which is literally obtained by reading everything backwards) of the target (here $F^c_{(n^{\ell})}=F_2^{(2)}F_1^{(2)}$):
\[
\xy
(0,0)*{\includegraphics[scale=.75]{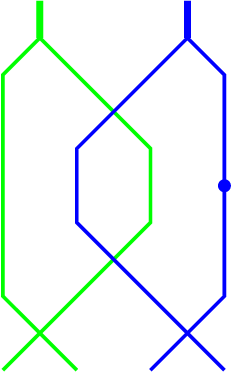}};
(20,0)*{\leftrightsquigarrow};
(40,0)*{\includegraphics[scale=.75]{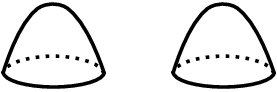}};
(0,-29)*{\includegraphics[scale=.75]{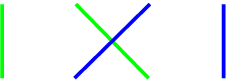}};
(20,-29)*{\leftrightsquigarrow};
(40,-29)*{\includegraphics[scale=.75]{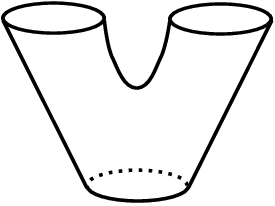}};
(0,-50)*{\includegraphics[scale=.75]{figs/catcell/dual-example1}};
(20,-50)*{\leftrightsquigarrow};
(40,-50)*{\includegraphics[scale=.75]{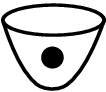}};
(0,-24)*{\circ};
(0,-35)*{\circ};
(10,-40)*{m_s};
(10,-29)*{d};
(10,0)*{m_t};
(-30,-60)*{\left(\;\xy(0,0)*{\begin{Young}1&2\cr\end{Young}}\endxy\;,\;\xy(0,0)*{\begin{Young}1&2\cr\end{Young}}\endxy\;\right)};
(-30,-50)*{\left(\;\xy(0,0)*{\begin{Young}1&3\cr\end{Young}}\endxy\;,\;\xy(0,0)*{\begin{Young}2&4\cr\end{Young}}\endxy\;\right)};
(-30,-40)*{\left(\;\xy(0,0)*{\begin{Young}1&2\cr\end{Young}}\endxy\;,\;\xy(0,0)*{\begin{Young}3&4\cr\end{Young}}\endxy\;\right)};
(-30,-55)*{\uparrow};
(-37.5,-45)*{\tau_2(2,1)};
(-30,-45)*{\uparrow};
(-40,-55)*{\text{unthickening}};
(-31,20)*{\left(\;\xy(0,0)*{\begin{Young}1&2\cr\end{Young}}\endxy\;,\;\xy(0,0)*{\begin{Young}1&2\cr\end{Young}}\endxy\;\right)};
(-31,10)*{\left(\;\xy(0,0)*{\begin{Young}2&4\cr\end{Young}}\endxy\;,\;\xy(0,0)*{\begin{Young}1&3\cr\end{Young}}\endxy\;\right)};
(-31,0)*{\left(\;\xy(0,0)*{\begin{Young}3&4\cr\end{Young}}\endxy\;,\;\xy(0,0)*{\begin{Young}1&2\cr\end{Young}}\endxy\;\right)};
(-31,-10)*{\left(\;\xy(0,0)*{\begin{Young}2&4\cr\end{Young}}\endxy\;,\;\xy(0,0)*{\begin{Young}1&3\cr\end{Young}}\endxy\;\right)};
(-31,-20)*{\left(\;\xy(0,0)*{\begin{Young}1&3\cr\end{Young}}\endxy\;,\;\xy(0,0)*{\begin{Young}2&4\cr\end{Young}}\endxy\;\right)};
(-31,-15)*{\downarrow};
(-49,-15)*{\tau_1(1,1)\text{ and }\tau_3(2,2)};
(-31,-5)*{\downarrow};
(-38.5,-5)*{\tau_2(2,1)};
(-31,5)*{\downarrow};
(-38.5,5)*{\tau_2(1,2)};
(-31,15)*{\downarrow};
(-41,15)*{\text{unthickening}};
\endxy
\]
The elements of the source are elements of the $\check{R}_{\Lambda}$-module $\mathrm{hom}_{\check{R}(\Lambda)}(F_2^{(2)}F_1^{(2)},F_2F_1F_2F_1)$, the elements of the target are elements of the $\check{R}_{\Lambda}$-module $\mathrm{hom}_{\check{R}(\Lambda)}(F_2F_2F_1F_1,F_2^{(2)}F_1^{(2)})$ and the differential is an $\check{R}_{\Lambda}$-module map in $\mathrm{hom}_{\check{R}(\Lambda)}(F_2F_1F_2F_1,F_2F_2F_1F_1)$. Thus, the composite is an element of the $1$-dimensional $\check{R}_{\Lambda}$-module $\mathrm{hom}_{\check{R}(\Lambda)}(F_2^{(2)}F_1^{(2)},F_2^{(2)}F_1^{(2)})$: it is just a number in $\bC$. This can be seen as the evaluation of closed $\mathfrak{gl}_n$-foams that categorifies our algorithm to evaluate closed $\mathfrak{gl}_n$-webs. This number can be obtained explicitly by using rules from thick calculus (see \cite{klms} or \cite{sto1}) that can also be stated directly in terms of $n$-multitableaux. In fact, one can (if one likes) say that the evaluation of closed $\mathfrak{gl}_n$-foams is already inside of at least work by HM. Although the combinatorics go back even further, see the references in Section 6 of \cite{hm}.

\subsubsection{A calculation example}\label{subsub-examplecalc}

We sketch by an example our approach to calculate the (colored) Khovanov--Rozansky $\mathfrak{gl}_n$-link homologies. We want to stress three things again before we start: the possibility for calculations is just one application of our translation. Moreover, it follows from Rouquier's universality theorem (see \cite[Corollary 5.7]{rou}) that all link homologies using the MOY-calculus as underlying uncategorified framework and analogs of Khovanov's original differentials have to give the same result (very, very roughly: the $\mathfrak{gl}_n$-web space $W_n(\Lambda)$ is the $\dot{\Uu}_q(\mathfrak{gl}_m)$-representation of highest weight $\Lambda$ and ``there is only one categorification'' of this). Thus, we do not need neither matrix factorizations nor $\mathfrak{gl}_n$-foams (we need them to show that everything \textit{works}). Another point we would like to add: our framework has enough local properties to perform an analogue of Bar-Natan's ``divide and conquer'' algorithm from \cite{bn3}. His local simplifications seem to correspond on our side to the categorification of the higher quantum Serre relations by Sto\v{s}i\'{c}, see Sections 4 and 5 in \cite{sto1}. Life is short, but this paper is not: we only sketch how this should work in Remark \ref{rem-betterway}.

Now the example: this is the Hopf link example that also appears in the Examples \ref{ex-Hopf} and \ref{ex-hopfnew} where the reader can find the pictures. We set $n=3, m=6$ and we have colored the two positive crossings with the colors $1$ (left component) and $2$. The presentation via $F_i^{(j)}$ is
\[
\mathrm{Hopf}=F_4^{(3)}F_5^{(2)}F_3^{(2)}F_2^{(2)}F_1^{(2)}T_{2,1,3}T_{1,2,2}F_5F_4F_3F_1F_2^{(3)}v_{(3,3,0,0,0,0)}.
\]
where the $T$ represent the braiding and the right and left strings of $F_i^{(j)}$ (that we shortly denote by $F_b$ and $F_t$) correspond to the bottom and top closure respectively. The braid complex $T_{2,1,3}T_{1,2,2}\tilde v=T_{2,1,3}T_{1,2,2}v_{\dots,1,2,\dots}$ (that technically takes place in a Schur quotient of $\check{\mathcal{U}}(\mathfrak{gl}_6)$) is
\[
\begin{xy}
\xymatrix{
& {\color{red}F^{(2)}_4F^{(2)}_3}{\color{green}F_2F_3}\tilde v\{-1\}\ar[dr]|{\phantom{-}\xy
(0,0)*{\raisebox{-0.15em}{\includegraphics[width=10px]{figs/higherstuff/HM-strings1}}};\endxy\colon F_2F_3\to F_3F_2}\ar@{.}[dd]|{\bigoplus} & \\
{\color{red}F_4F^{(2)}_3F_4}{\color{green}F_2F_3}\tilde v\{-2\}\ar[ur]|{\phantom{-}\xy
(0,0)*{\raisebox{-0.15em}{\includegraphics[width=20px]{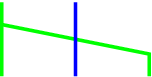}}};\endxy\colon F_4F^{(2)}_3F_4\to F^{(2)}_4F^{(2)}_3}\ar[dr]|{\phantom{-}\xy
(0,0)*{\raisebox{-0.15em}{\includegraphics[width=10px]{figs/higherstuff/HM-strings1}}};\endxy\colon F_2F_3\to F_3F_2} &  & {\color{red}F^{(2)}_4F^{(2)}_3}{\color{green}F_3F_2}\tilde v\{0\}\\
& {\color{red}F_4F^{(2)}_3F_4}{\color{green}F_3F_2}\tilde v\{-1\}\ar[ur]|{-\xy
(0,0)*{\raisebox{-0.15em}{\includegraphics[width=20px]{figs/linkhom/HM-diff}}}\endxy\colon F_4F^{(2)}_3F_4\to F^{(2)}_4F^{(2)}_3} &    
}
\end{xy}
\]
with leftmost part in homological degree zero. In the rightmost part we see $F_3^{(2)}F_3$ that is isomorphic (given by an explicit isomorphism) to $[3]F^{(3)}_3$ (this is a shorthand notation for a shifted direct sum) in $\check{\mathcal{U}}(\mathfrak{gl}_6)$, see Theorem 5.1.1 in \cite{klms}. By using one of Sto\v{s}i\'{c}'s categorifications of the higher quantum Serre relations (Theorem 3 in \cite{sto1}), we see that $F_3^{(2)}F_2F_3$ is (in the Schur quotient) isomorphic (given by an explicit isomorphism) to $[2]F_3^{(3)}F_2$. Using a Gauss elimination (induced differential $\tilde d$!) we see that the middle top and the non-top degree part of the rightmost component will cancel and the complex simplifies to (with $d=\xy
(0,0)*{\raisebox{-0.15em}{\includegraphics[width=10px]{figs/higherstuff/HM-strings1}}};\endxy\colon F_2F_3\to F_3F_2$ as before)
\[
\begin{xy}
\xymatrix{
F_4F^{(2)}_3F_4F_2F_3\tilde v\{-2\}\ar[r]^{d} & F_4F^{(2)}_3F_4F_3F_2\tilde v\{-1\}\ar[r]^/0.3em/{\tilde d} & F^{(2)}_4F^{(3)}_3F_2\tilde v\{2\}.
}
\end{xy}
\]
We now close it with $F_t,F_b$. By using $\mathrm{hom}_{\check{R}}(F^c_{(3^{2})},\cdot)$ and calculate the HM basis for the left two $\check{R}$-modules and the dual for the right two $\check{R}$-modules, we get, using the approach sketched above, the two differentials as matrices. Thus, calculating the homology is just linear algebra.
\subsubsection{Paper structure}\label{subsub-structure}
Before we summarize the paper let us note that Section \ref{sec-basics} (mostly) introducing notations and can be skipped by readers who feel safe using the language of $\mathfrak{gl}_n$-webs and categorified quantum groups. We try to illustrate everything with plenty of examples to help the reader on his/her way through this (too?) long paper. One can always go back to Section \ref{sec-basics} and look for the explicit definitions.

The summary of the uncategorified picture in this paper is as follows.

We start in Sections \ref{sec-combi} and \ref{sec-slnwebs} by recalling some notions and fixing notations, e.g. the notions of $n$-multitableaux and $\mathfrak{gl}_n$-webs. Most parts in those sections are known, but we have also included new results related to our framework, e.g. in Theorem \ref{thm-kkele} we show how the flows and their weights correspond to the decomposition into elementary tensors (we think this should be known, but we were unable to find the result in the literature).

In Section \ref{sec-tabwebs}, among other things, we give a detailed discussion of the relation between the $\mathfrak{gl}_n$-webs and the $n$-multitableaux language.

The combinatorial heart of Section \ref{sec-tabwebs} is the extended growth algorithm from Definition \ref{defn-exgrowth} that gives a bijection between $\mathfrak{gl}_n$-webs with flows and $n$-multitableaux (see Proposition \ref{prop-extgrowth}). This bijection can be extended to match Brundan--Kleshchev--Wang's degree of $n$-multitableaux with weights of flows (see Proposition \ref{prop-extgrowthdeg}).

We use this to give an evaluation algorithm in Theorem \ref{thm-evaluation} and its application to the dual canonical basis in Theorem \ref{thm-dualcanwebs}. Note that Lemma \ref{lem-webasF} implies that all relations from \cite{ckm} follow from the higher Serre relations (see e.g. in Chapter 7 in \cite{lu}).

Section \ref{sec-linkswebs} contains the application of the evaluation algorithm for calculations of the colored Reshetikhin--Turaev $\mathfrak{gl}_n$-polynomials in detail. That is, after showing in Lemma \ref{lem-linkasF} how links can be explicitly seen as strings of $F_i^{(j)}$, we show in Theorem \ref{thm-evoflinks} how to use $n$-multitableaux to compute the colored Reshetikhin--Turaev $\mathfrak{gl}_n$-polynomials. A neat fact (although we only sketch how it works): the invariance under the Reidemeister moves is a consequence of  the higher Serre relations. Afterwards we give two explicit examples (see Section \ref{subsub-twoexamples}).

The summary of the categorified picture in this paper is as follows.

We start in Section \ref{sec-higherstuff} by recalling some notions and fixing notations. Most parts in those sections are known, but we have also included new results, e.g. thick categorical q-skew Howe duality, see Theorem \ref{thm-thick}.

In Section \ref{sec-catpart1} we give the $\mathfrak{gl}_n$-web version of the HM basis by a growth algorithm, see Definition \ref{defn-growthfoam} (for the dual HM basis see Remark \ref{rem-othermethod}), and show that it is a graded cellular basis in Theorem \ref{thm-cellular}. Moreover, we relate our construction to the thick KL-R algebra in Theorem \ref{thm-iso}.

And in the last section, i.e. Section \ref{sec-catpart2}, we define our version of the colored $\mathfrak{gl}_n$-link homology in Definition \ref{defn-khshoweF2} and show in Theorem \ref{thm-same2} that it agrees with the colored Khovanov--Rozansky $\mathfrak{gl}_n$-link homology. Moreover, we discuss some local properties related to the Rickard complex in Lemma \ref{lem-rickardisf}. Afterwards we show (Definition \ref{defn-hmcomplex} and Theorem \ref{thm-algworks}) how to use the $\mathfrak{gl}_n$-web version of the HM basis for calculations.

Note that this shows that the Khovanov--Rozansky $\mathfrak{gl}_n$-link homologies are completely combinatorial in nature. Thus, everything is down to earth and can be made explicit.

We note again that, in order to illustrate that everything is explicit, we give numerous examples. We hope these will help the reader to understand the sometimes very confusing combinatorics. 
\section{Basic notions}\label{sec-basics}
\subsection{Combinatorics, (multi)partitions and (multi)tableaux}\label{sec-combi}
In this section we define/recall the combinatorial notions about multitableaux that we use in this paper.

For an integer 
$m\geq 1$ let 
\[
\Lambda(m,d)=\big\{\lambda=(\lambda^1,\dots,\lambda^m)\in \mathbb{N}^m\mid \sum_{j=1}^m\lambda^j=d\big\}
\]
be the set of \textit{compositions} of $d$ of length $m$. 
By $\Lambda^+(m,d)\subset\Lambda(m,d)$ we denote the 
subset of \textit{partitions}, i.e. all $\lambda\in\Lambda(m,d)$ such that 
\[
\lambda^1\geq\lambda^2\geq \dots\geq \lambda^m\geq 0.
\]

Let $\Lambda^{(+)}(m,d)_{I}\subset \Lambda^{(+)}(m,d)$ be the subset of compositions (or partitions)
whose entries are all in $I\subset\bN$. (Here we use a notation that we will use throughout, i.e. $(+)$ means both versions, with or without the $+$,
with the appropriate adaption of the notions in question.)
In particular, for some fixed $M\in\bN$ we use $\Lambda^{(+)}(m,d)_{M}$ as a notation for
\[
\Lambda^{(+)}(m,d)_M=\big\{\lambda=(\lambda^1,\dots,\lambda^m)\in \mathbb{N}^m\mid \sum_{j=1}^m\lambda^j=d,\;\lambda^j\in\{0,\dots,M\}\big\}.
\]

Recall that we can associate to each $\lambda\in\Lambda^+(m,d)$ a \textit{diagram for $\lambda$}
\[
\lambda=
\big\{(r(ow),c(olumn))\mid 1\leq c\leq \lambda^j, 0\leq r\leq m,j=1,\dots,m
\big\},
\]
which we denote by the same symbol $\lambda$. 
The elements of a diagram are called \textit{nodes $N$}. For example, if $\lambda=(4,2,1)$, that is $d=6,m=3$, then
\[
\lambda=
\xy(0,0)*{\begin{Young} & & & \cr & \cr \cr\end{Young}}\endxy.
\]
(We use the English notation to denote 
our partitions/diagrams.) We associate, by convention, 
all partitions of zero to the empty diagram $\emptyset$.

A \textit{tableau $T$ of shape $\lambda$} is a filling of $\lambda$ with (possibly repeating) numbers from a chosen, fixed set $\{1,\dots,k\}$. Such a tableau $T$ is said to be \textit{semistandard} if its entries increase along its rows (weakly) and columns (strictly), and \textit{column strict} if its entries increase along its columns (strictly) with no conditions on rows. For example
\[
T_1=\xy(0,0)*{\begin{Young} 1& 2& 2\cr 2& 3\cr 4\cr\end{Young}}\endxy,
\quad\quad
T_2=\xy(0,0)*{\begin{Young} 1& 2& 1\cr 2& 3\cr 4\cr\end{Young}}\endxy
.
\]
The tableau $T_1$ is semistandard, but $T_2$ is only column strict. We denote the set of all 
semistandard tableaux of shape $\lambda$ by $\mathrm{Std}^s(\lambda)$ and the 
set of all column strict tableaux of shape $\lambda$ by $\mathrm{Col}(\lambda)$.

Finally, let $\lambda\in\Lambda^+(m,d)$ be a partition. Then we associate to each node $N=(r,c)\in\lambda$ of $\lambda$ a \textit{residue} $r(N)$ by the rule $r(N)=c-r+\ell$ where $\ell$ is the number of non-zero entries of $\lambda$. 
(We see $\ell$ as being fixed by $\lambda$, even if we speak later about addable or removable nodes.
Moreover, the convention for the shift of the residue by $\ell$  is a normalization that ensures that the lowest residue for nodes is $1$.)

In the same vein, 
for an integer $n\geq 1$ a 
\textit{$n$-multipartition} $\vec{\lambda}\in\Lambda^+(m,d,n)$ of $d$ of length $m$ is an $n$-tuple of partitions $\vec{\lambda}=(\lambda_n,\dots,\lambda_1)$. Each of its components $\lambda_{i}=(\lambda_{i}^1,\dots,\lambda_{i}^{|\lambda_{i}|})$ is of length $|\lambda_{i}|$ such that their total length is $m$ and their total sum is $d$. We can associate to each $\vec{\lambda}\in\Lambda^+(m,d,n)$ a \textit{diagram for $\vec{\lambda}$}
\[
\vec{\lambda}=
\big\{(r(ow),c(olumn),i(entry))\mid 1\leq c\leq \lambda_{i}^j,\;0\leq r\leq
|\lambda_{i}|,\;i=n,\dots,1,\;j=0,\dots,|\lambda_{i}|
\big\},
\]
which we denote by the same symbol $\vec{\lambda}$. 
For example, if we have $\vec{\lambda}=(\lambda_4=(3,2,1),\lambda_3=(0),\lambda_2=(4),\lambda_1=(3,1))$, 
that is $d=14,m=6$ and $n=4$, then
\[
\lambda=\left(\;\xy(0,0)*{\begin{Young} & & \cr & \cr \cr\end{Young}}\endxy
\;,\;
\emptyset
\;,\;
\xy(0,0)*{\begin{Young} & & &\cr\end{Young}}\endxy\;,\;\xy(0,0)*{\begin{Young} & &\cr \cr\end{Young}}\endxy\;\right).
\]

Similarly as before, an \textit{$n$-multitableau $\vec{T}=(T_n,\dots,T_1)$ of shape $\vec{\lambda}$} is a filling of $\vec{\lambda}$ with (possible repeating) numbers from a chosen, fixed set $\{1,\dots,k\}$. Such a tableau $\vec{T}$ is said to be \textit{standard}, if its entries increase along its rows and columns (both strictly) and all repeating numbers appear at most once in $T_i$, and all nodes with the same number are of the same residue. 
(This is actually a slight generalization of the notion $n$-multitableau in the literature.)
The \textit{residue} of a node is defined verbatim as for tableaux, with $\ell$ being the maximal number of non-zero entries of the components of $\vec{\lambda}$. 

We denote the set of all standard tableaux $\vec{T}$ of shape $\vec{\lambda}$ by $\mathrm{Std}(\vec{\lambda})$.
If not stated otherwise, then all appearing $n$-multitableaux are assumed to be standard for the duration.

There are two natural embeddings $\iota_{n_1}^{n_2},\kappa_{n_1}^{n_2}\colon\Lambda^+(m,d,n_1)\to\Lambda^+(m,d,n_2)$ for $n_2\geq n_1$, i.e.
\[
\iota_{n_1}^{n_2}(\vec{\lambda})=(\underbrace{(0),\dots,(0)}_{n_2-n_1},\lambda_{n_1},\dots,\lambda_{1})
\quad\text{and}\quad
\kappa_{n_1}^{n_2}(\vec{\lambda})=(\lambda_{n_1},\dots,\lambda_{1},\underbrace{(0),\dots,(0)}_{n_2-n_1}).
\]

\begin{defn}\label{defn-tabcomb}
An \textit{addable node $N$ of residue $r(N)=k$} is a node $N$ that can be added to the diagram of $\lambda$ such that the new diagram is still the diagram of a partition and the residue is $r(N)=k$. We denote the \textit{set of addable nodes of residue $k$ of $\lambda$} by $\mathsf{A}^{k}(\lambda)$.
Similarly, a \textit{removable node $N$ of residue $r(N)=k$} is a node that can be removed from the diagram of $\lambda$ such that the new diagram is still the diagram of a partition and the residue of $N$ is $r(N)=k$. We denote the \textit{set of removable nodes of residue $k$ of $\lambda$} by $\mathsf{R}^{k}(\lambda)$.

Again, we can use the same notions for $n$-multipartitions $\vec{\lambda}\in\Lambda^+(m,d,n)$.

Moreover, we say a node $N_1=(r_1,c_1,i_1)$ of $\vec{\lambda}=(\lambda_{i})_{i=n}^1$ comes \textit{before/left of (or after/right of)} another node $N_2=(r_2,c_2,i_2)$ of $\vec{\lambda}$, denoted by $N_1\preceq N_2$ (or $N_1\succeq N_2$), if $i_1>i_2$ or $i_1=i_2$ and $r_1\leq r_2$ (or $i_1<i_2$ or $i_1=i_2$ and $r_1\geq r_2$). We use the evident definitions for the notions \textit{strictly} before $\prec$ and \textit{strictly} after $\succ$.

For a fixed node $N$, we denote the set of addable nodes of $\lambda$ before $N$ with the same residue $r(N)=k$ by $\mathsf{A}^{k\prec N}(\lambda)$ and we denote the set of addable nodes of $\lambda$ after $N$ with the same residue $r(N)=k$ by $\mathsf{A}^{k\succ N}(\lambda)$.

Similarly, for a fixed node $N$, we denote the set of removable nodes of $\lambda$ before $N$ with the same residue $r(N)=k$ by $\mathsf{R}^{k\prec N}(\lambda)$ and we denote the set of removable nodes of $\lambda$ after $N$ with the same residue $r(N)=k$ by $\mathsf{R}^{k\succ N}(\lambda)$.
\end{defn}

\begin{ex}\label{ex-resi}
Let $\vec{\lambda}=(\lambda_3,\lambda_2,\lambda_1)$ be the following $3$-multipartition (we have $\ell=3$), filled with its residues.
\[
\lambda_3=\xy(0,0)*{\begin{Young} 3&  4&  5&6\cr  2& 3\cr 1\cr\end{Young}}\endxy\;,\;\;\;\;
\lambda_2=\xy(0,0)*{\begin{Young} 3&  4\cr  2\cr\end{Young}}\endxy\;,\;\;\;\;
\lambda_1=\xy(0,0)*{\begin{Young} 3&  4&  5&6\cr  2& 3&4&5\cr 1&2 &3\cr\end{Young}}\endxy\;.
\]
Note that the residues are constant along the diagonals.

The set of addable nodes $\cdot$ of residue $4$ for $\vec{\lambda}$ and the set of removable nodes 
$\times$ of residue $2$ for $\vec{\lambda}$ are given by
\[
\lambda_3=\xy(0,0)*{\begin{Young} &  &  &\cr  & &$\cdot$\cr \cr\end{Young}}\endxy\;,\;\;\;\;\lambda_2=\xy(0,0)*{\begin{Young} &  \cr  $\times$\cr\end{Young}}\endxy\;,\;\;\;\;\lambda_1=\xy(0,0)*{\begin{Young} &  &  &\cr  & & &\cr & & &$\cdot$\cr\end{Young}}\endxy\;.
\]
The removable node is after/right of the left addable and before/left of the right addable node. Moreover, in the following we demonstrate all nodes strictly before $\prec$ and strictly after $\succ$ a fixed node marked $-$.
\[
\lambda_3=\xy(0,0)*{\begin{Young} $\prec$&  $\prec$&  $\prec$&$\prec$\cr $\prec$ &$\prec$\cr $\prec$\cr\end{Young}}\endxy\;,\;\;\;\;\lambda_2=\xy(0,0)*{\begin{Young} $\prec$&  $\prec$\cr  $\prec$\cr\end{Young}}\endxy\;,\;\;\;\;\lambda_1=\xy(0,0)*{\begin{Young} & $-$ &  &\cr  $\succ$& $\succ$& $\succ$& $\succ$\cr $\succ$& $\succ$& $\succ$\cr\end{Young}}\endxy\;.
\]
\end{ex}

Let us recall Brundan, Kleshchev and Wang's definition of the degree of a $n$-multitableau \cite{bkw}, slightly 
generalized to our setup.

\begin{defn}\label{defn-combinatorics1}
Let $\vec{T}\in\mathrm{Std}(\vec{\lambda})$ be an $n$-multitableau $\vec{T}=(T_n,\dots,T_1)$. 
We associate to $\vec{T}$ a sequence of $n$-multitableaux $(\vec{T}^j)$ for each $j\in\{0,1,\dots,k\}$ where $\vec{T}^j=(T^j_n,\dots,T^j_1)$ and $T^j_{n,\dots,1}$ is obtained from $T_{n,\dots,1}$ by deleting all nodes with numbers strictly bigger than $j$.

Moreover, we associate to it a 
sequence of $n$-multipartitions $(\vec{\lambda}^j)$ by removing the entries of the nodes of $(\vec{T}^j)$.
\end{defn}

\begin{ex}\label{ex-combinatorics1}
For the $4$-multitableau
\[
\vec{T}=\left(\;\xy (0,0)*{\begin{Young}1&2 \cr 3 \cr
\end{Young}}\endxy\;,\;\xy (0,0)*{\begin{Young}4 \cr\end{Young}}\endxy\;,\;\xy (0,0)*{\begin{Young}1&2 \cr\end{Young}}\endxy\;,\;\xy (0,0)*{\begin{Young}4 \cr\end{Young}}\endxy
\;\right),
\]
we obtain the following sequence. First note that, 
by definition, $\vec{T}^0=(\emptyset,\emptyset,\emptyset,\emptyset)$ and $\vec{T}^4=\vec{T}$. The intermediate $4$-multitableaux are
\[
\vec{T}^1=\left(\;\xy (0,0)*{\begin{Young}1 \cr\end{Young}}\endxy\;,\;\emptyset\;,\;\xy (0,0)*{\begin{Young}1 \cr\end{Young}}\endxy,\;\emptyset\;\right),\;\;\vec{T}^2=\left(\;\xy (0,0)*{\begin{Young}1&2 \cr\end{Young}}\endxy\;,\;\emptyset\;,\;\xy (0,0)*{\begin{Young}1 & 2 \cr\end{Young}}\endxy\;,\;\emptyset\;\right),\;\;\vec{T}^3=\left(\;\xy (0,0)*{\begin{Young}1&2 \cr 3 \cr\end{Young}}\endxy\;,\;\emptyset\;,\;\xy (0,0)*{\begin{Young}1 &2 \cr\end{Young}}\endxy\;,\;\emptyset\;
\right).
\]
\end{ex}

For repeating entries we very often add $0<\varepsilon\ll 1$ (being strictly smaller than one over max number of repeated boxes is sufficient) from left to right, e.g. letting $\varepsilon=0.1$:
\[
\vec{T}=\left(\;
\xy(0,0)*{\begin{Young}1&4\cr 3\cr\end{Young}}\endxy\;,\;\xy(0,0)*{\begin{Young}2\cr\end{Young}}\endxy\;,\;\xy(0,0)*{\begin{Young}1&4\cr 5\cr\end{Young}}\endxy
\;,\;\xy(0,0)*{\begin{Young}1\cr\end{Young}}\endxy
\;\right)
\rightsquigarrow
\left(\;
\xy(0,0)*{\begin{Young}1&4\cr 3\cr\end{Young}}\endxy\;,\;\xy(0,0)*{\begin{Young}2\cr\end{Young}}\endxy\;,\;\xy(0,0)*{\begin{Young}1.1&4.1\cr 5\cr\end{Young}}\endxy
\;,\;
\xy(0,0)*{\begin{Young}1.2\cr\end{Young}}\endxy
\;\right).
\]
Then we apply the definitions from the non-repeating setup (extended in the evident sense to non-integral entries; only the order matters).

\begin{defn}\label{defn-degpart1}
Let $\vec{T}\in\mathrm{Std}(\vec{\lambda})$ be a $n$-multitableau. For $j\in\{1,\dots,k\}$ let $N^j$ denote the set 
of all nodes that are filled with the number $j$ and let $\vec{T}^j$ denote as before the 
$n$-multitableau obtained from $\vec{T}$ by removing all nodes with entries $>j$.

First assume that there are no repeating numbers.
The \textit{degree of $\vec{T}^j$}, denoted by $\mathrm{deg}(\vec{T}^j)$, is defined to be
\[
\mathrm{deg}(\vec{T}^j)=|\mathsf{A}^{k\succ N}(\vec{T}^j)|-|\mathsf{R}^{k\succ N}(\vec{T}^j)|-a\;\;\text{ with }\;\;a=\sum_{i=0}^{N^j-1}i.
\]
If there are repeating numbers, then replace these by adding a small amount $0<\varepsilon\ll 1$ to each repeating number, increasing from 
left to right, and apply the definition from the non-repeating case.

The \textit{degree} of the $n$-multitableau $\vec{T}=(T_n,\dots,T_1)$, denoted by $\mathrm{deg}_{\mathrm{BKW}}(\vec{T})$, is then defined by
\[
\mathrm{deg}_{\mathrm{BKW}}(\vec{T})=\sum_{j=1}^k \mathrm{deg}(\vec{T}^j).
\]
\end{defn}

\begin{ex}\label{ex-degreea}
All of the following four standard $4$-multitableaux have degree zero.
\begin{gather*}
\vec{T}_1 =\left(\;\emptyset\;,\;\emptyset\;,\;\emptyset\;,\;\xy(0,0)*{\begin{Young} 1\cr\end{Young}}\endxy\;\right)\;,\; 
\vec{T}_2=\left(\;\emptyset\;,\;\emptyset\;,\;\xy(0,0)*{\begin{Young} 1\cr\end{Young}}\endxy\;,\;\xy(0,0)*{\begin{Young} 1\cr\end{Young}}\endxy\;\right)\;,
\;\vec{T}_3 =\left(\;\emptyset\;,\;\xy(0,0)*{\begin{Young} 1\cr\end{Young}}\endxy\;,\;\xy(0,0)*{\begin{Young} 1\cr\end{Young}}\endxy\;,\;\xy(0,0)*{\begin{Young} 1\cr\end{Young}}\endxy\;\right)\;,\;
\\
\vec{T}_4=\left(\;\xy(0,0)*{\begin{Young} 1\cr\end{Young}}\endxy\;,\;\xy(0,0)*{\begin{Young} 1\cr\end{Young}}\endxy\;,\;\xy(0,0)*{\begin{Young} 1\cr\end{Young}}\endxy\;,\;\xy(0,0)*{\begin{Young} 1\cr\end{Young}}\endxy\;\right)
\rightsquigarrow
\left(\;\xy(0,0)*{\begin{Young} 1\cr\end{Young}}\endxy\;,\;\xy(0,0)*{\begin{Young} 1.1\cr\end{Young}}\endxy\;,\;\xy(0,0)*{\begin{Young} 1.2\cr\end{Young}}\endxy\;,\;\xy(0,0)*{\begin{Young} 1.3\cr\end{Young}}\endxy\;\right),\quad\text{i.e. }\varepsilon=0.1.
\end{gather*}
To see this, we note that in the first case there is no node after $\succ$ the unique node $N^1$. Hence, $\mathrm{deg}(\vec{T}_1)=0$. In the second case we have to calculate two steps. In the first step, i.e.
\[
\left(\;\emptyset\;,\;\emptyset\;,\;\xy(0,0)*{\begin{Young} 1\cr\end{Young}}\endxy\;,\;\xy(0,0)*{\begin{Young} $\cdot$\cr\end{Young}}\endxy\;\right),
\]
we count one addable node of the same residue which we have marked with a $\cdot$, but for the second step there is again no node after $\succ$ the last node anymore. Hence, $\mathrm{deg}(\vec{T}_2)=0$, since we have to take the shift from Definition \ref{defn-degpart1} into account. For the third case we have to calculate three steps, i.e. the first and the second are
\[
\left(\;\emptyset\;,\;\xy(0,0)*{\begin{Young} 1\cr\end{Young}}\endxy\;,\;\xy(0,0)*{\begin{Young} $\cdot$\cr\end{Young}}\endxy\;,\;\xy(0,0)*{\begin{Young} $\cdot$\cr\end{Young}}\endxy\;\right)\;\;\text{ and }\;\;\left(\;\emptyset\;,\;\xy(0,0)*{\begin{Young} 1\cr\end{Young}}\endxy\;,\;\xy(0,0)*{\begin{Young} 1\cr\end{Young}}\endxy\;,\;\xy(0,0)*{\begin{Young} $\cdot$\cr\end{Young}}\endxy\;\right),
\]
where we have again indicated the addable nodes of the same residue with a $\cdot$. The third step is as before. Hence, $\mathrm{deg}(\vec{T}_3)=0$, because of the shift. The last case works similarly with a shift by $6$.

Note that the degree (total or local) can be negative. For example the last step of
\[
\vec{T}_5=\left(\;\xy(0,0)*{\begin{Young} 1 & 2 &3\cr 8 & 9\cr\end{Young}}\endxy\;,\;\xy(0,0)*{\begin{Young} 5 & 6\cr 10\cr 11\cr\end{Young}}\endxy\;,\;\xy(0,0)*{\begin{Young} 1 & 2 & 3\cr 4& 9\cr 7\cr\end{Young}}\endxy\;\right)
\]
has no addable nodes after $\succ$ the node $N^{11}$ with the same residue, but one removable node, namely the node filled with the entry $7$. Hence, $\mathrm{deg}(\vec{T}_5^{11})=-1$. The total degree in this case is
\[
\mathrm{deg}_{\mathrm{BKW}}(\vec{T}_5)=1+0+0+0+1+0+0+1+0+1-1=3.
\]
\end{ex}

\begin{defn}\label{defn-dominnancelambda}
Let $\vec{\lambda}=(\lambda_n,\dots,\lambda_1)$ and $\vec{\mu}=(\mu_n,\dots,\mu_1)$ be $n$-multipartitions in $\Lambda^+(m,d,n)$. Recall that $\lambda_{i}=(\lambda_{i}^1,\dots,\lambda_{i}^{|\lambda_i|})$ and $\mu_{i}=(\mu_{i}^1,\dots,\mu_{i}^{|\mu_i|})$ for $i\in\{n,\dots,1\}$.

We say \textit{$\vec{\mu}$ dominates $\vec{\lambda}$,} denoted by $\vec{\lambda}\trianglelefteq\vec{\mu}$, if
\[
\sum_{i^{\prime}=1}^{i-1}|\lambda_{n+1-i^{\prime}}|+\sum_{j=1}^{|\lambda_{n+1-i}|}\lambda_{n+1-i}^j\leq \sum_{i^{\prime}=1}^{i-1}|\mu_{n+1-i^{\prime}}|+\sum_{j=1}^{|\mu_{n+1-i}|}\mu_{n+1-i}^j
\]
for all $1\leq i\leq n$. We write $\vec{\lambda}\lhd\vec{\mu}$, if $\vec{\lambda}\unlhd\vec{\mu}$ and $\vec{\lambda}\neq\vec{\mu}$. It is easy to check that $\unlhd$ is a partial ordering of the set of all $n$-multipartitions $\Lambda^+(m,d,n)$, called the \textit{dominance order}.
This order can be extended to $n$-multitableaux in the following way. Suppose we have two standard $n$-multitableaux $\vec{T}_1\in\mathrm{Std}(\vec{\lambda})$ and $\vec{T}_2\in\mathrm{Std}(\vec{\mu})$ filled with numbers from $\{1,\dots,k\}$. As in Definition \ref{defn-combinatorics1}, we denote the corresponding $n$-multipartitions after removing all nodes with entries higher than $j\in\{1,\dots,k\}$ by $\vec{\lambda}^j$ and $\vec{\mu}^j$. Then
\[
\vec{T}_1\unlhd\vec{T}_2\Longleftrightarrow \vec{\lambda}^j\unlhd\vec{\mu}^j\;\;\text{ for all }\;\;j\in\{1,\dots,k\}.
\]

Given $\vec{\lambda}\in\Lambda^+(m,d,n)$, we can associate to it two \textit{unique standard $n$-multitableaux $T_{\vec{\lambda}}\in\mathrm{Std}(\vec{\lambda})$ and $T^*_{\vec{\lambda}}\in\mathrm{Std}(\vec{\lambda})$} with the property
\[
\vec{T}\in\mathrm{Std}(\vec{\lambda})\Rightarrow \vec{T}\unlhd T_{\vec{\lambda}}\;\text{ and }\;\vec{T}\in\mathrm{Std}(\vec{\lambda})\Rightarrow T^*_{\vec{\lambda}}\unlhd \vec{T}.
\]
The $n$-multitableaux $T_{\vec{\lambda}}$ is easily seen to be the $n$-multitableau with all entries in order from top to bottom, filling up rows before columns, and left to right and its so-called \textit{dual} $T^*_{\vec{\lambda}}$ has its entries ordered also from top to bottom, but filling up columns before rows, and from right to left.

To use the definitions above for repeating entries we, by convention, 
use the same notions as above after adding $0<\varepsilon\ll 1$ from left to right as before. 
\end{defn}

\begin{ex}\label{ex-morecomb1}
Intuitively $\vec{T}_1\triangleleft \vec{T}_2$ means the numbers in $\vec{T}_1$ appear earlier to the right than in $\vec{T}_2$. For example, given the $3$-multipartition
\[
\;\;\;\vec{\lambda}=\left(\;\xy(0,0)*{\begin{Young}&\cr\cr\end{Young}}\endxy\;,\;\xy(0,0)*{\begin{Young}\cr\end{Young}}\endxy\;,\;\xy(0,0)*{\begin{Young}&\cr\cr\end{Young}}\endxy\;\right),
\]
we see that
\[
T_{\vec{\lambda}}=\left(\;\xy(0,0)*{\begin{Young}1&2\cr3\cr\end{Young}}\endxy\;,\;\xy(0,0)*{\begin{Young}4\cr\end{Young}}\endxy\;,\;\xy(0,0)*{\begin{Young}5&6\cr7\cr\end{Young}}\endxy\;\right)\;\text{ and }\;T^*_{\vec{\lambda}}=\left(\;\xy(0,0)*{\begin{Young}5&7\cr 6\cr\end{Young}}\endxy\;,\;\xy(0,0)*{\begin{Young}4\cr\end{Young}}\endxy\;,\;\xy(0,0)*{\begin{Young}1&3\cr2\cr\end{Young}}\endxy\;\right)
.
\]
The left tableau will dominate all $\vec{T}\in\mathrm{Std}(\vec{\lambda})$. For example
\[
\vec{T}=\left(\;\xy(0,0)*{\begin{Young}1&2\cr3\cr\end{Young}}\endxy\;,\;\xy(0,0)*{\begin{Young}5\cr\end{Young}}\endxy\;,\;\xy(0,0)*{\begin{Young}4&6\cr7\cr\end{Young}}\endxy\;\right)
\]
will be dominated, since
\[
\vec{T}^4=\left(\;\xy(0,0)*{\begin{Young}1&2\cr3\cr\end{Young}}\endxy\;,\;\emptyset,\;\xy(0,0)*{\begin{Young}4\cr\end{Young}}\endxy\;\right)\unlhd T_{\vec{\lambda}}^4=\left(\;\xy(0,0)*{\begin{Young}1&2\cr3\cr\end{Young}}\endxy\;,\;\xy(0,0)*{\begin{Young}4\cr\end{Young}}\endxy\;,\;\emptyset\;\right).
\]
The dual one the other hand is dominated by all the others.
\end{ex}

\begin{defn}\label{defn-rsequence}
Let $\vec{T}\in\mathrm{Std}(\vec{\lambda})$ be a $n$-multitableau. The \textit{residue sequence} of $\vec{T}$, denoted by $r(\vec{T})$, is the $k$-tuple whose $j\in\{1,\dots,k\}$ entry is the residues of the node with number $j$. Moreover, the \textit{residue sequence} of a $n$-multipartition $\vec{\lambda}$, denoted by $r(\vec{\lambda})$, is defined to be $r(\vec{\lambda})=r(T_{\vec{\lambda}})$.

If the $n$-multitableau $\vec{T}$ has multiple entries with label $j$ and all of them are of the same residue, then we use the same definition.
\end{defn}
\subsection{The \texorpdfstring{$\mathfrak{gl}_n$}{gln}-spiders and the \texorpdfstring{$\mathfrak{gl}_n$}{gln}-web spaces}\label{sec-slnwebs}
\subsubsection{Definition of the \texorpdfstring{$\mathfrak{gl}_n$}{gln}-spider}\label{subsub-spider}
In this section we are going to define the $\Uu_q(\mathfrak{gl}_n)$\textit{-spider category or $\mathfrak{gl}_n$-web-category} $\spid{n}$, 
following \cite{ckm}.

Our convention for reading diagrams is from bottom to top and left to right. By diagram we mean oriented, planar graphs with labeled edges, where all vertices are either part of the boundary or $3$-valent. The boundary in our case are lines at either the bottom or the top of the diagrams with a certain number of fixed points ordered from left to right. Moreover, in the whole section let the letters $a,b,c,d$ and $e$ denote elements of $\{0,\dots,n\}$.

Furthermore, we use the convention that $[a]$ denotes the \textit{quantum integer} (with $[0]=1$), $[a]!$ denotes the \textit{quantum factorial}, 
and we also use the \textit{quantum binomial}:
\[
[a]=\frac{q^a-q^{-a}}{q-q^{-1}}=q^{a-1}+q^{a-3}+\dots+q^{-a+3}+q^{-a+1},\quad[a]!=[0][1]\dots [a-1][a],
\quad
\qbin{a}{b}=\frac{[a]!}{[a-b]![b]!}.
\]

\begin{defn}\label{def-freespid}(\textbf{Free \texorpdfstring{$\Uu_q(\mathfrak{gl}_n)$}{\textbf{U}q(sln)}-spider}) 
The \textit{free $\Uu_q(\mathfrak{gl}_n)$-spider category}, which we denote by $\fspidn{n}$, 
is the the $\bC(q)$-linear, monoidal category consisting of:
\begin{itemize}
\item The objects of $\fspidn{n}$, denoted by $\Ob(\fspidn{n})$, are tuples $\vec{k}$ with entries in $\{0,\dots,n\}$. We display their entries ordered from left to right according to their appearance in $\vec{k}$.
\item The $1$-morphisms of $\fspidn{n}$ between $\vec{k}$ and $\vec{l}$, denoted by $\Mor_{\fspidn{n}}(\vec{k},\vec{l})$, are diagrams between $\vec{k}$ and $\vec{l}$ freely (monoidally) generated by the following basic pieces,
\begin{align}\label{eq-genspider}
\scalebox{.7}{$\text{split}\colon\xy(0,0)*{\includegraphics[scale=.75]{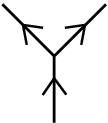}};(-6.5,5)*{\scriptstyle a};(6.3,5)*{\scriptstyle b};(-3.5,-5)*{\scriptstyle a+b}\endxy,\quad\text{merge}\colon
\xy(0,0)*{\includegraphics[scale=.75]{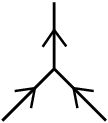}};(-6.5,-5)*{\scriptstyle a};(6.3,-5)*{\scriptstyle b};(3.5,5)*{\scriptstyle a+b}\endxy$},
\end{align}
called \textit{split (up)} and \textit{merge (up)}.
The boundary objects, by convention, should be the same as the label of the edge next to it. Therefore, we usually do not picture the objects directly as e.g. in \eqref{eq-genspider}.
\end{itemize}

We usually do not draw edges labeled $0$ and use edges labeled $n$, drawn as dotted \textit{leashes} (see also Remark \ref{rem-leash}). These conventions are illustrated in \eqref{eq:convention} below. We think of $0$ and $n$ labeled edges as non-existing. And, by convention, all diagrams with strictly smaller or bigger labels than $0$ or $n$ are defined to be $0$.

Moreover, we use shorthand notations for \textit{ladders}. 
For example, we use the following diagrams (and similar ones for other ladders) as a shorthand notation.
\begin{gather}\label{eq:convention}
\scalebox{.7}{$\xy(0,0)*{\includegraphics[scale=.75]{figs/slnwebs/trivalenta.eps}};(-6.5,5)*{\scriptstyle a};(6.5,5)*{\scriptstyle b};(-5,-5)*{\scriptstyle a+b=n}\endxy=\xy(0,2.5)*{\includegraphics[scale=.75]{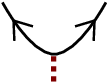}};(-6.5,4.5)*{\scriptstyle a};(6.3,4.5)*{\scriptstyle b}\endxy\;\text{and}\;\xy(0,0)*{\includegraphics[scale=.75]{figs/slnwebs/trivalentb.eps}};(-6.5,-5)*{\scriptstyle a};(6.3,-5)*{\scriptstyle b};(5.5,5)*{\scriptstyle a+b=n}\endxy=\xy(0,-2.9)*{\includegraphics[scale=.75]{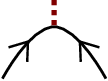}};(-6.5,-4.9)*{\scriptstyle a};(6.3,-4.9)*{\scriptstyle b}\endxy$}
\;\text{and}\;
\scalebox{.7}{$\xy(0,0)*{\includegraphics[scale=.75]{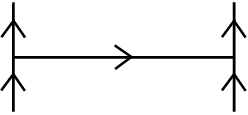}};(-11.5,-4)*{\scriptstyle a};(11.5,-4)*{\scriptstyle b};(3,2)*{\scriptstyle c+d};(-8,4)*{\scriptstyle a-c-d};(8,4)*{\scriptstyle b+c+d}\endxy=\xy(0,0)*{\includegraphics[scale=.75]{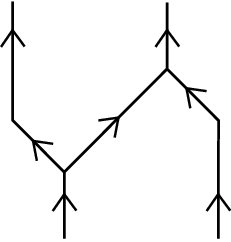}};(-4,-10.5)*{\scriptstyle a};(15.5,-10.5)*{\scriptstyle b};(3.5,-1)*{\scriptstyle c+d};(-6.5,10.5)*{\scriptstyle a-c-d};(13,10.5)*{\scriptstyle b+c+d}\endxy$}.
\end{gather}
\end{defn}

\begin{defn}\label{def-spid}(\textbf{$\Uu_q(\mathfrak{gl}_n)$-spider}) Let $n>1$. The $\Uu_q(\mathfrak{gl}_n)$-\textit{spider category}, which we denote by $\spid{n}$, is defined as the additive Karoubi closure (taking direct sums and direct summands, the latter in the abstract sense of the Karoubi envelope recalled below) of a quotient of $\fspidn{n}$ by the following relations.

The \textit{(co)associativity relations},
\begin{equation}\label{eq-tripod}
\scalebox{.7}{$
\xy(0,0)*{\includegraphics[scale=.75]{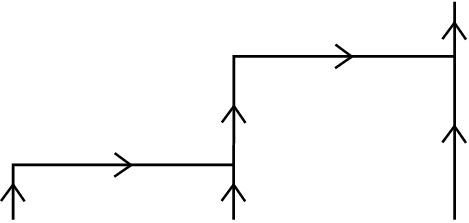}};(-26,-12)*{\scriptstyle a};(2,-12)*{\scriptstyle b};(30,-12)*{\scriptstyle c};(4,-1)*{\scriptstyle a+b};(22.5,11)*{\scriptstyle a+b+c}\endxy\;\;\;=\;\;\;\xy(0,0)*{\includegraphics[scale=.75]{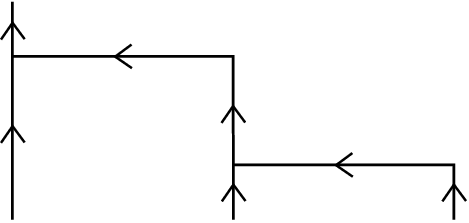}};(-26,-12)*{\scriptstyle a};(2,-12)*{\scriptstyle b};(30,-12)*{\scriptstyle c};(4,-1)*{\scriptstyle b+c};(-22.5,11)*{\scriptstyle a+b+c}\endxy$},
\end{equation}
including the evident coassociativity version as well,
the \textit{digon removals}
\begin{equation}\label{eq-digon1}
\scalebox{.7}{$\xy(0,0)*{\includegraphics[scale=.75]{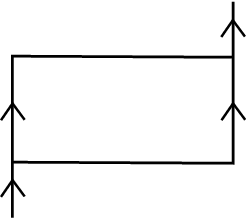}};(9,10)*{\scriptstyle a+b};(-9.5,-10)*{\scriptstyle a+b};(-12,0)*{\scriptstyle a};(16,0)*{\scriptstyle b};\endxy\;\;\;=\qbin{a+b}{b}\xy(0,0)*{\includegraphics[scale=.75]{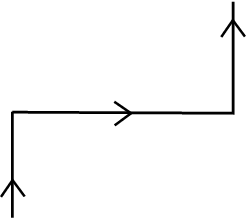}};(6.5,2)*{\scriptstyle a+b}\endxy$},
\end{equation}
the \textit{square removals}
\begin{equation}\label{eq-square1}
\scalebox{.7}{$\xy(0,0)*{\includegraphics[scale=.75]{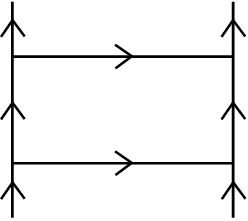}};(-11.5,-11)*{\scriptstyle a};(16.5,-11)*{\scriptstyle b};(2,8)*{\scriptstyle c};(2,-5)*{\scriptstyle d};(-10,0)*{\scriptstyle a-d};(18,0)*{\scriptstyle b+d};(-8,11)*{\scriptstyle a-c-d};(19.5,11)*{\scriptstyle b+c+d}\endxy\;\;\;=\qbin{c+d}{c}\;\xy(0,0)*{\includegraphics[scale=.75]{figs/slnwebs/laddera.eps}};(-11.5,-4)*{\scriptstyle a};(16.5,-4)*{\scriptstyle b};(3,2)*{\scriptstyle c+d};(-8,4)*{\scriptstyle a-c-d};(20,4)*{\scriptstyle b+c+d}\endxy$},
\end{equation}
and the \textit{square switches}
\begin{equation}\label{eq-square2}
\scalebox{.7}{$\xy(0,0)*{\includegraphics[scale=.75]{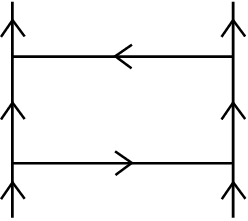}};(-11.5,-11)*{\scriptstyle a};(16.5,-11)*{\scriptstyle b};(2,8)*{\scriptstyle c};(2,-5)*{\scriptstyle d};(-10,0)*{\scriptstyle a-d};(18,0)*{\scriptstyle b+d};(-8,11)*{\scriptstyle a+c-d};(19.5,11)*{\scriptstyle b-c+d}\endxy\;\;\;=\sum_e \qbin{a-b+d-c}{e}\;\xy(0,0)*{\includegraphics[scale=.75]{figs/slnwebs/squarec.eps}};(-11.5,-11)*{\scriptstyle a};(16.5,-11)*{\scriptstyle b};(4,8)*{\scriptstyle d-e};(4,-5)*{\scriptstyle c-e};(-8,0)*{\scriptstyle a+c-e};(20,0)*{\scriptstyle b-c+e};(-8,11)*{\scriptstyle a+c-d};(19.5,11)*{\scriptstyle b-c+d}\endxy$}.
\end{equation}
\end{defn}

Moreover, for $0\leq n^{\prime}\leq n$ we also consider the full subcategory $\spidn{n}{n^{\prime}}$ consisting of objects with labels in $\{0,\dots,n^{\prime}\}$ only.

\subsubsection{Some \texorpdfstring{$\mathfrak{gl}_n$}{gln}-representation theoretical notions}\label{subsub-reptheo}
Let us briefly recall some of the representation theory of $\Uun$. Much more details that are related to our framework can be found in \cite{ckm} or \cite{mack1}. Moreover, we often use ``$\mathfrak{gl}_n$-webs'', ``$\mathfrak{gl}_n$-weights'' etc. instead of ``$\Uu_q(\mathfrak{gl}_n)$-webs'', ``$\Uu_q(\mathfrak{gl}_n)$-weights'' etc. and also omit to put a $q$ in the notation if no confusion can arise.

Recall that the $\mathfrak{gl}_n$-weight 
lattice is isomorphic to $\bZ^n$. Let 
$\epsilon_i=(0,\dots,1,\dots,0)\in\bZ^n$, with $1$ being on the $i$-th 
coordinate, and $\alpha_i=\epsilon_i-\epsilon_{i+1}
=(0,\dots,1,-1,\dots,0)\in\bZ^{n}$, for 
$i=1,\dots,n-1$. Recall that the Euclidean inner product on $\bZ^n$ is defined by  
$(\epsilon_i,\epsilon_j)=\delta_{i,j}$.

\begin{defn}\label{def-serre} For $n\in\bN_{>1}$ the \textit{quantum general linear algebra} 
$\Uu_q(\mathfrak{gl}_n)$ is 
the associative, unital $\bC(q)$-algebra generated by $K_i$ and $K_i^{-1}$, for $1,\dots, n$, 
and $E_{i},F_i$, for $i=1,\dots, n-1$, subject to the relations
\begin{gather*}
K_iK_j=K_jK_i,\quad K_iK_i^{-1}=K_i^{-1}K_i=1,
\\
E_iF_{j} - F_{j}E_i = \delta_{i,j}\dfrac{K_iK_{i+1}^{-1}-K_i^{-1}K_{i+1}}{q-q^{-1}},
\quad
K_iE_{j}=q^{ (\epsilon_i,\alpha_j)}E_{j}K_i,
\quad
K_iF_{j}=q^{- (\epsilon_i,\alpha_j)}F_{j}K_i,
\\
E_{i}^2E_{j}-[2]E_{i}E_{j}E_{i}+E_{j}E_{i}^2=0,
\qquad\text{if}\quad |i-j|=1,
\\
E_{i}E_{j}-E_{j}E_{i}=0,\qquad\text{else},
\\
F_{i}^2F_{j}-[2]F_{i}F_{j}F_{i}+F_{j}F_{i}^2=0,
\qquad\text{if}\quad |i-j|=1,
\\
F_{i}F_{j}-F_{j}F_{i}=0,\qquad\text{else}.
\end{gather*}
The last four relations are the so-called (quantum) \textit{Serre relations}.
\end{defn}

It is worth noting that $\Uu_q(\mathfrak{gl}_n)$ is a Hopf algebra with coproduct $\Delta$ given by
\[
\Delta(E_i)=E_i\otimes K_i+1\otimes E_i,\;\;\Delta(F_i)=F_i\otimes 1+K^{-1}_i\otimes F_i\;\;\text{and}\;\;\Delta(K_i^{\pm 1})=K_i^{\pm 1}\otimes K_i^{\pm 1}.
\]
The antipode $S$ and the counit $\varepsilon$ are given by
\[
S(E_i)=-E_iK^{-1}_i,\;\; S(F_i)=-K_iF_i,\;\;S(K_i^{\pm 1})=K_i^{\mp 1},\;\;\varepsilon(E_i)=\varepsilon(F_i)=0\;\;\text{and}\;\;\varepsilon(K_i^{\pm 1})=1.
\]
Recall that the Hopf algebra structure allows to extend actions to tensor products of representations, to duals of representations and there is a trivial representation.
We denote the standard basis of the $\Uun$-representation $\bC(q)^n$ 
(written $\bC^n$ for simplicity) by $\{x_1,\dots,x_n\}$, where the action is given by
\[
E_i(x_j)=\begin{cases}x_{j-1}, &\text{if }i=j-1,\\0 &\text{else,}\end{cases}\;F_i(x_j)=\begin{cases}x_{j+1}, &\text{if }i=j,\\0 &\text{else,}\end{cases}\;K_i(x_j)=\begin{cases}qx_{j}, &\text{if }i=j,\\ x_j &\text{else.}\end{cases}
\]
Then we consider the following quotient of the tensor algebra $\mathcal{T}\bC^n$:
\[
\Lambda^{\bullet}\bC^n=\mathcal{T}\bC^n/\mathcal S^2\bC^n,
\]
where $\mathcal S^2\bC^n$ is the symmetric square of $\bC^n$ spanned by $x_ix_j+qx_jx_i$ for all pairs $i<j$ and by $x_ix_i$ for all $i$, cf. \cite{bezw}.
Recall that $\Lambda^{\bullet}\bC^n$ is a graded algebra with product $\wedge$ and we denote by $\Lambda^k\bC^n$ its $k$-th direct summand, i.e.
\[
\Lambda^{\bullet}\bC^n=\bigoplus_{k=0}^n\Lambda^k\bC^n.
\]
These summands are irreducible $\Uun$-representations and the $k$-th one is called the \textit{$k$-th fundamental} $\Uun$-representation. We note that the $(n{-}k)$-th $\Uun$-representation is isomorphic to the dual of the $k$-th one. Moreover, the two cases $k=0,n$, which are duals, are called the \textit{trivial} $\Uun$-representation.

Given an $>$-ordered $k$-element subset $S=\{s_1,\dots,s_k\}$ of $\{n,\dots,1\}$ (we follow Cautis, Kamnitzer and Morrison, i.e. the sets $S$ are ordered decreasing; we write all involved sets decreasing), the \textit{tensor basis} of $\Lambda^k\bC^n$ is given by
\[
\{x_S=x_{s_1}\wedge\dots\wedge x_{s_k}\in\Lambda^k\bC^n\mid S\subset \{n,\dots,1\},|S|=k\}
\]
and its elements are called \textit{elementary tensors}. Moreover, as in \cite{mack1}, let $\vec{k}=(k_1,\dots,k_m)$ be an $m$-tuple with $0\leq k_i<n$ and define
\[
\Lambda^{\vec{k}}\bC^n=\Lambda^{k_1}\bC^n\otimes\dots\otimes\Lambda^{k_m}\bC^n.
\]
The tensor basis can be extended to a basis of $\Lambda^{\vec{k}}\bC^n$, which we also call \textit{tensor basis} and its elements $x_{\vec{S}}$ the \textit{elementary tensors} of $\Lambda^{\vec{k}}\bC^n$. Here we have $\vec{S}=(S_1,\dots,S_m)$ with $S_j\subset \{n,\dots, 1\},|S_j|=k_j$ for $j=1,\dots,m$.

\subsubsection{Relation to the representation category \texorpdfstring{$\Rep(\mathfrak{gl}_n)$}{\textbf{Rep}(gln)}}\label{subsub-spidrep}
By definition, $\Rep(\Uun)$ is the additive, Karoubi closure (taking direct sums and direct summands) of the full subcategory of all $\Uun$-representations 
generated by $\Lambda^k\bC^n$. Furthermore, recall that the $\Uun$-spider $\spid{n}$ is a monoidal category due to the Hops algebra structure of $\Uun$.

Given two subsets $S,T\subset\{n,\dots,1\}$ define $\ell(S,T)=|\{(i,j)\in S\times T\mid i<j\}|$. For any $a,b\in\{1,\dots,n-1\}$ with $a+b<n$ define the following (generating) intertwiners.
\begin{itemize}
\item[(a)] The intertwiner $M^{a,b}_s$ called \textit{split} is given by
\[
M_s^{a,b}\colon \Lambda^{a+b}\bC^n\to\Lambda^{a}\bC^n\otimes \Lambda^{b}\bC^n,\;M^{a,b}_s(x_S)=\sum_{T\subset S}(-q)^{\ell(S,T)}x_{T}\otimes x_{S-T}.
\]
\item[(b)] The intertwiner $M^{a,b}_m$ called \textit{merge} is given by
\[
M_m^{a,b}\colon \Lambda^{a}\bC^n\otimes \Lambda^{b}\bC^n\to \Lambda^{a+b}\bC^n,\;M^{a,b}_s(x_S\otimes x_T)=\begin{cases}(-q)^{-\ell(T,S)}x_{S\cup T}, &\text{if }S\cap T=\emptyset,\\ 0 &\text{else.}\end{cases}
\]
\end{itemize}

\begin{defn}\label{defn-pifunctor}
Define a monoidal functor $\Psi\colon\spid{n}\to\Rep(\Uun)$, given on objects by
\[
\vec{k}=(k_1^{\pm 1},\dots,k_m^{\pm 1})\mapsto (\Lambda^{k_1}\bC^n)^{\pm 1}\otimes\dots\otimes (\Lambda^{k_m}\bC^n)^{\pm 1},
\]
where a minus should indicate the dual $\Uun$-representation. On the morphisms the functor $\Psi$ is defined by
\begin{align}
\scalebox{.7}{$\xy(0,0)*{\includegraphics[scale=.75]{figs/slnwebs/trivalenta.eps}};(-6.5,5)*{\scriptstyle a};
(6.3,5)*{\scriptstyle b};(-3.5,-5)*{\scriptstyle a+b}\endxy$}
\mapsto M^{a+b}_s\;\;\text{ and }\;\;
\scalebox{.7}{$\xy(0,0)*{\includegraphics[scale=.75]{figs/slnwebs/trivalentb.eps}};(-6.5,-5)*{\scriptstyle a};
(6.3,-5)*{\scriptstyle b};(3.5,5)*{\scriptstyle a+b}\endxy$}\mapsto M^{a+b}_m.
\end{align}
\end{defn}

\begin{thm}\label{thm-pifunctor}(\cite[Theorem 3.3.1]{ckm})
The functor $\Psi$ from above is a well-defined equivalence of monoidal categories $\spid{n}$ to $\Rep(\Uun)$.\qed
\end{thm}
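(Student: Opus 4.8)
The plan is to verify that $\Psi$ is well-defined on morphisms and respects the pivotal structure, and then to establish that it is full, faithful and essentially surjective; the last two points are where quantum skew Howe duality does the real work.

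First I would check well-definedness, i.e.\ that $\Psi$ carries each defining relation of $\spid{n}$ to an identity of intertwiners. Since the generators of $\spid{n}$ map to the explicit maps $M_s^{a,b}$, $M_m^{a,b}$, $D^a$, each relation becomes an equality of $\Uun$-linear maps between tensor products of fundamental representations, which one checks by evaluating both sides on the tensor basis $x_{\vec{S}}$ and comparing the powers of $-q$ produced by the counting functions $\ell(S,T)$. The tag relation~\ref{eq-tag}, the tripod relation~\ref{eq-tripod} (a coassociativity-type statement for the exterior comultiplication), and the two digon removals~\ref{eq-digon1},~\ref{eq-digon2} (which reduce to summation identities for quantum binomials) are routine in this way. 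The square removal~\ref{eq-square1} and especially the square switch~\ref{eq-square2} are the delicate cases: for these I would either push through the bookkeeping directly, or -- more cleanly -- observe that both sides live in a $\mathrm{Hom}$-space whose dimension is small enough that equality is forced by matching a single matrix coefficient. One must also check that $\Psi$ intertwines the dualities, i.e.\ that the snake identities built from the $a+b=n$ specializations of split and merge (the cup and cap) go to the canonical (co)pairings; this is what upgrades $\Psi$ to a pivotal functor.

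Next, essential surjectivity is almost immediate: by the definition of $\Rep(\Uun)$ used here, objects are tensor products of the $\Lambda^k\bC^n$ and their duals, and $\Psi$ hits each such object on the nose. Moreover, the tags being isomorphisms (Remark~\ref{rem-spider}(a)) reduces the whole comparison to the full subcategory $\spidn{n}{+}$ of positive objects and the subcategory of $\Rep(\Uun)$ generated by the $\Lambda^k\bC^n$ themselves. Fullness is then the heart of the matter, and here I would invoke $q$-skew Howe duality. Fixing a total $N$ and writing the spaces $\Lambda^{\vec{k}}\bC^n$ with $\sum_i k_i = N$ as the weight spaces of $\Lambda^{N}(\bC^m\otimes\bC^n)$ for suitable $m$, the commuting $\Uu_q(\mathfrak{gl}_m)$-action realizes its Chevalley generators $E_i,F_i$ (and their divided powers) precisely as the ``ladder'' webs from Definition~\ref{def-freespid}; since the image of $\Uu_q(\mathfrak{gl}_m)$ is the full commutant of $\Uun$ on this space (a double-centralizer statement), every $\Uun$-intertwiner between the $\Lambda^{\vec{k}}\bC^n$ is a linear combination of composites of ladders, hence lies in the image of $\Psi$.

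Finally, faithfulness follows from a dimension count once fullness is in hand: $\dim\mathrm{Hom}_{\Uun}(\Lambda^{\vec{k}}\bC^n,\Lambda^{\vec{l}}\bC^n)$ is computable by the Littlewood--Richardson rule (equivalently, as a $\Uu_q(\mathfrak{gl}_m)$-weight-space multiplicity under skew Howe duality), and one produces a spanning set of webs of exactly that cardinality -- a set of reduced/non-elliptic webs, or the explicit $F$-generated normal forms that recur later in the paper -- so the surjection $\Psi$ on each $\mathrm{Hom}$-space must be an isomorphism. The step I expect to be the main obstacle is precisely this one: cutting the web category down to the correct size, i.e.\ showing that the six families of relations above already generate \emph{all} relations. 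That is exactly where one must exhibit or count an honest basis rather than a mere spanning set, and where the combinatorics of $q$-skew Howe duality has to be used quantitatively rather than just qualitatively.
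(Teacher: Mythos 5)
The paper itself offers no argument here: Theorem~\ref{thm-pifunctor} is imported verbatim from~\cite{ckm} (their Theorem 3.3.1) and closed with \qed, so there is no internal proof to measure you against. Your sketch of well-definedness (checking the relations~\ref{eq-tag}--\ref{eq-square2} on elementary tensors), pivotality, essential surjectivity, and fullness via skew Howe duality (ladders realize the $\dot{\Uu}_q(\mathfrak{gl}_m)$-action, and the double-commutant property makes every $\Uun$-intertwiner between the $\Lambda^{\vec{k}}\bC^n$ a combination of ladder composites) does follow the broad strategy of~\cite{ckm} and is sound as far as it goes.

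The genuine gap is exactly the step you flag and then do not supply: faithfulness. Your plan is a dimension count, which requires exhibiting, from the listed web relations alone, a spanning set of each web hom-space whose cardinality equals $\dim\mathrm{Hom}_{\Uun}$. For $n=2,3$ this is the classical crossingless-matching/non-elliptic basis, but for $n>3$ no such elementary spanning-set argument is available -- this paper itself stresses (Remark~\ref{rem-closedeva}) that the square-switch relation~\ref{eq-square2} prevents even a straightforward evaluation of closed webs from the relations, and the $F$-generated normal forms of Lemma~\ref{lem-webasF} only rewrite individual diagrams; they give no upper bound on the dimension of morphism spaces modulo relations, which is precisely what is at stake (a priori $\Psi$ could have kernel). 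The actual proof in~\cite{ckm} avoids counting altogether: after fullness, faithfulness is obtained by identifying the kernel of $\dot{\Uu}_q(\mathfrak{gl}_m)\to\mathrm{End}(\Lambda^N_q(\bC^m\otimes\bC^n))$ as the two-sided ideal generated by the idempotents $1_{\vec{k}}$ with some entry outside $\{0,\dots,n\}$ (a nontrivial statement proved with canonical-basis/based-module techniques), and then observing that these idempotents are already zero in $\spid{n}$, so the induced map on hom-spaces is injective. So your outline is correct up to fullness, but the faithfulness step as proposed would not go through for general $n$ without replacing the counting argument by something like this kernel identification.
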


One can actually upgrade Theorem \ref{thm-pifunctor} into 
an equivalence of braided categories, with $\Rep(\Uun)$ being braided by the $R$-matrix.

\subsubsection{Ladder moves and \texorpdfstring{$q$}{q}-skew Howe duality}\label{subsub-qhowe}

Adjoin an idempotent $1_{\vec{k}}$ for ${\Uu}_q(\mathfrak{gl}_m)$
for each $\vec{k}\in\bZ^{m}$ and add the relations
\begin{gather*}
1_{\vec{k}}1_{\vec{l}}=\delta_{\vec{k},\vec{l}}1_{\vec{k}},   
\quad
E_{i}1_{\vec{k}}=1_{\vec{k}+\alpha_i}E_{i},
\quad
F_{i}1_{\vec{k}}=1_{\vec{k}-\alpha_i}F_{i},
\quad
K_i1_{\vec{k}}=q^{\vec{k}_i}1_{\vec{k}}.
\end{gather*}

Following \cite{blm} we define:

\begin{defn} 
The idempotented quantum general linear algebra is defined by 
\[
\dot{\Uu}_q(\mathfrak{gl}_m)=\bigoplus_{\vec{k},\vec{l}\in\bZ^{m}}1_{\vec{k}}{\Uu}_q(\mathfrak{gl}_m)1_{\vec{l}}.
\]
\end{defn}

The morphisms of the algebra (or $1$-category) are generated for $i=1,\dots,m-1$ by the \textit{divided powers}
\[
E^{(j)}_i=\frac{E^j_i}{[j]!}\;\;\text{ and }\;\;F^{(j)}_i=\frac{E^j_i}{[j]!}.
\]
(Over $\mathbb{C}(q)$ the usual 
powers of $E_i$ and $F_i$ are sufficient and being generated by divided powers or usual powers is the same. But since we in principle could work integrally we prefer the above definition.)

We now briefly recall the $q$-skew Howe duality from \cite{ckm}.
To define $q$-skew Howe duality on the level of $\mathfrak{gl}_n$-webs with $m$ boundary points we restrict to certain weights $\vec{k}$ that we call \textit{$n$-bounded}. These weights have only entries $0\leq k_i\leq n$. Denote by a superscript $n$ the subalgebras with only these weights.

\begin{prop}\label{prop-qhowe}(\textbf{Pictorial $q$-skew Howe duality} - \cite[Section 5]{ckm})
The functor
\[
\gamma_{m,n}\colon 
\dot{{\Uu}}^n_q(\mathfrak{gl}_m)\to\spid{n}
\]
determined on morphisms by
\begin{align*}
1_{\vec{k}}\mapsto
\scalebox{.7}{$\xy
(-20,0)*{\includegraphics[scale=.75]{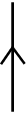}};
(-22,-5)*{\scriptstyle k_1};
(-10,0)*{\includegraphics[scale=.75]{figs/slnwebs/pointup.eps}};
(-12,-5)*{\scriptstyle k_2};
(0,0)*{\dots};
(10,0)*{\includegraphics[scale=.75]{figs/slnwebs/pointup.eps}};
(6.5,-5)*{\scriptstyle k_{m-1}};
(20,0)*{\includegraphics[scale=.75]{figs/slnwebs/pointup.eps}};
(18,-5)*{\scriptstyle k_m};
\endxy$}\quad
E_i1_{\vec{k}},F_i1_{\vec{k}}\mapsto
\scalebox{.7}{$\xy
(-45,0)*{\includegraphics[scale=.75]{figs/slnwebs/pointup.eps}};
(-47,-5)*{\scriptstyle k_1};
(-35,0)*{\dots};
(-25,0)*{\includegraphics[scale=.75]{figs/slnwebs/pointup.eps}};
(-28.5,-5)*{\scriptstyle k_{i-1}};
(0,0)*{\includegraphics[scale=.75]{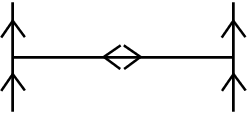}};
(-16.5,-5)*{\scriptstyle k_i};
(-17.75,5)*{\scriptstyle k_i \pm 1};
(2,2)*{\scriptstyle 1};
(8,5)*{\scriptstyle k_{i+1}\mp 1};
(10,-5)*{\scriptstyle k_{i+1}};
(25,0)*{\includegraphics[scale=.75]{figs/slnwebs/pointup.eps}};
(21.5,-5)*{\scriptstyle k_{i+2}};
(35,0)*{\dots};
(45,0)*{\includegraphics[scale=.75]{figs/slnwebs/pointup.eps}};
(43,-5)*{\scriptstyle k_m};
\endxy$}
\end{align*}
where the orientation of the arrow in the middle of the ladder is to the left for $E$ and to the right for $F$, is well-defined, pivotal and full. This defines an $\Uu_q(\mathfrak{gl}_m)$-action on the $\mathfrak{gl}_n$-spider.
\end{prop}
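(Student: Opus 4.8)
The statement is due to Cautis, Kamnitzer and Morrison (Section~5 of~\cite{ckm}), and the plan is to carry out their argument in the present notation, so I only sketch it. The main point is \textbf{well-definedness}, i.e.\ that the assignment on the generators $1_{\vec k}$, $E_i1_{\vec k}$, $F_i1_{\vec k}$ and the divided powers $E_i^{(j)}1_{\vec k}$, $F_i^{(j)}1_{\vec k}$ respects every defining relation of $\dot{\Uu}^n_q(\mathfrak{sl}_m)$. Several of these are automatic. The idempotent relations $1_{\vec k}1_{\vec l}=\delta_{\vec k,\vec l}1_{\vec k}$ and $E_i1_{\vec k}=1_{\vec k+\overline{\alpha}_i}E_i$, $F_i1_{\vec k}=1_{\vec k-\overline{\alpha}_i}F_i$ hold because a ladder rung literally records the labels $k_j$ on the vertical strands and changes two adjacent labels by $\pm1$; the relation $K_iK_{i+1}^{-1}1_{\vec k}=q^{k_i}1_{\vec k}$ is vacuous in the idempotent form. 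The far-commutation relations $E_iE_j1_{\vec k}=E_jE_i1_{\vec k}$ and the mixed $E_iF_j$, $F_iF_j$ versions for $|i-j|>1$ are clear, since then the two rungs act on disjoint strands and the diagrams slide past one another. This leaves three families of relations, to be checked by local computations with the spider relations of Definition~\ref{def-spid}.

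First, the $\mathfrak{sl}_2$-commutator $E_iF_j1_{\vec k}-F_jE_i1_{\vec k}=\delta_{i,j}[k_i-k_{i+1}]1_{\vec k}$, the idempotent form of the relation in Definition~\ref{def-serre}: for $|i-j|=1$ one slides the two rungs through each other using the square relations~\ref{eq-square1}--\ref{eq-square2}; for $i=j$ the two composites $\gamma_{m,n}(E_iF_i1_{\vec k})$ and $\gamma_{m,n}(F_iE_i1_{\vec k})$ are ``square'' webs on the strands $i,i+1$, and resolving them via the square-switch~\ref{eq-square2} (equivalently, via the digon removals~\ref{eq-digon1} and~\ref{eq-digon2}) expresses their difference as the straight identity ladder with a coefficient that the quantum-integer bookkeeping from the weights $k_i,k_{i+1}$ collapses to $[k_i-k_{i+1}]$. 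Second, the divided-power relations $E_i^{(a)}E_i^{(b)}1_{\vec k}=\qbin{a+b}{a}E_i^{(a+b)}1_{\vec k}$ (and the $F$-analogue): stacking rungs of thicknesses $a$ and $b$ produces a digon, and the first digon removal~\ref{eq-digon1} gives precisely $\qbin{a+b}{a}$ times the single rung of thickness $a+b$. Third, the Serre relations $E_i^2E_{i\pm1}-[2]E_iE_{i\pm1}E_i+E_{i\pm1}E_i^2=0$ and their $F$-versions: these only involve three consecutive strands, where the webs are built from splits and merges, and the identity follows from associativity/coassociativity of split and merge (the tripod relation~\ref{eq-tripod}) together with the square-switch~\ref{eq-square2}, exactly as in~\cite{ckm}.

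\textbf{Pivotality} is the compatibility of $\gamma_{m,n}$ with the duality data on both sides: the cup and cap of $\spid{n}$ are the $a+b=n$ specializations of split and merge, and these are precisely the images under $\gamma_{m,n}$ of rungs attached at the weights $k_j=n$ and $k_j=0$ (the ``leash'' and the empty strand), while the biadjunction of $E_i$ and $F_i$ corresponds to isotoping a rung across such a cup or cap, which is one of the ``isotopy relations'' imposed in $\spid{n}$; the tag signs $(-1)^{a(n-a)}$ are bookkept as in Theorem~\ref{thm-pifunctor}. \textbf{Fullness} asserts that every $\mathfrak{sl}_n$-web between upward-oriented objects $\vec k,\vec l$ with entries in $\{0,\dots,n\}$ is a composite of ladder generators: using that tags are isomorphisms (Remark~\ref{rem-spider}(a)) one reduces to tagless, upward webs, and then, viewing such a web from bottom to top as a Morse function, every elementary event is a split or merge of two adjacent edges which can be deformed into rung shape using the spider relations, yielding a factorization into $E_i^{(j)}$'s and $F_i^{(j)}$'s. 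Finally, composing $\gamma_{m,n}$ with the equivalence $\Psi\colon\spid{n}\xrightarrow{\sim}\Rep(\Uun)$ of Theorem~\ref{thm-pifunctor} identifies it with the genuine $\Uu_q(\mathfrak{sl}_m)$-action on $\bigoplus_{\vec k}\Lambda^{\vec k}\bC^n\cong\Lambda^\bullet(\bC^m\otimes\bC^n)$ coming from ordinary $(\mathfrak{sl}_m,\mathfrak{sl}_n)$-skew Howe duality; this both confirms the computations above and is the asserted action on the spider.

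\textbf{Main obstacle.} The relation checks for well-definedness are entirely local and routine once the spider relations~\ref{eq-digon1}--\ref{eq-square2} are available. The genuinely delicate part is fullness: proving that ladders generate all webs needs a careful normal-form (deformation-retraction) argument, and it is exactly there that the $n$-boundedness of the weights and having enough strands $m$ are used, so I would invoke the combinatorial argument of~\cite{ckm} rather than redo it. A secondary subtlety is tracking the tag signs $(-1)^{a(n-a)}$ so that pivotality holds on the nose.
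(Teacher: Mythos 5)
Your proposal is correct and matches the paper's treatment: the paper gives no independent proof of Proposition~\ref{prop-qhowe} but simply attributes it to Section~5 of~\cite{ckm}, and your sketch is precisely the Cautis--Kamnitzer--Morrison argument (relation checks via the spider relations~\ref{eq-digon1}--\ref{eq-square2}, pivotality, and the ladder normal-form argument for fullness), with the genuinely hard step deferred to~\cite{ckm} exactly as the paper does.
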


We note that the image of the divided powers is easy to write down, i.e. for $E_i^{(j)}$ and $F_i^{(j)}$ the middle arrow will have a label $j$ and the two shifts at the top will also be by $j$ instead of $1$.

\begin{rem}\label{rem-leash}
In order to work with the ladders in a pictorial convenient way we have to use the following convention, which we call \textit{leash-convention}.
\begin{itemize}
\item Edges labeled $0$ are not pictured.
\item Edges labeled are pictured using \textit{dotted leashes} that we tend to picture as Bordeaux colored edges. We do not 
illustrate orientation for leashes.
\end{itemize}
This has the advantage that ladders corresponding to $F$ (the ones we mostly use) will always point upwards. An example with $n=5$ is the following.
\[
\scalebox{.7}{$\xy(0,0)*{\includegraphics[scale=.75]{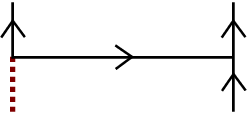}};(-12,-4)*{\scriptstyle 5};(16.5,-4)*{\scriptstyle 2};(1,2)*{\scriptstyle 1};(-12,4)*{\scriptstyle 4};(17,4)*{\scriptstyle 3}\endxy$}.
\]
\end{rem}

\subsubsection{The \texorpdfstring{$\mathfrak{gl}_n$}{gln}-web space}\label{subsub-webspace}
Now we are going to define the \textit{$\mathfrak{gl}_n$-web space} and afterwards in Section \ref{subsub-flows} the \textit{$\mathfrak{gl}_n$-flow lines} in the spirit of \cite{kk}. We only use $n$-bounded $\vec{k}$, i.e. $k_i\in\{0,\dots,n\}$, and we tend to omit the ``$n$-bounded'' from our notation. Moreover, we write $(n^{\ell})=(n,\dots,n,0,\dots,0)\in\Lambda(m,n\ell)_n$.

\begin{defn}(\textbf{The $\mathfrak{gl}_n$-web space})\label{defn-slnwebspace}
Given a fixed $\vec{k}\in\Lambda(m,n\ell)_n$ for some $\ell\in\bN$, the \textit{$\mathfrak{gl}_n$-web space for $\vec{k}$}, denoted by $W_n(\vec{k})$, is defined by
\[
W_n(\vec{k})=\Mor_{\spidn{n}{n}}((n^{\ell}),\vec{k})\cong\mathrm{Inv}_{\dot{\Uu}_q(\mathfrak{gl}_n)}(\Lambda^{\vec{k}}\bC^n).
\]
The \textit{$\mathfrak{gl}_n$-web space} $W_n(\Lambda)$ ($\Lambda$ denotes $n$-times the $\ell$-th fundamental $\mathfrak{gl}_m$-weight) is defined by
\[
W_n(\Lambda)=\bigoplus_{\vec{k}\in \Lambda(m,n\ell)_n}W_n(\vec{k})=\bigoplus_{\vec{k}\in \Lambda(m,n\ell)_n}\Mor_{\spidn{n}{n}}((n^{\ell}),\vec{k}).
\]
\end{defn}

Note that $q$-skew Howe duality gives $W_n(\Lambda)$ the structure of the irreducible $\dot{\Uu}_q(\mathfrak{gl}_m)$-module 
of highest weight $\Lambda$ (see \cite[Corollary 4.10]{my}).

Boundaries of $\mathfrak{gl}_n$-webs consist of univalent vertices (the end points of oriented edges), which we will usually put on a horizontal line (or various horizontal lines), called the \textit{cut line}, and that we usually picture by a dotted line, e.g. such a $\mathfrak{gl}_n$-web is shown below for $n=4$.
\[
\scalebox{.7}{$\xy
(0,0)*{\includegraphics[scale=.75]{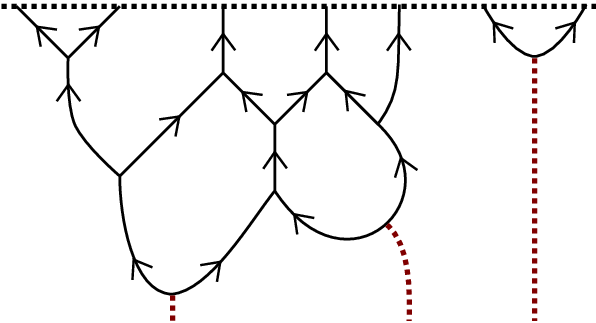}};
(-36,22)*{1};
(-22.5,22)*{1};
(-9.5,22)*{2};
(3.5,22)*{3};
(13,22)*{1};
(23.5,22)*{2};
(37,22)*{2};
(-35,16)*{\scriptstyle 1};
(-23.5,16)*{\scriptstyle 1};
(-26.5,8)*{\scriptstyle 2};
(-14,4)*{\scriptstyle 1};
(-6,10)*{\scriptstyle 1};
(0,10)*{\scriptstyle 2};
(8,10)*{\scriptstyle 1};
(-7.5,16)*{\scriptstyle 2};
(5.25,16)*{\scriptstyle 3};
(14.75,16)*{\scriptstyle 1};
(23.5,16)*{\scriptstyle 2};
(36.5,16)*{\scriptstyle 2};
(15.5,0)*{\scriptstyle 2};
(3,-7.5)*{\scriptstyle 2};
(-17.75,-13.5)*{\scriptstyle 3};
(-8.5,-13.5)*{\scriptstyle 1};
(-1,0)*{\scriptstyle 3};
\endxy$}.
\]
In this way, the boundary of a $\mathfrak{gl}_n$-web can be identified with a $\vec{k}$ as above. The $\mathfrak{gl}_n$-webs without boundary (that is $k_i\in\{0,n\}$) are called \textit{closed} $\mathfrak{gl}_n$-webs. 

Important convention: we tend to think in pictures and, by abuse of notation, sometimes call \textit{only} the $\bC(q)$-linear generators of $\fspidnn{n}{n}$ (i.e. no formal $\bC(q)$-sums, but all possible pictures) $\mathfrak{gl}_n$-webs. Of course, by linearity, these suffice for our purposes. 

Moreover, we will write $v^*$ to denote the $\mathfrak{gl}_n$-web obtained by reflecting a given $\mathfrak{gl}_n$-web $v$ horizontally and reversing all orientations but keeping the labels fixed. By $v^*u$ we shall mean the closed $\mathfrak{gl}_n$-web obtained by gluing $v^*$ on top of $u$, whenever such a construction is possible. That is, whenever the number of strands, the labels and the orientation match at the cut line.
\begin{align}\label{eq-dual}
\scalebox{.7}{$\xy
(0,0)*{\includegraphics[scale=0.4]{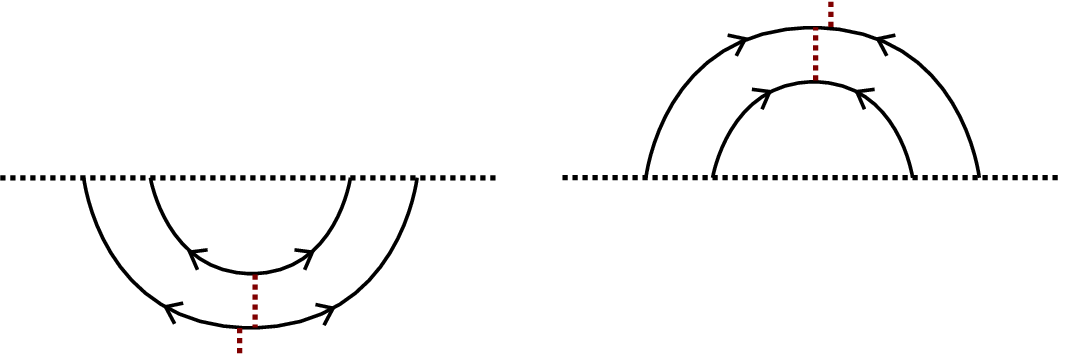}};
(-18.5,2)*{v};
(20,-2)*{v^*};
\endxy$}
\quad\rightsquigarrow\quad
\scalebox{.7}{$\raisebox{-0.0ex}{\xy
(0,0)*{\includegraphics[scale=0.4]{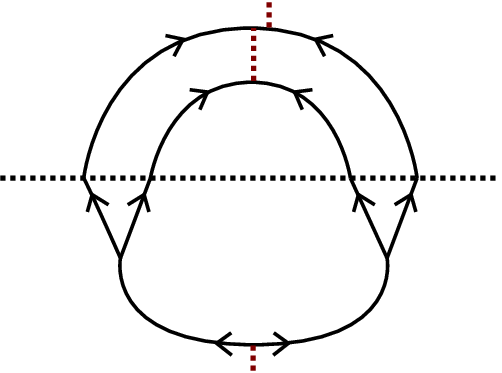}};
(12.5,-5)*{u};
(13,5)*{v^*};
\endxy}$}.
\end{align}

\begin{defn}\label{defn-kupform}(\textbf{Kuperberg form})
Given $u,v\in W_n(\Lambda)$ we define the \textit{Kuperberg form}
\[
\langle\cdot,\cdot\rangle_{\mathrm{Kup}}\colon W_n(\Lambda)\times W_n(\Lambda)\to \bC(q),
\quad
\langle u,v\rangle_{\mathrm{Kup}}=q^{d(\vec{k})}\mathrm{ev}(v^*u),
\]
where the evaluation map $\mathrm{ev}(\cdot)\colon\mathrm{End}_{\dot{\Uu}_q(\mathfrak{gl}_m)}(n^{\ell})\to\bC(q)$ is obtained as follows. First, interpret the closed $\mathfrak{gl}_n$-web $v^*u$ using Theorem \ref{thm-pifunctor} as an intertwiner with normalization factor $d(\vec{k})$ given by
\begin{equation}\label{eq-shift}
d(\vec{k})=\frac{1}{2}\left(n(n-1)\ell-\sum_{i=1}^mk_i(k_i-1)\right).
\end{equation}
Then extend this definition such that $\langle\cdot,\cdot\rangle_{\mathrm{Kup}}$ becomes 
$q$-antilinear in the first and $q$-linear in the second entry.
\end{defn}

\begin{prop}(\cite[Corollary 4.10]{my})\label{prop-kupshap}
The Kuperberg form on $W_{\Lambda}$ is, under $q$-skew Howe duality from Proposition \ref{prop-qhowe}, exactly the $q$-Shapovalov form $\langle\cdot,\cdot\rangle_{\mathrm{Shap}}$.\qed
\end{prop}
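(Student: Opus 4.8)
This is~\cite{my} Corollary 4.10; I outline the argument I would give. The plan is to invoke the universal characterization of the contravariant form: there is a \emph{unique} $q$-sesquilinear form on the irreducible $\dot{\Uu}_q(\mathfrak{sl}_m)$-module $V(\Lambda)$ of highest weight $\Lambda$ that takes the value $1$ on a fixed highest weight vector $v_h$ and is \emph{contravariant}, in the sense that $\langle F_i x,y\rangle=\langle x,E_i y\rangle$ together with the $q$-twisted versions for the divided powers $F_i^{(j)},E_i^{(j)}$ dictated by the coproduct; this is precisely $\langle\cdot,\cdot\rangle_{\mathrm{Shap}}$. By Definition~\ref{defn-slnwebspace}, $q$-skew Howe duality identifies $W_n(\Lambda)$ with $V(\Lambda)$, with $v_h$ corresponding to the constant web on the object $(n^{\ell})$. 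So it suffices to check that $\langle\cdot,\cdot\rangle_{\mathrm{Kup}}$ is unit-normalized on $v_h$ and contravariant; uniqueness then forces $\langle\cdot,\cdot\rangle_{\mathrm{Kup}}=\langle\cdot,\cdot\rangle_{\mathrm{Shap}}$, and fullness of $\gamma_{m,n}$ (Proposition~\ref{prop-qhowe}) guarantees the webs $v^{*}u$ span $\mathrm{End}_{\dot{\Uu}_q(\mathfrak{sl}_m)}(n^{\ell})$, so equality on these webs is equality everywhere.

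First I would settle the normalization. The closed web $v_h^{*}v_h$ is a disjoint union of $\ell$ closed $n$-labelled leashes; evaluating it through $\Psi$ (Theorem~\ref{thm-pifunctor}), e.g.\ by repeated digon removals~\ref{eq-digon1}, produces a product of quantum binomials whose net effect is a single power of $q$. The shift $d((n^{\ell}))=\frac{1}{2}n(n-1)\ell$ of Equation~\ref{eq-shift} is designed so as to cancel exactly this power, hence $\langle v_h,v_h\rangle_{\mathrm{Kup}}=1$.

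Next I would check contravariance. Under $\gamma_{m,n}$ the operator $F_i 1_{\vec{k}}$, resp.\ $E_i 1_{\vec{k}}$, is the stacking on top of a single ladder rung with a rightward, resp.\ leftward, middle arrow, and $u\mapsto u^{*}$ (horizontal reflection with all orientations reversed) exchanges these two rungs, since the pivotal functor $\Psi$ intertwines horizontal reflection with passage to the adjoint intertwiner. Thus $\mathrm{ev}\big((F_i 1_{\vec{k}}u)^{*}v\big)=\mathrm{ev}\big(u^{*}E_i 1_{\vec{k}-\overline{\alpha}_i}v\big)$ for the \emph{unnormalized} evaluations, the weight shifting from $\vec{k}$ to $\vec{k}-\overline{\alpha}_i$; reinstating the normalization factors changes this by $q^{\,d(\vec{k})-d(\vec{k}-\overline{\alpha}_i)}$. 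Together with the prescribed $q$-antilinearity of $\langle\cdot,\cdot\rangle_{\mathrm{Kup}}$ in the first slot, this will be the contravariance identity provided $d(\vec{k})-d(\vec{k}-\overline{\alpha}_i)$ matches the $q$-exponent in the Shapovalov adjointness relation between $F_i 1_{\vec{k}}$ and $E_i 1_{\vec{k}-\overline{\alpha}_i}$ (the one coming from $\Delta(F_i)=F_i\otimes 1+K_i^{-1}\otimes F_i$); the divided-power case then follows by the same reasoning, comparing $\qbin{a}{b}$-coefficients on the two sides.

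The main obstacle is exactly this last $q$-bookkeeping: one must verify, by a short computation with Equation~\ref{eq-shift}, that $d(\vec{k})-d(\vec{k}-\overline{\alpha}_i)$ equals the Shapovalov $q$-exponent for every $n$-bounded weight $\vec{k}$, and that divided powers behave compatibly. This is the only genuinely computational point; everything else is formal, once one has the identification $W_n(\Lambda)\cong V(\Lambda)$ and the pivotal compatibility of $\Psi$ with web reflection.
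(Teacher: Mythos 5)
Your proposal coincides with the paper insofar as the paper gives no argument at all for Proposition~\ref{prop-kupshap}: it is quoted directly from Corollary 4.10 of~\cite{my}, which is exactly the citation you open with. The direct argument you then sketch (uniqueness of the contravariant form on the irreducible highest weight module, plus a check that the Kuperberg form is correctly normalized and contravariant) is the right strategy, but as written it is not yet a proof, for two concrete reasons. First, the normalization step is computed incorrectly: by Equation~\ref{eq-shift} one has $d((n^{\ell}))=\tfrac{1}{2}\bigl(n(n-1)\ell-\ell\, n(n-1)\bigr)=0$, not $\tfrac{1}{2}n(n-1)\ell$, and $v_h^{*}v_h$ is just the identity on $(\Lambda^{n}\bC^{n})^{\otimes\ell}$, i.e.\ on the trivial representation, so $\mathrm{ev}(v_h^{*}v_h)=1$ outright --- there are no quantum binomials and no power of $q$ for the shift to cancel (note also that a closed $a$-labelled circle evaluates to $\qbin{n}{a}$, which is not a monomial in $q$, so the picture ``binomials collapsing to a single $q$-power'' is off). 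The conclusion $\langle v_h,v_h\rangle_{\mathrm{Kup}}=1$ is correct, but not for the reason you give, which suggests the computation was not actually carried out.

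Second, and more seriously, the entire content of the statement is the adjunction bookkeeping that you explicitly defer: one has to show that reflection $u\mapsto u^{*}$, together with the weight-dependent shift $d(\vec{k})$ and the $q$-antilinearity in the first slot, realizes precisely the Shapovalov adjunction between $F_i^{(j)}1_{\vec{k}}$ and its $q$-twisted adjoint (the $K$-factors coming from the coproduct), for every $n$-bounded weight and every divided power. For the simple $F_i$ a short computation with Equation~\ref{eq-shift} gives $d(\vec{k}-\overline{\alpha}_i)-d(\vec{k})=k_i-k_{i+1}-1$ (with $k_i,k_{i+1}$ the relevant $\mathfrak{gl}_m$-entries of $\vec{k}$), and this is the exponent that must be matched against the Shapovalov convention, then propagated to the divided powers; until that is done the argument is an outline rather than a proof. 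If you do not want to carry out this verification, the honest route is the one the paper takes: cite~\cite{my}, Corollary 4.10.
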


(We do not need the $q$-Shapovalov form in this paper and only refer to e.g. the part before \cite[Corollary 4.10]{my} for the definition.)

\subsubsection{Flow lines}\label{subsub-flows}

Given a $\mathfrak{gl}_n$-web $u$, we denote its vertex and edge sets by $V(u)$ and $E(u)$.

\begin{defn}(\textbf{$\mathfrak{gl}_n$-flow lines})\label{defn-slnflow}
Let $u\in W_n(\vec{k})$ be a $\mathfrak{gl}_n$-web. The set of possible \textit{edge colors} is
\[
\mathcal S=\mathfrak{P}(\{n,\dots,1\})=\mathfrak{P}^0(\{n,\dots,1\})\cup\dots\cup\mathfrak{P}^n(\{n,\dots,1\}),
\]
that is we identify the allowed edge colors with the subsets of $\{n,\dots,1\}$ where we order these colors by the number of their elements. We write $S_j\in\mathcal S$ with $S_j=\{s_1,\dots,s_j\}$ if $S_j$ has $j$ elements and its elements are ordered decreasing.

An \textit{$\mathfrak{gl}_n$-flow line $f$ for $u$} is a coloring of the edges of $u$ such that the following is satisfied.
\begin{itemize}
\item If the edge $e\in E(u)$ of $u$ has a label $j$, then the color has to be a subset with $j$ elements.
\item Recall that at each vertex there are either two incoming or outgoing edges. The colors for these two edges $S,S^{\prime}$ have to be disjoint, i.e. $S\cap S^{\prime}=\emptyset$.
\item The unique outgoing or incoming edge $S^{\prime\prime}$ has to satisfy $S^{\prime\prime}=S\cup S^{\prime}$.
\end{itemize}

For each vertex $v$ define the \textit{weight} $\mathrm{wt}^v(u_f)$ to be $\ell(S,S^{\prime})=|\{(i,j)\mid i\in S,j\in S^{\prime}, i<j\}|$ if and only if $S,S^{\prime}$ are the two upper edges and $-\ell(S^{\prime},S)$ if and only if $S,S^{\prime}$ are the two lower edges (in both cases ordered from left to right). Here, and in the following, $u_f$ denotes a $\mathfrak{gl}_n$-web $u$ together with a fixed flow $f$ for the $\mathfrak{gl}_n$-web $u$.

In the dual cases, that is with all arrows reversed, we flip all the sign conventions from above.

\textit{The (total) weight $\mathrm{wt}(u_f)$} is defined to be the sum over all local weights, i.e.
\[
\mathrm{wt}(u_f)=\sum_{v\in V(u)}\mathrm{wt}^v(u_f).
\]

The \textit{state string $\vec{S}_{u_f}$} given by $u_f$ is defined to be the ordered tuple of the colors of $u_f$ that touch the cut line. As we will see in Section \ref{sec-tabwebs}, state strings correspond 
bijectively to $n$-multipartitions, while flows on webs correspond bijectively to $n$-multitableaux.
\end{defn}

\begin{ex}\label{exa-flow}
For example, if $n=4$, $\vec{k}=(1,1,0,2,3,1,2,2)$ and the $\mathfrak{gl}_n$-web $u$ is the one from above, then a $\mathfrak{gl}_n$-flow line for $u$ is for example
\[
\scalebox{.7}{$\xy
(0,0)*{\includegraphics[scale=.75]{figs/slnwebs/exaweb1}};
(-36,16)*{\scriptstyle \{2\}};
(-22.5,16)*{\scriptstyle \{1\}};
(-24.5,8)*{\scriptstyle \{2,1\}};
(-13,4)*{\scriptstyle \{4\}};
(-8.5,5)*{\scriptstyle \{3\}};
(-1.5,10)*{\scriptstyle \{2,1\}};
(8.75,10)*{\scriptstyle \{3\}};
(-4.5,16)*{\scriptstyle \{4,3\}};
(8.25,16.5)*{\scriptstyle \{3,2,1\}};
(15.75,16)*{\scriptstyle \{4\}};
(21.5,16)*{\scriptstyle \{3,1\}};
(38.75,16)*{\scriptstyle \{4,2\}};
(18.75,-1)*{\scriptstyle \{4,3\}};
(5.5,-7.5)*{\scriptstyle \{2,1\}};
(-25.75,-13.5)*{\scriptstyle \{4,2,1\}};
(-6.5,-13.5)*{\scriptstyle \{3\}};
(2.5,0)*{\scriptstyle \{3,2,1\}};
\endxy$}.
\]
Moreover, the weight in this case is $9$.
\end{ex}

Let us denote by $Fl(u)$ the set of all possible flow lines of $u$.

\begin{thm}\label{thm-kkele}
Let $\vec{k}\in\Lambda(m,n\ell)_n$ for some $\ell\in\bN$. Fix a $\mathfrak{gl}_n$-web $u\in W_n(\vec{k})$. Then
\begin{equation}\label{eq-kkele}
u=\sum_{u_f\in Fl(u)}(-q)^{\mathrm{wt}(u_f)}x_{\vec{S}_{u_f}}\;\;\text{ with }x_{\vec{S}_{u_f}}\in\Lambda^{\vec{k}}\bC^n,
\end{equation}
where the pair $(\vec{S}_{u_f},\mathrm{wt}(u_f))$ is the state string and weight of $u_f$ and $x_{\vec{S}_{u_f}}$ is the corresponding elementary tensor.
\end{thm}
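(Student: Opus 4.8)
The plan is to prove the formula by induction on the structure of the $\mathfrak{sl}_n$-web $u$, using the fact (Theorem~\ref{thm-pifunctor}) that $\Psi\colon\spid{n}\to\Rep(\Uun)$ is an equivalence of pivotal categories, so that computing $u$ as an element of $\Lambda^{\vec k}\bC^n$ amounts to tracking the intertwiners $M^{a,b}_s$, $M^{a,b}_m$, $D^a$ through a decomposition of $u$ into generators. Since every $\mathfrak{sl}_n$-web is a composition and juxtaposition of the basic pieces in~\ref{eq-genspider} (split, merge, tag), and since the tags are isomorphisms (Remark~\ref{rem-spider}(a)) one may reduce, after fixing a presentation of $u$, to the case built from splits and merges only, with the boundary encoding $\vec k$. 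I would set up the induction so that the inductive hypothesis is exactly~\ref{eq-kkele}: for the sub-web obtained by removing the topmost vertex, $u'$ is the signed sum $\sum_{u'_f}(-q)^{\mathrm{wt}(u'_f)}x_{\vec S_{u'_f}}$ over its flows.

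First I would record the key computation: if $v$ is a merge vertex with incoming edges colored $S$ (left) and $T$ (right) and outgoing edge $S\cup T$ (requiring $S\cap T=\emptyset$, else the term dies), then by the formula for $M^{a,b}_m$ in Definition~\ref{defn-pifunctor}(b) the contribution is multiplied by $(-q)^{-\ell(T,S)}$, which is precisely $(-q)^{\mathrm{wt}^v}$ with the sign convention in Definition~\ref{defn-slnflow} for two lower edges; dually, for a split vertex with upper edges $S$ (left), $S'$ (right), the formula for $M^{a,b}_s$ in Definition~\ref{defn-pifunctor}(a) produces the sum $\sum_{T\subset S}(-q)^{\ell(S,T)}x_T\otimes x_{S-T}$, i.e. summing over all ways to color the two outgoing edges compatibly, weighted by $(-q)^{\ell(\text{left},\text{right})}=(-q)^{\mathrm{wt}^v}$. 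For the tag $D^a$ one checks the sign $(-q)^{-\ell(S,T)}$ matches the reversed convention stated for dual edges in Definition~\ref{defn-slnflow}. So attaching one more vertex to $u'$ multiplies each term by $(-q)^{\mathrm{wt}^v(u_f)}$ and, at a split, refines the sum over flows of $u'$ into the sum over flows of $u$; this is exactly what is needed to pass from $\sum_{u'_f}(-q)^{\mathrm{wt}(u'_f)}x_{\vec S_{u'_f}}$ to $\sum_{u_f}(-q)^{\mathrm{wt}(u_f)}x_{\vec S_{u_f}}$, since $\mathrm{wt}(u_f)=\mathrm{wt}(u'_f)+\mathrm{wt}^v(u_f)$ by definition and the state string on the cut-line is unaffected by an internal vertex. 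The base case is the empty web (or a web consisting only of through-strands and a cup/cap, $a+b=n$), where both sides are the single elementary tensor with an appropriate sign computed directly from the pairing/copairing, which is the $a+b=n$ special case of merge/split already handled.

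I expect the main obstacle to be well-definedness, i.e. independence of the chosen decomposition of $u$ into generators: a priori the right-hand side of~\ref{eq-kkele} is manifestly intrinsic to $u$ (flows depend only on the web), but my inductive argument processes a specific sequence of vertices, and I must check that the relations of $\spid{n}$ (tag relation~\ref{eq-tag}, isotopy relations~\ref{eq-tripod}, and the removal relations~\ref{eq-digon1}--\ref{eq-square2}) are all respected. By Theorem~\ref{thm-pifunctor} the images under $\Psi$ already satisfy these relations, so the \emph{value} $u\in\Lambda^{\vec k}\bC^n$ is well-defined; the real content is that the \emph{flow-sum} bookkeeping is invariant too. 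The cleanest way is to observe that for a \emph{closed} web this is essentially Khovanov--Kuperberg's argument in the $\mathfrak{sl}_3$ case~\cite{kk} adapted via the explicit Cautis--Kamnitzer--Morrison intertwiners, and that the general boundary case follows from the closed case by the pivotal identity~\ref{eq-pivotal} together with the correspondence between flows on $u$ and flows on its closure; alternatively one checks the square-switch relation~\ref{eq-square2} directly at the level of flow-sums, which is the one genuinely nontrivial combinatorial identity (a quantum Vandermonde-type identity repackaging $\sum_e\qbin{a-b-c+d}{e}$ as a reindexing of flows), and the remaining relations are comparatively routine sign-and-binomial checks. I would present the well-definedness via the closed-web reduction to keep the proof short, remarking that the square-switch check is the place where one genuinely uses the quantum binomial identities underlying~\cite{ckm}.
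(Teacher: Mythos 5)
Your core argument is the paper's own: induction on the number of vertices, matching at each vertex the explicit Cautis--Kamnitzer--Morrison intertwiners $M^{a,b}_s$, $M^{a,b}_m$ (and duals) from Definition~\ref{defn-pifunctor} with the local weight convention of Definition~\ref{defn-slnflow}, and then observing that composing one more vertex multiplies coefficients, so exponents add exactly as the total weight does. Where you diverge is in what you call the ``main obstacle'': independence of the chosen decomposition and compatibility with the spider relations, which you propose to settle by a closed-web reduction or a direct flow-sum check of the square switch~\ref{eq-square2}. This step is superfluous, and the paper rightly omits it. The statement concerns a \emph{fixed} diagram $u$ (recall the paper's convention that ``$\mathfrak{sl}_n$-webs'' here means actual diagrams, i.e. generators of the free spider): the right-hand side of~\ref{eq-kkele} is intrinsic to that diagram, and the left-hand side is $\Psi(u)\in\Lambda^{\vec{k}}\bC^n$, whose value does not depend on how you slice the diagram into layers of generators --- that is just functoriality and the interchange law in a monoidal category, not a property that requires checking the relations of $\spid{n}$. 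In particular the square-switch relation never enters; the fact that isotopic or relation-equivalent diagrams have equal flow-sums is a \emph{consequence} of the theorem applied to each diagram separately, not an ingredient of its proof. So you can delete the closed-web reduction entirely; what genuinely deserves care (and is also where the paper is terse) is the sign bookkeeping identifying the intertwiner exponents $\ell(S,T)$, $-\ell(T,S)$ with $\mathrm{wt}^v$ under the stated left/right and upper/lower conventions, including the base cases with tags and cups/caps.
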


\begin{proof}
This is just the assembling of pieces now. To be more precise, we can use induction on the number of vertices of $u$ where it is easy to check for all small cases $V(u)<2$.

The main observation now is that locally our conventions match the ones given above Definition \ref{defn-pifunctor} for the intertwiners $M^{a,b}_s$ and $M^{a,b}_m$. It is worth noting that the exponents for $M^{a,b}_s$ equal exactly our definition, since for $T\subset S$ we see that $\ell(S,T)=\ell(S-T,T)$ and that our convention how flow lines add around vertices also match exactly with the cases where the intertwiner map to a non-trivial element. Thus, summing over all possibilities is the same as taking all possible flows.
We proceed by induction from a smaller $\mathfrak{gl}_n$-web to a bigger $\mathfrak{gl}_n$-web by adding one vertex. This is the same as composing the intertwiner for the smaller $\mathfrak{gl}_n$-web with one of the maps from above. Note that the coefficients will be multiplied. Hence, their powers add and this happens in the same way as for the total weight.
\end{proof}

\begin{ex}\label{exa-flow2}
In the case of the flow given in Example \ref{exa-flow} we see that the weight is $9$ and the state string is $\vec{S}_{u_f}=(\{2\},\{1\},\emptyset,\{4,3\},\{3,2,1\},\{4\},\{3,1\},\{4,2\})$. Hence, the corresponding elementary tensor is
\[
x_{\vec{S}_{u_f}}=x_2\otimes x_1\otimes 1\otimes (x_4\wedge x_3)\otimes (x_3\wedge x_2 \wedge x_1)\otimes x_4\otimes (x_3\wedge x_1) \otimes (x_4\wedge x_2).
\]
It is an element of $\Lambda^{\vec{k}}\bC^4=\bC^4\otimes \bC^4\otimes \bC\otimes\Lambda^2\bC^4\otimes\Lambda^3\bC^4\otimes\bC^4\otimes\Lambda^2\bC^4\otimes\Lambda^2\bC^4$, since we have $\vec{k}=(1,1,0,2,3,1,2,2)$. The Theorem \ref{thm-kkele} ensures that it appears in the decomposition of $u$ as a sum of elementary tensors at least once with multiplicity $(-q)^{\mathrm{wt}(u_f)}=-q^9$. In order to find the full coefficient for $x_{\vec{S}_{u_f}}$ one has to know all flows with the same state string as $u_f$ and their weights.
\end{ex}
\subsection{KLR algebras, categorification of \texorpdfstring{$\mathfrak{gl}_n$}{gln}-webs and categorified \texorpdfstring{$q$}{q}-skew Howe duality}\label{sec-higherstuff}
\subsubsection{The general linear quantum 2-algebras}\label{subsub-q2alg}

We briefly recall the (diagrammatic) categorification of the idempotented quantum groups
${\mathcal{U}}(\mathfrak{gl}_m)={\mathcal{U}}_Q(\mathfrak{gl}_m)$ in this section, see \cite{kl5} or \cite{rou}. 
We fix the following possible choices in the notation of \cite{cala}: the scalars $Q$ are given by $t_{ij}=-1$ if $j=i+1$, $t_{ij}=1$ otherwise, $r_i=1$ and $s_{ij}^{pq}=0$ (this corresponds to the signed version in \cite{kl5} and \cite{kl4}).  

Note that we work with $\mathfrak{gl}_m$ 
on our Howe dual side and 
all appearing roots and weights are roots and weights of the general linear Lie algebra.

\begin{defn}\label{defn-KL}
The $2$-category $\Ucatm$ is defined as follows.
\begin{itemize}
\item The objects in $\Ucatm$ are the weights $\vec{k}\in\bZ^{m}$.
\end{itemize}
For any pair of objects $\vec{k}$ and $\vec{k}^{\prime}$ in $\Ucatm$, the hom category 
$\Ucatm(\vec{k},\vec{k}^{\prime})$ is the $\bZ$-graded, additive $\bC$-linear category consisting of the following data.
\begin{itemize}
\item Objects (or $1$-morphisms), that is finite formal sums of the form $\mathcal{E}_{\ui}{\idm}_{\vec{k}}\{t\}$ and $\mathcal{F}_{\ui}{\idm}_{\vec{k}}\{t\}$ where $t\in\bZ$ is a grading shift and $\ui$ is string of $i\in\{1,\dots,m-1\}$ such that $\vec{k}^{\prime}=\vec{k}+\sum_{a=1}^{l}\epsilon_ai_{a}^{\prime}$.
\item The spaces of $1$-morphisms (or $2$-morphisms) are the $\bZ$-graded, $\bC$-vector spaces generated by compositions of diagrams shown below. Here $\{t\}$ denotes a degree shift up by $t$ and we use the shorthand notations $\alpha^{ii^{\prime}}=(\alpha_i,\alpha_{i^{\prime}})$ and $\alpha^{\vec{k} i}=2\frac{(\vec{k},\alpha_i)}{(\alpha_i,\alpha_i)}$.
\[
\phi_1=\xy
(0,1)*{\includegraphics[width=09px]{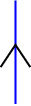}};
(1.5,-4)*{\scriptstyle i};
(3,1)*{\scriptstyle\vec{k}};
(-5,1)*{\scriptstyle\vec{k}+\alpha_i};
\endxy\,\hspace{8mm}
\,
\phi_2=\xy
(0,1)*{\includegraphics[width=09px]{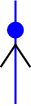}};
(1.5,-4)*{\scriptstyle i};
(3,1)*{\scriptstyle\vec{k}};
(-5,1)*{\scriptstyle\vec{k}+\alpha_i};
\endxy\,\hspace{8mm}
\,\phi_3=\xy
(0,1)*{\includegraphics[width=25px]{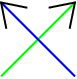}};
(-5,-3)*{\scriptstyle i};
(5.25,-3)*{\scriptstyle i^{\prime}};
(5.5,0)*{\scriptstyle\vec{k}};
\endxy\,\hspace{8mm}\,
\phi_4=\xy
(0,0)*{\includegraphics[width=25px]{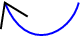}};
(0,-3)*{\scriptstyle i};
(5,0)*{\scriptstyle\vec{k}};
\endxy\hspace{8mm}\,
\phi_5=\xy
(0,0)*{\includegraphics[width=25px]{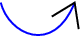}};
(0,-3)*{\scriptstyle i};
(5,0)*{\scriptstyle\vec{k}};
\endxy,
\]
\[
\psi_1=\xy
(0,1)*{\includegraphics[width=09px]{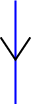}};
(1.5,-4)*{\scriptstyle i};
(3,1)*{\scriptstyle\vec{k}};
(-5,1)*{\scriptstyle\vec{k}-\alpha_i};
\endxy\,\hspace{8mm}
\,
\psi_2=\xy
(0,1)*{\includegraphics[width=09px]{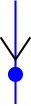}};
(1.5,-4)*{\scriptstyle i};
(3,1)*{\scriptstyle\vec{k}};
(-5,1)*{\scriptstyle\vec{k}-\alpha_i};
\endxy\,\hspace{8mm}
\,\psi_3=\xy
(0,1)*{\includegraphics[width=25px]{figs/higherstuff/downcross}};
(-5,-3)*{\scriptstyle i};
(5.25,-3)*{\scriptstyle i^{\prime}};
(5.5,0)*{\scriptstyle\vec{k}};
\endxy\,\hspace{8mm}\,
\psi_4=\xy
(0,0)*{\includegraphics[width=25px]{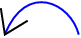}};
(0,-3)*{\scriptstyle i};
(5,0)*{\scriptstyle\vec{k}};
\endxy\hspace{8mm}\,
\psi_5=\xy
(0,0)*{\includegraphics[width=25px]{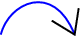}};
(0,-3)*{\scriptstyle i};
(5,0)*{\scriptstyle\vec{k}};
\endxy.
\]
The convention for reading these diagrams is from right to left and bottom to top.
\item The degree can be fixed as follows. First, a $2$-morphism $\alpha\colon X\to Y$ of degree $d$ is of degree 
$d-a+b$ seen as a $2$-morphism $\alpha\colon X\{a\}\to Y\{b\}$. 
Now, the generating $2$-morphisms above are homogeneous of degree $0$ if considered between the following shifts.
$\phi_1=\mathrm{id}_{\mathcal{E}_{i}\onel}$, $\phi_2\colon\mathcal{E}_{i}\onel\Rightarrow\mathcal{E}_{i}\onel\{\alpha^{ii}\}$, $\phi_3\colon\mathcal{E}_{i}\mathcal{E}_{i^{\prime}}\onel\Rightarrow\mathcal{E}_{i^{\prime}}\mathcal{E}_{i}\onel\{\alpha^{ii^{\prime}}\}$ and $\phi_4\colon\onel\{\frac{1}{2}\alpha^{ii}+\alpha^{\vec{k} i}\}\Rightarrow\mathcal{E}_{i}\mathcal{F}_{i}\onel$ and $\phi_5\colon\onel\{\frac{1}{2}\alpha^{ii}-\alpha^{\vec{k} i}\}\Rightarrow\mathcal{F}_{i}\mathcal{E}_{i}\onel$, and $\psi_1=\mathrm{id}_{\mathcal{F}_{i}\onel}$, $\psi_2\colon\mathcal{F}_{i}\onel\Rightarrow\mathcal{F}_{i}\onel\{\alpha^{ii}\}$, $\psi_3\colon\mathcal{F}_{i}\mathcal{F}_{i^{\prime}}\onel\Rightarrow\mathcal{F}_{i^{\prime}}\mathcal{F}_{i}\onel\{\alpha^{ii^{\prime}}\}$ and $\psi_4\colon\mathcal{F}_{i}\mathcal{E}_{i}\onel\Rightarrow\onel\{\frac{1}{2}\alpha^{ii}+\alpha^{\vec{k} i}\}$ and $\psi_5\colon\mathcal{E}_{i}\mathcal{F}_{i}\onel\Rightarrow\onel\{\frac{1}{2}\alpha^{ii}-\alpha^{\vec{k} i}\}$.
\item There are relations imposed onto $2$-morphisms, where we take the ones from the signed version in \cite{kl5} and \cite{kl4}. 
(We will not recall here since we do not need them explicitly.)
\end{itemize}
\end{defn}

Recall that, given a $1$-category $\mathcal{C}$, then the objects of the \textit{Karoubi envelope $\KAR(\mathcal{C})$} are pairs $(O,e)$ where $O\in\Ob(\mathcal{C})$ is an object and $e\colon O\to O$ is a projector $e^2=e$. For the case we are interested in, that is the Karoubi envelope of $\Ucatt$, one can define analogs of the \textit{divided powers} $E_i^{(j)}$ and $F_i^{(j)}$ as follows.

Fix a \textit{color} $j\in\bN$ and set $O=\mathcal{F}^j\onel$, where 
$\vec{k}\in\bZ^2$. Define $e_j\colon O\to O$ to be the idempotent obtained by any reduced presentation of the longest braid word on $j$ strands together with a certain, fixed dot placement (see \cite[(2.18)]{klms}). 
Then $\mathcal{F}^{(j)}\onel=(O\{\frac{j(j-1)}{2}\},e_j)$ and one can define $\mathcal{E}^{(j)}\onel$ similarly.

The category $\Ucattd$ can be described by using \textit{thick calculus}, cf. \cite{klms}. 
The (for us) most important $2$-morphisms are then given by (the right face should carry the label $\vec{k}$)
\[
\xy
(0,0)*{\includegraphics[width=09px]{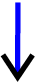}};
(1.75,0)*{\scriptstyle j};
\endxy\colon \mathcal{F}^{(j)}\onel\to \mathcal{F}^{(j)}\onel,\hspace*{0.4cm}
\xy
(0,0)*{\includegraphics[width=25px]{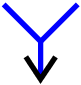}};
(-5,2)*{\scriptstyle j};
(5,2)*{\scriptstyle j^{\prime}};
(5,-2)*{\scriptstyle j+j^{\prime}};
\endxy\colon \mathcal{F}^{(j+j^{\prime})}\onel\to \mathcal{F}^{(j)}\mathcal{F}^{(j^{\prime})}\onel,\hspace*{0.4cm}
\xy
(0,0)*{\includegraphics[width=25px]{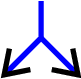}};
(-5,-2)*{\scriptstyle j};
(5.5,-2)*{\scriptstyle j^{\prime}};
(3.5,2)*{\scriptstyle j+j^{\prime}};
\endxy\colon \mathcal{F}^{(j)}\mathcal{F}^{(j^{\prime})}\onel\to \mathcal{F}^{(j+j^{\prime})}\onel
,
\]
called \textit{thick identity}, \textit{split} and \textit{merge}, the latter two being of degree $jj^{\prime}$. The \textit{thick crossing} is then a composite of (first) the merge and (then) the split
\begin{equation}\label{eq-createcrossing}
\xy
(0,1)*{\includegraphics[width=25px]{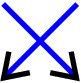}};
(-5,-2)*{\scriptstyle j};
(5.75,-2)*{\scriptstyle j^{\prime}};
\endxy\colon \mathcal{F}^{(j)}\mathcal{F}^{(j^{\prime})}\onel\to \mathcal{F}^{(j^{\prime})}\mathcal{F}^{(j)}\onel=
\xy
(0,0)*{\includegraphics[width=25px]{figs/higherstuff/split}};
(-5,2)*{\scriptstyle j};
(5,2)*{\scriptstyle j^{\prime}};
(5,-2)*{\scriptstyle j+j^{\prime}};
\endxy
\circ
\xy
(0,0)*{\includegraphics[width=25px]{figs/higherstuff/merge}};
(-5,-2)*{\scriptstyle j};
(5.5,-2)*{\scriptstyle j^{\prime}};
(3.5,2)*{\scriptstyle j+j^{\prime}};
\endxy
.
\end{equation}
The $2$-category consisting of these diagrams is denoted by $\Ucattc$, which can then be extended to a graphical calculus for $\Ucattd$ by introducing generalized versions of the dot $2$-morphisms: for each each $j$-labeled thick strand one allows a symmetric polynomial $p\in\bZ[X_1,\dots,X_j]^{S_j}$ which satisfy certain relations, see \cite{klms}.

We define $\Ucatmc$ to be the full $2$-subcategory of $\Ucatmd$ with the same objects $\vec{k}$, but with $1$-morphisms generated by the divided powers $\mathcal{E}^{(j)}_i\onel$ and $\mathcal{F}^{(j)}_i\onel$ from above for each $i\in\{1,\dots,m-1\}$.

\subsubsection{The cyclotomic KLR algebras}\label{subsub-KLR}

Let $\Lambda$ be a dominant $\mathfrak{gl}_m$-weight, $V_{\Lambda}$ the irreducible $\Um$-module of highest 
weight $\Lambda$ and $P_{\Lambda}$ the set of weights in $V_{\Lambda}$.

\begin{defn}\label{defn-cyclKLR}
The \textit{cyclotomic KLR (Khovanov--Lauda, Rouquier) algebra} $R_{\Lambda}$ is defined as the $2$-subquotient of $\Ucatm$ consisting of all diagrams with only downward oriented strands and rightmost region labeled $\Lambda$ modded out by the $2$-ideal generated by all diagrams of the form
\begin{align}\label{eq-cyclorel}
\xy
(0,0)*{\includegraphics[width=75px]{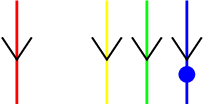}};
(11.5,-8.5)*{\scriptstyle i_1};
(6,-8.5)*{\scriptstyle i_2};
(0.5,-8.5)*{\scriptstyle i_3};
(-5,0)*{\dots};
(-11,-8.5)*{\scriptstyle i_p};
(17,-4.75)*{\scriptstyle \Lambda_{i_1}\text{-dots}};
(14,0)*{\scriptstyle \Lambda};
\endxy
,
\end{align}
where $i_k\in\{1,\dots,m-1\}$ and $p\in\bN$.
The relation \eqref{eq-cyclorel} is known as the \textit{cyclotomic relation}.
\end{defn}

Note that 
$R_{\Lambda}=\bigoplus_{\vec{k}\in P_{\Lambda}} R_{\Lambda}(\vec{k})$,
where $R_{\Lambda}(\vec{k})$ is the subalgebra generated by all diagrams 
whose left-most region is labeled $\vec{k}$. The algebra $R_{\Lambda}$ is finite dimensional, see \cite{bk1}.

If we draw pictures for the cyclotomic KLR algebra, then we do not need orientations anymore, that is pictures will look like
\[
\xy(0,0)*{\includegraphics[scale=.75]{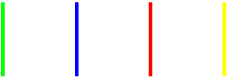}}\endxy\quad\text{or}\quad\xy(0,0)*{\includegraphics[scale=.75]{figs/higherstuff/HM-strings1}}\endxy
\]

In \cite{hm} Hu and Mathas defined a graded cellular basis of the cyclotomic KLR algebra $R_{\Lambda}$. We do not recall their definition here, since it is not short and we give an alternative definition in our language later. The reader is encouraged to take a look at their great paper. We call their basis \textit{HM basis}. We only mention that their basis (in the form we need it) is parameterized by $\vec{\lambda}\in\Lambda^+(c,c(\vec{k}),c^{\prime})$, i.e. all $c^{\prime}$-multipartitions of $c(\vec{k})$ for all suitable $c,c^{\prime}$, and $\vec{T},\vec{T}^{\prime}\in\mathrm{Std}(\vec{\lambda})$, i.e. standard $c^{\prime}$-multitableaux. They denote their basis by
\begin{align}\label{defn-hmbasis}
\{\psi^{\vec{\lambda}}_{\vec{T}^{\prime},\vec{T}}\mid \vec{\lambda}\in \mathfrak{P}_{c(\vec{k})}\text{ and }\vec{T},\vec{T}^{\prime}\in\mathrm{Std}(\vec{\lambda})\},
\end{align}
where $\mathfrak{P}_{c(\vec{k})}$ is the set of all multipartitions of $c(\vec{k})$. Moreover, the basis is homogeneous with degree
\[
\mathrm{deg}_{\mathrm{BKW}}(\psi^{\vec{\lambda}}_{\vec{T}^{\prime},\vec{T}})=\mathrm{deg}_{\mathrm{BKW}}(\vec{T})+\mathrm{deg}_{\mathrm{BKW}}(\vec{T}^{\prime}).
\]

To make the connection to webs: we fix $c^{\prime}=n$, and $c(\vec{k})$ is a constant that only depends on the weight $\vec{k}$. It could be written in an explicit formula as the author has done in \cite{tub3} for $\mathfrak{sl}_3$, but we do not do it here since we do not use the formula and it is rather cumbersome. We only note that it just counts the number of $F$ one has to apply (as an $\dot{\Uu}_q(\mathfrak{gl}_m)$-action) to go from $(n^{\ell})$ to $\vec{k}$. And the constant $c=c(\vec{S})$ depends only on the $\mathfrak{gl}_n$-flows at the cut line (and can be also written down explicitly, but we do not need any explicit formula). To summarize, we have two fixed numbers $n$ and $c(\vec{k})$ and consider the set of all $n$-multipartitions of $c(\vec{k})$.

\begin{defn}(\textbf{Thick cyclotomic KLR})\label{defn-thickcyclKLR}
The \textit{thick cyclotomic KLR algebra}, denoted by $\check{R}_{\Lambda}$, is the $2$-subquotient of $\Ucatmc$ defined by the $2$-subcategory of all diagrams with only downward oriented strands and rightmost region labeled $\Lambda$ and modded out by the cyclotomic relation \eqref{eq-cyclorel}.
\end{defn}

We will define a HM basis for $\check{R}_{\Lambda}$ later on.

\subsubsection{Matrix factorizations and categorification of $\mathfrak{gl}_n$-webs}\label{subsub-mfs}

Our main sources are \cite{my} and \cite{mack1} where the reader can find much more details. We keep our notation close to theirs (e.g. we suppress the shifts in homology degree) and the corresponding algebraic definitions can be found therein.

All the reader needs to know about matrix factorizations on the level of $\mathfrak{gl}_n$-webs is that a $\mathfrak{gl}_n$-web $u$ can be seen as a matrix factorization denoted by $\widehat{u}$. Such matrix factorizations are $(\bZ/2\bZ,\bZ)$-graded where the latter degree is called the \textit{$q$-grading}. Shifting in the first grading is indicated by $\langle\cdot\rangle$ and shifts in the $q$-grading by $\{\cdot\}$. For example, there is a \textit{dual} matrix factorization $\widehat{u}_{\bullet}$ and one can check that 
$\widehat{u}_{\bullet}\cong \widehat{u^*}\langle 1\rangle\{d(\vec{k})\}$ for $u\in W_n(\vec{k})$. (Note that taking duals in this context does not invert arrows on webs, but is rather the operation from \eqref{eq-dual}.)

Very important for us in the following are the ones that correspond to an $E^{(j)}_i$ or to an $F^{(j)}_i$. Both of them are indecomposable. We denote them by $\widehat{E}^{(j)}_{(k_i,k_{i+1})}$ and $\widehat{F}^{(j)}_{(k_i,k_{i+1})}$, respectively. 
Furthermore, we denote the one that corresponds to the identity by $\widehat{1}_{\vec{k}}$.

We freely switch between the notions of $\mathfrak{gl}_n$-webs and their corresponding matrix factorizations (e.g. we tend to write $F^{(j)}_i$ instead of $\widehat{F}^{(j)}_{(k_i,k_{i+1})}$).

In short, on the level of $1$-morphism we usually use the language of $\mathfrak{gl}_n$-webs, but on the level of $2$-morphism we use the language explained below, i.e. using certain $\mathrm{EXT}$-spaces which are isomorphic to certain $\langle\cdot\rangle$-shifted $\mathrm{HOM}$-spaces (modulo null-homotopic maps) between matrix factorizations (see \cite[Proposition 5.6]{my}). Thus, we can loosely call them \textit{homomorphisms of matrix factorizations}.

\subsubsection{The $\mathfrak{gl}_n$-web-algebra}\label{subsub-webalg}

Now we recall the definition of the $\mathfrak{gl}_n$-web algebra $H_n(\vec{k})$ from \cite{mack1}.

\begin{defn}\label{defn-slnwebalgebra}
Choose a fixed monomial basis $B(W_n(\vec{k}))$ of $W_n(\vec{k})$. That is, any basis vector $u\in B(W_n(\vec{k}))$ can be obtained from a fixed highest weight vector using $q$-skew Howe duality. We do not recall the exact definition here and refer to Example \ref{exa-fstring} instead. It should be noted that this includes that any basis vector is one fixed $\mathfrak{gl}_n$-web without any quantum factors.

For any pair $u,v\in B(W_n(\vec{k}))$, define (for $d(\vec{k})$ as in \eqref{eq-shift}) 
\[
{}_vH_n(\vec{k})_u=\mathrm{EXT}(\widehat{u},\widehat{v})\cong 
H(\widehat{v^*u})\{d(\vec{k})\}.
\]
The $\mathfrak{gl}_n$-web algebras $H_n(\vec{k})$ and $H_n(\Lambda)$ are defined by 
\[
H_n(\vec{k})=\bigoplus_{u,v\in B(W_n(\vec{k}))} {}_vH_n(\vec{k})_u\hspace*{0.25cm}\text{ and }\hspace*{0.25cm}H_n(\Lambda)=\bigoplus_{\vec{k}\in\Lambda(m,n\ell)_n}H_n(\vec{k}),
\]
with multiplication induced by the composition of maps between the corresponding matrix 
factorizations.
\end{defn}

It should be noted that $H_n(\vec{k})$ is a $\bZ$-graded, finite dimensional, unital, associative algebra. Moreover, the algebra is a $\bZ$-graded, symmetric Frobenius algebra of Gorenstein parameter $2d(\vec{k})$, that is, $H_n(\vec{k})\{-2d(\vec{k})\}$ is graded isomorphic (as $H_n(\vec{k})$-bimodules) to its graded dual. The trace $\tau$ is given by pairing elements of $H_n(\vec{k})$ with the identity $1=\sum_{u\in W_n(\vec{k})}\mathrm{id}(\widehat{u})$.

\begin{rem}\label{rem-basis}
In \cite{mack1} Mackaay has chosen a certain monomial basis called \textit{LT-basis}. This basis is obtained from a $q$-skew Howe analog of an \textit{intermediate crystal basis} defined by Leclerc--Toffin \cite{leto}. We note that all of Mackaay's constructions that are important for us only depend on the fact that this basis is monomial. In fact, Mackaay's arguments in \cite[Lemma 7.5]{mack1} show that, for all choices of bases, all the possibly different $\mathfrak{gl}_n$-web algebras will be Morita equivalent.
\end{rem}

\subsubsection{Categorified $q$-skew Howe duality}\label{subsub-cathowe}
As a last ingredient we are going to recall now how these constructions can be used to categorify an instance of $q$-skew Howe duality. We should note that this is in fact one of our main ingredients, but since the definition of the $2$-action of $\Ucatm$ on $\dot{\mathcal W}^{\circ}_{\Lambda}\cong \mathcal W^p_{\Lambda}$ (the first is a $2$-category of matrix factorizations and the second is a $2$-category of $H_n(\Lambda)$-representations, see \cite[Definition 7.1]{mack1}) is not short in any sense, we only recall it very briefly, i.e. by an example of the action on $2$-morphisms. The full list can be found in \cite[Section 9]{my}.
The point is that categorified $q$-skew Howe duality also defines a $2$-action of $\Ucatm$ on $\mathcal W^p_{\Lambda}$.

\begin{thm}\label{thm-qhowe}(\textbf{Categorified pictorial $q$-skew Howe duality}, see \cite[Theorem 9.7]{my})
The $2$-functor
\begin{equation}\label{eq-klr}
\Gamma_{m,n\ell,n}\colon\Ucatm\to\mathcal W^p_{\Lambda},
\end{equation}
defined on objects and $1$-morphisms similarly as in Proposition \ref{prop-qhowe} and on $2$-morphisms by the list of cases in \cite[Section 9]{my}, is a well-defined $2$-action of $\Ucatm$ on $\mathcal W^p_{\Lambda}$ giving latter the structure of a strong $\mathfrak{gl}_m$-$2$-representation in the sense of \cite{cala}. This strong $\mathfrak{gl}_m$-$2$-representation induces an additive equivalence of $2$-categories
\begin{equation}\label{eq-klrequi}
\tilde{\Gamma}_{m,n\ell,n}=\tilde{\Gamma}\colon\RPMOD\to\mathcal W^p_{\Lambda},
\end{equation}
i.e. from the category of finite dimensional, $\bZ$-graded, projective $R_{\Lambda}$-modules to $\mathcal W^p_{\Lambda}$.
\end{thm}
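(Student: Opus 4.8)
The statement is essentially Theorem~9.7 of~\cite{my}, so the plan is to reproduce that argument, organised around three points: well-definedness of $\Gamma_{m,n\ell,n}$, the descent to the cyclotomic quotient, and the resulting $2$-equivalence. First I would establish well-definedness. Since $\Ucatm$ is generated as a $2$-category by the dot, crossing, cup and cap $2$-morphisms subject to the signed Khovanov--Lauda relations (with the scalars $Q$ fixed as in Definition~\ref{defn-KL}), it suffices to check that the prescribed images of these generators --- the matrix-factorization homomorphisms listed in Section~9 of~\cite{my} --- satisfy each of these relations. Every such relation is local, involving at most two adjacent strands and hence at most two colours, so the verification reduces to a finite collection of identities between explicit homomorphisms of small matrix factorizations; the sign choices $t_{i,i+1}=-1$ are exactly what is forced by the $(-1)^{a(n-a)}$ appearing on tags in Definition~\ref{defn-pifunctor} and by the Koszul-type signs in the matrix factorization differentials. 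On objects and $1$-morphisms $\Gamma_{m,n\ell,n}$ is just the categorification of $\gamma_{m,n}$ from Proposition~\ref{prop-qhowe}, so biadjointness of $\mathcal E_i$ and $\mathcal F_i$ (with the cups and caps as units and counits) is automatic, and with all relations checked one obtains a genuine $2$-action; that it is \emph{strong} in the sense of~\cite{cala} follows because it is locally finite and the underlying $\dot{\Uu}_q(\mathfrak{sl}_m)$-module $W_n(\Lambda)$ is the integrable highest weight module of highest weight $\Lambda$.

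Next I would show the action descends to the cyclotomic quotient. The point is that $(n^{\ell})\in\Lambda(m,n\ell)_n$ is the highest weight, so $\mathcal E_i\idm_{(n^{\ell})}\simeq 0$ in $\mathcal W^p_{\Lambda}$, and an induction on the number of $F$'s --- using Lemma~\ref{lem-webasF}, which writes every $\mathfrak{sl}_n$-web as a string of $F_i^{(j)}$'s applied to the highest weight vector --- shows that the cyclotomic $2$-morphism~\ref{eq-cyclorel} maps to zero. Hence $\Gamma_{m,n\ell,n}$ factors through the cyclotomic quotient of $\Ucatm$ whose $1$-morphism endomorphism algebra is $R_{\Lambda}$, and one obtains the induced additive $2$-functor $\tilde\Gamma\colon \RPMOD\to\mathcal W^p_{\Lambda}$. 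Essential surjectivity on the $1$-morphism level is then immediate: every object of $\mathcal W^p_{\Lambda}$ is a direct summand of some $\widehat u$, and Lemma~\ref{lem-webasF} presents $u$ as $F_{i_r}^{(j_r)}\cdots F_{i_1}^{(j_1)}v_{(n^{\ell})}$, so $\widehat u$ is the image of the corresponding projective $R_{\Lambda}$-module, working in the Karoubi envelope to accommodate the divided powers; essential surjectivity on objects (the weights $\vec k$) is trivial.

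It remains to show $\tilde\Gamma$ is fully faithful on $2$-morphisms, and this is the main obstacle. One must compare, for projectives $P,P'$ corresponding to webs $u,v$, the graded dimension of $\mathrm{Hom}_{R_{\Lambda}}(P,P')$ --- which by the Hu--Mathas basis~\ref{defn-hmbasis} is a sum of $q^{\mathrm{deg}_{\mathrm{BKW}}(\vec T)+\mathrm{deg}_{\mathrm{BKW}}(\vec T')}$ over pairs of standard $n$-multitableaux of each shape --- with that of $\mathrm{HOM}_{\mathrm{nh}}(\widehat u,\widehat v)\cong H(\widehat{v^*u})\{d(\vec k)\}$. Fullness itself already follows from the fullness statement in Proposition~\ref{prop-qhowe} together with the fact that $R_{\Lambda}$ surjects onto the endomorphism side. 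Faithfulness is the genuinely hard part: both graded dimensions decategorify to the matrix coefficient $\langle u,v\rangle_{\mathrm{Kup}}$ of the $q$-Shapovalov ($=$ Kuperberg, Proposition~\ref{prop-kupshap}) form on $V_{\Lambda}$, and since a well-defined $2$-functor can only \emph{decrease} graded dimensions, equality of these two Euler characteristics forces $\tilde\Gamma$ to be an isomorphism on each graded piece. This last step uses crucially the finite-dimensionality of $R_{\Lambda}$ (Brundan--Kleshchev) and the positivity of the graded dimensions involved, exactly as in the proof of~\cite{my} Theorem~9.7.
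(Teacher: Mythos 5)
You are proving a statement that the paper itself does not prove: Theorem~\ref{thm-qhowe} is imported verbatim from Mackaay--Yonezawa (Theorem 9.7 of~\cite{my}), and the equivalence $\tilde\Gamma$ is there (as the present paper itself recalls in the proof of Lemma~\ref{lem-welldefidem}) obtained from Rouquier's universality theorem for cyclotomic quotients (Proposition 5.6 of~\cite{rou}) applied to the strong $2$-representation, not by a hands-on full-faithfulness check. Your first part --- verifying the signed Khovanov--Lauda/Cautis--Lauda relations on the images of the generating $2$-morphisms --- is indeed the content of Section 9 of~\cite{my}, though note that biadjointness of the images of $\mathcal E_i$ and $\mathcal F_i$ with the prescribed shifts is one of the relations that must be checked, not something that is ``automatic''; and your graded-dimension bookkeeping is in the spirit of what the paper later does in Theorems~\ref{thm-foambasis} and~\ref{thm-iso}.

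However, two steps in your reconstruction do not hold up. First, ``fullness itself already follows from the fullness statement in Proposition~\ref{prop-qhowe}'' is a non sequitur: that proposition concerns the decategorified functor $\gamma_{m,n}$ being full on $1$-morphisms of the spider, and says nothing about the induced maps on $2$-hom spaces, i.e.\ about the cyclotomic KL-R algebra surjecting onto $\mathrm{HOM}_{\mathrm{nh}}(\widehat u,\widehat v)$. That surjectivity is precisely the hard content of the equivalence (in the foam setting it is a substantial theorem of Queffelec--Rose~\cite{qr1}; in~\cite{my} and~\cite{mack1} it is extracted from Rouquier's universality rather than checked directly), and without it your ``equal Euler characteristics force an isomorphism'' argument has nothing to start from, since a linear map between spaces of equal graded dimension need be neither injective nor surjective. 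Second, the descent to the cyclotomic quotient is not the formal induction you sketch: the cyclotomic relation~\ref{eq-cyclorel} is a $2$-morphism condition on dots (the image $\widehat t_{1}^{\;\Lambda_{i_1}}$ must vanish on the relevant matrix factorization), whereas $\mathcal E_i{\idm}_{(n^{\ell})}\simeq 0$ is a statement about $1$-morphisms and weight spaces; passing from the latter to the former is exactly the nontrivial structure theory of cyclotomic quotients (Rouquier, Webster, Kang--Kashiwara), or else requires an explicit computation with the matrix factorizations, and cannot be dispatched by ``induction on the number of $F$'s''. As written, then, the proposal reproduces the easy outer layers of the argument but leaves the two genuinely hard points --- surjectivity on $2$-homs and the cyclotomic descent --- unproved.
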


All the reader needs to know to understand this paper about the list for the $2$-action is that there are certain homomorphisms between matrix factorizations associated to the for us most important pieces
\begin{gather}\label{eq-mf-homs}
\xy
(0,1)*{\includegraphics[width=25px]{figs/higherstuff/downcross}};
(-5,-3)*{\scriptstyle i};
(5,-3)*{\scriptstyle j};
(6.5,1)*{\scriptstyle \vec{k}};
\endxy \mapsto \begin{cases}\widehat{CR}_{ji}\colon \widehat{F}_i\widehat{F}_{i\pm 1}\to \widehat{F}_{i\pm 1}\widehat{F}_i, &\text{if }j= i\pm 1,\\
\widehat{I}_{ii}\widehat{D}_{ii}\colon \widehat{F}_i\widehat{F}_{i}\to \widehat{F}_i\widehat{F}_{i}, &\text{if }i= j,\\
\widehat{s}_{ji}\colon \widehat{F}_i\widehat{F}_{j}\to \widehat{F}_{j}\widehat{F}_i, &\text{if }|i-j|>1,\end{cases}\quad\text{and}\quad
\xy
(0,1)*{\includegraphics[width=09px]{figs/higherstuff/downsimpledot}};
(1.5,-4)*{\scriptstyle i};
(3,1)*{\scriptstyle\vec{k}};
(-5,1)*{\scriptstyle\vec{k}-\bar{\alpha}_i};
\endxy \mapsto \widehat{t}_i\colon \widehat{F}_i\to\widehat{F}_i,
\end{gather}
of $q$-degree $1$, $-2$, $0$ and $2$ respectively. We will not recall the definition of these morphisms of matrix factorizations, see \cite{my} for the definitions, we only need to know their existence and that they satisfy the relations of the categorified general linear quantum group. Let us however briefly sketch how to think about these morphism. For the case $n=2$ these correspond in the familiar \textit{cobordism language} (see for example \cite{lqr1}) to a \textit{saddle}, a \textit{cup} followed by a \textit{cap} and a \textit{shift}. In the $n=3$ case these can also be translated to natural pictures, see for example \cite{lqr1} or \cite{mpt}. Moreover, the homomorphism $\widehat{t}_i$ is of $q$-degree $2$ and can be thought of as placing a dot on the corresponding ladder. To make the notation cumbersome we use sub- and superscripts like $\widehat{F}^{(j)}_{p,i,\vec{k}}$ to indicate the position $p$ (read from right to left in the KLR picture and from bottom to top in the $\mathfrak{gl}_n$-web picture), the (possible divided) power $j$, the residue (or color) $i$ and the weight $\vec{k}$. We sometimes skip some of them and hope that it is clear from the context in those cases.

The $2$-action works roughly as follows. Given one of the $2$-cell generators of $\Ucatm$, one has an object given by the $\vec{k}$ and two $\mathfrak{gl}_n$-webs at the bottom $u_b$ and top $u_t$ by reading from right to left and apply an $E_i$ for each upwards pointing string with label $i$ one passes and an $F_i$ for each downwards pointing string with label $i$. Then assign a certain homomorphism between the matrix factorization $\widehat{u}_b$ and $\widehat{u}_t$ as a $2$-morphism. For example, for $n=3$ and position $p=1$
\[
\psi_3=\xy
(0,1)*{\includegraphics[width=25px]{figs/higherstuff/downcross}};
(-5,-3)*{\scriptstyle 1};
(5,-3)*{\scriptstyle 2};
(8.5,0)*{\scriptstyle (1,2,0)};
\endxy \mapsto \widehat{CR}_{1,21}\colon u_b=F_1F_2v_{(1,2,0)}\to F_2F_1v_{(1,2,0)}=u_t.
\]
In pictures:
\[
\widehat{CR}_{1,21}\colon
\scalebox{.7}{$\xy
(0,0)*{\includegraphics[scale=.75]{figs/higherstuff/braid2bexa.eps}};
(-26,-13)*{1};
(2.5,-13)*{2};
(30,-13)*{0};
(14,-4)*{F_2};
(-26,0)*{1};
(2.5,0)*{1};
(30,0)*{1};
(-14,10)*{F_1};
(-26,13)*{0};
(2.5,13)*{2};
(30,13)*{1};
\endxy$}
\to
\scalebox{.7}{$\xy
(0,0)*{\includegraphics[scale=.75]{figs/higherstuff/braid3a.eps}};
(-26,-13)*{1};
(2.5,-13)*{2};
(30,-13)*{0};
(14,10)*{F_2};
(-26,0)*{0};
(2.5,0)*{3};
(30,0)*{0};
(-14,-4)*{F_1};
(-26,13)*{0};
(2.5,13)*{2};
(30,13)*{1};
\endxy$}.
\]
For the reader familiar with the corresponding 
foamation (see \cite{lqr1}, \cite{mpt} or \cite{qr1}) we note that this is like zipping certain edges away.

\begin{thm}\label{thm-thick}
The $2$-functor $\Gamma_{m,n\ell,n}$ extends to a $2$-functor
\[
\check{\Gamma}_{m,n\ell,n}\colon\Ucatmc\to\mathcal W^p_{\Lambda}.
\]
\end{thm}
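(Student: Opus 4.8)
The plan is to deduce the existence of $\check\Gamma_{m,n\ell,n}$ from the universal property of the Karoubi envelope, feeding in only the $2$-functor $\Gamma_{m,n\ell,n}$ of Theorem~\ref{thm-qhowe}. Recall that, by definition, $\Ucatmc$ is the full $2$-subcategory of $\Ucatmd=\KAR(\Ucatm)$ whose objects are the weights $\vec{k}$ and whose $1$-morphisms are generated by the divided powers $\mathcal E^{(j)}_i\onel=(\mathcal E^j_i\onel\{\tfrac{j(j-1)}{2}\},e_j)$ and $\mathcal F^{(j)}_i\onel=(\mathcal F^j_i\onel\{\tfrac{j(j-1)}{2}\},e_j)$, with $e_j$ the KLMS idempotent. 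Hence it suffices to extend $\Gamma_{m,n\ell,n}$ to all of $\KAR(\Ucatm)$ and then restrict along the inclusion $\Ucatmc\hookrightarrow\KAR(\Ucatm)$; the real content is thus reduced to checking that the target $\mathcal W^p_\Lambda$ is ``Karoubian enough''.

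First I would check that the hom-categories of $\mathcal W^p_\Lambda$ are idempotent complete. This is immediate in the module description of $\mathcal W^p_\Lambda$ (see~\cite{mack1} Definition 7.1), or equivalently in its incarnation as a $2$-category of matrix factorizations modulo homotopy: $2$-morphisms there are honest module maps (respectively homotopy classes of maps), and an idempotent bimodule endomorphism of a finite-dimensional projective $H_n(\Lambda)$-bimodule splits off its image as a direct summand. Equivalently one may invoke the additive equivalence $\tilde\Gamma\colon\RPMOD\xrightarrow{\ \sim\ }\mathcal W^p_\Lambda$ of Theorem~\ref{thm-qhowe} and the fact that categories of finitely generated projective modules are idempotent complete. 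With this in hand, the standard universal property of the Karoubi envelope produces an essentially unique $2$-functor $\widehat\Gamma\colon\KAR(\Ucatm)\to\mathcal W^p_\Lambda$ with $\widehat\Gamma\circ\iota\simeq\Gamma_{m,n\ell,n}$, $\iota$ the canonical inclusion, and we set $\check\Gamma_{m,n\ell,n}$ to be its restriction to $\Ucatmc$.

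Concretely this unwinds as expected. On a divided-power $1$-morphism one gets $\check\Gamma(\mathcal F^{(j)}_i\onel)=\Image\big(\Gamma_{m,n\ell,n}(e_j)\big)$ acting on $\Gamma_{m,n\ell,n}(\mathcal F^j_i\onel)$; by the description of the divided-power matrix factorizations in Section~\ref{subsub-mfs} this image is exactly the indecomposable $\widehat F^{(j)}_{(k_i,k_{i+1})}$, i.e.\ the thickened ladder web already assigned to $F^{(j)}_i$ at the $1$-morphism level in Proposition~\ref{prop-qhowe}. On $2$-morphisms $\check\Gamma$ is then forced: a $2$-morphism in $\Ucatmc$ between $(\mathcal A,e)$ and $(\mathcal B,f)$ is an $\Ucatm$-$2$-morphism $\alpha$ with $f\circ\alpha\circ e=\alpha$, and $\check\Gamma(\alpha)$ is $\Gamma_{m,n\ell,n}(\alpha)$ (co)restricted to the images of $\Gamma_{m,n\ell,n}(e)$ and $\Gamma_{m,n\ell,n}(f)$. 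In particular the thick-calculus generators --- thick identity, split, merge, thick crossing, thick dots --- are sent to the corresponding composites of the homomorphisms of matrix factorizations from the list in Section~9 of~\cite{my} with the relevant idempotent splittings, and the degrees $-jj'$ of split and merge match automatically.

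The main obstacle I expect is bookkeeping rather than conceptual: one must make sure that the abstract Karoubi extension $\widehat\Gamma$ is compatible with \emph{horizontal} composition (the monoidal/ladder structure), so that $\check\Gamma$ is genuinely a $2$-functor and not merely a collection of functors on the individual hom-categories, and that its further restriction to the thick cyclotomic subquotient $\check R_\Lambda\subset\Ucatmc$ behaves as needed for Theorem~\ref{thm-iso} downstream. Both should follow from the essential uniqueness of the extension together with the fact that the divided-power webs split the corresponding web idempotents in $\mathcal W^p_\Lambda$, via the decompositions $\widehat F^j_i\cong\bigoplus[\,\cdots\,]\widehat F^{(j)}_i$ coming from thick calculus~\cite{klms}, so that the abstract ``image of an idempotent'' and the explicit ``thick ladder'' descriptions of $\check\Gamma$ on $1$-morphisms coincide. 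An entirely hands-on alternative would be to define $\check\Gamma$ directly on the thick-calculus generators by the matrix-factorization maps above and to verify \emph{all} the relations of~\cite{klms} one by one; this is feasible but considerably more laborious, whereas the universal-property route reduces the theorem essentially to the observation that $\mathcal W^p_\Lambda$ is idempotent complete.
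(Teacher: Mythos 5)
Your overall strategy --- extend $\Gamma_{m,n\ell,n}$ over the Karoubi envelope via its universal property and then restrict along the inclusion of $\Ucatmc$ --- is exactly the paper's first move. The divergence, and the gap, is at the decisive step: why the extension lands in $\mathcal W^p_{\Lambda}$ at all, and concretely why the value on a divided power is the indecomposable $\widehat{F}^{(j)}_{(k_i,k_{i+1})}$. The paper does \emph{not} argue that $\mathcal W^p_{\Lambda}$ is idempotent complete; it extends to $\overline{\Gamma}_{m,n\ell,n}\colon\KAR(\Ucatm)\to\KAR(\mathcal W^p_{\Lambda})$ and then proves the isomorphism $\overline{\Gamma}_{m,n\ell,n}(\mathcal F^{(j)}_i\onel)\cong(\widehat{F}^{(j)}_{(k_i,k_{i+1})},\mathrm{id})$, i.e.\ that the particular idempotent $\Gamma_{m,n\ell,n}(e_j)$ splits onto an honest object of $\mathcal W^p_{\Lambda}$. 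That is where all the content sits, and the paper obtains it from the matrix factorization literature: Theorem~\ref{thm-qhowe} pins down $\overline{\Gamma}_{m,n\ell,n}(\mathcal F^{(j)}_i\onel)$ as a pair (web, explicit idempotent), Corollary 9.8 of~\cite{my} gives that the square removal relation~\ref{eq-square1} holds in $K_0^{\oplus}(\mathcal W^p_{\Lambda})$, and together with the indecomposability of $\widehat{F}^{(j)}_{(k_i,k_{i+1})}$ this forces the desired isomorphism.

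In your write-up this step is asserted rather than proved, in two places. First, idempotent completeness of $\mathcal W^p_{\Lambda}$ is not settled by noting that $2$-morphisms are module maps whose idempotents split: one must check that the class of $1$-morphisms allowed in the definition (the matrix factorizations of~\cite{my} Definition 9.1, respectively the bimodules of~\cite{mack1} Definition 7.1) is closed under passing to the summand cut out by the idempotent --- which is essentially the same question in disguise, not an immediate formality. Second, and more seriously, the identification of $\Image(\Gamma_{m,n\ell,n}(e_j))$ with $\widehat{F}^{(j)}_{(k_i,k_{i+1})}$ does not follow from the ``description'' in Section~\ref{subsub-mfs} (which only records that such indecomposables exist), nor from~\cite{klms}: the decomposition $\mathcal F_i^{\,j}\onel\cong\bigoplus\mathcal F_i^{(j)}\onel\{\cdot\}$ of~\cite{klms} lives upstairs in the categorified quantum group, and pushing it through $\Gamma_{m,n\ell,n}$ only says that $\Gamma_{m,n\ell,n}(\mathcal F_i^{\,j}\onel)$ decomposes into images of idempotents, not that those images are the thick-ladder matrix factorizations. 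You need the target-side input (Wu, Yonezawa, Mackaay--Yonezawa), exactly as the paper invokes it. If you only wanted the bare existence of some extension and could verify closure under summands for the actual definition of $\mathcal W^p_{\Lambda}$, your route would produce a $\check{\Gamma}_{m,n\ell,n}$ whose value on divided powers is an abstract summand; but the theorem is used later (Lemma~\ref{lem-welldef}, Theorem~\ref{thm-iso}, the construction of $\phi_{\mathrm{R}}$) precisely through the identification with $\widehat{F}^{(j)}_{(k_i,k_{i+1})}$, so this identification --- the heart of the paper's proof --- cannot be deferred as bookkeeping.
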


\begin{proof}
Given any two $1$-categories and a $1$-functor $\mathcal{FUN}\colon\mathcal{C}\to\mathcal{D}$, there exists (by the universal property of the Karoubi envelope) an extension $\overline{\mathcal{FUN}}\colon\KAR(\mathcal{C})\to\KAR(\mathcal{D})$. Moreover, any $1$-category $\mathcal{C}$ embeds via $O\mapsto (O,\mathrm{id})$ fully faithful into $\KAR(\mathcal{C})$. Both statements are still true in the $2$-categorical setting.

Thus, it suffices to show that
\[
\overline{\Gamma}_{m,n\ell,n}(\mathcal{F}^{(j)}_i\onel)\cong (\widehat{F}^{(j)}_{(k_i,k_{i+1})},\mathrm{id}(\widehat{F}^{(j)}_{(k_i,k_{i+1})})),\;\text{ with }\vec{k}=(\dots,k_i,k_{i+1},\dots).
\]
(And the same for $\mathcal{E}^{(j)}_i\onel$.)
On the level of the $\mathfrak{gl}_n$-webs this means we need to prove
\[
\left(
\scalebox{.7}{$\xy(0,0)*{\includegraphics[scale=.75]{figs/slnwebs/squarea.eps}};(-11,-11)*{\scriptstyle k_i};(18.5,-11)*{\scriptstyle k_{i+1}};(2,8.5)*{\scriptstyle 1};(2,-5)*{\scriptstyle 1};(-9.5,11)*{\scriptstyle k_i-j};(20,11)*{\scriptstyle k_{i+1}+j};(-0.5,-0.95)*{\vdots};(-0.5,3.45)*{\vdots};(-12,-0.95)*{\vdots};(-12,3.45)*{\vdots};(12,-0.95)*{\vdots};(12,3.45)*{\vdots};\endxy$},
\widehat{I}^{(j)}_i\circ\widehat{D}^{(j)}_i\circ\widehat{t}^{\mathrm{sym}}\right)\cong \left(
\scalebox{.7}{$\xy(0,0)*{\includegraphics[scale=.75]{figs/slnwebs/laddera.eps}};(-11,-4)*{\scriptstyle k_i};(18.5,-4)*{\scriptstyle k_{i+1}};(2,2)*{\scriptstyle j};(-10,4)*{\scriptstyle a-j};(18,4)*{\scriptstyle b+j}\endxy$},\mathrm{id}(\widehat{F}^{(j)}_{(k_i,k_{i+1})})\right),
\]
where the ladders labeled $1$ are repeated $j$-times. Here we introduce some notation. We define
\[
\widehat{I}^{j^{\prime}}_i\colon \widehat{F}_i^{(j^{\prime}+1)}\to\widehat{F}_i^{(j^{\prime})}\widehat{F}_i,\;\widehat{I}^{(j)}_i=\widehat{I}_i^{1}\circ\dots\circ\widehat{I}_i^{j-1},\;
\widehat{D}_i^{j^{\prime}}\colon \widehat{F}_i\widehat{F}_i^{(j^{\prime})}\to\widehat{F}_i^{(j^{\prime}+1)}\text{ and }\widehat{D}^{(j)}_i=\widehat{D}^{j-1}_i\circ\dots\circ\widehat{D}^{1}_i.
\]
The steps $\widehat{I}^{j^{\prime}}$ and $\widehat{D}^{j^{\prime}}$ should be composites of $\widehat{CR}$ and $\widehat{t}$ exactly as the and $(j^{\prime},1)$-splitters and $(1,j^{\prime})$-merges are defined in Section 2 of \cite{klms}. The subscript $\mathrm{sym}$ should indicate a symmetric spread of dots starting with $j-1$ for the top edge to no dots for the bottom.

Now comes the good part about matrix factorizations: a lot of calculations are already done. So we do not need to redo them. In fact, the isomorphism above follows from work of Mackaay and Yonezawa \cite{my} (we also mention Wu \cite{wu} and Yonezawa \cite{yo1}, \cite{yo2} here) without any extra calculations. To be precise, Theorem \ref{thm-qhowe} implies that $\overline{\Gamma}_{m,n\ell,n}(\mathcal{F}^{(j)}_i\onel)$ is given as above and \cite[Corollary 9.8]{my} implies that \eqref{eq-square1} is satisfied in $K_0^{\oplus}(W^p_{\Lambda})$ 
(meaning the additive Grothendieck group). Thus, there has to be a suitable isomorphism which finishes the proof.
\end{proof}
\section{The uncategorified story}\label{sec-uncat}
\subsection{Multitableaux and \texorpdfstring{$\mathfrak{gl}_n$}{gln}-webs}\label{sec-tabwebs}

\subsubsection{Pictorial \texorpdfstring{$q$}{q}-skew Howe duality: an example}\label{subsub-example}
Before we start let us recall by an example how the translation of a string of $F^{(j)}_i$ acting on a highest weight vector $v_h$ to a $\mathfrak{gl}_n$-web $u$ works. The reader unfamiliar with this process, which is crucial for everything that follows, is encouraged to take a look at e.g. \cite{ckm}, \cite{mack1} or \cite{tub3} for a more detailed discussion.

\begin{ex}\label{exa-fstring}
Let $n=4$, $\ell=1$ and let $v_h=v_{(4)}$ be the highest weight vector for the partition $(4^1)$. 
Assume that we have the two stings
\[
\mathrm{qH}(u_1)=F_1F_2F_1\;\;\text{ and }\;\;\mathrm{qH}(u_2)=F_1F_2^{(2)}F_1^{(2)}.
\]
Then $\mathrm{qH}(u_{1,2})v_h$ will generate the following $\mathfrak{gl}_4$-webs $u_1$ and $u_2$ under $q$-skew Howe duality.
\begin{gather}\label{eq-not-same}
u_1=
\scalebox{.7}{$\xy
(0,-1.5)*{\includegraphics[scale=0.65]{figs/exgrowth/theta-exa-a.eps}};
(-12,-11)*{F_1};
(-12,13)*{F_1};
(12,1)*{F_2};
(-22,-19)*{4};
(-22,-7.3)*{3};
(-22,5.7)*{3};
(-22,16)*{2};
(2,-19)*{0};
(2,-7.3)*{1};
(2,5.7)*{0};
(2,16)*{1};
(26,-19)*{0};
(26,-7.3)*{0};
(26,5.7)*{1};
(26,16)*{1};
\endxy$},
\;\;\hspace*{0.5cm}\;\;u_2=\scalebox{.7}{$\xy
(0,-1.5)*{\includegraphics[scale=0.65]{figs/exgrowth/theta-exa-a.eps}};
(-11.5,-9.75)*{F^{(2)}_1};
(-12,13)*{F_1};
(12.5,2.5)*{F^{(2)}_2};
(-22,-19)*{4};
(-22,-7.3)*{2};
(-22,5.7)*{2};
(-22,16)*{1};
(2,-19)*{0};
(2,-7.3)*{2};
(2,5.7)*{0};
(2,16)*{1};
(26,-19)*{0};
(26,-7.3)*{0};
(26,5.7)*{2};
(26,16)*{2};
\endxy$}.
\end{gather}
Recall hereby that one can read off the corresponding $\mathfrak{gl}_m$-weight $\vec{k}$ for a fixed (horizontal) level by taking the numbers in order from left to right as $k_j$.
Note that the $\mathfrak{gl}_4$-webs in \eqref{eq-not-same} are different labels.
\end{ex}

\subsubsection{The extended growth algorithm}\label{subsub-exgrowth}

Denote by $W_n(\vec{k},\vec{S})$ the set of all possible $\mathfrak{gl}_n$-webs $u$ that can be obtained by a string of divided powers of $F$ acting on a highest weight vector $v_h=v_{(n^{\ell})}$ (without taking any $\mathfrak{gl}_n$-web relations in account at the moment) together with a flow $f$ on $u$ with boundary datum $\vec{S}$.

We start now by defining a map $\iota\colon W_n(\vec{k},\vec{S})\to\mathrm{Std}(\vec{\lambda})$.

\begin{defn}\label{defn-webtotab}(\textbf{Flows to fillings})
Given a fixed pair $(\vec{k},\vec{S})$ and a $\mathfrak{gl}_n$-web $u_f\in W_n(\vec{k},\vec{S})$ and a string that generates $u$, i.e. $\mathrm{qH}(u)=F^{(j_{m^{\prime}})}_{i_{m^{\prime}}}\dots F^{(j_1)}_{i_1}$.

We associate to it inductively a standard $n$-multitableaux $\iota(u_f)\in\mathrm{Std}(\vec{\lambda})$ as follows.
\begin{enumerate}
\item At the initial stage set $\vec{T}_0=(\emptyset,\dots,\emptyset)$.
\item At the $k$-th step use $F^{(j_{k})}_{i_{k}}$ and the local flow on the corresponding ladder to determine the operation performed on $\vec{T}_{k-1}$. We give the rule together with the operation
$\mathbf{k}\colon\vec{T}_{k-1}\mapsto\vec{T}_{k}$
below.
\item Repeat until $k=m^{\prime}$.
\item Then set $\iota(u_f)=\vec{T}_{m^{\prime}}$.
\end{enumerate}

Assume that the ladder that corresponds to the $k$-th move $F^{(j_{k})}_{i_{k}}$ with local flow is
\[
\scalebox{.7}{$F^{(j_{k})}_{i_{k}}\colon
\xy(0,0)*{\includegraphics[scale=.75]{figs/slnwebs/laddera.eps}};(-11.5,-4)*{\scriptstyle a};(16.5,-4)*{\scriptstyle b};(3,2)*{\scriptstyle j_k};(-9.5,4)*{\scriptstyle a-j_k};(18.5,4)*{\scriptstyle b+j_k}\endxy$},\quad
\scalebox{.7}{$\text{a flow on $F^{(j_{k})}_{i_{k}}$}\colon\xy(0,0)*{\includegraphics[scale=.75]{figs/slnwebs/laddera.eps}};(-11,-4)*{\scriptstyle S_1};(17,-4)*{\scriptstyle S_2};(2,2)*{\scriptstyle T};(-9,4)*{\scriptstyle S_1-T};(19,4)*{\scriptstyle S_2\cup T}\endxy$},
\]
for suitable subsets $S_1,S_2,T\subset\{n,\dots,1\}$. The set $T$ will be, by our conventions, of the form $T=\{t_{j_k},\dots,t_{1}\}$ for $t_{1}<\dots<t_{j_k}$. Then the operation $\mathbf{k}\colon\vec{T}_{k-1}\mapsto\vec{T}_{k}$ should add a node of residue $i_k$ and filling $k$ to the $t_{k^{\prime}}$-th part of $\vec{T}_k$ for all $t_{k^{\prime}}$.
\end{defn}

Let us give an example before we show the non-trivial fact that the algorithm is well-defined.
\begin{ex}\label{ex-flowtotab}
Given $n=5$, $v_h=v_{(5^2)}$ and $\mathrm{qH}(u)=F_1F_2F_3^{(2)}F_2^{(2)}$, we obtain a $\mathfrak{gl}_5$-web $u$ using $q$-skew Howe duality 
and we choose a flow $\vec{S}=(\{5,4,2,1\},\{5,3,2\},\{1\},\{4,3\})$ for it:
\[
\scalebox{.7}{$\xy
(0,0)*{\includegraphics[scale=.75]{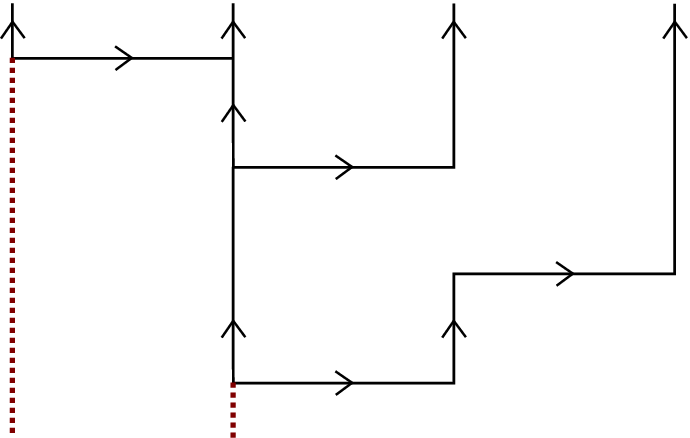}};
(0.5,-16.5)*{F_2^{(2)}};
(29,-3)*{F_3^{(2)}};
(0.5,9.75)*{F_2};
(-27.5,23.5)*{F_1};
(-40,-27.5)*{5};
(-12,-27.5)*{5};
(16,-27.5)*{0};
(44,-27.5)*{0};
(-40,-13.75)*{5};
(-12,-13.75)*{3};
(16,-13.75)*{2};
(44,-13.75)*{0};
(-40,0)*{5};
(-12,0)*{3};
(16,0)*{0};
(44,0)*{2};
(-40,13.75)*{5};
(-12,13.75)*{2};
(16,13.75)*{1};
(44,13.75)*{2};
(-40,27.5)*{4};
(-12,27.5)*{3};
(16,27.5)*{1};
(44,27.5)*{2};
\endxy$},
\quad
\scalebox{.7}{$\xy
(0,0)*{\includegraphics[scale=.75]{figs/exgrowth/webflow-exa-a.eps}};
(-0.5,-17.5)*{\scriptstyle \{4,3\}};
(-0.5,9.75)*{\scriptstyle \{1\}};
(-28.5,23.5)*{\scriptstyle \{3\}};
(-42,29)*{\scriptstyle \{5,4,2,1\}};
(-14,29)*{\scriptstyle \{5,3,2\}};
(-9.5,13.5)*{\scriptstyle \{5,2\}};
(-8.5,-14)*{\scriptstyle \{5,2,1\}};
(14,29)*{\scriptstyle \{1\}};
(27.5,-3.75)*{\scriptstyle \{4,3\}};
(42,29)*{\scriptstyle \{4,3\}};
\endxy$}.
\]
The algorithm performs five steps, i.e.
\begin{align*}
\vec{T}_0=\left(\;\emptyset\;,\;\emptyset\;,\;\emptyset\;,\;\emptyset\;,\;\emptyset\;\right) &\mapsto \vec{T}_1=\left(\;\emptyset\;,\;\xy (0,0)*{\begin{Young}1\cr \end{Young}}\endxy\;,\;\xy (0,0)*{\begin{Young}1\cr \end{Young}}\endxy\;,\;\emptyset\;,\;\emptyset\;\right)\\
&\mapsto \vec{T}_2=\left(\;\emptyset\;,\;\xy (0,0)*{\begin{Young}1& 2\cr \end{Young}}\endxy\;,\;\xy (0,0)*{\begin{Young}1& 2\cr \end{Young}}\endxy\;,\;\emptyset\;,\;\emptyset\;\right)\\
&\mapsto \vec{T}_3=\left(\;\emptyset\;,\;\xy (0,0)*{\begin{Young}1& 2\cr \end{Young}}\endxy\;,\;\xy (0,0)*{\begin{Young}1& 2\cr \end{Young}}\endxy\;,\;\emptyset\;,\;\xy (0,0)*{\begin{Young}3\cr \end{Young}}\endxy\;\right)\\
&\mapsto \vec{T}_4=\left(\;\emptyset\;,\;\xy (0,0)*{\begin{Young}1& 2\cr \end{Young}}\endxy\;,\;\xy (0,0)*{\begin{Young}1& 2\cr 4\cr \end{Young}}\endxy\;,\;\emptyset\;,\;\xy (0,0)*{\begin{Young}3\cr \end{Young}}\endxy\;\right)=\iota(u_f).
\end{align*}
\end{ex}

\begin{lem}\label{lem-allworks}
The algorithm of Definition \ref{defn-webtotab} is well-defined. Moreover, we have
\[
\iota(u_f)=\iota(v_{f^{\prime}})\Leftrightarrow u=v\text{ and }f=f^{\prime},
\]
where the equality of $\mathfrak{gl}_n$-webs and flows is not taking any $\mathfrak{gl}_n$-web relations (including isotopies) into account.
\end{lem}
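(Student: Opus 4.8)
The plan is to prove both claims simultaneously by induction on the number of ladder steps, propagating an invariant that links the partial multitableau $\vec{T}_k$ produced after the $k$-th step to the flow data on the corresponding partial web. Throughout, write $\vec{S}^{(k)}=(S^{(k)}_1,\dots,S^{(k)}_m)$ for the tuple of subsets sitting on the $m$ vertical strands just above the $k$-th rung (so $\vec{S}^{(0)}$ is full on the first $\ell$ strands and empty elsewhere, while $\vec{S}^{(m^{\prime})}=\vec{S}$), and $T^{(k)}$ for the subset that moves across the $k$-th rung. The flow conditions at the trivalent vertices of that rung say exactly that $T^{(k)}\subseteq S^{(k-1)}_{i_k}$ and $S^{(k-1)}_{i_k+1}\cap T^{(k)}=\emptyset$, equivalently that $T^{(k)}=\{\,t:t\in S^{(k-1)}_{i_k},\ t\notin S^{(k-1)}_{i_k+1}\,\}$ is precisely the set of colours present on strand $i_k$ but absent from strand $i_k+1$.

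For well-definedness I would show, by induction on $k$, that $\vec{T}_k$ is a standard $n$-multitableau whose nodes labelled $k$ all carry residue $i_k$ and occupy the $|T^{(k)}|=j_k$ pairwise distinct components indexed by the colours of $T^{(k)}$. The substance is the claim that whenever the algorithm asks to add a residue-$i_k$ node to the $t$-th component for $t\in T^{(k)}$, that component of $\vec{T}_{k-1}$ indeed has an addable node of residue $i_k$. I would prove this by tracking, through the $q$-skew Howe dictionary of Proposition~\ref{prop-qhowe}, how the component associated with a fixed colour $t$ grows: each time colour $t$ lies in the moving set of a rung at position $i$ it is pushed one strand to the right, a box is appended to component $t$, and (after the normalising shift by $\ell$) the residue of that box equals $i$; consequently the sequence of strand positions successively occupied by colour $t$ is recorded faithfully, in the standard filling order, by the $t$-th component, and the next addable residue of that component is precisely the position colour $t$ is about to move out of. The disjointness clause $S^{(k-1)}_{i_k+1}\cap T^{(k)}=\emptyset$ is exactly what forbids two same-labelled boxes from landing in one component and what makes the demanded residue addable, so the flow conditions are simply the diagrammatic face of the standardness conditions; the ``blueprint'' computation in Example~\ref{ex-flowtotab} is the model case of this argument. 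This is the generalisation to all $n$ of the argument given for $n=3$ in~\cite{tub3}.

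For injectivity, I would note that every step of the algorithm is reversible from its output alone. Given $\vec{T}=\iota(u_f)$, the nodes labelled $k$ determine $i_k$ (their common residue, well defined by standardness) and $j_k$ (their number), hence the generating string $\mathrm{qH}(u)=F^{(j_{m^{\prime}})}_{i_{m^{\prime}}}\cdots F^{(j_1)}_{i_1}$ and therefore $u$ itself, since no web relations or isotopies are being imposed; and, as those $j_k$ nodes sit in distinct components, recording which components they occupy returns the moving set $T^{(k)}$. Running this backwards, starting from $\vec{S}^{(0)}$ and using $S^{(k)}_{i_k}=S^{(k-1)}_{i_k}\setminus T^{(k)}$, $S^{(k)}_{i_k+1}=S^{(k-1)}_{i_k+1}\cup T^{(k)}$, reconstructs every $\vec{S}^{(k)}$ and hence the flow $f$ in full. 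Thus $\iota(u_f)=\iota(v_{f^{\prime}})$ forces $u=v$ and $f=f^{\prime}$, and the reverse implication is immediate from the deterministic nature of the construction.

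The main obstacle is the invariant used for well-definedness: making the correspondence ``strand position of colour $t$'' $\longleftrightarrow$ ``residue of the next box of component $t$'' precise, and checking it is preserved by a single rung, has to reconcile the decreasing-order convention on flow subsets, the residue shift by $\ell$, and the behaviour of the divided powers $F^{(j)}_i$ under $q$-skew Howe duality. Once this bookkeeping is set up cleanly the per-step verification is mechanical, but organising it so that the indices do not obscure the (rather simple) underlying picture is where the real care is needed.
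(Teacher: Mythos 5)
Your proposal is correct and follows essentially the same route as the paper: induction over the ladder rungs, with the key invariant that the components possessing an addable node of residue $i$ are exactly those indexed by colours in $S_i\setminus S_{i+1}$ (your per-copy tracking of each colour's strand position is just a more explicit formulation of this), and injectivity by reconstructing the string of $F_{i_k}^{(j_k)}$'s and the rung flows from the residues, multiplicities and component positions of the entries. One local overstatement should be fixed: the flow conditions at a rung only give $T^{(k)}\subseteq S^{(k-1)}_{i_k}\setminus S^{(k-1)}_{i_k+1}$ with $|T^{(k)}|=j_k$, not equality --- equality would force a unique flow on each web, contradicting the whole point of flows; your argument never actually uses the equality, only the containment and disjointness, so the proof stands once the word ``equivalently'' is weakened to a containment.
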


\begin{proof}
We use induction on the total number $\ell(\mathrm{qH}(u))$ of $F_i$ of the string of $F_i^{(j)}$ that generate the $\mathfrak{gl}_n$-web $u$. The induction step is to remove the last, i.e. leftmost, factor $F_i^{(j)}$, to create a smaller $\mathfrak{gl}_n$-web $u^{<}$ for which the statement is already known by the hypothesis. To summarize, assume that $\ell(\mathrm{qH}(u))=r$. Then we let
\[
u=F_{i_r}^{(j_r)}\prod_{k=1}^{r-1}F_{i_k}^{(j_k)}v_h\;\;\text{ and }\;\;u^{<}=\prod_{k=1}^{r-1}F_{i_k}^{(j_k)}v_h,
\]
and check what the last step could do. 

The induction start includes all cases of total length $\ell_{\mathrm{t}}(\mathrm{qH}(u))=\sum j_k\leq n$, since the divided power can go up to $n$.
That everything is well-defined follows for these cases, because all cases with total length $\leq n$ are just the first ladder steps given by $F_{i_1}^{(j_1)}$ which can not run into ambiguities, since we fill the empty $n$-multitableaux with at most $n$ nodes and all of the correct residue due to our residue normalization. Moreover, the possible addable nodes of residue $i_2$ are given by $S^1_{i_2}-S^1_{i_2+1}$, where $\vec{S}^1$ is the flow at the top of the first ladder move. 

Otherwise, assume that it is well-defined for $u^{<}$ and the possible addable nodes of residue $i_r$ are given by $\vec{S}^<$. Observe now that the given flow on the middle edge of the ladder for $F_{i_r}^{(j_r)}$ is determined by the smaller one $f^<$ at the boundary of $u^{<}$. Moreover, by construction, it has to be disjoint to the two incoming flows at the boundary. That is, $T\subset S^<_{i_r}-S^<_{i_r+1}$.
This shows that the last step can perform a legal move and hence, the algorithm is well-defined and gives a standard $n$-multitableaux, the possible addable nodes will now be determined by $\vec{S}$. 

That the algorithm gives different results for different $\mathfrak{gl}_n$-webs $u,v$ or different flows $f,f^{\prime}$ on one $\mathfrak{gl}_n$-web $u$ follows in the same vein, i.e. it is clear by construction that the first step will give a different result for different inputs. By induction, we then only have to ensure that the first place where either $u$ and $v$ are different or where $f$ and $f^{\prime}$ are different give different results. The first follows directly, since already the boundary vectors $\vec{k}_u$ and $\vec{k}_v$ will be different for $u$ and $v$ and hence, the whole shape will be different. The second follows because different flows with the same boundary datum have to be different on the middle edge of the last ladder. But in this case the rules tell us to place the new nodes in different parts of the $n$-multitableaux.
\end{proof}

The whole procedure also works the other way around: given a fixed $n$-multitableaux $\vec{T}\in\mathrm{Std}(\vec{\lambda})$, one can generate a $\mathfrak{gl}_n$-web $u_f\in W_n(\vec{k},\vec{S})$ together with a flow on it as we describe now. 

\begin{defn}\label{defn-exgrowth}(\textbf{Extended $\mathfrak{gl}_n$-growth algorithm})
The \textit{extended $\mathfrak{gl}_n$-growth algorithm}
\[
\mathrm{g}\colon\mathrm{Std}(\vec{\lambda})\to W_n(\vec{k},\vec{S})
\]
is given inductively as follows.

Let $\vec{T}\in\mathrm{Std}(\vec{\lambda})$ be a standard $n$-multitableaux with nodes labeled from $1,\dots,s$. We assign to it a $\mathfrak{gl}_n$-web $u$ given by a sequence of divided powers of $F_{i_k}^{(j_k)}$ (under $q$-skew Howe duality) by
\[
u=\prod_{k=1}^{s} F_{i_k}^{(j_k)}v_{(n^{\ell})},
\]
where $i_k$ is the residue of the node(s) with entry $k$ and $j_k$ is their multiplicity.

Denote for $k^{\prime}=0,\dots,s$ the $\mathfrak{gl}_n$-web $u^{k^{\prime}}$ obtained by
\[
u^{k^{\prime}}=\prod_{k=1}^{k^{\prime}} F_{i_k}^{(j_k)}v_{(n^{\ell})}.
\]
The flow $f$ on $u$ is given inductively starting with a flow $f_0$ on the $\mathfrak{gl}_n$-web $u^{0}$ that has only some leashes for entries with label $n$ given by the full set $\{n,\dots,1\}$ on all leashes and nothing else.

Assume $0<k^{\prime}$ and that the flow $f_{k^{\prime}-1}$ on $u^{k^{\prime}-1}$ is given. Then extend the flow to $f_{k^{\prime}}$ on $u^{k^{\prime}}$ by extending the flow $f_{k^{\prime}-1}$ on $u^{k^{\prime}-1}$ such that the horizontal line in the ladder corresponding to the last move given by $F_{i_{k^{\prime}}}^{(j_{k^{\prime}})}$ is labeled with the set
\[
S=
\left\{
\epsilon_n,\dots,\epsilon_1\right\}-\{0\},\epsilon_{\tilde m}=
\begin{cases}
\tilde m, &\text{if the number }k^{\prime}\text{ appears in the }n\text{-multitableaux }T_{\tilde m},
\\0,&\text{else}.
\end{cases}
\]
(If well-defined this determines the labels on the two upper edges of the ladder.) Finally set $u_f=u^{s}_{f_{s}}$.
\end{defn}

It is again not a priori clear that this algorithm is well-defined. But before proving this we give an example.

\begin{ex}\label{ex-tabtoflow}
Given the $5$-multitableaux
\[
\vec{T}=(T_5,T_4,T_3,T_2,T_1)=\left(\;\emptyset\;,\;\xy (0,0)*{\begin{Young}1& 2\cr \end{Young}}\endxy\;,\;\xy (0,0)*{\begin{Young}1& 2\cr 4\cr \end{Young}}\endxy\;,\;\emptyset\;,\;\xy (0,0)*{\begin{Young}3\cr \end{Young}}\endxy\;\right),
\]
which is $\vec{T}_4$ from Example \ref{ex-flowtotab}, one gets exactly the same result as therein.
\end{ex}

\begin{lem}\label{lem-allworks2}
The algorithm of Definition \ref{defn-exgrowth} is well-defined. Moreover, we have
\[
\mathrm{forget}(\mathrm{g}(\vec{T}))=\mathrm{forget}(\mathrm{g}(\vec{T}^{\prime}))\Leftrightarrow r(\vec{T})=r(\vec{T}^{\prime}),
\]
where $\mathrm{forget}(\cdot)$ forgets the flow line and
\[
\mathrm{g}(\vec{T})=\mathrm{g}(\vec{T}^{\prime})\Leftrightarrow \vec{T}=\vec{T}^{\prime},
\]
where the equalities are again not taking any $\mathfrak{gl}_n$-web relations (including isotopies) into account.
\end{lem}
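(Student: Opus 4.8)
The plan is to prove all three assertions simultaneously by induction on the number $s$ of distinct labels occurring in $\vec T$ (equivalently, on the length of the generating string $\mathrm{qH}(u)$), running the argument of Lemma~\ref{lem-allworks} in reverse. The single fact from which everything will follow is the \emph{inductive invariant}
\[
\iota\bigl(u^{k'}_{f_{k'}}\bigr)=\vec T^{k'}\qquad\text{for all }k'=0,\dots,s,
\]
where $\vec T^{k'}$ is the standard $n$-multitableau obtained from $\vec T$ by deleting all nodes with entry $>k'$ and $u^{k'}_{f_{k'}}$ is the $\mathfrak{sl}_n$-web with flow produced after the first $k'$ steps of the extended growth algorithm. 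The base case $k'=0$ is immediate: $u^{0}$ carries only leashes labelled by the full set $\{n,\dots,1\}$, which $\iota$ sends to the empty $n$-multitableau. Taking $k'=s$ will give $\iota\circ\mathrm{g}=\mathrm{id}$.

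For the inductive step there are two things to check. First, \textbf{well-definedness}: at step $k'$ the algorithm prescribes the middle-rung flow $S=\{\epsilon_n,\dots,\epsilon_1\}\setminus\{0\}$ of the ladder for $F^{(j_{k'})}_{i_{k'}}$, and one must verify this is a legal flow, i.e.\ $S$ is contained in the incoming left flow and disjoint from the incoming right flow, so that the two upper edges of the ladder receive honest (in particular non-negative) labels. By the invariant $\iota(u^{k'-1}_{f_{k'-1}})=\vec T^{k'-1}$, and by the local bookkeeping already done in the proof of Lemma~\ref{lem-allworks}, the set of components $\tilde m$ for which $T_{\tilde m}$ has an addable node of residue $i_{k'}$ is exactly the set of elements available in the incoming flow at position $i_{k'}$, while the incoming right flow records the components already blocked at that residue. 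Since $\vec T$ is \emph{standard}, the nodes labelled $k'$ lie in pairwise distinct components, each an addable node of residue $i_{k'}$ of $\vec T^{k'-1}$; translated across this correspondence, that is precisely $S\subseteq(\text{left flow})$ and $S\cap(\text{right flow})=\emptyset$. Second, once the flow is legal, the ``flows to fillings'' rule of Definition~\ref{defn-webtotab} applied to this ladder adds, for each $t\in S$, a node of residue $i_{k'}$ labelled $k'$ to the $t$-th component, which is exactly the passage $\vec T^{k'-1}\mapsto\vec T^{k'}$; this re-establishes the invariant at stage $k'$.

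Granting the invariant, the remaining statements are formal. If $\mathrm{g}(\vec T)=\mathrm{g}(\vec T')$ then applying $\iota$ gives $\vec T=\vec T'$, and the converse is trivial since $\mathrm{g}$ is a well-defined map; this is the second equivalence. For the first equivalence, $\mathrm{forget}(\mathrm{g}(\vec T))$ is by construction the $\mathfrak{sl}_n$-web underlying the string $F^{(j_s)}_{i_s}\cdots F^{(j_1)}_{i_1}$, whose data $(i_k,j_k)_k$ is read off from, and conversely determines, the residue sequence $r(\vec T)$ in the divided-power convention of Definition~\ref{defn-rsequence}; and since no $\mathfrak{sl}_n$-web relations or isotopies are imposed, two such strings yield the same $\mathfrak{sl}_n$-web diagram if and only if the strings coincide. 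Chaining these two ``if and only if'' statements gives $\mathrm{forget}(\mathrm{g}(\vec T))=\mathrm{forget}(\mathrm{g}(\vec T'))\Leftrightarrow r(\vec T)=r(\vec T')$.

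The step I expect to be the main obstacle is the well-definedness part of the inductive step: making fully precise the correspondence ``addable nodes of residue $i_{k'}$ in $\vec T^{k'-1}$ $\leftrightarrow$ available slots of the incoming flow at position $i_{k'}$'', so that standardness of $\vec T$ genuinely forces $S$ to be a legal rung flow at \emph{every} intermediate web $u^{k'}$. This is the same bookkeeping underlying Lemma~\ref{lem-allworks}, but run in the forward direction; the ``blueprint'' remark at the end of Example~\ref{ex-flowtotab} together with Example~\ref{ex-tabtoflow} already indicates where the only potential ambiguities lie and why disjointness of flows (i.e.\ standardness) removes them.
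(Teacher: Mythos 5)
Your proof is correct and follows essentially the same route as the paper's: induction on the number of steps, with well-definedness resting on the same addable-node/flow bookkeeping already set up in the proof of Lemma~\ref{lem-allworks}, and the residue-sequence equivalence read off from the fact that the generating string of $F_i^{(j)}$'s is determined by (and determines) $r(\vec{T})$. The only cosmetic difference is that you carry the invariant $\iota(u^{k^{\prime}}_{f_{k^{\prime}}})=\vec{T}^{k^{\prime}}$ (thereby proving $\iota\circ\mathrm{g}=\mathrm{id}$, i.e. half of Proposition~\ref{prop-extgrowth}, along the way) and deduce injectivity of $\mathrm{g}$ formally, whereas the paper argues injectivity directly from the flows on the middle edges of the ladders; the underlying content is identical.
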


\begin{proof}
The proof that the algorithm is well-defined and gives always different results for different $n$-multitableaux follows the same idea as in the proof of Lemma \ref{lem-allworks}, i.e. induction on the length $s$ of the $n$-multitableaux. We obtain $\vec{T}^<$ from $\vec{T}$ by removing all nodes with the biggest entry such that the biggest entry of $\vec{T}^<$ is $s-1$.

For both claims it is easy to verify all small cases, i.e. all cases with length $s=1$, by hand. Our residue convention ensures that the corresponding divided power does not kill the highest weight vector. Moreover, a full $n$-multitableaux corresponds to a leash shift with a full flow, that is
\[
\left(\;\xy (0,0)*{\begin{Young}1\cr \end{Young}}\endxy\;,\dots,\;\xy (0,0)*{\begin{Young}1\cr\end{Young}}\endxy\;\right)
\mapsto
\scalebox{.7}{$\xy
(0,0)*{\includegraphics[scale=.75]{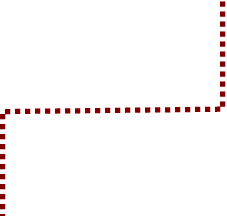}};
(0,3)*{\{n,\dots,1\}};
\endxy$}.
\]
To see that the algorithm is well-defined note that we get a legal step from $\vec{T}^<$ to $\vec{T}$, i.e. a flow, because if we add a ladder at the $i$-th position, then the values of $S_i$ and $S_{i+1}$ are determined by the same observation as above in the proof of Lemma \ref{lem-allworks}. Moreover, to see that the string of $F_i^{(j)}$ does not kill the highest weight vector in the last step from $\vec{T}^<$ to $\vec{T}$, we note that the action of $F_{i_s}^{(j_s)}$ is determined by $\vec{k}^<$. And this is encoded in $\vec{T}^<$ by the residue sequence and multiplicities of the entries. If $F_{i_s}^{(j_s)}$ would kill the vector, then the configuration could not have been legal in the first place. 

To see that $n$-multitableaux with a different residue sequence already give different $\mathfrak{gl}_n$-webs is because of the definition of the string of $F_i^{(j)}$. That different fillings give different flows follows, because the position of the nodes with the same label that are at different positions will give a different flow on the middle edge of the corresponding ladder.

On the other hand, that equal $n$-multitableaux give the same $\mathfrak{gl}_n$-webs with the same flow follows immediately and $r(\vec{T})=r(\vec{T}^{\prime})$ forces the underlying $\mathfrak{gl}_n$-webs to be the same follows because we obtain the string of $F_i^{(j)}$ that generates the $\mathfrak{gl}_n$-webs only from the residue sequence. 
\end{proof}

Because the two algorithms given in Definitions \ref{defn-webtotab} and \ref{defn-exgrowth} are inverse procedures we note the following proposition.

\begin{prop}\label{prop-extgrowth}
We have
\[
\iota\circ\mathrm{g}=\mathrm{id}_{\mathrm{Std}(\vec{\lambda})}\;\;\text{ and }\;\;\mathrm{g}\circ\iota=\mathrm{id}_{W_n(\vec{k},\vec{S})},
\]
where we again not taking any $\mathfrak{gl}_n$-web relations (including isotopies) into account.
\end{prop}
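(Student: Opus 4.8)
The plan is to prove the two identities by showing that the inductive procedures of Definitions~\ref{defn-webtotab} and~\ref{defn-exgrowth} are step-by-step ``transposes'' of one another, and then to deduce the second identity from the first using the injectivity already established in Lemma~\ref{lem-allworks}. Both Lemma~\ref{lem-allworks} and Lemma~\ref{lem-allworks2} already guarantee that $\mathrm{g}$ is well-defined with values in $W_n(\vec{k},\vec{S})$ and that $\iota$ is well-defined with values in $\mathrm{Std}(\vec{\lambda})$; so all that remains for the present statement is to check that the composites return the input.

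First I would prove $\iota\circ\mathrm{g}=\mathrm{id}_{\mathrm{Std}(\vec{\lambda})}$ by induction on the length $s$ of $\vec{T}\in\mathrm{Std}(\vec{\lambda})$ (equivalently on $\ell(\mathrm{qH}(u))$). Write $\vec{T}^{<}$ for $\vec{T}$ with all nodes carrying the top label $s$ removed; by construction the first $s-1$ steps of $\mathrm{g}$ applied to $\vec{T}$ produce precisely $\mathrm{g}(\vec{T}^{<})$, for which the claim holds by the inductive hypothesis. It then suffices to examine the last step. By construction $\mathrm{g}$ reads off from the label-$s$ nodes their common residue $i_s$ (legitimate since $\vec{T}$ is standard) and their multiplicity $j_s$, hence the divided power $F^{(j_s)}_{i_s}$, and it labels the middle edge of the associated ladder by the set $S=\{\epsilon_n,\dots,\epsilon_1\}-\{0\}$ with $\epsilon_{\tilde m}=\tilde m$ exactly when $s$ occurs in the $\tilde m$-th component of $\vec{T}$. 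Now running $\iota$ on this ladder with this flow: the set $T$ on the middle edge equals $S$, and by the rule of Definition~\ref{defn-webtotab} one node of residue $i_s$ and label $s$ is added to the $t$-th component for each $t\in T=S$ --- which is exactly the set of label-$s$ nodes of $\vec{T}$. Hence $\iota(\mathrm{g}(\vec{T}))=\vec{T}$. (The legality of this last step, i.e.\ $T\subseteq S^{<}_{i_s}-S^{<}_{i_s+1}$, is already part of Lemma~\ref{lem-allworks2}, so this is genuinely just unwinding two definitions that invert each other at each stage.)

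For $\mathrm{g}\circ\iota=\mathrm{id}_{W_n(\vec{k},\vec{S})}$ I would avoid repeating the induction and instead invoke injectivity of $\iota$ from Lemma~\ref{lem-allworks}: for any $u_f\in W_n(\vec{k},\vec{S})$ one has $\iota\bigl(\mathrm{g}(\iota(u_f))\bigr)=(\iota\circ\mathrm{g})(\iota(u_f))=\iota(u_f)$, and since $\iota(v_{f'})=\iota(u_f)$ forces $v=u$ and $f'=f$, we get $\mathrm{g}(\iota(u_f))=u_f$. (Alternatively one runs the symmetric induction, removing the leftmost factor $F^{(j_{m'})}_{i_{m'}}$: $\iota$ records the label-$m'$ nodes in the components indexed by the middle-edge flow $T$, and $\mathrm{g}$ reconstructs from them the divided power $F^{(j_{m'})}_{i_{m'}}$ and the same set $T$, while the two upper edges of the last ladder are forced by the boundary flow of the smaller web, reconstructed by the inductive hypothesis.) The final assertion about isotopy invariance requires nothing further: since the $\mathfrak{sl}_n$-web is isotopy invariant and the construction matches the ``rigid'' $n$-multitableau framework with webs-with-flows, the bijection descends, and one only records this.

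The main obstacle --- which is really the content of Lemmas~\ref{lem-allworks} and~\ref{lem-allworks2} rather than of this proposition --- is the observation that the flow on the middle (horizontal) edge of a ladder, together with the incoming boundary flow inherited from the smaller web, determines the flow on \emph{both} upper edges of that ladder; this is exactly what makes the correspondence ``multitableau component $\leftrightarrow$ middle-edge subset'' lossless and forces the disjointness/legality condition checked in those lemmas. Granting them, the present proposition is a formal consequence.
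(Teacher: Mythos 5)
Your proposal is correct and follows essentially the same route as the paper, which simply observes that the inductive steps of Definitions~\ref{defn-webtotab} and~\ref{defn-exgrowth} reverse each other (citing Lemmata~\ref{lem-allworks} and~\ref{lem-allworks2}) and leaves the details to the reader; you have merely written out that scrutiny explicitly. Your shortcut of deducing $\mathrm{g}\circ\iota=\mathrm{id}$ from $\iota\circ\mathrm{g}=\mathrm{id}$ via the injectivity of $\iota$ in Lemma~\ref{lem-allworks} is a small, valid efficiency but not a genuinely different argument.
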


\begin{proof}
We use the two Lemmas \ref{lem-allworks} and \ref{lem-allworks2}, i.e. scrutiny of the inductive steps given in Definitions \ref{defn-webtotab} and \ref{defn-exgrowth} shows that they reverse each other.
\end{proof}

We state now in an important lemma how one can write any $\mathfrak{gl}_n$-web $u\in W_n(\vec{k})$ explicitly as a string of $F_i^{(j)}$.

\begin{lem}\label{lem-webasF}
Any $u\in W_n(\vec{k})\subset W_n(\Lambda)$, for all $\vec{k}$, can be written, using $q$-skew Howe duality, as
\[
u=\prod_{k=1}^{s}F_{i_k}^{(j_k)}v_{(n^{\ell})}
\]
for some $s\in\bN$. Moreover, this can be done in such a way that none of the $F_i^{(j)}$ connects two nested and not connected components into a single connected component.
\end{lem}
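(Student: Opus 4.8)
The plan is to induct on the number of trivalent vertices of $u$, using the fact (Theorem~\ref{thm-pifunctor} and the surrounding $q$-skew Howe discussion) that $u$, as a morphism in $\spidn{n}{n}$, lies in the image of the functor $\gamma_{m,n}$ and hence is a $\bC(q)$-linear combination of strings of divided powers $F_i^{(j)}$ acting on $v_{(n^{\ell})}$. What needs to be extracted is a \emph{single} such string representing $u$ (not a sum), together with the no-nesting property. First I would set up the induction on $|V(u)|$. When $|V(u)|=0$, $u$ consists only of parallel through-strands and leashes, and is obtained from $v_{(n^{\ell})}$ by an appropriate sequence of leash-shift $F_i^{(n)}$'s (as in the ``full'' $n$-multitableau computation in the proof of Lemma~\ref{lem-allworks2}) — the base case. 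For the inductive step, I would pick a trivalent vertex $w$ of $u$ adjacent to the cut-line (or, after an isotopy, arrange that the top portion of $u$ above some horizontal slice is a single ladder rung), so that $u = F_i^{(j)}\, u'$ where $u'$ has one fewer vertex and the rung realizes a merge or split; by the inductive hypothesis $u' = \prod_{k=1}^{s-1} F_{i_k}^{(j_k)} v_{(n^{\ell})}$, and then $u = F_i^{(j)} u'$ is the desired expression.

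The key technical point is that one can always peel off a \emph{top} ladder rung rather than being stuck with a combination: this is exactly the content of the ladder presentation coming from pictorial $q$-skew Howe duality (Proposition~\ref{prop-qhowe}), which says $\gamma_{m,n}$ is \emph{full}, so every web-morphism is (a linear combination of) ladder diagrams, and any fixed ladder diagram is literally a string of $F_i^{(j)}$'s. Concretely, I would argue: a $\mathfrak{sl}_n$-web $u$ with boundary $\vec{k}$ can, up to isotopy, be drawn so that all its trivalent vertices occur at distinct heights, each in a single rung between two adjacent vertical strands (this uses the ``ladder'' normal form, valid since biadjointness of $E$'s and $F$'s gives the isotopy relations alluded to after Definition~\ref{def-freespid}); reading off the rungs from bottom to top gives the string $\prod F_{i_k}^{(j_k)}$, and the labels match because the middle/horizontal edge labels are determined as differences of the flanking edge labels, exactly as noted in Section~\ref{sec-notation}. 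Thus no linear combination is needed — the single web $u$ \emph{is} one ladder diagram.

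For the no-nesting clause, I would additionally require the isotopy normal form to respect the nesting structure of $u$: whenever $u$ has two components (or sub-webs) that are nested and disjoint, one arranges the heights so that the rungs building the inner one are completed before any rung of the outer one touches it; equivalently, one processes the components in order of nesting depth, innermost first. Since a merge/split $F_i^{(j)}$ only ever joins two strands that are already adjacent at that height, and adjacency can be maintained within a single nesting level, no $F_i^{(j)}$ is forced to connect two previously unconnected nested pieces. I expect this last point — making the ``no $F_i^{(j)}$ bridges two nested unconnected components'' claim precise and checking it survives the isotopy normalization — to be the main obstacle, since it requires care about how isotopies interact with the planar nesting poset of the web; everything else is a routine unwinding of the ladder formalism already recalled in the paper. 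I would close by remarking, as the statement of Lemma~\ref{lem-webasF} asserts, that this procedure is algorithmic: one literally reads the rungs of the normalized diagram from bottom to top.
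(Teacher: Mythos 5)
Your overall strategy coincides with the paper's: induction on the number of trivalent vertices, peeling off a rung adjacent to the boundary, a base case of vertex-free webs, and an isotopy rearrangement for the no-nesting clause. However, there is a genuine gap at the central point of the lemma, namely that \emph{only} $F$'s are needed. You assert that after an isotopy every vertex sits in a single rung between adjacent vertical strands and that ``reading off the rungs from bottom to top gives the string $\prod F_{i_k}^{(j_k)}$''. A generic Morse/ladder position of a web does not have this property: each rung is an $E$-move or an $F$-move depending on whether its horizontal edge points left or right, and both types occur. For instance, the dumbbell of Example~\ref{ex-websasF}, sliced where it is drawn inside one pair of columns, is a merge followed by a split in which one of the two rungs is forced to be an $E$-move; to use only $F$'s one must re-route through a neighbouring zero-column and write it as $F_2^{(2)}F_1^{(2)}$, at the cost of a longer word and a shifted column. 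Eliminating the $E$-rungs is exactly what needs proof, and your appeal to fullness of $\gamma_{m,n}$ (Proposition~\ref{prop-qhowe}) does not supply it, since fullness only yields $\bC(q)$-linear combinations of words in \emph{both} $E$'s and $F$'s. The paper's proof provides the missing mechanism: it peels the leftmost vertex with two outgoing edges at the cut-line and reads the resulting tripod as an $F_i^{(j)}$ by placing the single thick edge, a leash, or a $0$-edge in the appropriate column, checking case by case that the required leashes and zero-entries are always available for the given boundary data. Your proposal needs an analogous argument explaining why the chosen top rung, and every later one, can be read as an $F$ compatibly with the fixed target $\vec{k}$ and the fixed source $(n^{\ell})$.

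Two smaller points. Your base case is incomplete: a vertex-free web (in the leash-suppressed picture) need not consist only of through-strands and leashes; it may contain caps and cups, and these are not leash-shifts $F_i^{(n)}$ but are realized as $F_1^{(a)}$ acting on a leash/zero pair, which is why the paper checks five base cases (cup, cap, and the left, right and empty shifts). The no-nesting part of your plan (building nested components innermost first while keeping adjacency within a nesting level) is in the same spirit as the paper's argument, which simply shifts the offending $F_i^{(j)}$-ladder sideways at the cost of a longer string; that part is acceptable at the level of detail the paper itself gives.
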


\begin{proof}
We prove the first statement by induction on the number of vertices of the $\mathfrak{gl}_n$-webs $u$. We use $1$ here as the position index without loss of generality.

If $u$ has no vertices at all, then we see that we have to check exactly five cases, i.e. cup and cap
\[
\scalebox{.7}{$\xy
(0,0)*{\includegraphics[scale=.75]{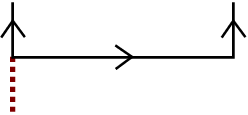}};
(-14,-9)*{n};
(14,-9)*{0};
(-14,9)*{n-a};
(14,9)*{a};
(1,4)*{F_1^{(a)}};
\endxy$},\;\;\hspace*{0.5cm}\;\;
\scalebox{.7}{$\xy
(0,0)*{\includegraphics[scale=.75]{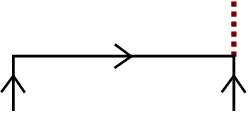}};
(14,9)*{n};
(-14,9)*{0};
(14,-9)*{n-a};
(-14,-9)*{a};
(1,4)*{F_1^{(a)}};
\endxy$},
\]
and three shifts, i.e. the left, right and the empty shift
\[
\scalebox{.7}{$\xy
(0,0)*{\includegraphics[scale=.75]{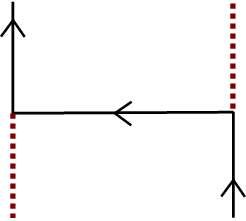}};
(-14,-15)*{n};
(14,-15)*{n-a};
(-14,15)*{n-a};
(14,15)*{n};
(1,4)*{F_1^{(a)}};
\endxy$},\;\;\hspace*{0.5cm}\;\;
\scalebox{.7}{$\xy
(0,0)*{\includegraphics[scale=.75]{figs/slnwebs/lineup.eps}};
(14,15)*{a};
(-14,15)*{0};
(14,-15)*{0};
(-14,-15)*{a};
(1,4)*{F_1^{(a)}};
\endxy$},\;\;\hspace*{0.5cm}\;\;
\scalebox{.7}{$\xy
(0,0)*{\includegraphics[scale=.75]{figs/exgrowth/leashright.eps}};
(14,15)*{n};
(-14,15)*{0};
(14,-15)*{0};
(-14,-15)*{n};
(1,4)*{F_1^{(n)}};
\endxy$}.
\]
Here we can use any $0\leq a\leq n$. This shows that any $\mathfrak{gl}_n$-web with no vertices can be obtained from $v_{(n^{\ell})}$ by an explicit sequence of $F_i^{(j)}$ starting from a suitable weight at the bottom which can be chosen as a highest weight in the closed cases.

Now assume that $u$ has at least one vertex. Take the leftmost of the vertices of $u$ with two outgoing edges (including leashes) that connects to the cut line. Cut it away by changing the cut line a little bit as illustrated below. The boundary data changes accordingly (we allow an arbitrary, finite number of zeros to the left).
\[
\scalebox{.7}{$\xy
(0,0)*{\includegraphics[scale=.75]{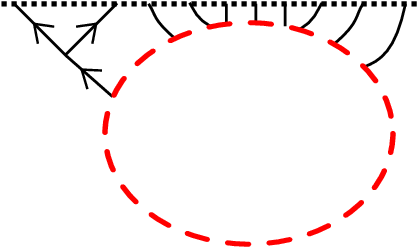}};
(5,-1)*{u^{\prime}};
\endxy$}\longmapsto
\scalebox{.7}{$\xy
(0,0)*{\includegraphics[scale=.75]{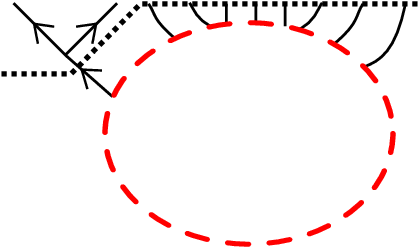}};
(5,-1)*{u^{\prime}};
\endxy$}.
\]
Since $u^{\prime}$ has fewer vertices than $u$, we can use induction and the observation that the last step can be realized as an $F_i^{(j)}$ depending on how we read the tripod, e.g. for suitable $0\leq a,b\leq n$
\[
\scalebox{.7}{$\xy
(0,0)*{\includegraphics[scale=.75]{figs/slnwebs/leash.eps}};
(-14,-9)*{n};
(-14,9)*{n-a};
(14,-9)*{b};
(14,9)*{a+b};
(1,4)*{F_1^{(a)}};
\endxy$}.
\]
Hence, $u$ can be realized as a string of suitable chosen $F_i^{(j)}$.

To see the second statement we note that we can freely use isotopies as illustrated below.
\[
\scalebox{.7}{$\xy
(0,0)*{\includegraphics[scale=.5]{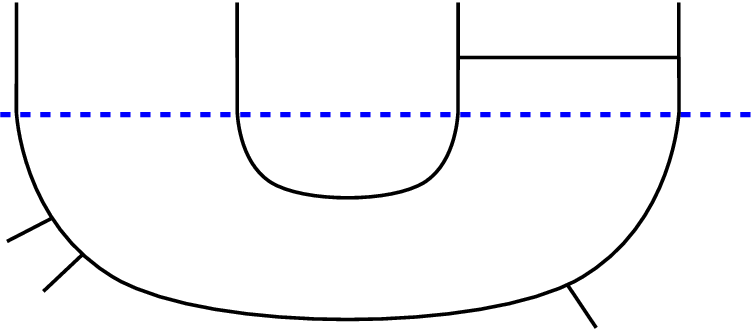}};
\endxy$}
\rightsquigarrow
\scalebox{.7}{$\xy
(0,0)*{\includegraphics[scale=.5]{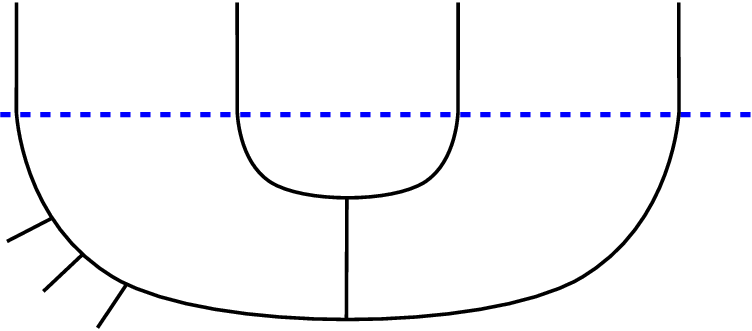}};
\endxy$}.
\]
That is, we can always avoid to connect nested parts by shifting the $F_i^{(j)}$-ladder around.
\end{proof}

\begin{ex}\label{ex-websasF}
For example a $\mathfrak{gl}_n$-web $u$ with a local dumbbell and $n>4$
\[
\scalebox{.7}{$\xy
(0,0)*{\includegraphics[scale=.75]{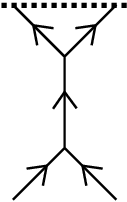}};
(-6.5,-14.25)*{2};
(6.5,-14.25)*{2};
(-6.5,14)*{2};
(6.5,14)*{2};
\endxy$}\longmapsto
\scalebox{.7}{$\xy
(0,0)*{\includegraphics[scale=.75]{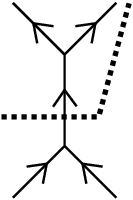}};
(-6.5,-14.25)*{2};
(6.5,-14.25)*{2};
(-6.5,0)*{0};
(3,0)*{4};
\endxy$}
\leftrightsquigarrow
\scalebox{.7}{$\xy
(0,0)*{\includegraphics[scale=.75]{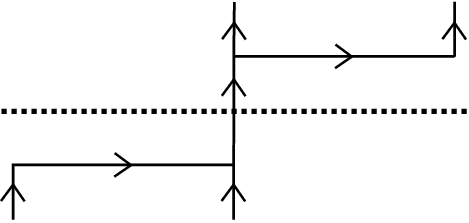}};
(-28,-16)*{2};
(0,-16)*{2};
(28,-16)*{0};
(-28,2)*{0};
(2.5,2)*{4};
(28,2)*{0};
(-28,16)*{0};
(0,16)*{2};
(28,16)*{2};
(28,16)*{2};
(-14,-10.5)*{F_{1}^{(2)}};
(14.5,11)*{F_{2}^{(2)}};
\endxy$}
.
\]
Thus, in the notation of Lemma \ref{lem-webasF}, the $\mathfrak{gl}_n$-web $u^{\prime}$ has a $F_1^{(2)}$ as a leftmost factor in its product of $F_i^{(j)}$. Hence, we have
\[
u^{\prime}=F_1^{(2)}\prod_{k} F_{i_k}^{(j_k)}v_h\rightsquigarrow u=F_2^{(2)}F_1^{(2)}\prod_{k} F_{i_k}^{(j_k)}v_h.
\]

As another example is that the $\mathfrak{gl}_4$-web $u$ from Example \ref{exa-flow} can be generated by
\[
u=F_7^{(2)}F_3F_1F_2F_1F_3F^{(2)}_4F^{(2)}_3F_4F_5F_4F_2F_1F_3^{(2)}F^{(4)}_2F^{(4)}_6F^{(4)}_5F^{(4)}_4F^{(4)}_3v_{(4^3)}.
\]
If we use this string to generate the $\mathfrak{gl}_4$-web $u$, then the flow $f$ from Example \ref{exa-flow} will be converted to the following $4$-multitableau.
\[
\iota(u_f)=\left(\;\xy (0,0)*{\begin{Young}1& 2 & 3 & 4 & 19\cr 5 & 6 & 9 & 10\cr 15 & 16 & 18\cr\end{Young}}\endxy\;,\;\xy (0,0)*{\begin{Young}1& 2 & 3 & 4\cr 5 & 6 & 11\cr 7 & 8 & 14\cr\end{Young}}\endxy\;,\;\xy (0,0)*{\begin{Young}1& 2 & 3 & 4 & 19\cr 5 & 16 & 17\cr\end{Young}}\endxy\;,\;\xy (0,0)*{\begin{Young}1& 2 & 3 & 4\cr 5 & 12 & 13\cr 17\cr\end{Young}}\endxy\;\right).
\]
\end{ex}

The Proposition \ref{prop-extgrowth} together with Lemma \ref{lem-webasF} imply that any ``reasonable'' basis of the $\mathfrak{gl}_n$-web space $W_n(\vec{k})$ is \textit{monomial}, i.e. given by a sequence of $F^{(j)}_i$ acting on a highest weight vector $v_h$. In fact, given a spanning set of $\mathfrak{gl}_n$-webs of $W_n(\vec{k})$, the hardest part is to show linear independence.

Some ``reasonable'' bases of $W_n(\vec{k})$ are the basis given by all $\mathfrak{sl}_2$-arc diagrams (here $n=2$), Kuperberg's basis of non-elliptic $\mathfrak{sl}_3$-webs (here $n=3$), intermediate crystal bases in the sense of Leclerc--Toffin \cite{leto} under $q$-skew Howe duality (see \cite{tub3} or \cite{mack1}) and Fontaine's basis \cite{fon}.

\begin{cor}\label{cor-extgrowth}
All of the bases of $W_n(\vec{k})$ mentioned above are monomial.\qed
\end{cor}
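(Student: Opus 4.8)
The plan is to reduce the statement entirely to Lemma~\ref{lem-webasF}. Recall that in our terminology a basis of $W_n(\vec{k})$ is called \emph{monomial} precisely when each of its elements is a single $\mathfrak{sl}_n$-web of the form $\prod_{k=1}^{s}F_{i_k}^{(j_k)}v_{(n^\ell)}$, obtained from the fixed highest weight vector $v_h=v_{(n^\ell)}$ by a sequence of divided powers $F_i^{(j)}$ acting under $q$-skew Howe duality (with no extra quantum scalar). Lemma~\ref{lem-webasF} asserts exactly this for \emph{every} $\mathfrak{sl}_n$-web $u\in W_n(\vec{k})\subset W_n(\Lambda)$, and in its proof the highest weight vector that one lands on is forced to be $v_{(n^\ell)}$, since the number of its entries is fixed by $\vec{k}$.

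So the first step is simply to observe that each of the four bases listed above consists of honest $\mathfrak{sl}_n$-webs, i.e.\ of $\bC(q)$-linear generators of $W_n(\vec{k})$ carrying no quantum prefactor: the $\mathfrak{sl}_2$-arc diagrams for $n=2$, Kuperberg's basis of non-elliptic $\mathfrak{sl}_3$-webs for $n=3$, the intermediate crystal basis of Leclerc and Toffin under $q$-skew Howe duality (as recalled in~\cite{tub3} and~\cite{mack1}), and Fontaine's basis from~\cite{fon}. For the Leclerc--Toffin basis this is essentially built into its definition, and for the others it is the standard description; in each case one only has to recall that the basis vectors are single pictures rather than formal sums. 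The second step is then to apply Lemma~\ref{lem-webasF} to each basis element separately, which produces the required string of $F_i^{(j)}$'s and hence shows monomiality.

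There is essentially no obstacle here, and that is worth stating explicitly: the content of the corollary is already contained in Lemma~\ref{lem-webasF}. The only genuine case-by-case check is that the four families listed really are bases of honest webs, and the only potential subtlety, namely that all elements descend from the \emph{same} highest weight vector, is handled inside the proof of Lemma~\ref{lem-webasF}. Note in particular that linear independence and spanning are not part of the assertion — they are taken for granted once one knows that these four families are bases — so, as in the remark preceding the corollary, the hard direction (linear independence of a spanning set of webs) is precisely the part that is \emph{not} needed here.
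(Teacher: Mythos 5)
Your proposal is correct and follows essentially the same route as the paper: the corollary is stated there as an immediate consequence of Lemma~\ref{lem-webasF} (together with Proposition~\ref{prop-extgrowth}), namely that every honest $\mathfrak{sl}_n$-web, hence every element of the listed bases, is realized as a string of $F_i^{(j)}$'s acting on $v_{(n^{\ell})}$. Your added remark that linear independence plays no role here matches the paper's own comment preceding the corollary.
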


\subsubsection{Degree and the weight of flows}\label{subsub-exgrowthdegree}

We are going to show now that the result of Proposition \ref{prop-extgrowth} can be strengthened. To be more precise, both $W_n(\vec{k})$ and $\mathrm{Std}(\vec{\lambda})$ are graded. The first one by the weight of the flows and the second one by Brundan--Kleshchev--Wang's degree for multitableaux.

\begin{prop}\label{prop-extgrowthdeg}
Both maps
\[
\iota\colon W_n(\vec{k},\vec{S})\to\mathrm{Std}(\vec{\lambda})\;\;\text{ and }\;\;\mathrm{g}\colon\mathrm{Std}(\vec{\lambda})\to W_n(\vec{k},\vec{S})
\]
preserve the degree.
\end{prop}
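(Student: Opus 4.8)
The plan is to deduce the statement from Proposition~\ref{prop-extgrowth} by a purely local analysis. Since $\iota$ and $\mathrm{g}$ are mutually inverse bijections, it suffices to prove that $\iota$ is degree-preserving, i.e. that $\mathrm{deg}_{\mathrm{BKW}}(\iota(u_f))=\mathrm{wt}(u_f)$ for every $u_f\in W_n(\vec{k},\vec{S})$. I would argue by induction on the length $s=\ell(\mathrm{qH}(u))$ of the string $\mathrm{qH}(u)=F^{(j_s)}_{i_s}\cdots F^{(j_1)}_{i_1}$ generating $u$, following verbatim the inductive steps of Definitions~\ref{defn-webtotab} and~\ref{defn-exgrowth}, with the base (small length) cases handled exactly as in the proof of Lemma~\ref{lem-allworks}. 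Both gradings are \emph{additive} along these steps: on the web side $\mathrm{wt}(u_f)=\sum_{k=1}^{s}w_k$, where $w_k$ is the sum of the two local flow-weights (Definition~\ref{defn-slnflow}) at the vertices of the ladder rung introduced by $F^{(j_k)}_{i_k}$; on the tableau side $\mathrm{deg}_{\mathrm{BKW}}(\iota(u_f))=\sum_{k=1}^{s}\mathrm{deg}(\vec{T}^{k})$ by definition (Definition~\ref{defn-degpart1}), where $\mathrm{deg}(\vec{T}^{k})$ is the contribution of the $j_k$ nodes labelled $k$ (all of residue $i_k$). Because $\iota$ commutes with deleting the leftmost factor $F^{(j_s)}_{i_s}$, the induction reduces to the single local identity
\[
w_k \;=\; \mathrm{deg}(\vec{T}^{k})
\]
for all $k$, the left side being read off the local flow on the $k$-th rung and the right side from the residue-$i_k$ addable and removable nodes lying strictly after the new nodes in the multitableau built up through step $k$.

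I would then prove this local identity, first in the case $j_k=1$ of a single box. Recall from (the proof of) Theorem~\ref{thm-kkele} that $w_k$ is the $(-q)$-exponent picked up when the intertwiner realizing $F_{i_k}1_{\vec{k}^{<}}$ (a split $M_s$ followed by a merge $M_m$) sends the elementary tensor labelled by the boundary flow $\vec{S}^{<}$ of the smaller web $u^{<}$ to the one labelled by the extended flow; concretely this is an inversion count between the set $T=\{t\}$ on the new rung and the neighbouring flow-sets $S_1,S_2$ in the notation of Definition~\ref{defn-exgrowth}. On the tableau side, after the residue normalisation of Definition~\ref{defn-tabcomb}, the quantity $\mathrm{deg}(\vec{T}^{k})=|\mathsf{A}^{i_k\succ N}(\vec{T}^{k})|-|\mathsf{R}^{i_k\succ N}(\vec{T}^{k})|$ is \emph{also} an inversion count: the residue $i_k$ singles out the adjacent pair $(k^{<}_{i_k},k^{<}_{i_k+1})$ of the weight $\vec{k}^{<}$, the components of $\vec{T}^{<}$ index the $n$ colours $\{n,\dots,1\}$ tracked by the flow, and the relation "strictly after $N$" translates into a condition on the flow-sets of the strands passing below the rung, under which an addable (resp. removable) node of residue $i_k$ corresponds to a colour admitted (resp. not admitted) by the relevant strand. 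Matching these two descriptions is a book-keeping exercise in the same spirit as the $\mathfrak{sl}_3$ computation of~\cite{tub3}; the only new feature for general $n$ is that "colours" are now subsets of $\{n,\dots,1\}$ rather than single indices, which changes nothing essential.

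For the divided-power case $j_k>1$ one adds one residue-$i_k$ node to each component $t_1,\dots,t_{j_k}$, where $T=\{t_1<\dots<t_{j_k}\}$ is the rung-flow. Decomposing the split/merge through $F^{(j_k)}_{i_k}$ into $j_k$ successive single-box splits and merges, exactly as the $(j',1)$-splitters and $(1,j')$-merges are built in Section~2 of~\cite{klms}, the weight $w_k$ becomes the sum of the $j_k$ single-box contributions \emph{minus} the $\binom{j_k}{2}$ inversions internal to $T$; these are precisely the terms produced by the $\varepsilon$-perturbation convention in Definition~\ref{defn-degpart1}, and on the tableau side the BKW rule subtracts exactly $a=\sum_{i=0}^{j_k-1}i=\binom{j_k}{2}$, while the single-box contributions are covered by the $j_k=1$ analysis performed in the prescribed left-to-right order. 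Hence $w_k=\mathrm{deg}(\vec{T}^{k})$ in general and the induction closes.

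I expect the main obstacle to be the book-keeping in the $j_k=1$ step: tracking precisely which residue-$i_k$ addable and removable nodes lie strictly after the new node $N$ as the flow is propagated through the growth algorithm, and checking that this count is insensitive to the changes of cut-line used in Definition~\ref{defn-webtotab} and Lemma~\ref{lem-webasF}. This is, however, a routine (if tedious) extension of the $\mathfrak{sl}_3$ argument of~\cite{tub3}, and the divided-power correction is forced on both sides by the matching $\binom{j_k}{2}$, so no genuinely new idea should be needed.
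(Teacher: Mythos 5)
Your proposal is correct and follows essentially the same route as the paper: induction on the length of the generating string of $F_i^{(j)}$'s, additivity of both gradings, and the local identification of $|\mathsf{A}^{k\succ N}|$ and $|\mathsf{R}^{k\succ N}|$ with the inversion counts $\ell(\cdot,T)$ and $\ell(T,\cdot)$ on the new ladder rung, with the BKW shift $a$ matching the $\tfrac{1}{2}|T|(|T|-1)$ internal inversions of $T$. The only (cosmetic) difference is that the paper treats the divided-power case in one stroke via the relation between $\ell(S_1,T)$ and $\ell(S_1^{<},T)$, whereas you decompose the thick rung into single-box splitters; both yield the same $\binom{j_k}{2}$ correction.
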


\begin{proof}
First lets us take a look how to read off the weight for a ladder. Assume that the flow on the top of a ladder is given by $\vec{S}=(S_1,\dots,S_m)$, at the bottom by $\vec{S}^{<}=(S^<_1,\dots,S^{<}_m)$ and at its horizontal edge by $T$. Moreover, assume for simplicity that the ladder comes from an action of $F_1$, i.e. that it is a ladder at position $1$. Then, by our convention how to draw ladders, we have
\[
\scalebox{.7}{$\xy(0,0)*{\includegraphics[scale=.75]{figs/slnwebs/ladderc.eps}};(-2.5,-10.5)*{\scriptstyle S^<_1};(17,-10.5)*{\scriptstyle S^<_2};(2,-1)*{\scriptstyle T};(-9.5,10.5)*{\scriptstyle S_1};(10,10.5)*{\scriptstyle S_2}\endxy$}.
\]
The weight $\mathrm{wt}(u)$ is now given by $\ell(S_1,T)-\ell(T,S_2^<)$, that is, by counting how many pairs of the set $T\times S^<_2$ are strictly ordered and subtract the number of strictly ordered pairs of $S_1\times T$. Since $S_1=S^<_1\cup T$, this is the same as
\begin{equation}\label{eq-degshifta}
\mathrm{wt}(u)=\ell(S_1,T)-\ell(T,S_2^<)=\ell(S_1^<,T)-\ell(T,S_2^<)-\frac{1}{2}|T|(|T|-1).
\end{equation}
We are going to show that the map $\iota$ preserves the degree. The other direction follows in a similar vein, since both algorithm are inverses, and is omitted.

To proof that $\iota$ preserves the degree we can use a similar induction as in the Lemmas \ref{lem-allworks} and \ref{lem-allworks2} before. One easily verifies that the small cases, i.e. the empty shift and all possible flows on caps and cups, preserve the degree. The shift of the degree
\begin{equation}\label{eq-degshiftb}
a=\sum_{i=0}^{N^j-1}i
\end{equation}
from Definition \ref{defn-degpart1} is exactly the shift by $\frac{1}{2}|T|(|T|-1)$, because $|N^j|=|T|$. For example, if the first step is an empty shift, then $S^<_1=T=\{n,\dots,1\}$ and $S^<_2=\emptyset$ which gives the desired answer.

For a $\mathfrak{gl}_n$-web with a flow $u_f$ and $\iota(u_f)=\vec{T}$, we can assume that the degree is preserved for $u^<_{f^<}$. Hence, we only have to verify that the degree is still preserved in the last step of the algorithm. To see this we note that the three terms $\ell(S^<_1,T)$, $\ell(T,S^<_2)$ and $\frac{1}{2}|T|(|T|-1)$ from \eqref{eq-degshifta} are the three numbers from Definition \ref{defn-degpart1}, i.e.
\[
\ell(S_1,T)=|\mathsf{A}^{k\succ N}(\vec{T}^j)|,\ell(T,S_2^<)=|\mathsf{R}^{k\succ N}(\vec{T}^j)|\;\text{ and }\; \frac{1}{2}|T|(|T|-1)=a.
\]
The proof completes: both, $\mathrm{wt}$ and $\mathrm{deg}_{\mathrm{BKW}}$ are locally the same and are both defined inductively.
\end{proof}

\subsubsection{The evaluation algorithm}\label{subsub-evaluation}
We conclude this part by giving an algorithm to evaluate closed $\mathfrak{gl}_n$-webs $w$.

\begin{defn}\label{defn-evaluation}(\textbf{Evaluation of $\mathfrak{gl}_n$-webs}) Given a $\mathfrak{gl}_n$-web $u\in W_n(\vec{k})\cong\mathrm{Inv}_{\dot{\Uu}_q(\mathfrak{gl}_n)}(\Lambda^{\vec{k}}\bC^n)$ together with a sequence 
of $F_i^{(j)}$ generating it, i.e.
\[
u=\prod_{k=1}^{s}F_{i_k}^{(j_k)}v_{(n^{\ell})},
\]
we assign to it a set $\mathrm{ev}_u=\{\vec{T}_1,\dots,\vec{T}_a\}$ of standard $n$-multitableaux $\vec{T}_{b}$ inductively as follows.
\begin{itemize}
\item[(1)] Set $\mathrm{ev}_u^0=\{\emptyset\}$, where $\emptyset$ denotes the empty $n$-multitableaux.
\item[(2)] In each step $1\leq k\leq s$ add certain (explained below) new $n$-multitableaux $\vec{T}^k$ to $\mathrm{ev}_u^{k-1}$ and obtain a new set $\mathrm{ev}_u^{k}$.
\item[(3)] After each step $1\leq k\leq s$ remove all old $n$-multitableaux $\vec{T}^{k-1}$ from $\mathrm{ev}_u^{k}$.
\item[(4)] Repeat (2)+(3) until $k=s$. Set $\mathrm{ev}_u=\mathrm{ev}_u^{s}$.
\end{itemize}
The way to decide which $n$-multitableaux $\vec{T}^k$ should be added in the $k$-th step is to take all possible ways to add $j_k$ nodes with residue $i_k$ labeled $k$ to a $\vec{T}^{k-1}$ such that the result is again a standard $n$-multitableaux. Do this for all possible $\vec{T}^{k-1}$.
The \textit{evaluation} of a closed $\mathfrak{gl}_n$-web $w\in\mathrm{End}_{\dot{\Uu}_q(\mathfrak{gl}_n)}((n^{\ell}))$ is then defined to be
\[
\mathrm{ev}(w)=\sum_{\vec{T}\in\mathrm{ev}_w}q^{\mathrm{deg}_{\mathrm{BKW}}(\vec{T})}\in\bN[q,q^{-1}].
\]
\end{defn}

\begin{ex}\label{ex-evaluation}
Consider two circles as a $\mathfrak{gl}_2$-web $w$ in the $\mathfrak{gl}_2$ case. We know in this case that the evaluation should give $[2]^2=q^2+2+q^{-2}\in\bN[q,q^{-1}]$. We can write $w$ as a string of $F_i^{(j)}$ as follows.
\[
\scalebox{.7}{$\xy
(0,0)*{\includegraphics[scale=.75]{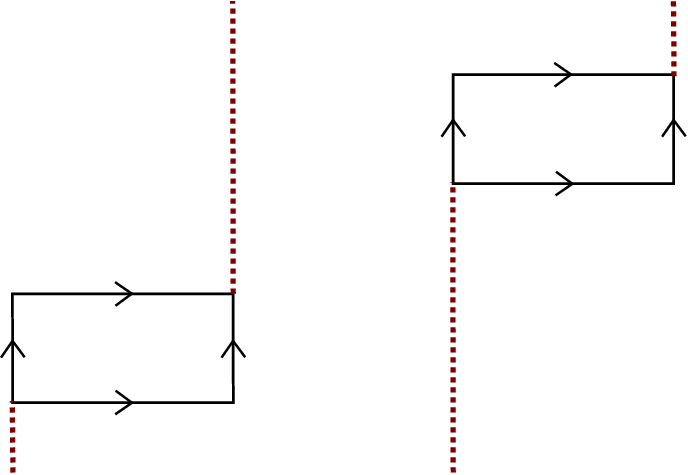}};
(-40,-28)*{2};
(-12.5,-28)*{0};
(16.5,-28)*{2};
(44,-28)*{0};
(-27.5,-18)*{F_1};
(-40,-14)*{1};
(-12.5,-14)*{1};
(16.5,-14)*{2};
(44,-14)*{0};
(-27.5,-4.25)*{F_1};
(-40,0)*{0};
(-12.5,0)*{2};
(16.5,0)*{2};
(44,0)*{0};
(27.5,10)*{F_3};
(-40,14)*{0};
(-12.5,14)*{2};
(16.5,14)*{1};
(44,14)*{1};
(27.5,24)*{F_3};
(-40,28)*{0};
(-12.5,28)*{2};
(16.5,28)*{0};
(44,28)*{2};
\endxy$}.
\]
Hence, because we also have an empty shift at the bottom, we get $F_3F_3F_1F_1F_2^{(2)}$ for $w$. Recall that we have a shift of residues given by the number of twos at the bottom. We get the four $2$-multitableaux
\[
\vec{T}_1=\left(\;\xy (0,0)*{\begin{Young}1& 2\cr 4\cr\end{Young}}\endxy\;,\;\xy (0,0)*{\begin{Young}1& 3\cr 5\cr \end{Young}}\endxy\;\right)\quad\text{and}\quad
\vec{T}_2=\left(\;\xy (0,0)*{\begin{Young}1& 2\cr 5\cr\end{Young}}\endxy\;,\;\xy (0,0)*{\begin{Young}1& 3\cr 4\cr \end{Young}}\endxy\;\right),
\]
\[
\vec{T}_3=\left(\;\xy (0,0)*{\begin{Young}1& 3\cr 4\cr\end{Young}}\endxy\;,\;\xy (0,0)*{\begin{Young}1& 2\cr 5\cr \end{Young}}\endxy\;\right)\quad\text{and}\quad
\vec{T}_4=\left(\;\xy (0,0)*{\begin{Young}1& 3\cr 5\cr\end{Young}}\endxy\;,\;\xy (0,0)*{\begin{Young}1& 2\cr 4\cr \end{Young}}\endxy\;\right),
\]
because in the first step (the one for $F_2^{(2)}$) we have exactly one option where we can add two nodes with residue $2$ to the empty $2$-multitableaux. Then we have two choices to add nodes for the two $F_1$ and the same happens for the two $F_2$. The reader should check that the degrees for the $2$-multitableaux from $\vec{T}_1$ to $\vec{T}_4$ are $2,0,0,-2$. These are exactly the powers of the $q$ in $[2]^2$.

Note that the way to obtain $w$ as a string of $F_i^{(j)}$ is far from being unique. For example
\[
\scalebox{.7}{$\xy
(0,0)*{\includegraphics[scale=.75]{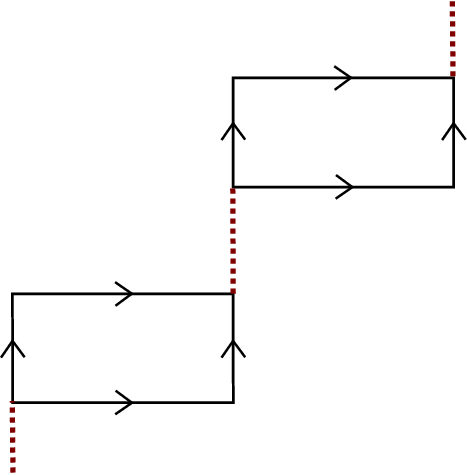}};
(-26,-28)*{2};
(2,-28)*{0};
(30,-28)*{0};
(-14,-18)*{F_1};
(-26,-14)*{1};
(2,-14)*{1};
(30,-14)*{0};
(-14,-4)*{F_1};
(-26,0)*{0};
(2,0)*{2};
(30,0)*{0};
(14,9.5)*{F_2};
(-26,14)*{0};
(2,14)*{1};
(30,14)*{1};
(14,23)*{F_2};
(-26,28)*{0};
(2,28)*{0};
(30,28)*{2};
\endxy$}.
\]
This time we get
\[
\vec{T}_{1,2}=\left(\;\xy (0,0)*{\begin{Young}1& -\cr\end{Young}}\endxy\;,\;\xy (0,0)*{\begin{Young}2& -\cr\end{Young}}\endxy\;\right)\quad\text{or}\quad
\vec{T}_{3,4}=\left(\;\xy (0,0)*{\begin{Young}2& -\cr\end{Young}}\endxy\;,\;\xy (0,0)*{\begin{Young}1& -\cr \end{Young}}\endxy\;\right),
\]
where the $-$ should be filled with either $3$ in the first and $4$ in the second or vice versa.

A crucial difference (also from the viewpoint of the $\mathfrak{gl}_n$-link polynomials) is to change the sequence for the two circles $w=F_2F_2F_1F_1$ to $w^{\prime}=F_2F_1F_2F_1$. This gives the following $\mathfrak{gl}_2$-web.
\[
\scalebox{.7}{$\xy
(0,0)*{\includegraphics[scale=.75]{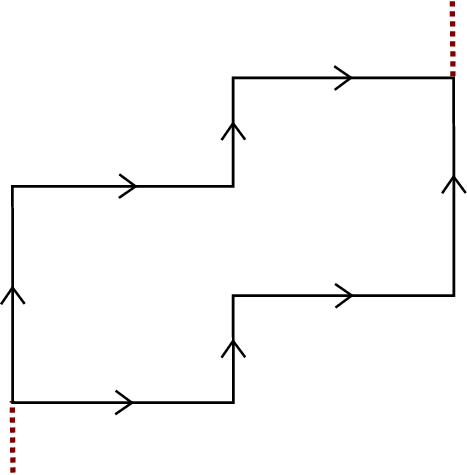}};
(-26,-28)*{2};
(2,-28)*{0};
(30,-28)*{0};
(-14,-18)*{F_1};
(-26,-14)*{1};
(2,-14)*{1};
(30,-14)*{0};
(14,-4.5)*{F_2};
(-26,0)*{1};
(2,0)*{0};
(30,0)*{1};
(-14,9.5)*{F_1};
(-26,14)*{0};
(2,14)*{1};
(30,14)*{1};
(14,23)*{F_2};
(-26,28)*{0};
(2,28)*{0};
(30,28)*{2};
\endxy$}.
\]
The algorithm gives now only the two $2$-multitableaux
\[
\vec{T}_{1}=\left(\;\xy (0,0)*{\begin{Young}1& 2\cr\end{Young}}\endxy\;,\;\xy (0,0)*{\begin{Young}3& 4\cr\end{Young}}\endxy\;\right)\text{ or }
\vec{T}_{2}=\left(\;\xy (0,0)*{\begin{Young}3& 4\cr\end{Young}}\endxy\;,\;\xy (0,0)*{\begin{Young}1& 2\cr \end{Young}}\endxy\;\right),
\]
because the nodes with labels $2$ and $3$ switch their residue. The two $2$-multitableaux are of degree $1$ and $-1$ giving the evaluation $q+q^{-1}=[2]\in\bN[q,q^{-1}]$ as expected.
\end{ex}

\begin{thm}\label{thm-evaluation}
The evaluation of $\mathfrak{gl}_n$-webs is independent of the choices involved. Moreover, for any two $\mathfrak{gl}_n$-webs $u,v\in W_n(\vec{k})$ the evaluation in Definitions \ref{defn-kupform} and \ref{defn-evaluation} satisfy ($w=v^*u$)
\[
\mathrm{ev}(v^*u)=\sum_{\vec{T}\in\mathrm{ev}_w}q^{\mathrm{deg}_{\mathrm{BKW}}(\vec{T})}=q^{-d(\vec{k})}\langle u, v\rangle_{\mathrm{Kup}}=q^{-d(\vec{k})}\langle u,v\rangle_{\mathrm{Shap}},
\]
i.e. the evaluation using $n$-multitableaux gives (up to a shift by $-d(\vec{k})$) the Kuperberg bracket 
$\langle\cdot,\cdot\rangle_{\mathrm{Kup}}$.
\end{thm}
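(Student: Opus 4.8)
The plan is to reduce the statement to the already-established facts about the two growth algorithms and the categorical/pictorial $q$-skew Howe duality, plus the interpretation of closed $\mathfrak{sl}_n$-webs as intertwiners via Theorem~\ref{thm-pifunctor}. First I would fix $u,v\in W_n(\vec{k})$ and, using Lemma~\ref{lem-webasF}, write both as explicit strings of divided powers $F_i^{(j)}$ acting on the highest weight vector $v_{(n^\ell)}$; gluing $v^*$ on top of $u$ then exhibits $w=v^*u$ as a single string $\prod_k F_{i_k}^{(j_k)}v_{(n^\ell)}$ ending in a weight with all $k_i\in\{0,n\}$, i.e.\ as a closed $\mathfrak{sl}_n$-web. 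By Theorem~\ref{thm-pifunctor} this closed web is an endomorphism of the trivial representation, hence a scalar in $\bC(q)$, and by Proposition~\ref{prop-kupshap} together with Definition~\ref{defn-kupform} that scalar, after the normalization shift $d(\vec{k})$, is exactly $\langle u,v\rangle_{\mathrm{Kup}}=\langle u,v\rangle_{\mathrm{Shap}}$. So it remains to show $\mathrm{ev}(w)=\sum_{\vec{T}\in\mathrm{ev}_w}q^{\mathrm{deg}_{\mathrm{BKW}}(\vec{T})}$ equals this scalar, and that it is independent of the chosen $F$-string.

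For the identification of $\mathrm{ev}(w)$ with the scalar, the key is Theorem~\ref{thm-kkele}: applying the string of $F_i^{(j)}$'s to $v_{(n^\ell)}$ and expanding in elementary tensors, the coefficient of the (here unique, trivial) elementary tensor $x_{\vec{S}}$ with $\vec S$ corresponding to the empty boundary is $\sum_{u_f\in Fl(w)}(-q)^{\mathrm{wt}(u_f)}$, where the sum is over all flows on $w$ with the given (trivial) boundary. Now Proposition~\ref{prop-extgrowth} gives a bijection $\iota$ between such flows $w_f$ and standard $n$-multitableaux $\vec T$, and the evaluation-algorithm set $\mathrm{ev}_w$ of Definition~\ref{defn-evaluation} is precisely the image under $\iota$ of all flows on $w$ (the ``add $j_k$ nodes of residue $i_k$ in all legal ways'' step is literally the flow-extension step of Definition~\ref{defn-exgrowth} read through $\iota$). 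Moreover, Proposition~\ref{prop-extgrowthdeg} tells us $\mathrm{deg}_{\mathrm{BKW}}(\iota(w_f))=\mathrm{wt}(w_f)$. Since $w$ is closed, every flow on it has even weight (all local vertex weights are integers and the signs work out so that, as the example computations suggest, only non-negative even powers of $q$ appear — this follows because $w=v^*u$ is a genuine closed web so $(-q)^{\mathrm{wt}(w_f)}$ has positive sign for the surviving terms; more carefully, it follows from Theorem~\ref{thm-kkele} applied to $v^*u$ and the fact that the Shapovalov pairing of $u$ with itself-type expressions lands in $\bN[q,q^{-1}]$ for such closed webs). Hence $\sum_{u_f\in Fl(w)}(-q)^{\mathrm{wt}(u_f)}=\sum_{\vec T\in\mathrm{ev}_w}q^{\mathrm{deg}_{\mathrm{BKW}}(\vec T)}$, giving the central equality.

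Independence of choices then comes for free: the left-hand side $\sum_{u_f\in Fl(w)}(-q)^{\mathrm{wt}(u_f)}$ is manifestly intrinsic to the closed web $w$ (it is the Kuperberg bracket by the reasoning above, or directly by Theorem~\ref{thm-kkele}), so any two $F$-strings generating the same $w$ — which they do by Lemma~\ref{lem-webasF} up to isotopy, and $\iota,\mathrm{g}$ were shown to respect nothing finer than the web-with-flow data — yield the same value of $\mathrm{ev}(w)$. I would phrase this as: two admissible strings for $w$ produce, via Definition~\ref{defn-webtotab}, two sets of $n$-multitableaux both of which are in bijection (degree-preserving, by Proposition~\ref{prop-extgrowthdeg}) with $Fl(w)$, hence have equal $q$-generating functions.

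The main obstacle I expect is the sign bookkeeping: Theorem~\ref{thm-kkele} produces $(-q)^{\mathrm{wt}(u_f)}$, whereas the evaluation algorithm and the Shapovalov form produce honest powers $q^{\mathrm{deg}_{\mathrm{BKW}}}$ with positive coefficients. Reconciling these requires checking that for a \emph{closed} web arising as $v^*u$ all contributing flows have even weight (so $(-q)^{\mathrm{wt}}=q^{\mathrm{wt}}$), or alternatively tracking the sign conventions in the tags/leashes (the $(-1)^{a(n-a)}$ in~\ref{eq-tag} and in Definition~\ref{defn-pifunctor}) and in $v^*$ carefully enough to see the signs cancel globally. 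The cleanest route is probably to invoke Proposition~\ref{prop-kupshap} and the known positivity of the $q$-Shapovalov form on these highest-weight modules, so that the equality of $q$-generating functions forces the weights of surviving flows to be even, rather than proving evenness by a direct combinatorial argument on vertex weights. Everything else is an assembly of Lemma~\ref{lem-webasF}, Proposition~\ref{prop-extgrowth}, Proposition~\ref{prop-extgrowthdeg}, Theorem~\ref{thm-kkele}, Theorem~\ref{thm-pifunctor} and Proposition~\ref{prop-kupshap}, with no new ideas needed.
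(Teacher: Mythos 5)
Your toolkit is the right one (Lemma~\ref{lem-webasF}, Proposition~\ref{prop-extgrowth}, Proposition~\ref{prop-extgrowthdeg}, Theorem~\ref{thm-kkele}, Proposition~\ref{prop-kupshap}), and your independence-of-choices argument is essentially the paper's (determinism of the algorithm, Lemmata~\ref{lem-allworks} and~\ref{lem-allworks2}, and the degree-zero normalization of empty shifts). The genuine gap is the sign reconciliation that you flag yourself and then resolve incorrectly. Theorem~\ref{thm-kkele} produces coefficients $(-q)^{\mathrm{wt}(w_f)}$, while the evaluation algorithm produces $q^{\mathrm{deg}_{\mathrm{BKW}}(\vec{T})}$ with $\mathrm{deg}_{\mathrm{BKW}}(\iota(w_f))=\mathrm{wt}(w_f)$; your proposed fix, namely that every flow on a closed web $w=v^*u$ has even weight, is false. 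The paper's own Example~\ref{ex-evaluation} is a counterexample: the single circle $w^{\prime}=F_2F_1F_2F_1v_{(2^1)}$ for $n=2$ carries exactly two flows, of weights $+1$ and $-1$, yet the evaluation is $[2]=q+q^{-1}$ with positive coefficients. Your fallback, invoking positivity of the $q$-Shapovalov form to force evenness, is circular, since identifying $\mathrm{ev}$ with the Shapovalov form is exactly what is to be proven; and positivity would not force evenness anyway, because of possible cancellations and, more importantly, because the tag/leash signs $(-1)^{a(n-a)}$ entering Definition~\ref{defn-pifunctor} and the passage to $v^*$ mean that the naive coefficient extraction from Theorem~\ref{thm-kkele} is not literally the positive sum appearing in Definition~\ref{defn-evaluation}.

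The paper avoids this global coefficient extraction altogether: after settling independence as above, it proves the middle equality locally, by checking that the multitableaux evaluation reproduces the defining relations of $\spid{n}$ from Definition~\ref{def-spid} with the correct coefficients (the quantum binomials in the digon and square relations are exactly accounted for by the weights of the local flows), while tracking how the shift $d(\vec{k})$ changes stepwise; the isotopy relations need not be checked thanks to Lemmata~\ref{lem-allworks} and~\ref{lem-allworks2}. Since the evaluation of closed webs is determined by these relations (compare Theorem~\ref{thm-MOYuniq} and Corollary~\ref{cor-kupismoy}), the equality with the Kuperberg bracket follows, and the last equality is Proposition~\ref{prop-kupshap}, as you say. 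If you insist on your global route, you would instead have to track the leash/tag signs and the $q$-antilinearity and shift in Definition~\ref{defn-kupform} precisely enough to see where the factors $(-1)^{\mathrm{wt}}$ are absorbed; that is exactly the ``exhausting calculation'' which the local relation-checking route breaks into small verifiable pieces.
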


\begin{proof}
To prove that the algorithm is well-defined we observe that the procedure is deterministic, i.e. the algorithm itself can not run into ambiguities.

To see that it is independent of the involved choices note that the algorithm is just a way to find possible flow lines on $u$ under the interpretation given in Definition \ref{defn-webtotab}. That it is independent of the choices, i.e. how to write a certain local move, and isotopies follows now from the Lemmas \ref{lem-allworks} and \ref{lem-allworks2}. To be more precise, if we start with two different $n$-multitableaux that correspond to the same flow on a fixed $\mathfrak{gl}_n$-web $u$ (including isotopies). Then we can convert both to the $\mathfrak{gl}_n$-web framework and we can use the isotopy invariance to see that they agree.

That it is also independent of the highest weight vector follows from Theorem \ref{thm-kkele} and the observation that we have normalized the degree in such a way that all empty shifts are of degree zero. Hence, since tensor products of the trivial representation have an, up to a scalar, unique basis vector, Theorem \ref{thm-kkele} and our normalization imply that the resulting evaluation $\mathrm{ev}(u)$ is a fixed element in $\bN[q,q^{-1}]$.

The third equality is a consequence of Proposition \ref{prop-kupshap}. Hence, it only remains to show the second equality. This equality can be proven using Theorem \ref{thm-kkele} again.

That is, one needs to show that the coefficients in the relations given in Definition \ref{def-spid} are given by the weight of the local flows. Furthermore, one has to take the change of $\vec{k}$ into account to see how the shift $d(\vec{k})$ changes stepwise. This is a straightforward, but exhausting, calculation and is omitted (although, because of the Lemmas \ref{lem-allworks} and \ref{lem-allworks2}, we do not have to check the isotopy relations). For example, if $n=3$, then a closed circle (i.e. \eqref{eq-digon1} with $a+b=3$) has three flows of degree $2,0,-2$ giving $q^2+1+q^{-2}=[3]$.
\end{proof}

\subsubsection{An application: dual canonical bases and \texorpdfstring{$\mathfrak{gl}_n$}{gln}-webs}\label{subsub-dualcan}

As an application of Theorem \ref{thm-evaluation} we will conclude this section by giving an explicit and algorithmic if-and-only-if-condition for a $\mathfrak{gl}_n$-web $u$ to be dual canonical. Dual canonical for $\mathfrak{gl}_n$-webs means canonical on the $q$-skew Howe dual side, see e.g. \cite[ Corollary 4.21]{mack1}. Thus, in our notation, having positive exponent properties.
The reader interested in a more detailed discussion about these bases can check for example \cite{bk1}, \cite{bs3} or \cite{lu} and a 
discussion related to $\mathfrak{gl}_n$-webs can be found in \cite{mack1}.

Recall that there is a unique $q$-antilinear \textit{bar involution} $\phi$ on $W_n(\Lambda)$ determined by $\phi(v_{\Lambda})=v_{\Lambda}$ and $\phi(Xv_{\Lambda})=\overline{X}v_{\Lambda}$ for a vector $v_{\Lambda}$ of highest weight $\Lambda$ and any $X\in\dot{\Uu}_q(\mathfrak{gl}_m)$, with $\overline{\cdot}$ being the usual bar involution on $\dot{\Uu}_q(\mathfrak{gl}_m)$. We can use the $q$-Shapovalov form $\langle\cdot,\cdot\rangle_{\mathrm{Shap}}$ on $W_n(\Lambda)$ (see e.g. \cite{my} before Corollary 4.10) to define \textit{Lusztig's symmetric bilinear form} by setting $(\cdot,\cdot)_{\mathrm{Lusz}}=\overline{\langle\cdot,\phi(\cdot)\rangle}_{\mathrm{Shap}}$. 

Moreover, it is known that $W_n(\Lambda)$ is parameterized by semistandard tableaux of shape $(n^{\ell})$, which we denote by $\mathrm{Std}^s((n^{\ell}))\subset\mathrm{Col}((n^{\ell}))$. For a column strict tableaux $T$ we can define the \textit{column word} $co(T)=(c_1,\dots,c_{n\ell})$ to be a sequence of the entries of the columns of $T$ read from top to bottom and then from left to right. Note that this sequence has length $n\ell$. Then the set $\mathrm{Col}((n^{\ell}))$ is partial order by
\[
T\leq T^{\prime}\Leftrightarrow c(T^{\prime})-c(T)\in\bN^{n\ell}\;\text{ with }\;c(T^{(\prime)})=(c^{(\prime)}_1,c^{(\prime)}_1+c^{(\prime)}_2,\dots,c^{(\prime)}_1+\dots+c^{(\prime)}_{n\ell}).
\]
Since we tend to use $n$-multipartitions and $n$-multitableaux instead let us state what this means in our notation. A column strict tableaux $T$ of shape $(n^{\ell})$ corresponds to a $n$-multipartition $\vec{\lambda}$ by subtracting from each row the row number and obtain a new column strict tableaux $\tilde T$. Read the $k$-th column from bottom to top to obtain in this way the $n{+}1{-}k$-th partition $\lambda_{n+1-k}$ of the $\vec{\lambda}=(\lambda_n,\dots,\lambda_1)$. It is easy to see that this process is in fact invertible (the usage $n+1-k$ instead of $k$ due to our reading convention for $n$-multipartitions).

Write $\vec{\lambda}_T$ for the corresponding $n$-multipartition. Then $T\leq T^{\prime}$ if and only if $\vec{\lambda}_{T}\trianglelefteq\vec{\lambda}_{T^{\prime}}$, where $\trianglelefteq$ is the dominance order from Definition \ref{defn-dominnancelambda}. As a small example consider the following.
\[
\xy (0,0)*{\begin{Young}1& 3\cr 2 & 4 \cr\end{Young}}\endxy\leq\xy(0,0)*{\begin{Young}1& 2\cr 3 & 4\cr\end{Young}}\endxy\;\;\text{ and }\;\;\left(\;\emptyset\;,\;
\xy (0,0)*{\begin{Young}&\cr &\cr\end{Young}}\endxy\;\right)\trianglelefteq \left(\;\xy (0,0)*{\begin{Young}\cr\end{Young}}\endxy\;,\;\xy (0,0)*{\begin{Young}&\cr \cr\end{Young}}\endxy\;\right).
\]

Note that the conversion of a column strict tableaux $T$ to a $\vec{S}=(S_1,\dots,S_k)$ is given by counting the multiplicities of the entry $r$ and obtain an $r$-element subset $S_r\subset\{n,\dots, 1\}$ by taking the column numbers in which the entry appears as elements of $S_r$. Our Proposition \ref{prop-extgrowth} is actually stronger: for each boundary condition $\vec{S}$ there exists a $\mathfrak{gl}_n$-web $u_f$ that realizes this condition. To see this note that, as explained above, one can covered $\vec{S}$ to a $n$-multipartition $\vec{\lambda}$, then fill $\vec{\lambda}$ in any standard way and use Proposition \ref{prop-extgrowth} to generate a $\mathfrak{gl}_n$-web $u_f$. Thus, it makes sense to write $x_T$ since this corresponds 1:1 to the elementary tensors $x_{\vec{S}}$ from Section \ref{subsub-reptheo}.

A standard argument shows that a canonical basis, if it exists, is unique for a given precanonical structure. For a more general discussion see e.g. \cite{web3}. Moreover, Lusztig and Kashiwara proved that there exists a canonical basis $\{b_T\mid T\in\mathrm{Std}^s((n^{\ell}))\}$ of $W_n(\Lambda)$ with respect to the precanonical structure given by the elementary tensors $\{x_T\mid T\in\mathrm{Col}((n^{\ell}))\}$, the bar involution $\phi$ and Lusztig's symmetric bilinear form $(\cdot,\cdot)_{\mathrm{Lusz}}$.

In order to state the condition we need to extend the notion of a \textit{canonical flow} $f_c$ for a fixed $\mathfrak{gl}_n$-web $u\in W_n(\vec{k})$. To understand the notion recall that, e.g. by Lemma \ref{lem-webasF}, any $\mathfrak{gl}_n$-web $u$ can be obtained from a string of $F_i^{(j)}$ acting on a suitable highest weight vector $v_h$. While the elements of $W_n(\Lambda)$ are indexed by semistandard (meaning only weakly increasing along columns, but strictly along rows) tableaux of shape $(n^{\ell})$, the elements of the tensor product $\Lambda^{k_1}\bC^n\otimes\dots\otimes\Lambda^{k_m}\bC^n$ are indexed by column strict tableaux of shape $(n^{\ell})$ and $W_n(\Lambda)$ is a direct summand of it. Let us denote by $\mathrm{sh}\in\bZ$ some shift. Then Theorem \ref{thm-kkele} says that
\begin{gather}\label{eq-dualcan1}
\begin{aligned}
u&=q^{\mathrm{sh}}x_T+\sum_{T\leq T^{\prime}}c(u,T^{\prime})x_{T^{\prime}},c(u,T^{\prime})\in\bN[q,q^{-1}],T,T^{\prime}\in\mathrm{Col}((n^{\ell}))
\\ \phantom{u}=&q^{\mathrm{sh}}x_{\vec{\lambda}_T}+\sum_{\vec{\lambda}_{T}\unlhd\vec{\lambda}_{T^{\prime}}}c(u,\vec{\lambda}_{T^{\prime}})x_{\vec{\lambda}_{T^{\prime}}},c(u,\vec{\lambda}_{T^{\prime}})\in\bN[q,q^{-1}],\vec{\lambda}_T,\vec{\lambda}_{T^{\prime}}\in\Lambda^+(c(\vec{\lambda}_{T^{(\prime)}}),c(\vec{k}),n).
\end{aligned}
\end{gather}
We do not have a positive exponent property in general. Note that we are mostly interested in the case when the inequalities are strict and the leading coefficient is $1$, because it is one condition for a vector to be (dual) canonical.

By Theorem \ref{thm-kkele} the flows encode the coefficients of $u$ in terms of elementary tensors. The canonical flow now should be the flow that encodes the leading coefficient in the decomposition above. Recall from the previous sections that a flow $f$ can be translated to a string $\vec{S}_f$ of elements of $\mathfrak{P}(\{n,\dots,1\})$ by looking at the boundary and to a $n$-multipartition $\vec{\lambda}_f$ by removing all numbers from its $n$-multitableaux $\vec{T}_f$ from Section \ref{sec-tabwebs}.

It is very important in the following that we assume that the strings that generate our $\mathfrak{gl}_n$-webs are not arbitrary, but in such a way that they do not connect nested, unconnected components. This is always possible as explained in Lemma \ref{lem-webasF}.

\begin{defn}\label{defn-canflow}(\textbf{Canonical flow})
Fix a $\mathfrak{gl}_n$-web $u$ and a sequence of $F_i^{(j)}$ generating $u$. The \textit{canonical flow} $f_c$ for $u$ is the flow that corresponds (via Proposition \ref{prop-extgrowth}) to the $n$-multitableaux $\vec{T}_c$ obtained inductively by placing $j_k$ nodes with residue $i_k$ in the rightmost possible position. We denote the corresponding $n$-multipartition by $\vec{\lambda}_c$.
\end{defn}

\begin{lem}\label{lem-canflow}
Given a fixed $\mathfrak{gl}_n$-web $u$. Then the canonical flow $f_c$ on $u$ exists, i.e. the algorithm from Definition \ref{defn-canflow} is well-defined.
Moreover, $\mathrm{deg}_{\mathrm{BKW}}(\vec{T}_c)=\mathrm{wt}(u_{f_c})=\mathrm{sh}$ for some constant $\mathrm{sh}\leq 0$ and for all flows $f$ on $u$ the corresponding $\vec{\lambda}_f$ are bigger in the dominance order.
Hence, the $\vec{\lambda}_c=\vec{\lambda}_T$ and $\mathrm{sh}$ is the shift from \eqref{eq-dualcan1}. This inequality is strict if and only if $\mathrm{sh}=0$.
\end{lem}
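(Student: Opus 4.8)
The plan is to transport everything to the combinatorics of $n$-multitableaux. By Proposition~\ref{prop-extgrowth} the maps $\iota$ and $\mathrm{g}$ are mutually inverse bijections between flows on $u$ and standard $n$-multitableaux, and under this correspondence the canonical flow $f_c$ matches the $n$-multitableau $\vec{T}_c$ built by the greedy ``rightmost placement'' rule of Definition~\ref{defn-canflow}; by Proposition~\ref{prop-extgrowthdeg} the correspondence preserves degree, so $\mathrm{wt}(u_{f_c})=\mathrm{deg}_{\mathrm{BKW}}(\vec{T}_c)$, and this common value is what we call $\mathrm{sh}$. Hence it remains to prove: (i) the greedy rule never gets stuck, so $\vec{T}_c$ (and $f_c$) exist and are unique; (ii) $\vec{\lambda}_c$ is the minimum, in the dominance order $\unlhd$, of the set of shapes $\vec{\lambda}_f$ of flows $f$ on $u$, which by Theorem~\ref{thm-kkele} is exactly the set of shapes occurring in~\ref{eq-can1}, so that $\vec{\lambda}_c=\vec{\lambda}_T$; (iii) $\mathrm{deg}_{\mathrm{BKW}}(\vec{T}_c)\le 0$; and (iv) the inequality in (ii) is strict — i.e.\ $f_c$ is the unique flow of shape $\vec{\lambda}_c$ — precisely when $\mathrm{sh}=0$.

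For (i) I would induct on the length $s$ of the chosen generating string $\mathrm{qH}(u)=F_{i_s}^{(j_s)}\cdots F_{i_1}^{(j_1)}$, in parallel with Lemmata~\ref{lem-allworks} and~\ref{lem-allworks2}. Deleting the leftmost factor $F_{i_s}^{(j_s)}$ gives a shorter web $u^{<}$ on which, by the inductive hypothesis, the greedy rule produces a well-defined flow $f_c^{<}$ with boundary datum $\vec{S}^{<}$; the only thing to check is that the set $S^{<}_{i_s}\setminus S^{<}_{i_s+1}$, out of which the horizontal edge of the last ladder must be coloured, has at least $j_s$ elements, so that a unique rightmost subset $T$ of size $j_s$ exists. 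Since $S^{<}_{i_s}$ has $k^{<}_{i_s}\ge j_s$ elements, the only way this could fail is if $S^{<}_{i_s}$ were almost entirely contained in $S^{<}_{i_s+1}$; this is exactly the configuration excluded by the hypothesis (coming from Lemma~\ref{lem-webasF}) that the generating string does not connect two nested, previously unconnected components. As all earlier steps were themselves greedy, $f_c^{<}$ involves no residual choice, so the output is a genuine standard $n$-multitableau by Lemma~\ref{lem-allworks2}, and uniqueness of the rightmost extension gives well-definedness.

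For (ii) I would again induct on $s$, comparing at step $k$ the shape $\vec{\lambda}_c^{k}$ produced greedily with the shape $\vec{\lambda}_f^{k}$ coming from an arbitrary flow $f$ on $u$ cut off after step $k$. Reading the dominance order as the partial-sum order on the concatenated sequence of part sizes (Definition~\ref{defn-dominnancelambda}), adding $j_k$ boxes of residue $i_k$ into the rightmost legal parts keeps every partial sum as small as legally possible; I would check that this operation is $\unlhd$-monotone, so that $\vec{\lambda}_c^{k}\unlhd\vec{\lambda}_f^{k}$ whenever $\vec{\lambda}_c^{k-1}\unlhd\vec{\lambda}_f^{k-1}$, the point requiring care being that the rightmost legal part can jump when one passes to a $\unlhd$-larger shape (handled using that boxes of residue $i_k$ are added along diagonals and the residue normalization of Definition~\ref{defn-tabcomb}). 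The identification $\vec{\lambda}_c=\vec{\lambda}_T$ is then immediate, since $\vec{\lambda}_T$ in~\ref{eq-dualcan1} is by definition the $\unlhd$-minimal shape appearing in the elementary-tensor expansion, which by Theorem~\ref{thm-kkele} is the $\unlhd$-minimal shape of a flow on $u$.

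The main obstacle is (iii), $\mathrm{deg}_{\mathrm{BKW}}(\vec{T}_c)\le 0$, which is the ``quite technical'' step promised in the text. I would expand $\mathrm{deg}_{\mathrm{BKW}}(\vec{T}_c)=\sum_{k}\mathrm{deg}(\vec{T}_c^{k})$ and bound each local term via Definition~\ref{defn-degpart1}: when the $j_k$ boxes of residue $i_k$ labelled $k$ are placed as far right as possible, every such box $N$ sits in (or after) the last part that admits an addable $i_k$-node, so $|\mathsf{A}^{i_k\succ N}(\vec{T}_c^{k})|\le|\mathsf{R}^{i_k\succ N}(\vec{T}_c^{k})|$, while the shift $-\binom{j_k}{2}$ from the repeated entry (Equation~\ref{eq-degshiftb}, matching the $-\frac{1}{2}|T|(|T|-1)$ of Equation~\ref{eq-degshifta}) only lowers the count further; hence $\mathrm{deg}(\vec{T}_c^{k})\le 0$ for every $k$ and $\mathrm{sh}\le 0$. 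Finally, (iv) follows by feeding this local analysis back into~\ref{eq-dualcan1}: if some flow $f\ne f_c$ has shape $\vec{\lambda}_c$, then at the first step where its $n$-multitableau diverges from $\vec{T}_c$ the greedy placement was not forced, which by the bound just established forces that local degree to be strictly negative, hence $\mathrm{sh}<0$; conversely, if $\mathrm{sh}=0$ the same bound shows the greedy placement is forced at every step, so $f_c$ is the unique flow of shape $\vec{\lambda}_c$ and the contribution $c(u,\vec{\lambda}_c)$ in~\ref{eq-dualcan1} vanishes, making the dominance inequality strict. The remaining bookkeeping — keeping track of the $(-q)^{\mathrm{wt}}$ signs in~\ref{eq-kkele} and matching the leading monomial $q^{\mathrm{sh}}$ with the canonical flow — is routine once (iii) is in hand.
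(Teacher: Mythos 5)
Your skeleton follows the paper's route: existence of $f_c$ by induction on the generating string using the second part of Lemma~\ref{lem-webasF}, the bound $\mathrm{sh}\le 0$ from the rightmost-placement rule, dominance minimality from the same rule, and $\vec{\lambda}_c=\vec{\lambda}_T$ via Theorem~\ref{thm-kkele}. However, two of the inferences you rely on are false as stated. In (iii), the per-node inequality $|\mathsf{A}^{i_k\succ N}(\vec{T}_c^{k})|\le|\mathsf{R}^{i_k\succ N}(\vec{T}_c^{k})|$ fails as soon as a divided power $F_{i_k}^{(j_k)}$ with $j_k\ge 2$ occurs: in the step-by-step count of Definition~\ref{defn-degpart1} the leftmost of the $j_k$ greedily placed nodes sees the $j_k-1$ positions filled later in the same step as addable nodes after it (already the first step $F_2^{(2)}$ in Example~\ref{ex-evaluation} gives $|\mathsf{A}^{\succ}|=1$ and $|\mathsf{R}^{\succ}|=0$). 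The correct accounting, and the one the paper uses, is that rightmost placement leaves \emph{no} addable node of the same residue after the step is completed; the same-step addables sum to exactly $a$ and are cancelled by the shift $-a$, so each step contributes $-\sum_N|\mathsf{R}^{\succ N}|\le 0$. Your conclusion $\mathrm{sh}\le 0$ survives, but not by treating $-a$ as an extra bonus on top of your inequality.

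The more serious gap is (iv). An unforced greedy choice does not make the local degree of that step negative: the alternative addable position lies \emph{before} (to the left of) the node the greedy rule chose, so it enters neither $\mathsf{A}^{\succ N}$ nor $\mathsf{R}^{\succ N}$. For the closed circle $u=F_1F_1v_{(2,0)}$ (with $n=2$) the first step is unforced yet contributes $0$; the value $\mathrm{sh}=-1$ only appears at the second step, when the canonical tableau fills the left-hand component and the previously placed node becomes a removable node after it. Conversely, ``$\mathrm{sh}=0\Rightarrow$ the greedy choice is forced at every step'' also fails: for the single arc $F_1v_{(2,0)}$ one has $\mathrm{sh}=0$ although the node may be placed in either component --- the alternative simply produces a strictly $\rhd$-bigger shape, so strictness is not endangered. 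What actually governs strictness is that $\mathrm{sh}=-\sum|\mathsf{R}^{\succ}|$ for $\vec{T}_c$, so $\mathrm{sh}<0$ is equivalent to the existence of a removable node of the relevant residue after some greedily placed node; such a node lets one swap entries and produce a second standard tableau, hence a second flow, of shape $\vec{\lambda}_c$, and conversely any $\vec{T}_f\neq\vec{T}_c$ with the same shape and residue sequence forces such a removable node. Your (iv) needs to be rebuilt around this equivalence. Finally, in (i) the assertion that $|S^{<}_{i_s}\setminus S^{<}_{i_s+1}|<j_s$ ``can only happen'' for nested unconnected components is precisely the strengthened invariant the paper carries through its induction (checking arcs and theta-webs as base cases and propagating it); Lemma~\ref{lem-webasF} only restricts which ladders the generating string may use and does not by itself give this property of the canonical flow, so that invariant still has to be proved inside your induction rather than quoted.
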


\begin{proof}
That the algorithm is well-defined, i.e. in each step one can place the correct number of nodes at the correct positions, follows again by induction on the number of vertices $V(u)$. The induction step is, as before, removing the last $F_i^{(j)}$ of the string that generates $u$. Then it is true for $u^<$ and we can check locally that it still works.

In fact, we prove something stronger. Recall that $u$ has a boundary string $\vec{k}=(k_1,\dots,k_m)$ and $\vec{S}_{u_c}=(S_1,\dots,S_m)$ denotes the boundary of the canonical flow on $u$ (if it exists) and the $S_i$ are subsets of $\{n,\dots,1\}$. We show that $|S_i-S_{i+1}|<\mathrm{min}(k_i,n-k_{i+1})$ if and only if $S_k$ and $S_{k+1}$ are not connected and belong to two nested components of $u$. Moreover, we also want to show at the same time that $u$ has a canonical flow in the sense of Definition \ref{defn-canflow}.

First we note that we are only interested in the boundary, that is we can ignore internal closed components and that the statement is certainly true for all shifts. So let $u$ be a collection of arcs, i.e. $V(u)=0$. We have to check three cases. These are
\[
\scalebox{.7}{$\xy
(0,0)*{\includegraphics[scale=.75]{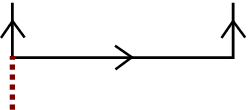}};
(-14,8)*{S_i};
(14,8)*{S_{i+1}};
(0,2.5)*{\scriptstyle \{k_i,\dots,1\}};
\endxy$},
\quad
\scalebox{.7}{$\xy
(0,0)*{\includegraphics[scale=.75]{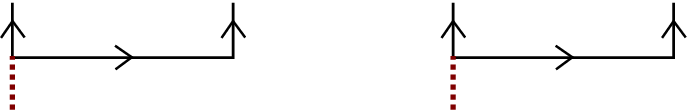}};
(-14,8)*{S_i};
(14,8)*{S_{i+1}};
(-28,2.5)*{\scriptstyle \{k_i,\dots,1\}};
(27,2.5)*{\scriptstyle \{k_{i+1},\dots,1\}};
\endxy$},
\quad
\scalebox{.7}{$\xy
(0,0)*{\includegraphics[scale=.75]{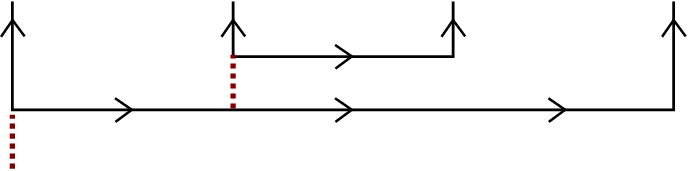}};
(14,12)*{S_i};
(42,12)*{S_{i+1}};
(0,6.5)*{\scriptstyle \{k_i,\dots,1\}};
(0,0)*{\scriptstyle \{k_{i+1},\dots,1\}};
\endxy$}.
\]
In all these cases the canonical flow is displayed above. Hence, the canonical exists and satisfies the extra condition from above (recall that leashes have flow $\{n,\dots,1\}$ which splits into two disjoint flows at the top). Note that $\{k_i,\dots,1\}-\{n,\dots,n-k_{i+1}+1\}=\{\mathrm{min}(k_i,n-k_{i+1}),\dots,1\}$.

Moreover, that the statement is true if $u$ has exactly one vertex follows in the same fashion by checking three extra cases involving a component that looks like a theta-web (we need this case too, because a ladder can have two vertices).

The main observation now is that one can always apply every non-killing divided power of $F_i$ in the first two cases and the canonical flow will carry over, but one could run into problems in the last case.

Now assume $|V(u)|>1$. Remove the last ladder from $u$ and obtain a $\mathfrak{gl}_n$-web $u^<$. Note that it is clear by the case-by-case check above that the statement will carry over from $u^<$ to $u$ if this last ladder was an arc. Thus, we can freely assume that the last ladder has at least one vertex and we can use the induction hypothesis on $u^<$. But then the statement follows also for $u$, since we know by Lemma \ref{lem-webasF} that the last $F_i^{(j)}$ does not connect nested, unconnected components of $u^<$. But then, since the last $F_i^{(j)}$ does not kill $u^<$, we can apply the procedure from Definition \ref{defn-canflow} to the canonical flow on $u^<$, because of the translation between flows and $n$-multitableaux from Section \ref{sec-tabwebs}. Moreover, the other statement also carries over. Thus, the algorithm is well-defined.

We observe that the second statement can in fact be strengthen. That is, each local step is of degree lower or equal zero (and therefore of course also the total result). To see this note that if a step would have addable nodes of the same residue to the right, then we would have placed them differently. Thus, the only contributions to the degree comes from removable nodes which always lower the degree and the total degree will be some constant $\mathrm{sh}\leq 0$.
That all other flows give bigger $n$-multipartitions follows immediately from the definition of the dominance order, since we place the nodes in the rightmost possible positions. But in general there can be non-canonical flows $f$ with the same $n$-multipartition $\vec{\lambda}_{f}=\vec{\lambda}_{f_c}$, e.g. if $u$ has a connected, internal, closed $\mathfrak{gl}_n$-web as for example a closed circle.

But if $\mathrm{sh}=0$, then this inequality has to be strict. This follows because the residue sequence of the $n$-multitableaux $\vec{T}$ have to be the same for all flows on $u$. That is $\vec{\lambda}_{f}=\vec{\lambda}_{f_c}$ and $f\neq f_c$ implies the existence of removable nodes, because $f\neq f_c\Leftrightarrow\vec{T}_f\neq\vec{T}_{f_c}$ and, by the argument above, $\vec{T}_{f_c}$ does not have addable nodes. But then $\mathrm{sh}<0$.

In the same vein, if $\mathrm{sh}<0$, then the existence of removable nodes allows use to define another $n$-multitableaux $\vec{T}_f\neq\vec{T}_{f_c}$ with $\vec{\lambda}_{f}=\vec{\lambda}_{f_c}$ by switching the corresponding entries of the nodes.
\end{proof}

\begin{ex}\label{ex-canflow}
The reader is invited to check that our notion of canonical flow for arc-diagrams in the case $n=2$ gives counter-clockwise oriented circles in the notation of Brundan and Stroppel \cite{bs1} and in the case $n=3$ our definition gives exactly Khovanov and Kuperberg's notion of canonical flows for non-elliptic $\mathfrak{sl}_3$-webs \cite{kk}.

The $\mathfrak{sl}_2$-webs that do not satisfy $\mathrm{sh}=0$ will be all $\mathfrak{sl}_2$-webs with internal circles (aka closed $\mathfrak{sl}_2$-\textit{subwebs}) and all $\mathfrak{sl}_3$-webs with internal digons or closed $\mathfrak{sl}_3$-subwebs.

A bigger example is the $\mathfrak{gl}_4$-web from the Examples \ref{exa-flow} and \ref{ex-websasF}. Here the resulting $4$-multitableaux is
\[
\vec{T}_c=\left(\;\xy (0,0)*{\begin{Young}1& 2 & 3 & 4\cr 5 & 14 \cr\end{Young}}\endxy\;,\;\xy (0,0)*{\begin{Young}1& 2 & 3 & 4\cr 5 & 12 & 13\cr 17\cr\end{Young}}\endxy\;,\;\xy (0,0)*{\begin{Young}1& 2 & 3 &4 & 19\cr 5 & 6 & 11\cr 15 & 16 & 18\cr\end{Young}}\endxy\;,\;\xy (0,0)*{\begin{Young}1& 2 & 3 & 4 & 19\cr 5 & 6 & 9 & 10\cr 7 & 8 & 12 & 13\cr\end{Young}}\endxy\;\right).
\]
Thus, by the Theorem \ref{thm-dualcanwebs} below, this $\mathfrak{gl}_4$-web is not dual-canonical because the degree of $\vec{T}_c$ is $-1$. In fact, only the node labeled $13$ is not of degree zero, but of degree $-1$.
\end{ex}

We are now ready to state the condition for a $\mathfrak{gl}_n$-web to be dual canonical. It is worth noting that the conditions (b) and (c) can be checked by the algorithm from Definition \ref{defn-evaluation}. Recall the shift $d(\vec{k})$ in the definition of the Kuperberg bracket, see \eqref{eq-shift}.

\begin{thm}\label{thm-dualcanwebs}
Given a $\mathfrak{gl}_n$-web $u\in W_n(\vec{k})$. The following are equivalent.
\begin{itemize}
\item[(a)] The $\mathfrak{gl}_n$-web $u$ is a dual canonical basis element.
\item[(b)] The evaluation of $w=u^*u$ satisfies $\mathrm{ev}(w)=q^{-d(\vec{k})}(1+\mathrm{rest}(w))$ with $\mathrm{rest}(w)\in q\bN[q]$ (positive exponent property).
\item[(c)] $\mathrm{ev}_u$ does not contain $n$-multitableaux $\vec{T}$ with $\mathrm{deg}_{\mathrm{BKW}}(\vec{T})\leq 0$ except the canonical $n$-multitableaux $\vec{T}_c$ which is of degree zero.
\end{itemize}
Moreover, a $\mathfrak{gl}_n$-web $u\in W_n(\vec{k})$ that does contain a closed $\mathfrak{gl}_n$-subweb is never dual canonical.
\end{thm}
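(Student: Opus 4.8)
The plan is to prove the equivalence of (a), (b), (c) first, and then deduce the final claim about closed $\mathfrak{sl}_n$-sub-webs as a corollary of the (b)$\Rightarrow$(a) direction together with Lemma~\ref{lem-canflow}.

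For the core equivalence, I would work with the chain $\text{(c)}\Leftrightarrow\text{(b)}\Rightarrow\text{(a)}\Rightarrow\text{(b)}$. The equivalence $\text{(b)}\Leftrightarrow\text{(c)}$ is essentially a restatement: by Definition~\ref{defn-evaluation} and Theorem~\ref{thm-evaluation} we have $\mathrm{ev}(w)=\sum_{\vec{T}\in\mathrm{ev}_w}q^{\mathrm{deg}_{\mathrm{BKW}}(\vec{T})}=q^{-d(\vec{k})}\langle u,u\rangle_{\mathrm{Kup}}$, so the positivity condition (b) on $\mathrm{ev}(w)$ is literally the statement that, after dividing by $q^{-d(\vec{k})}$, every $n$-multitableau in $\mathrm{ev}_w$ has strictly positive degree except for exactly one of degree $0$; Lemma~\ref{lem-canflow} identifies that the minimal-degree (leading) contribution is the canonical flow $f_c$ with $\mathrm{deg}_{\mathrm{BKW}}(\vec{T}_c)=\mathrm{sh}$, so $\mathrm{sh}=0$ (and uniqueness of the degree-$0$ tableau) is exactly (c). Here I must be careful that $w=u^*u$ and that $d(\vec{k})$ is the right normalization so that the ``$1$'' in (b) corresponds to the degree-$0$ term; this follows from the normalization built into Definition~\ref{defn-kupform} and the computation in the proof of Theorem~\ref{thm-evaluation}.

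For $\text{(b)}\Rightarrow\text{(a)}$ I would use the standard characterization of the dual canonical / lower global crystal basis: a vector is the dual canonical basis element $b_T$ if and only if it is bar-invariant and has an expansion $b_T=x_T+\sum_{T<T'}c(b_T,T')x_{T'}$ with $c(b_T,T')\in q\mathbb{Z}[q]$ (the positive exponent property on the off-diagonal). By Theorem~\ref{thm-kkele} and Equations~\ref{eq-dualcan1}--\ref{eq-can1}, the $\mathfrak{sl}_n$-web $u$ expands as $u=q^{\mathrm{sh}}x_{\vec\lambda_T}+\sum_{\vec\lambda_T\lhd\vec\lambda_{T'}}c(u,\vec\lambda_{T'})x_{\vec\lambda_{T'}}$ with all $c(u,\vec\lambda_{T'})\in\mathbb{N}[q,q^{-1}]$ and $\mathrm{sh}=\mathrm{wt}(u_{f_c})=\mathrm{deg}_{\mathrm{BKW}}(\vec{T}_c)$; the $\mathfrak{sl}_n$-webs are manifestly bar-invariant (they are honest diagrams, fixed by $\phi$ by construction, e.g.\ Corollary~4.21 in~\cite{mack1}). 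Condition (b), via $\text{(b)}\Leftrightarrow\text{(c)}$, forces $\mathrm{sh}=0$, i.e.\ the leading coefficient is $1$. Then bar-invariance plus the triangularity forces the off-diagonal coefficients $c(u,\vec\lambda_{T'})$ to have \emph{strictly} positive $q$-exponents: any $q^0$- or negative-power term on the off-diagonal would, by pairing $u$ against itself with the $q$-Shapovalov form and using Theorem~\ref{thm-evaluation}, produce a degree-$\le 0$ tableau in $\mathrm{ev}_w$ other than $\vec{T}_c$, contradicting (c). Hence $u$ satisfies all defining properties of $b_T$, so $u=b_T$. The converse $\text{(a)}\Rightarrow\text{(b)}$ is immediate: a dual canonical $u$ has the positive-exponent expansion, so $\langle u,u\rangle_{\mathrm{Shap}}$ — being $\sum$ of $|c(u,\vec\lambda_{T'})|^2$-type contributions computed via Theorem~\ref{thm-kkele} — has leading term $1$ and the rest in $q\mathbb{N}[q]$ after the shift.

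Finally, for the claim that a $\mathfrak{sl}_n$-web $u\in W_n(\vec{k})$ containing a closed $\mathfrak{sl}_n$-sub-web is never dual canonical: by Lemma~\ref{lem-canflow}, a connected internal closed sub-web contributes non-trivially to $\mathrm{ev}_w$. Concretely, such a closed sub-web, generated via Lemma~\ref{lem-webasF} by some string of $F_i^{(j)}$'s, evaluates by Theorem~\ref{thm-evaluation} to a nontrivial element of $\mathbb{N}[q,q^{-1}]$ invariant under $q\mapsto q^{-1}$ — e.g.\ a closed circle for $n=2$ gives $[2]=q+q^{-1}$, a closed circle for $n=3$ gives $[3]=q^2+1+q^{-2}$ — and in every case this symmetric Laurent polynomial has a term of degree $\le 0$ distinct from (and in fact strictly below, or equal to) the would-be leading $1$. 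More precisely, one shows there exist two distinct flows on $u$ with the same $n$-multipartition $\vec\lambda_{f}=\vec\lambda_{f_c}$ (switch the colors/state on the closed component), so by Lemma~\ref{lem-canflow}'s last paragraph $\mathrm{sh}<0$, or else $\mathrm{ev}_w$ contains a second degree-$0$ tableau; either way condition (c) fails. Hence $u$ is not dual canonical. The main obstacle I anticipate is making the bar-invariance and triangularity bookkeeping in $\text{(b)}\Rightarrow\text{(a)}$ fully rigorous — in particular, transporting the positivity of $\langle u,u\rangle_{\mathrm{Kup}}$ into strict positivity of the \emph{individual} off-diagonal structure constants $c(u,\vec\lambda_{T'})$ — since this is where one genuinely uses that the $x_{\vec\lambda_{T'}}$ form a pre-canonical basis and not merely an abstract spanning set; the rest is a matter of carefully combining Theorems~\ref{thm-kkele}, \ref{thm-evaluation} and Lemma~\ref{lem-canflow}.
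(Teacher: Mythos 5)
Your overall architecture matches the paper's proof: the same chain (c)$\Leftrightarrow$(b)$\Rightarrow$(a)$\Rightarrow$(b), with (b)$\Rightarrow$(a) via bar-invariance, triangularity with positive off-diagonal exponents and uniqueness of the pre-canonical structure, (a)$\Rightarrow$(b) via the identification of the evaluation with the $q$-Shapovalov form, and the closed-sub-web statement deduced from Lemma~\ref{lem-canflow}. The genuine soft spot is the bridge between (b), which is a statement about flows on the closed web $w=u^*u$, and (c), which is a statement about flows on $u$ itself. You call this ``essentially a restatement'', but it is not: what is needed (and what the paper supplies) is that every flow on $u^*u$ is a pair of flows on $u$ agreeing at the cut-line, together with the degree duality $\mathrm{deg}_{\mathrm{BKW}}(u_f^*)=\mathrm{deg}_{\mathrm{BKW}}(u_f)-d(\vec{k})$. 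With this, (c) gives that every glued degree is at least $-d(\vec{k})$, with equality only for the canonical pair, hence (b); and a non-canonical flow of degree $\leq 0$ (respectively a canonical flow of negative degree) glues to a term violating (b), giving $\neg$(c)$\Rightarrow\neg$(b). Without this decomposition, your (b)$\Leftrightarrow$(c) has no content, and your later appeal to ``a degree-$\leq 0$ tableau in $\mathrm{ev}_w$'' is circular.

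Relatedly, the obstacle you flag at the end --- transporting positivity of $\langle u,u\rangle_{\mathrm{Shap}}$ into positivity of the individual coefficients $c(u,\vec{\lambda}_{T^{\prime}})$ --- is a detour you do not need and which would indeed be delicate, since the elementary tensors are not orthonormal for the Shapovalov form (your ``$|c|^2$-type'' heuristic fails). The paper argues directly from (c): by Theorem~\ref{thm-kkele} each coefficient $c(u,\vec{\lambda}_{T^{\prime}})$ is literally $\sum_f(-q)^{\mathrm{wt}(u_f)}$ over the flows with that boundary, flows of equal weight contribute with the same sign so there is no cancellation, and hence (c) forces all off-diagonal coefficients into $q\bN[q]$; the strict triangularity and the leading coefficient $q^{\mathrm{sh}}=1$ are exactly the content of Lemma~\ref{lem-canflow} when $\mathrm{sh}=0$. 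Bar-invariance should likewise be justified as in the paper, via Lemma~\ref{lem-webasF} and $\phi(F_i^{(j)})=F_i^{(j)}$, rather than ``by construction'' (Corollary 4.21 of~\cite{mack1} is about the canonical/dual canonical switch, not bar-invariance). Your argument for the closed-sub-web claim --- two distinct flows with the same boundary multipartition force $\mathrm{sh}<0$ by the last part of Lemma~\ref{lem-canflow} --- is fine and essentially equivalent to the paper's multiplicativity argument.
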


\begin{proof}
(b)$\Leftrightarrow$(c). The difference hereby is that $\mathrm{ev}_u$ contains all flows on $u$, while $\mathrm{ev}_{u^*u}$ contains all possible ways to glue flows on $u$ together. Still (b) and (c) are equivalent: the weight of a flow $f$ on $w=u^*u$ is given by the sum of the weights of two flows $f_b$ and $f_t$ on the bottom and top part, respectively. But by Theorem \ref{thm-evaluation}, Proposition \ref{prop-kupshap} and the properties of the $q$-Shapovalov form $\langle\cdot,\cdot\rangle_{\mathrm{Shap}}$ we see that (b)$\Leftrightarrow$(c).
To be precise, we have
\[
\mathrm{deg}_{\mathrm{BKW}}(u_f^*)=\mathrm{deg}_{\mathrm{BKW}}(u_f)-d(\vec{k})
\]
by duality. Thus, (c)$\Rightarrow$(b) since, under the assumption that (c) is true, there can be only one flow of degree $-d(\vec{k})$ on $u^*$, namely the dual of the canonical flow on $u$.
Furthermore, the existence of a non-canonical flow $f$ on $u$ with degree $\leq 0$ gives, again by duality, a non-canonical flow on $w=u^*u$ of degree $\leq 0$ even after shifting everything by $d(\vec{k})$. Thus, by Theorem \ref{thm-evaluation}, (b) can not be true. Moreover, a canonical flow $f_c$ always exists and has degree lower or equal zero by Lemma \ref{lem-canflow}. That is, if $f_c$ has negative degree, then, by Theorem \ref{thm-evaluation} and duality again, (b) can not be true. Hence, $\neg\text{(c)}\Rightarrow\neg\text{(b)}$.

(a)$\Rightarrow$(b). This follows from Theorem \ref{thm-evaluation}, because the evaluation $\mathrm{ev}(w)$ is (up to a shift) the $q$-Shapovalov form $\langle u,u\rangle_{\mathrm{Shap}}$. By the discussion above the unique precanonical structure is given by the bar involution $\phi$, the elementary tensors and Lusztig's bilinear form $(\cdot,\cdot)_{\mathrm{Lusz}}=\overline{\langle\cdot,\phi(\cdot)\rangle}_{\mathrm{Shap}}$. Hence, a $\mathfrak{gl}_n$-web $u$ that does not satisfy (b) can not satisfy the positive exponent property.

(b)$\Rightarrow$(a). Recall that we already know that the $q$-Shapovalov form is the Kuperberg form is the evaluation result from Theorem \ref{thm-evaluation}. Thus, we only need to check that $u$ is bar invariant and satisfies \eqref{eq-dualcan1} with $\mathrm{sh}=0$, $c(u,\vec{\lambda}_{T^{\prime}})\in q\bN[q]$ and a strict inequality for the sum. Then, because a dual canonical structure is unique (if it exists), we can conclude that the $\mathfrak{gl}_n$-web $u$ is dual canonical.

We observe that Lemma \ref{lem-webasF} ensures that $u$ can be written as a sequence of $F_i^{(j)}$ acting on a highest weight vector. Hence, since $\phi(F_i^{(j)})=F_i^{(j)}$, the bar invariance follows.

Moreover, the second condition follows from Lemma \ref{lem-canflow} (because (b)$\Leftrightarrow$(c)) together with Theorem \ref{thm-kkele}. Thus, (b) is a sufficient condition for $u$ to be dual canonical.

If $u$ has a closed $\mathfrak{gl}_n$-subweb $w$, then, since this corresponds to a multiplication by $\mathrm{ev}(w)$ by Theorem \ref{thm-evaluation} and the canonical flow corresponds to a negative degree of $\mathrm{ev}(w)$, the condition (c) can not be satisfied.  
\end{proof}
\subsection{Connection to colored \texorpdfstring{$\mathfrak{gl}_n$}{gln}-link polynomials}\label{sec-linkswebs}

\begin{rem}\label{rem-tangles} 
It is not hard to adapt the discussion in this section to tangles. While the result for a link is a quantum number in $\bZ[q,q^{-1}]$ (a Laurent polynomial in $q$ with integer coefficients), the result for a tangle is a matrix of quantum numbers.

To see this note that the invariant is an intertwiner of $\Uu_q(\mathfrak{gl}_n)$-representations which we, under $q$-skew Howe duality, see as a certain string of $F_i^{(j)}$ acting on a $\dot{\Uu}_q(\mathfrak{gl}_m)$-weight space $W_n(\vec{k}_b)$ at the bottom to another $\dot{\Uu}_q(\mathfrak{gl}_m)$-weight space $W_n(\vec{k}_t)$ at the top. In the case of a link the bottom one will be the highest $\dot{\Uu}_q(\mathfrak{gl}_m)$-weight space and the top the lowest $\dot{\Uu}_q(\mathfrak{gl}_m)$-weight space of the $\dot{\Uu}_q(\mathfrak{gl}_m)$-highest weight module $W_n(\Lambda)$. Both are of dimension $1$. Hence, the whole results is a certain quantum number. For a tangle the weight spaces $W_n(\vec{k}_b)$ and $W_n(\vec{k}_t)$ do not have to be one dimensional.
\end{rem}

\subsubsection{The MOY-calculus}\label{subsub-moy}
We start by recalling the \textit{colored Reshetikhin--Turaev $\mathfrak{gl}_n$-link polynomial $\langle L_D\rangle_n$} of a colored link diagram $L_D$ following the approach of Murakami--Ohtsuki--Yamada from \cite{moy}, i.e. using the so-called \textit{MOY graph polynomial} $\langle w\rangle_{\mathrm{MOY}}$ of a closed $\mathfrak{gl}_n$-web $w$. To fix notation, we call a crossing $\xy(0,0)*{\includegraphics[scale=0.4]{figs/linkpoly/positive.eps}};\endxy$ \textit{positive} and a crossing $\xy(0,0)*{\includegraphics[scale=0.4]{figs/linkpoly/negative.eps}};\endxy$ \textit{negative} and the difference of their total numbers $|\overcrossing|$ and $|\undercrossing|$ the \textit{writhe} $\mathrm{w}(L_D)=|\overcrossing|-|\undercrossing|$ of the diagram.

\begin{defn}\label{defn-MOYpoly}(\textbf{MOY graph polynomial}) Let $w$ be a closed $\mathfrak{gl}_n$-web and let $V(w)$ and $E(w)$ be the sets of its vertices and edges. Let $c\colon E(w)\to\bN$ be the function that assigns to edges $e\in E(w)$ its \textit{label} (or color) $c(e)\in\bN$. Moreover, for a fixed flow $f$ on $w$ let $\mathrm{f}\colon E(w)\to\mathfrak{P}(\{n,\dots,0\})$ be the function that assigns to each edges $e\in E(w)$ its \textit{flow} (or state) $\mathrm{f}(e)\in\mathfrak{P}(\{n,\dots,0\})$.

Recall that for each vertex $v\in V(w)$ and a fixed flow $w_f$ the notation $\mathrm{wt}^v(w_f)$ denotes the weight of the vertex $v$ with respect to $w_f$ (see Definition \ref{defn-slnflow}). Define the \textit{(total) shifted weight} $\mathrm{wt}(v,w_f)$ and $\mathrm{wt}^t(v,w_f)$ by
\[
\mathrm{wt}(v,w_f)=q^{\frac{\mathrm{c}(e_1)\mathrm{c}(e_2)}{2}-\mathrm{wt}^v(w_f)}\;\;\text{ and }\;\;\mathrm{wt}^t(v,w_f)=\prod_{v\in V(w)}\mathrm{wt}(v,w_f),
\]
where $e_1,e_2\in E(w)$ are the two unique incoming or outgoing edges at $v$.

Define for a fixed flow $f$ on $w$ a graph by replacing each edge $e\in E(w)$ by $\mathrm{c}(e)$ parallel edges. Then assign to each of these edges a different element of $\mathrm{f}(e)$. Then connect the new edges with the same element of $\mathfrak{P}(\{n,\dots,0\})$. From this we get a collection of embedded, oriented, labeled circles that we denote by $\mathcal{C}$, and we denote the label of each $C\in\mathcal{C}$ by $\mathrm{f}(C)$. Moreover, denote by $\mathrm{rot}(C)$ the \textit{orientation} of the circle $C$, i.e. $\mathrm{rot}(C)=1$ if the orientation is counter-clockwise and $\mathrm{rot}(C)=-1$ otherwise. Note that there are some for us unimportant technicalities how to obtain these circles, see \cite{moy}.

The \textit{rotation number} $\mathrm{rot}(w_f)$ is then defined by
\[
\mathrm{rot}(w_f)=\sum_{C\in\mathcal{C}}\mathrm{rot}(C)\mathrm{f}(C).
\]
Then the \textit{$\mathfrak{gl}_n$-MOY graph polynomial} of $w$ is defined by
\[
\langle w\rangle_{\mathrm{MOY}}=\sum_{f\in Fl(w)}\mathrm{wt}^t(v,w_f)q^{\mathrm{rot}(w_f)}\in\bN[q,q^{-1}],
\]
where $Fl(w)$ denotes the set of all flow lines on $w$.
\end{defn}

\begin{thm}\label{thm-MOYrel}(\cite{moy})
The polynomial $\langle\cdot\rangle_{\mathrm{MOY}}$ satisfies the relations of the $\Uu_q(\mathfrak{gl}_n)$-spider $\spid{n}$.\qed
\end{thm}

\begin{thm}(\cite[Theorem 2.4]{wu})\label{thm-MOYuniq}
The MOY graph polynomial $\langle\cdot\rangle_{\mathrm{MOY}}$ is uniquely determined by the relations of the $\Uu_q(\mathfrak{gl}_n)$-spider $\spid{n}$ from Definition \ref{def-spid}.\qed
\end{thm}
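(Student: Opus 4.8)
\textbf{Proof proposal for Theorem~\ref{thm-MOYuniq}.}

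The plan is to argue that the relations of $\spid{n}$ listed in Definition~\ref{def-spid} already suffice to reduce the evaluation of \emph{any} closed $\mathfrak{sl}_n$-web to a $\bC(q)$-multiple of the empty web, so that the value $\langle w\rangle_{\mathrm{MOY}}$ is forced by these relations together with the normalization $\langle\emptyset\rangle_{\mathrm{MOY}}=1$. In other words, one must exhibit, for each closed $w$, a finite sequence of applications of the tag relation~\ref{eq-tag}, the tripod relations~\ref{eq-tripod}, the digon removals~\ref{eq-digon1} and~\ref{eq-digon2}, the square removal~\ref{eq-square1}, and the square switch~\ref{eq-square2} (plus mirrors and arrow reversals) that terminates at $c\cdot\emptyset$ for some $c\in\bN[q,q^{-1}]$. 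Since Theorem~\ref{thm-MOYrel} already guarantees that $\langle\cdot\rangle_{\mathrm{MOY}}$ is \emph{consistent} with all these relations, uniqueness then follows: any two invariants satisfying the $\spid{n}$-relations and agreeing on $\emptyset$ must agree on $w$, because both are computed by the same reduction sequence.

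The key steps, in order, would be: First, use the tag relations~\ref{eq-tag} and the second digon removal~\ref{eq-digon2} (with $b=n-a$) — as recalled in Remark~\ref{rem-spider}(a) — to eliminate all tags, so that one may assume $w$ lies in the full subcategory on positive objects with edge labels in $\{0,\dots,n\}$ and no $2$-valent vertices. Second, observe that a closed web, having no boundary, must contain an innermost face; by a Euler-characteristic / planarity count on the underlying planar trivalent graph, such an innermost face is bounded by at most... well, for $n=2,3$ it would be a circle, bigon, or triangle that is directly killed by~\ref{eq-digon1},~\ref{eq-digon2}, or the tripod-plus-square relations, strictly decreasing $|V(w)|$. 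For general $n$ the innermost face could be a square whose reduction requires the square switch~\ref{eq-square2}, which does \emph{not} decrease the vertex count — this is precisely the phenomenon flagged in Remark~\ref{rem-closedeva}. So the third step is to handle this obstruction: rather than a naive face-reduction induction, I would appeal to Lemma~\ref{lem-webasF} to rewrite $w$ as a string $\prod_k F_{i_k}^{(j_k)}v_{(n^\ell)}$ under $q$-skew Howe duality, and then run the evaluation algorithm of Definition~\ref{defn-evaluation}; Theorem~\ref{thm-evaluation} shows this algorithm computes $\mathrm{ev}(w)$, and every move used in deriving that algorithm (the ladder/Serre manipulations underlying $q$-skew Howe duality, together with the $\spid{n}$-relations that appear when translating $F_i^{(j)}$-strings into webs) is by Proposition~\ref{prop-qhowe} a consequence of the defining relations of $\spid{n}$. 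Thus the algorithm itself furnishes the required reduction sequence purely in terms of $\spid{n}$-relations.

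The main obstacle will be the square switch~\ref{eq-square2}: it is the one relation that is not ``face-reducing,'' so the termination of the reduction is not obvious from a simple complexity measure on the web. The cleanest way around this, which is what I would pursue, is to not attempt a self-contained graph-combinatorial termination argument at all, but to borrow termination from the $q$-skew Howe side — the algorithm of Definition~\ref{defn-evaluation} manifestly terminates (it builds finitely many $n$-multitableaux of bounded size), and Theorem~\ref{thm-evaluation} certifies that its output equals the value dictated by the $\spid{n}$-relations. A secondary, more bookkeeping-style obstacle is checking that the translation ``web $\leftrightarrow$ string of $F_i^{(j)}$'s'' of Lemma~\ref{lem-webasF} and Proposition~\ref{prop-qhowe} really only invokes consequences of the relations in Definition~\ref{def-spid} (and not, say, extra relations hidden in the representation category); but this is exactly the content of Proposition~\ref{prop-qhowe} (fullness) together with Theorem~\ref{thm-pifunctor}. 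Alternatively, since Theorem~\ref{thm-MOYuniq} is cited as already known — Wu's Theorem 2.4 in~\cite{wu} — one may simply invoke that; but the argument above shows it follows from the machinery developed in this paper, which is the more self-contained route.
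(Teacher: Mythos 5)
The paper gives no internal proof of this statement at all: it is recorded with a \qed and attributed to Wu (Theorem 2.4 in~\cite{wu}), so your closing remark that one may simply invoke Wu is exactly what the paper does. The problem lies in your proposed self-contained argument, which has a gap at its central step. Uniqueness means that for every closed web $w$ one has $w=c_w\cdot\emptyset$ \emph{in the quotient category} $\spid{n}$, i.e.\ modulo the relations of Definition~\ref{def-spid}, so that the scalar (and hence the value of any invariant compatible with those relations) is forced. Your preliminary reductions (removing tags, and the observation that naive innermost-face induction breaks down for $n>3$ because of the square switch) are fine, but the evaluation algorithm of Definition~\ref{defn-evaluation} does not supply the missing reduction: it never manipulates $w$ by the relations at all, it builds $n$-multitableaux, and the proof of Theorem~\ref{thm-evaluation} identifies its output with the Kuperberg/Shapovalov evaluation, i.e.\ with the image of $w$ under $\Psi$ in $\Rep(\Uun)$ via flows and Theorem~\ref{thm-kkele}. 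Your sentence that Theorem~\ref{thm-evaluation} ``certifies that its output equals the value dictated by the $\spid{n}$-relations'' presupposes precisely the statement to be proven, and the fullness in Proposition~\ref{prop-qhowe} is not the relevant input: fullness says every intertwiner is hit by some web, not that the listed relations account for all identities among webs.

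What your argument actually needs is the faithfulness half of Theorem~\ref{thm-pifunctor}: since $\Psi$ is an equivalence of pivotal categories, the morphism space of $\spid{n}$ in which a closed web lives (endomorphisms of an object labelled only by $0$'s and $n$'s) is mapped isomorphically onto a one-dimensional space in $\Rep(\Uun)$, so $w=c_w\cdot\emptyset$ holds already in $\spid{n}$; and because $\spid{n}$ is by definition the quotient of $\fspidn{n}$ by the relations of Definition~\ref{def-spid}, that equality is a formal consequence of those relations. Combined with Theorem~\ref{thm-MOYrel}, this forces $\langle w\rangle_{\mathrm{MOY}}=c_w$ and gives uniqueness. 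Note that once you use this, the detour through Lemma~\ref{lem-webasF} and the evaluation algorithm is unnecessary for uniqueness (it only serves to \emph{compute} $c_w$, which is the point of Theorem~\ref{thm-evaluation} and Corollary~\ref{cor-kupismoy}). Alternatively, do what the paper does and cite Wu's Theorem 2.4, whose combinatorial reduction is independent of the machinery of~\cite{ckm}.
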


Hence, our notions are the same, something that is not clear from Definition \ref{defn-MOYpoly} above and follows only from the Theorems \ref{thm-MOYrel} and \ref{thm-MOYuniq}. Because of this we use our notation in the following.

\begin{cor}\label{cor-kupismoy}
Let $w=v^*u$ be a closed $\mathfrak{gl}_n$-web. Then $\langle w\rangle_{\mathrm{Kup}}=\langle w\rangle_{\mathrm{MOY}}$.\qed
\end{cor}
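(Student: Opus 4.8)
The plan is to deduce the corollary from the uniqueness statement of Theorem~\ref{thm-MOYuniq}, using that the Kuperberg bracket of a closed web is by construction the image of that web under the Cautis--Kamnitzer--Morrison functor $\Psi$ of Theorem~\ref{thm-pifunctor}.

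First I would unwind the normalization. For a closed $\mathfrak{sl}_n$-web $w=v^*u\in W_n(\vec{k})$ the boundary datum $\vec{k}$ has all entries in $\{0,n\}$, so $\sum_i k_i(k_i-1)=n(n-1)\ell$ and the shift $d(\vec{k})$ of Equation~\ref{eq-shift} vanishes. Hence $\langle w\rangle_{\mathrm{Kup}}=\langle u,v\rangle_{\mathrm{Kup}}=\mathrm{ev}(v^*u)$, where $\mathrm{ev}(\cdot)$ denotes the scalar obtained by reading $w$, via $\Psi$, as an endomorphism of the trivial $\Uun$-representation; no bar or extra shift appears, since for a closed web there is no free variable beyond this scalar.

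Next I would observe that $\mathrm{ev}(\cdot)$, viewed as an invariant of closed $\mathfrak{sl}_n$-webs, satisfies every relation of the $\Uu_q(\mathfrak{sl}_n)$-spider $\spid{n}$ from Definition~\ref{def-spid}: because $\Psi$ is a well-defined pivotal functor, applying a spider relation inside a closed web leaves its image in $\Rep(\Uun)$, and hence its evaluation, unchanged, and the empty web (together with any collection of $0$- and $n$-labelled components) evaluates to $1$. By Theorem~\ref{thm-MOYrel} the MOY graph polynomial $\langle\cdot\rangle_{\mathrm{MOY}}$ has the same property and the same value on these base cases. Theorem~\ref{thm-MOYuniq} then forces $\mathrm{ev}(w)=\langle w\rangle_{\mathrm{MOY}}$ for all closed $w$, and combining this with the first paragraph gives $\langle w\rangle_{\mathrm{Kup}}=\langle w\rangle_{\mathrm{MOY}}$.

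The only point that needs care is matching the normalization conventions of Definitions~\ref{defn-kupform} and~\ref{defn-MOYpoly} on the base cases that pin down the unique spider invariant---essentially checking that both assign $1$ to empty components and to label-$n$ leashes---and confirming that the $q$-sesquilinearity of the Kuperberg form introduces no spurious shift once $d(\vec{k})=0$. Beyond that bookkeeping, the statement is a purely formal consequence of the two cited theorems.
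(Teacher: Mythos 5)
Your proposal is correct and matches the paper's intent: the corollary is stated there without proof precisely because it is meant to follow immediately from Theorem~\ref{thm-MOYrel} (MOY satisfies the spider relations) and Theorem~\ref{thm-MOYuniq} (uniqueness), together with the fact that the Kuperberg evaluation is defined via the well-defined functor $\Psi$ of Theorem~\ref{thm-pifunctor} and hence also satisfies those relations. Your extra bookkeeping about $d(\vec{k})=0$ for a closed web and the normalization on the empty web is sound and only makes explicit what the paper leaves implicit.
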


\begin{defn}\label{defn-cRTpoly}(\textbf{Colored Reshetikhin--Turaev $\mathfrak{gl}_n$-link polynomial}) 
Let $L_D$ be a colored link diagram. Then the \textit{colored Reshetikhin--Turaev $\mathfrak{gl}_n$-link polynomial} $\langle L_D\rangle_n$ of $L_D$ is defined by applying the following to all crossings of $L_D$. We use
\[
\left\langle
\xy
(0,0)*{\includegraphics[scale=.75]{figs/linkpoly/positive.eps}};
(2,-4.1)*{\scriptstyle b};
(-2,-4.35)*{\scriptstyle a};
\endxy
\right\rangle_n=\sum_{k=0}^b (-1)^{k+(a+1)b}q^{-b+k}
\sbox0{
\scalebox{.7}{$\xy(0,0)*{\includegraphics[scale=0.74]{figs/slnwebs/squarec.eps}};(-11.5,-11)*{\scriptstyle a};(11.5,-11)*{\scriptstyle b};(6,8)*{\scriptstyle a+k-b};(2,-5)*{\scriptstyle k};(-9.8,0)*{\scriptstyle a+k};(10,0)*{\scriptstyle b-k};(-11.5,10)*{\scriptstyle b};(11.5,10)*{\scriptstyle a}\endxy$}}
\mathopen{\resizebox{\width}{1.15\ht0}{$\Bigg\langle$}}
\raisebox{1.2ex}{\usebox{0}}
\mathclose{\resizebox{\width}{1.15\ht0}{$\Bigg\rangle$}}_{\mathrm{Kup}},
\]
if $b\leq a$, and for $a<b$ we use
\[
\left\langle
\xy
(0,0)*{\includegraphics[scale=.75]{figs/linkpoly/positive.eps}};
(2,-4.1)*{\scriptstyle b};
(-2,-4.35)*{\scriptstyle a};
\endxy
\right\rangle_n=\sum_{k=0}^a (-1)^{k+(b+1)a}q^{-a+k}
\sbox0{
\scalebox{.7}{$\xy(0,0)*{\reflectbox{\includegraphics[scale=0.74]{figs/slnwebs/squarec.eps}}};(-11.5,-11)*{\scriptstyle a};(11.5,-11)*{\scriptstyle b};(6,8)*{\scriptstyle a+k-b};(2,-5)*{\scriptstyle k};(-9.8,0)*{\scriptstyle a+k};(10,0)*{\scriptstyle b-k};(-11.5,10)*{\scriptstyle b};(11.5,10)*{\scriptstyle a}\endxy$}}
\mathopen{\resizebox{\width}{1.15\ht0}{$\Bigg\langle$}}
\raisebox{1.2ex}{\usebox{0}}
\mathclose{\resizebox{\width}{1.15\ht0}{$\Bigg\rangle$}}_{\mathrm{Kup}}
\]
for a positive $\overcrossing_{a,b}$ and almost the same for a negative $\undercrossing_{a,b}$ with the same colors $a,b$, but the powers of $q$ above are minus the ones for the positive $\overcrossing_{a,b}$.

Moreover, for each positive crossing $\overcrossing_{a,b}$ we 
need the shift
\[
s\left(\xy
(0,0)*{\includegraphics[scale=.75]{figs/linkpoly/positive.eps}};
(2,-4.1)*{\scriptstyle b};
(-2,-4.35)*{\scriptstyle a};
\endxy\right)=\begin{cases}(-1)^{b+1}q^{b(n+1-b)}, &\text{if }a=b,\\1,&\text{else,}\end{cases}
\]
and the same again up to a multiplication with $-1$ in the exponent of $q$ for a negative crossing with the same colors.
The \textit{normalized}, colored Reshetikhin--Turaev $\mathfrak{gl}_n$-link polynomial of $L_D$ is then defined by
\begin{equation}\label{eq-norRT}
\mathrm{RT}_n(L_D)=\langle L_D\rangle_n\cdot\prod_{c_{a,b}}s(c_{a,b}),
\end{equation}
where the product runs over all colored crossings. 
\end{defn}

\begin{thm}\label{thm-RTinv}(\cite[Theorem 5.1]{moy})
The colored Reshetikhin--Turaev $\mathfrak{gl}_n$-link polynomial $\langle\cdot\rangle_n\in\bZ[q,q^{-1}]$ is invariant under the second and third Reidemeister moves.
The normalized, colored Reshetikhin--Turaev $\mathfrak{gl}_n$-link polynomial $\mathrm{RT}_n(\cdot)\in\bZ[q,q^{-1}]$ is an invariant of links.\qed
\end{thm}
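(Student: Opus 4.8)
The plan is to deduce the statement from a small number of local identities, reducing everything either to the relations of the $\Uu_q(\mathfrak{sl}_n)$-spider $\spid{n}$ or, in the spirit of this paper, to identities in $\dot{\Uu}_q(\mathfrak{sl}_m)$ via $q$-skew Howe duality. By Definition~\ref{defn-cRTpoly} the bracket $\langle L_D\rangle_n$ is a $\bZ[q,q^{-1}]$-linear combination of Kuperberg brackets of the closed webs obtained by replacing each crossing by the displayed square webs. Since $\langle\cdot\rangle_{\mathrm{Kup}}=\langle\cdot\rangle_{\mathrm{MOY}}$ is a well-defined functional on $\spid{n}$ (Corollary~\ref{cor-kupismoy} together with Theorem~\ref{thm-MOYrel}) taking values in $\bN[q,q^{-1}]$ by Theorem~\ref{thm-evaluation}, membership $\langle L_D\rangle_n\in\bZ[q,q^{-1}]$ is automatic, and only the behaviour under the Reidemeister moves has to be checked; for this it suffices to verify the corresponding local web identities once and for all.

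First I would pass through $q$-skew Howe duality: by Lemma~\ref{lem-webasF} and Lemma~\ref{lem-linkasF} an $a,b$-colored crossing is encoded by a colored braiding $T^i_{a,b}$, a fixed $\bZ[q,q^{-1}]$-combination of strings of $F^{(j)}_i$'s acting on a weight space of $W_n(\Lambda)$, after the rearrangement sketched in the Introduction (an instance of the embedding $\dot{\Uu}_q(\mathfrak{sl}_i)\hookrightarrow\dot{\Uu}_q(\mathfrak{sl}_{i+1})$ followed by the relations of $\dot{\Uu}_q(\mathfrak{sl}_{i+1})$). The argument then has three local ingredients. (i) Reidemeister II: composing $T^i_{a,b}$ with the oppositely-oriented $b,a$-colored braiding yields the identity; unwinding the definitions this is a finite computation with the square switch~\ref{eq-square2} and the digon removals~\ref{eq-digon1},~\ref{eq-digon2}, equivalently an identity of $F$-strings that is an instance of the quantum Serre relations in $\dot{\Uu}_q(\mathfrak{sl}_{i+1})$. (ii) Reidemeister III: the braid identity for three consecutive colored crossings, again reducible through the same rearrangement to the higher quantum Serre relations (Lusztig~\cite{lu}, Chapter~7). (iii) Reidemeister I: resolving a positive curl on an $a$-colored component produces a small closed sub-web whose Kuperberg bracket, relative to the strand it sits on, is computed by~\ref{eq-digon1} and~\ref{eq-digon2} (or directly by the algorithm of Theorem~\ref{thm-evaluation}); the outcome is exactly the scalar $s(c_{a,a})^{-1}$ appearing in Definition~\ref{defn-cRTpoly}, and dually for a negative curl.

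Ingredients (i) and (ii) give invariance of $\langle\cdot\rangle_n$ under the second and third Reidemeister moves, proving the first assertion. For the second assertion, (iii) shows that a Reidemeister I move multiplies $\langle\cdot\rangle_n$ by $s(c_{a,a})^{-1}$ while changing the product $\prod_{c_{a,b}} s(c_{a,b})$ by the compensating factor $s(c_{a,a})$; hence $\mathrm{RT}_n(L_D)=\langle L_D\rangle_n\cdot\prod_{c_{a,b}} s(c_{a,b})$ is also invariant under Reidemeister I. Since every link diagram is determined up to planar isotopy and the three Reidemeister moves, $\mathrm{RT}_n(\cdot)\in\bZ[q,q^{-1}]$ is a link invariant.

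The part I expect to be delicate is purely the bookkeeping: tracking the signs $(-1)^{k+(a+1)b}$ and the $q$-shifts in the crossing formula, handling the cases $b\le a$ and $a<b$ separately, and following the reindexing under the embeddings $\dot{\Uu}_q(\mathfrak{sl}_i)\hookrightarrow\dot{\Uu}_q(\mathfrak{sl}_{i+1})$, so that the higher Serre relations appear with exactly the coefficients needed to make Reidemeister II and III hold on the nose. The conceptual content — that these two moves are \emph{instances} of the higher quantum Serre relations — is clean; it is the normalization of the crossing in Definition~\ref{defn-cRTpoly} that requires care. Accordingly I would carry out the detailed verification inside the proof of Theorem~\ref{thm-evoflinks}, and for the present statement one may alternatively just invoke the original argument of Murakami, Ohtsuki and Yamada~\cite{moy}.
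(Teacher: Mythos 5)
Your proposal is correct and follows essentially the route the paper takes: the paper proves this statement by citing Theorem 5.1 of~\cite{moy} and sketches exactly your skew Howe/higher-Serre-relation argument for the second (and third) Reidemeister move in the proof of Theorem~\ref{thm-evoflinks}. The only material you add is the explicit Reidemeister I bookkeeping showing the curl contributes $s(c_{a,a})^{-1}$ so that the product of shifts in~\ref{eq-norRT} compensates it, a point the paper simply delegates to~\cite{moy}.
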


Note that already $\langle\cdot\rangle_n$ is invariant under the Reidemeister moves up to a normalization, i.e. it gives an invariant of framed links. We ignore the normalization in the following.

\subsubsection{Our setup}\label{subsub-oursetup}

The rest of the section is intended to explain how our approach can be used to calculate $\langle L_D\rangle_n$ for all colorings using the language of $n$-multitableaux. Thus, we have explain how a colored link diagram $L_D$ can be translated to our framework using $q$-skew Howe duality and actions of $F_i^{(j)}$ on some highest weight vector $v_{(n^{\ell})}$. We start by defining the \textit{colored braiding} operators. Recall that we assume that $\Lambda$ denotes $n$-times the $\ell$-th fundamental $\dot{\Uu}_q(\mathfrak{gl}_m)$-weight and that $W_n(\Lambda)$ denotes the irreducible $\dot{\Uu}_q(\mathfrak{gl}_m)$-representation of highest weight $\Lambda$. Recall that we use notations such as $\vec{k}$ for $\mathfrak{gl}_m$-weights.

\begin{defn}\label{defn-colbraid}
For $a,b\in\{0,\dots,n\}$ let $\vec{k}=(\dots,a,b,0,\dots)$ and $\vec{k}^{\prime}=(\dots,0,a,b,\dots)\in\bN^{m}$ be $\dot{\Uu}_q(\mathfrak{gl}_m)$-weights where $a$ is the $i$-th entry of $\vec{k}$ and the $i+1$-th entry of $\vec{k}^{\prime}$.
For all $k=0,\dots,\mathrm{min}(a,b)$ the \textit{$k$-th colored braiding operator $T_{a,b,i}^k$} acts on the $\vec{k}$-weight space $W_n(\vec{k})$ of $W_n(\Lambda)$ by
\[
T_{a,b,i}^k\colon W_n(\vec{k})\to W_n(\vec{k}^{\prime}), v_{\vec{k}}\mapsto \begin{cases}F_{i+1}^{(a+k-b)}F_{i}^{(a)}F_{i+1}^{(b-k)}v_{\vec{k}}, &\text{if } b\leq a,\\F_{i}^{(a-k)}F_{i+1}^{(a)}F_{i}^{(k)}v_{\vec{k}}, &\text{if } a< b,\end{cases}
\]
for $v_{\vec{k}}\in W_n(\vec{k})$. Or in pictures with $T_{a,b,i}^k=\xy(0,0)*{\includegraphics[scale=0.4]{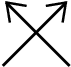}};\endxy$
\[
\xy
(0,0)*{\includegraphics[scale=.75]{figs/linkpoly/braid.eps}};
(2,-4.1)*{\scriptstyle b};
(-2,-4.35)*{\scriptstyle a};
(0,4.35)*{\scriptstyle k};
\endxy 
v_{\vec{k}}
=
\scalebox{.7}{$\xy
(0,0)*{\includegraphics[scale=.75]{figs/linkpoly/colbraidb.eps}};
(17,18.5)*{F_{i+1}^{(a+k-b)}};
(-26,-20)*{a};
(2,-19.5)*{b};
(30,-19.5)*{0};
(-12,4)*{F_{i}^{(a)}};
(-26,-6.5)*{a};
(2.5,-7)*{k};
(33.5,-6.5)*{b-k};
(15,-9.5)*{F_{i+1}^{(b-k)}};
(-26,6.5)*{0};
(6,6.5)*{a+k};
(33.5,6)*{b-k};
(-26,19.5)*{0};
(2,19.5)*{b};
(30,19)*{a};
\endxy$}
\quad
\text{or}
\quad
\scalebox{.7}{$\xy
(0,0)*{\includegraphics[scale=.75]{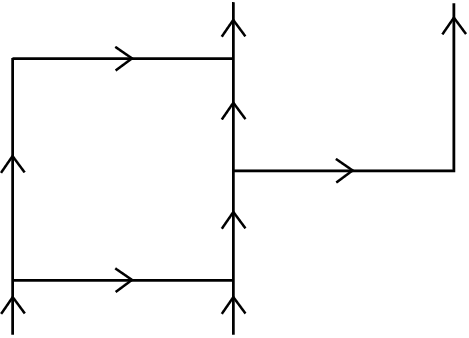}};
(-12,-10.5)*{F_{i}^{(k)}};
(-26,-20)*{a};
(2,-19.5)*{b};
(30,-19.5)*{0};
(18.5,3.5)*{F_{i+1}^{(a)}};
(-23,-6.5)*{a-k};
(6,-7)*{b+k};
(30,-6.5)*{0};
(-11,18)*{F_{i}^{(a-k)}};
(-23,6.5)*{a-k};
(10,6.5)*{b+k-a};
(30,6)*{a};
(-26,19.5)*{0};
(2,19.5)*{b};
(30,19)*{a};
\endxy$}.
\]
Note that, if the weights have values $<0$ or $>n$, then the corresponding diagram is zero due to our convention. The same is true for the action, since it factors through $\Lambda^{<0}\bC^n$ or $\Lambda^{>n}\bC^n$.

We define the \textit{left} ${}_l T_{a,b,i}^k=\xy(0,0)*{\rotatebox{90}{\includegraphics[scale=0.4]{figs/linkpoly/braid.eps}}};\endxy$, \textit{right} ${}_r T_{a,b,i}^k=\xy(0,0)*{ \rotatebox{270}{\includegraphics[scale=0.4]{figs/linkpoly/braid.eps}}};\endxy$ and \textit{downwards} ${}_d T_{a,b,i}^k=\xy(0,0)*{ \rotatebox{180}{\includegraphics[scale=0.4]{figs/linkpoly/braid.eps}}};\endxy$ versions by
\[
\scalebox{.7}{$\xy(0,0)*{\rotatebox{90}{\includegraphics[scale=.75]{figs/linkpoly/braid.eps}}};(2,-4.6)*{\scriptstyle b};
(0,4.35)*{\scriptstyle k};
(-2,-4.9)*{\scriptstyle a};\endxy=\xy(0,0)*{\includegraphics[scale=.75]{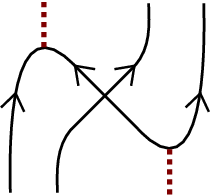}};
(-7,-10.6)*{\scriptstyle b};
(-11,-10.9)*{\scriptstyle a};
(6.75,10.9)*{\scriptstyle b};
(0,4.35)*{\scriptstyle k};
(11.25,10.5)*{\scriptstyle a};\endxy$}
\quad
\text{and}
\quad
\scalebox{.7}{$\xy(0,0)*{\rotatebox{270}{\includegraphics[scale=.75]{figs/linkpoly/braid.eps}}};(2,-4.6)*{\scriptstyle b};
(0,4.35)*{\scriptstyle k};
(-2,-4.9)*{\scriptstyle a};\endxy=\xy(0,0)*{\includegraphics[scale=.75]{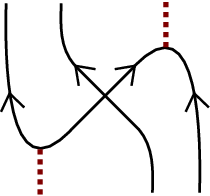}};(-6.75,10.5)*{\scriptstyle a};
(-11.25,10.9)*{\scriptstyle b};
(7,-10.9)*{\scriptstyle a};
(0,4.35)*{\scriptstyle k};
(11,-10.6)*{\scriptstyle b};\endxy$}
\quad
\text{and}
\quad
\scalebox{.7}{$\xy(0,0)*{\rotatebox{180}{\includegraphics[scale=.75]{figs/linkpoly/braid.eps}}};(-2,-4.6)*{\scriptstyle b};
(0,4.35)*{\scriptstyle k};
(2,-4.9)*{\scriptstyle a};\endxy=\xy(0,0)*{\includegraphics[scale=.75]{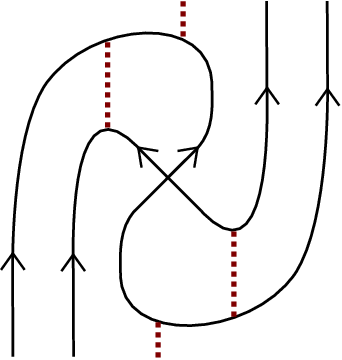}};(-13.5,-20.6)*{\scriptstyle b};
(-19,-20.9)*{\scriptstyle a};
(0,4.35)*{\scriptstyle k};
(19,20.9)*{\scriptstyle b};
(13.5,20.5)*{\scriptstyle a};\endxy$}.
\]

These three definitions correspond to
\[
{}_l T_{a,b,i}^k v_{\vec{k}_l}=F_{i+1}^{(a)}F_{i}^{(a)}T_{b,n-a,i+1}^{k}F_{i+3}^{(a)}F_{i+2}^{(a)}v_{\vec{k}_l}\hspace*{0.15cm}\text{ and }\hspace*{0.15cm}{}_r T_{a,b,i}^k=F_{i+1}^{(n-b)}F_{i-2}^{(b)}T_{n-b,a,i-1}^{k}F_{i-2}^{(n-b)}F_{i+1}^{(b)}v_{\vec{k}_r}
\]
with the new weights $\vec{k}_l=(\dots,a,b,n,0,0,\dots)$ and $\vec{k}_r=(\dots,n,0,a,b,0,\dots)$ and
\[
{}_d T_{a,b,i}^k v_{\vec{k}_l}=F_{i+2}^{(a)}F_{i+3}^{(a)}F_{i+1}^{(a)}F_{i+2}^{(b)}F_{i}^{(a)}F_{i+1}^{(b)}T_{n-a,n-b,i+2}^{k}F_{i+4}^{(b)}F_{i+3}^{(b)}F_{i+2}^{(a)}F_{i+5}^{(a)}F_{i+4}^{(a)}F_{i+3}^{(a)}v_{\vec{k}_d}
\]
with $\vec{k}_d=(\dots,a,b,n,n,0,0,0,\dots)$ with $a$ always in the $i$-th position and the $v_{\vec{k}}$ are all vectors in the corresponding weight modules for the three $\vec{k}$.

The \textit{positive full braiding operator} $T_{a,b,+i}$ is then defined to be the $q$-weighted sum
\begin{equation}\label{eq-posbraid}
T_{a,b,+i}=\begin{cases}\sum_{k=0}^b (-1)^{k+(a+1)b}q^{-b+k}T_{a,b,i}^k, &\text{if } b\leq a,\\\sum_{k=0}^a (-1)^{k+(b+1)a}q^{-a+k}T_{a,b,i}^k, &\text{if } a<b.\end{cases}
\end{equation}
Moreover, the \textit{negative full braiding operator} $T_{a,b,-i}$ is defined similar but with all powers of $q$ multiplied by the factor $-1$.
\end{defn}

\begin{ex}\label{ex-braidop}
Let us consider a small $\mathfrak{gl}_2$ example. Let $a=b=1$ and therefore $k=0$ or $k=1$. Then we have essentially two pictures.
\[
k=0:
\scalebox{.7}{$\xy
(0,0)*{\includegraphics[scale=.75]{figs/linkpoly/braid1a.eps}};
(-26,-20)*{1};
(2,-19.5)*{1};
(30,-19.5)*{0};
(-14,3.5)*{F_{i}};
(-26,-6.5)*{1};
(2.5,-7)*{0};
(30,-6.5)*{1};
(14,-10.5)*{F_{i+1}};
(-26,6.5)*{0};
(2.5,6.5)*{1};
(30,6)*{1};
(-26,19.5)*{0};
(2,19.5)*{1};
(30,19)*{1};
\endxy$}
\hspace*{0.2cm}
\text{ and }
\hspace*{0.2cm}
k=1:
\scalebox{.7}{$\xy
(0,0)*{\includegraphics[scale=.75]{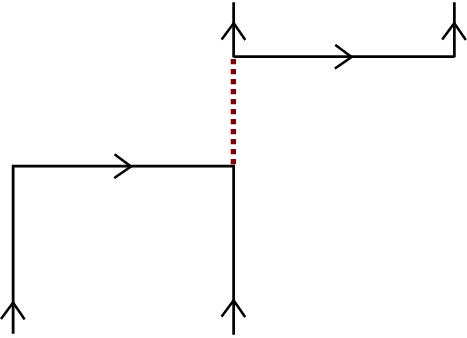}};
(14,17.5)*{F_{i+1}};
(-26,-20)*{1};
(2,-19.5)*{1};
(30,-19.5)*{0};
(-14,3.5)*{F_{i}};
(-26,-6.5)*{1};
(2.5,-7)*{1};
(30,-6.5)*{0};
(-26,6.5)*{0};
(2.5,6.5)*{2};
(30,6)*{0};
(-26,19.5)*{0};
(2,19.5)*{1};
(30,19)*{1};
\endxy$}.
\]
These are exactly the two terms in the Kauffman calculus for the Jones polynomial.
\end{ex}

Let $T_D$ denote a colored, oriented diagram of a tangle. We assume that $T_D$ is in a general Morse position. By this we mean that strands of $T_D$ are locally either \textit{identities}, \textit{cups}, \textit{caps}, \textit{shifts}, \textit{overcrossings} or \textit{undercrossings} (with all possible orientations) as illustrated below.
\[
\scalebox{.7}{$\xy
(0,0)*{\includegraphics[scale=.75]{figs/slnwebs/pointup.eps}};
\endxy$},
\hspace*{0.5cm}
\scalebox{.7}{$\xy
(0,0)*{\includegraphics[scale=.75]{figs/slnwebs/cupgen.eps}};
\endxy$},
\hspace*{0.5cm}
\scalebox{.7}{$\xy
(0,0)*{\includegraphics[scale=.75]{figs/slnwebs/capgen.eps}};
\endxy$},
\hspace*{0.5cm}
\scalebox{.7}{$\xy
(0,0)*{\includegraphics[scale=.75]{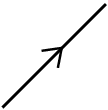}};
\endxy$},
\hspace*{0.5cm}
\scalebox{.7}{$\xy
(0,0)*{\reflectbox{\includegraphics[scale=.75]{figs/linkpoly/leftup.eps}}};
\endxy$},
\hspace*{0.5cm}
\scalebox{.7}{$\xy
(0,0)*{\includegraphics[scale=.75]{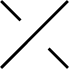}};
\endxy$},
\hspace*{0.5cm}
\scalebox{.7}{$\xy
(0,0)*{\includegraphics[scale=.75]{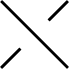}};
\endxy$}.
\]
Our approach for calculation is to use the evaluation algorithm.

\begin{lem}\label{lem-linkasF}
Any colored, oriented tangle diagram $T_D$ can be written, using $q$-skew Howe duality, as
\[
T_D=\prod_{k=1}^{s} \tilde{F}_{i_k}^{(j_k)}v_{(n^{\ell})},\hspace*{0.25cm}\tilde{F}_{i_k}^{(j_k)}=\begin{cases}F_{i_k}^{(j_k)}, &\text{for some }i_k\in\{1,\dots,m-1\},j_k\in\{0,\dots,n\},\\T_{a_k,b_k,\pm i_k}, &\text{for some }a_k,b_k\in\{0,\dots,n\},i_k\in\{1,\dots,m-1\},\end{cases}
\]
for some $s\in\bN$, some highest weight vector $v_{(n^{\ell})}$ and marked braiding operators $T_{a_k,b_k,\pm i_k}$ (where the signs should indicate if the corresponding crossing is positive $\xy(0,0)*{\includegraphics[scale=0.4]{figs/linkpoly/positive.eps}};\endxy$ or negative $\xy(0,0)*{\includegraphics[scale=0.4]{figs/linkpoly/negative.eps}};\endxy$).

Hence, each such tangle diagram $T_D$ can be realized as
\[
T_D=\prod_{k=1}^{s} \tilde{F}_{i_k}^{(j_k)}v_{(n^{\ell})}=\sum_{j=1}^{t}(-1)^{\mathrm{sgn}_j}q^{\mathrm{sh}_j}\prod_{k_j=1}^{s_j} F_{i_{k_j}}^{(j_{k_j})}v_{(n^{\ell})}=\sum_{j=1}^{t}(-1)^{\mathrm{sgn}_j}q^{\mathrm{sh}_j}u_j
\]
where $\mathrm{sgn}_j$ and $\mathrm{sh}_j$ are some constants and all summands are of the same total length $\sum j_{k_j}$. The $u_j$ are certain $\mathfrak{gl}_n$-webs. Moreover, if $T_D$ is a link diagram, then the $u_j$ are all closed $\mathfrak{gl}_n$-webs.
\end{lem}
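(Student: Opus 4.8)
The plan is to reduce the crossingless parts to Lemma~\ref{lem-webasF} and the crossings to Definition~\ref{defn-colbraid}, and then expand. First I would fix a general Morse position for $T_D$, so that reading from bottom to top the diagram is a vertical stack of elementary horizontal slices, each slice being a juxtaposition of exactly one local generator (identity, cup, cap, shift, overcrossing or undercrossing, in any admissible orientation) with identity strands elsewhere. Each slice carries a bottom weight $\vec{k}_b$ and a top weight $\vec{k}_t$, and the whole diagram is the composite of these slices applied to the highest weight vector $v_{(n^{\ell})}$ at the bottom (enlarging $m$ by adjoining extra $0$- and $n$-labelled strands, i.e. leashes, as in Remark~\ref{rem-leash}, so that everything points upwards).

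For a slice with no crossing, the base cases treated in the proof of Lemma~\ref{lem-webasF} (the five crossingless pieces: cup, cap, left/right/empty shift, and the tripod step) show that the associated $\mathfrak{sl}_n$-web morphism is realized by a string of divided powers $F_{i}^{(j)}$ acting on $\vec{k}_b$; stacking these slice-by-slice produces the $F_{i_k}^{(j_k)}$-factors of the claimed product. For a slice containing a crossing of colors $a,b$ in position $i$: an upward positive (resp. negative) crossing is by Definition~\ref{defn-colbraid} literally the operator $T_{a,b,+i}$ (resp. $T_{a,b,-i}$), and a crossing in any other orientation is one of the left/right/downward versions ${}_lT_{a,b,i}^k$, ${}_rT_{a,b,i}^k$, ${}_dT_{a,b,i}^k$, which Definition~\ref{defn-colbraid} again rewrites as a string of $F$'s times a braiding operator in the canonical upward position (the bookkeeping sketched in Example~\ref{ex-braidop} of ``wiggling the braid around''). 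Hence $T_D = \prod_{k=1}^s \tilde F_{i_k}^{(j_k)} v_{(n^{\ell})}$ with each $\tilde F_{i_k}^{(j_k)}$ either an $F_{i_k}^{(j_k)}$ or a marked braiding operator $T_{a_k,b_k,\pm i_k}$, which is the first assertion.

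For the second assertion, expand each braiding operator via Equation~\ref{eq-posbraid} into its finite, $q$-weighted, signed sum of the $T_{a,b,i}^{k}$, and recall that each $T_{a,b,i}^{k}$ is itself a short product of three divided powers of $F$'s. Multiplying out the (finite) product over all $\tilde F$-slots and distributing gives a finite sum $\sum_{j=1}^t (-1)^{\mathrm{sgn}_j} q^{\mathrm{sh}_j}\prod_{k_j=1}^{s_j} F_{i_{k_j}}^{(j_{k_j})} v_{(n^{\ell})}$, with $\mathrm{sgn}_j$ and $\mathrm{sh}_j$ obtained by collecting the signs and $q$-powers of the chosen resolution. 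Each summand $u_j$ is an honest $\mathfrak{sl}_n$-web by pictorial $q$-skew Howe duality (Proposition~\ref{prop-qhowe}), and all the $u_j$ have the same total length $\sum j_{k_j}$ because in each $T_{a,b,i}^{k}$ the sum of the orders of the three divided powers is independent of $k$ (it equals the sum of the two colours entering that crossing), and the $F_{i_k}^{(j_k)}$-slots coming from the crossingless part are common to all resolutions. Finally, if $T_D=L_D$ is a link diagram, then its bottom and top boundaries are empty, so by Remark~\ref{rem-tangles} the starting weight is the highest and the ending weight the lowest weight of $W_n(\Lambda)$ (each one-dimensional); in particular every $u_j$ has only $0$- and $n$-labelled boundary edges, i.e. is a closed $\mathfrak{sl}_n$-web.

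\textbf{Main obstacle.} The delicate point is purely combinatorial: verifying that every crossing, in every orientation and at every height, can be brought into the canonical form handled by Definition~\ref{defn-colbraid} by a local isotopy that inserts only crossingless material, and that the resulting crossingless pieces can be routed so as not to connect nested, previously unconnected components (so that Lemma~\ref{lem-webasF} applies and produces the $F_{i_k}^{(j_k)}$-factors). This is exactly where the strengthened second statement of Lemma~\ref{lem-webasF} is used, and it is the step that deserves the most care; the rest is routine expansion.
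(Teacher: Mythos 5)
Your proposal is correct and follows essentially the same route as the paper's (very terse) proof: reduce the crossingless local pieces to Lemma~\ref{lem-webasF} and realize the crossings as the braiding operators of Definition~\ref{defn-colbraid}, then expand via Equation~\ref{eq-posbraid} and distribute. One cosmetic slip: in each $T_{a,b,i}^{k}$ the sum of the orders of the three divided powers is $2a$ in both cases (not the sum of the two colours), but it is indeed independent of $k$, which is all your constant-length argument needs.
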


\begin{proof}
All the statements are easy to verify following the proof of Lemma \ref{lem-webasF} and we omit the details.
\end{proof}

Using the last part of Lemma \ref{lem-linkasF} we can therefore define the \textit{evaluation} $\mathrm{ev}(L_D)$ of a colored, oriented link diagram $L_D$ to be
\[
\mathrm{ev}(L_D)=\sum_{j=1}^{t}(-1)^{\mathrm{sgn}_j}q^{\mathrm{sh}_j}\mathrm{ev}(w_j),
\]
where $\mathrm{ev}(w_j)$ denotes our evaluation algorithm from Definition \ref{defn-evaluation}.

\begin{thm}\label{thm-evoflinks}
Let $L_D$ be a colored, oriented link diagram. The evaluation $\mathrm{ev}(L_D)$ is invariant under the second and third Reidemeister moves and isotopies. Moreover,
\[
\mathrm{ev}(L_D)=\langle L_D\rangle_n,
\]
i.e. the evaluation algorithm gives the colored Reshetikhin--Turaev $\mathfrak{gl}_n$-link polynomial. The normalized colored Reshetikhin--Turaev $\mathfrak{gl}_n$-link polynomial can be obtained by a shift.
\end{thm}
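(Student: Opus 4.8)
The plan is to deduce the statement from the pieces already in place, by the same ``assembling'' philosophy as in the proof of Theorem~\ref{thm-kkele}. By Lemma~\ref{lem-linkasF} we may write $L_D=\sum_{j=1}^{t}(-1)^{\mathrm{sgn}_j}q^{\mathrm{sh}_j}u_j$ with the $u_j$ closed $\mathfrak{sl}_n$-webs obtained from a string of $F_i^{(j)}$'s acting on $v_{(n^{\ell})}$ under $q$-skew Howe duality, so that $\mathrm{ev}(L_D)=\sum_j(-1)^{\mathrm{sgn}_j}q^{\mathrm{sh}_j}\mathrm{ev}(u_j)$. By Theorem~\ref{thm-evaluation} each $\mathrm{ev}(u_j)$, computed by the $n$-multitableaux algorithm of Definition~\ref{defn-evaluation}, equals the Kuperberg bracket $\langle u_j\rangle_{\mathrm{Kup}}$ (the normalization $d(\vec{k})$ there is trivial for a closed web, since $\vec{k}\in\{0,n\}^m$ with $\ell$ entries equal to $n$ forces $d(\vec{k})=0$), and by Corollary~\ref{cor-kupismoy} this is the MOY bracket. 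Thus the only thing left to check for the identity $\mathrm{ev}(L_D)=\langle L_D\rangle_n$ is that the local data produced by Lemma~\ref{lem-linkasF} --- the colored braiding operators $T_{a,b,\pm i}$ of Definition~\ref{defn-colbraid} --- correspond, under $\gamma_{m,n}$, to the local crossing resolutions of Definition~\ref{defn-cRTpoly}.

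First I would carry out that matching. By construction $T_{a,b,i}^k v_{\vec{k}}$ is the string $F_{i+1}^{(a+k-b)}F_{i}^{(a)}F_{i+1}^{(b-k)}v_{\vec{k}}$ (resp.\ its $a<b$ analogue), whose image under $q$-skew Howe duality is, reading off the labels on the ladder edges exactly as in Example~\ref{exa-fstring}, precisely the ``square'' web appearing in Definition~\ref{defn-cRTpoly}; and the $q$-weighted sums of Equation~\ref{eq-posbraid} are verbatim the sums with coefficients $(-1)^{k+(a+1)b}q^{-b+k}$ (resp.\ $(-1)^{k+(b+1)a}q^{-a+k}$) of Definition~\ref{defn-cRTpoly}. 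The rearrangement needed to get there --- turning the naive $\sum_k\alpha(k)F^{(a+k-b)}_iE^{(k)}_iv$ into $\sum_k\alpha(k)F_{i+1}^{(a+k-b)}F_{i}^{(a)}F_{i+1}^{(b-k)}v$, as in the displayed computation of the introduction --- is just the embedding $\dot{\Uu}_q(\mathfrak{sl}_i)\hookrightarrow\dot{\Uu}_q(\mathfrak{sl}_{i+1})$ followed by relations in $\dot{\Uu}_q(\mathfrak{sl}_{i+1})$, and gives the same morphism because $\gamma_{m,n}$ is a well-defined functor (Proposition~\ref{prop-qhowe}). Doing this for the left, right and downwards versions ${}_lT,{}_rT,{}_dT$ (the standard duals-twisted variants, whose explicit strings are recorded in Definition~\ref{defn-colbraid}) and for both signs, and composing these local pieces along $L_D$ in the order dictated by Lemma~\ref{lem-linkasF}, assembles $\langle L_D\rangle_n$ as exactly the same signed $q$-weighted sum of Kuperberg brackets of closed webs as $\mathrm{ev}(L_D)$. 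Hence $\mathrm{ev}(L_D)=\langle L_D\rangle_n$ term by term, and the normalized polynomial $\mathrm{RT}_n(L_D)$ is recovered by the global framing shift $\prod_{c_{a,b}}s(c_{a,b})$ of Equation~\ref{eq-norRT}.

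For invariance there are two things to say. On the one hand, the identity $\mathrm{ev}(L_D)=\langle L_D\rangle_n$ together with Theorem~\ref{thm-RTinv} already yields invariance under the second and third Reidemeister moves and planar isotopy, with the usual framing ambiguity for the first move. On the other hand --- and this is the point the paper wants to sketch --- one can argue invariance directly inside our framework: independence of the chosen $F_i^{(j)}$-decomposition and planar-isotopy invariance are built into Theorem~\ref{thm-evaluation} (using the web-level isotopy invariance of Section~\ref{sec-tabwebs}) together with the bi-adjointness of the $E$'s and $F$'s underlying ${}_lT,{}_rT,{}_dT$; while the second and third Reidemeister moves translate, after applying $q$-skew Howe duality and passing to the (thick) Schur quotient of $\dot{\Uu}_q(\mathfrak{sl}_m)$, into the braid-inverse relation $T_{b,a,-i}T_{a,b,+i}=\mathrm{id}$ (R2) and the Yang--Baxter relation for the $T$-operators (R3), which are in turn consequences of the higher quantum Serre relations (Lusztig, Chapter~7 in~\cite{lu}). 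The main obstacle I anticipate is precisely the second paragraph's bookkeeping: matching Definition~\ref{defn-colbraid} with Definition~\ref{defn-cRTpoly} for all orientations and both signs, and in particular checking that the rearrangements into pure-$F$ strings really reproduce the square webs of the MOY resolution without spurious $q$-powers or signs; likewise the direct proof of R2 and R3 via higher Serre relations, although conceptually clean, is a long case-by-case check, so I would only sketch it and lean on Theorem~\ref{thm-RTinv} for the rigorous statement.
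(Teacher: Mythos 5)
Your proposal is correct and follows essentially the same route as the paper: the identity $\mathrm{ev}(L_D)=\langle L_D\rangle_n$ is obtained by assembling Lemma~\ref{lem-linkasF}, Theorem~\ref{thm-evaluation} and Corollary~\ref{cor-kupismoy} (the matching of the braiding operators of Definition~\ref{defn-colbraid} with the crossing resolutions of Definition~\ref{defn-cRTpoly} being built in by construction), and invariance is either inherited from Theorem~\ref{thm-RTinv} or argued directly, as a sketched consequence of the (higher) quantum Serre relations after restricting to braids — exactly as in the paper.
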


\begin{proof}
This is only an assembling of pieces: the claim follows from Theorem \ref{thm-evaluation} and Corollary \ref{cor-kupismoy}.
\end{proof}

There is an alternative way to prove the statement in our setup which we sketch here. Because of Theorem \ref{thm-evaluation} we note that we already have the isotopy invariance. Thus, it suffices to restrict to braids (the braid is oriented upwards). We sketch how to show the invariance for the second Reidemeister move by restricting to the uncolored case $a=b=1$. It will be a consequence of the Serre relations from Definition \ref{def-serre}. The same is true for the uncolored third Reidemeister move as we invite the reader to check. The invariance in the colored case follows in the same vein using the higher Serre relations as in e.g. \cite[Chapter 7]{lu}.

The invariance under the second Reidemeister move in our case can be proven by checking that
\begin{equation}\label{eq-serreinv}
T_{1,1,\mp i+1}T_{1,1,\pm i}v_{\dots,1,1,0,0,\dots}=F_{i+1}F_{i+2}F_iF_{i+1}v_{\dots,1,1,0,0,\dots},\hspace*{0.15cm}\text{with the first }1\text{ in the }i\text{-th entry.}
\end{equation}
Or in pictures (the other possibility can be proven analogously): the move
\[
\scalebox{.7}{$\xy
(0,0)*{\includegraphics[scale=.75]{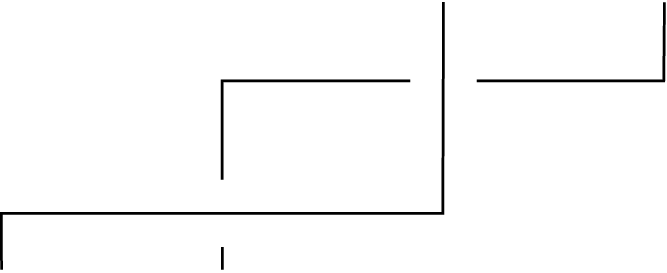}};
(0,10)*{T_{1,1,-i+1}};
(-27.5,-7.5)*{T_{1,1,+i}};
(-40.5,-16)*{1};
(-12.5,-16)*{1};
(15.5,-16)*{0};
(43.5,-16)*{0};
(-40.5,0)*{0};
(-12.5,0)*{1};
(15.5,0)*{1};
(43.5,0)*{0};
(-40.5,16)*{0};
(-12.5,16)*{0};
(15.5,16)*{1};
(43.5,16)*{1};
\endxy$}
\]
has to be
\[
\scalebox{.7}{$\xy
(0,0)*{\includegraphics[scale=.75]{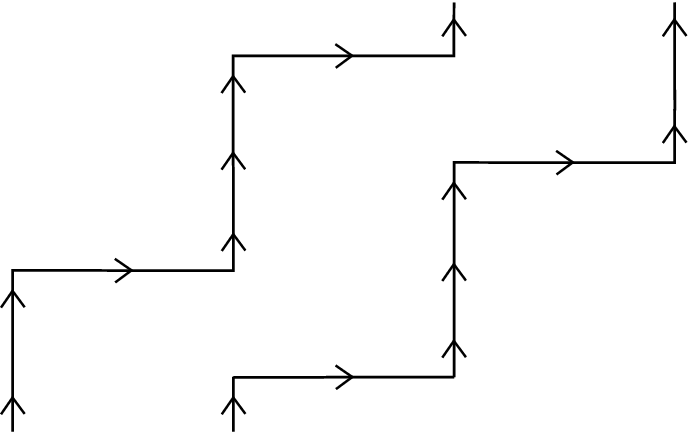}};
(0,24)*{F_{i+1}};
(-27.5,-3.75)*{F_{i}};
(0,-17)*{F_{i+1}};
(27.5,10.5)*{F_{i+2}};
(-40.5,-27)*{1};
(-12.5,-27)*{1};
(15.5,-27)*{0};
(43.5,-27)*{0};
(-40.5,-13.5)*{1};
(-12.5,-13.5)*{0};
(15.5,-13.5)*{1};
(43.5,-13.5)*{0};
(-40.5,0)*{0};
(-12.5,0)*{1};
(15.5,0)*{1};
(43.5,0)*{0};
(-40.5,13.5)*{0};
(-12.5,13.5)*{1};
(15.5,13.5)*{0};
(43.5,13.5)*{1};
(-40.5,27)*{0};
(-12.5,27)*{0};
(15.5,27)*{1};
(43.5,27)*{1};
\endxy$}.
\]
Factoring the left side of \eqref{eq-serreinv} using the definition from \eqref{eq-posbraid} gives the term (we use $v=v_{\dots,1,1,0,0,\dots}$)
\[
(F_{i+1}F_{i+2}F_{i}F_{i+1}-q^{+1}\cdot F_{i+1}F_{i+2}F_{i+1}F_{i}-q^{-1}\cdot F_{i+2}F_{i+1}F_{i}F_{i+1}+ F_{i+2}F_{i+1}F_{i+1}F_{i})v.
\]
Therefore, it suffices to show that
\[
F_{i+2}F_{i+1}F_{i+1}F_{i}v\stackrel{!}{=}q^{+1}\cdot F_{i+1}F_{i+2}F_{i+1}F_{i}v+q^{-1}\cdot F_{i+2}F_{i+1}F_{i}F_{i+1}v.
\]
Since $F^2_{i+1}v=0$, we see by using the Serre relations on the right three $F$ that
\[
q^{-1}\cdot F_{i+2}F_{i+1}F_{i}F_{i+1}v=\frac{q^{-1}}{[2]}\cdot F_{i+2}F^2_{i+1}F_{i}v.
\]
Using the Serre relations on the three left $F$ of the other term gives
\[
\frac{q^{+1}}{[2]}\cdot F_{i+2}F^2_{i+1}F_{i}v+\frac{q^{-1}}{[2]}\cdot F_{i+2}F^2_{i+1}F_{i}v=F_{i+2}F^2_{i+1}F_{i}v=F_{i+2}F_{i+1}F_{i+1}F_{i}v.
\]
The other cases follow similar.

\subsubsection{Two examples}\label{subsub-twoexamples}
Since empty shifts do not change anything interesting, we sometimes do not use them in the following, e.g. in order to go from the highest to the lowest weight one would have to do empty shifts at the end to order all non-zero entries to the right.

\begin{ex}\label{ex-linkasF}
Let us consider a certain diagram of the unknot $U_D$ as such a sum of $F_i^{(j)}$. Here we use $n=2$ and strands are only colored with color $1$. Note that this example belongs to Example \ref{ex-evaluation}.
\[
\scalebox{.7}{$\xy
(0,0)*{\rotatebox{270}{\includegraphics[scale=.75]{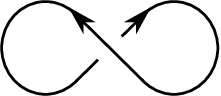}}};
(8,10)*{1};
(8,-10)*{1};
\endxy$}
\hspace*{0.25cm}
\rightsquigarrow
\hspace*{0.25cm}
\scalebox{.7}{$\xy
(0,0)*{\includegraphics[scale=.75]{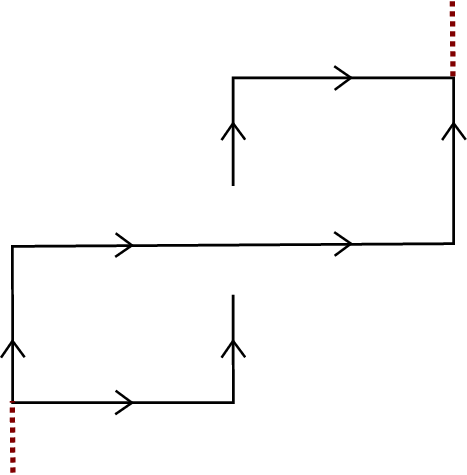}};
(-26,-28)*{2};
(2,-28)*{0};
(29.5,-28)*{0};
(-14,-17.5)*{F_1};
(-26,-8.5)*{1};
(2,-8.5)*{1};
(29.5,-8.5)*{0};
(-8,2)*{T_{1,1,2}};
(-26,8.5)*{0};
(2,8.5)*{1};
(29.5,8.5)*{1};
(14,23)*{F_2};
(-26,28)*{0};
(2,28)*{0};
(29.5,28)*{2};
\endxy$}.
\]
Hence, we can write the unknot as (beware that it has an undercrossing)
\[
U_D=F_2T_{1,1,2}F_1v_{(2^1)}=qF_2F_2F_1F_1v_{(2^1)}-F_2F_1F_2F_1v_{(2^1)}.
\]
We should note that we are cheating a little bit here, since, if we would strictly follow the algorithm, then we would have to 
rewrite the right pointing crossing as in Definition \ref{defn-colbraid} and we would get
\begin{align*}
U_D &=F_4F_2F_1T_{1,1,2}F_1F_4F_3F^{(2)}_2v_{(2^2)}\\ &=qF_4F_2F_1F_2F_3F_1F_4F_3F^{(2)}_2v_{(2^2)}-F_4F_2F_1F_3F_2F_1F_4F_3F^{(2)}_2v_{(2^2)}.
\end{align*}

Hence, as we have already calculated in Example \ref{ex-evaluation} before, the left summand gives four $2$-multitableaux of degrees $2,0,-2$ and the right summand two of degrees $1,-1$. Thus,
\[
\mathrm{ev}(U_D)=q(q^2+2+q^{-2})-(q+q^{-1})=q^{3}+q=q^{2}[2],
\]
which is, up to a normalization, the polynomial $[2]$ of the trivial diagram. The normalization factor given in Definition \ref{defn-cRTpoly} is indeed $q^{-2}=q^{2\mathrm{w}(U_D)}$ in this case.
\end{ex}

\begin{ex}\label{ex-Hopf}
A more demanding, but also more interesting, example is the Hopf link given below. Our space here is limited, so we only sketch the calculation.
\[
\scalebox{.7}{$\xy
(0,0)*{\includegraphics[scale=.75]{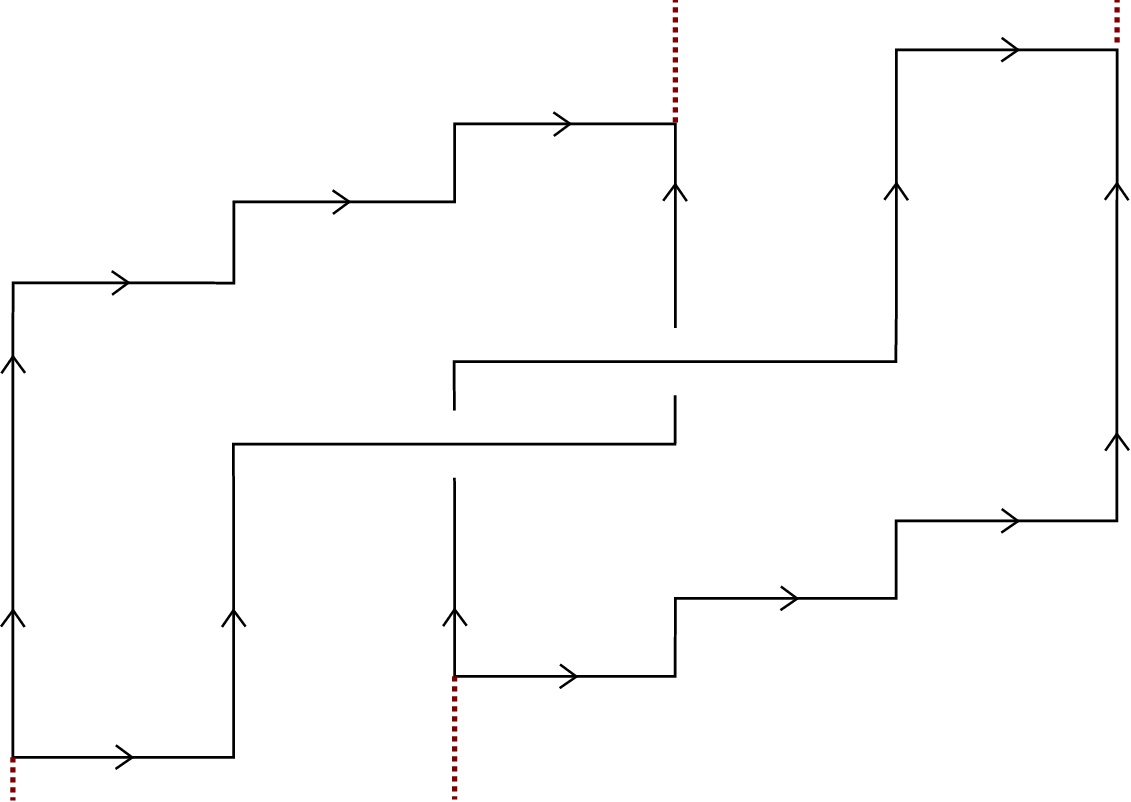}};
(-68,-50)*{3};
(-40,-50)*{0};
(-12,-50)*{3};
(16,-50)*{0};
(44,-50)*{0};
(72,-50)*{0};
(-56,-42)*{F_1};
(-68,-40)*{2};
(-40,-40)*{1};
(-12,-40)*{3};
(16,-40)*{0};
(44,-40)*{0};
(72,-40)*{0};
(0,-32)*{F_3};
(-68,-30)*{2};
(-40,-30)*{1};
(-12,-30)*{2};
(16,-30)*{1};
(44,-30)*{0};
(72,-30)*{0};
(28,-22)*{F_4};
(-68,-20)*{2};
(-40,-20)*{1};
(-12,-20)*{2};
(16,-20)*{0};
(44,-20)*{1};
(72,-20)*{0};
(56,-12)*{F_5};
(-68,-10)*{2};
(-40,-10)*{1};
(-12,-10)*{2};
(16,-10)*{0};
(44,-10)*{0};
(72,-10)*{1};
(-27.5,-1.5)*{T_{1,2,2}};
(-68,0)*{2};
(-40,0)*{0};
(-12,0)*{2};
(16,0)*{1};
(44,0)*{0};
(72,0)*{1};
(0.5,8.5)*{T_{2,1,3}};
(-68,10)*{2};
(-40,10)*{0};
(-12,10)*{0};
(16,10)*{1};
(44,10)*{2};
(72,10)*{1};
(-55,18.5)*{F^{(2)}_1};
(-68,20)*{0};
(-40,20)*{2};
(-12,20)*{0};
(16,20)*{1};
(44,20)*{2};
(72,20)*{1};
(-27,28.5)*{F^{(2)}_2};
(-68,30)*{0};
(-40,30)*{0};
(-12,30)*{2};
(16,30)*{1};
(44,30)*{2};
(72,30)*{1};
(1,38.5)*{F^{(2)}_3};
(-68,40)*{0};
(-40,40)*{0};
(-12,40)*{0};
(16,40)*{3};
(44,40)*{2};
(72,40)*{1};
(57,48.5)*{F^{(2)}_5};
(-68,50)*{0};
(-40,50)*{0};
(-12,50)*{0};
(16,50)*{3};
(44,50)*{0};
(72,50)*{3};
\endxy$}.
\]
In this case we want to calculate the colored $\mathfrak{gl}_3$-link polynomial using $\dot{\Uu}_q(\mathfrak{gl}_6)$-weight representations, i.e. $n=3$ and $m=6$. Moreover, the colors are illustrated above, that is we have the two braiding operators $T_{1,2,2}$ (bottom) and $T_{2,1,3}$ (top). Thus, we choose the orientations of the Hopf link to point upwards, i.e. both crossings should be $\xy(0,0)*{\includegraphics[scale=0.4]{figs/linkpoly/positive.eps}};\endxy$. Both of them correspond to two summands. Thus, we have four summands in total. Moreover, we see that (in the picture above we skipped the empty shift $F_2^{(3)}$ at the bottom and we can ignore the empty shift at the top)
\[
\mathrm{Hopf}=F_5^{(2)}F_3^{(2)}F_2^{(2)}F_1^{(2)}T_{2,1,3}T_{1,2,2}F_5F_4F_3F_1F_2^{(3)}v_{(3^2)}.
\]
The first operator gives the summands $-q^{-1}F_2F_3\mathbbm{1}$ and $\mathbbm{1}F_3F_2$ and the top gives $-q^{-1}F_4F_3^{(2)}F_4$ and $F_4^{(2)}F_3^{(2)}\mathbbm{1}$. Recall from Definition \ref{defn-colbraid} that the braiding operators will have three terms $F_i^{(j)}$ and we have indicated the trivial one by $\mathbbm{1}$. Or in local pictures:
\[
F_2F_3\mathbbm{1}:
\scalebox{.7}{$\xy
(0,0)*{\includegraphics[scale=.75]{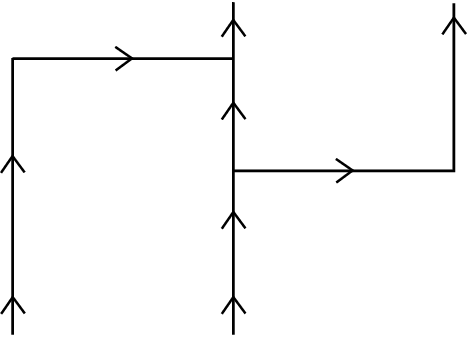}};
(-26,-20)*{1};
(2,-19.5)*{2};
(30,-19.5)*{0};
(-14,17)*{F_{2}};
(-26,-6.5)*{1};
(2.5,-7)*{2};
(30,-6.5)*{0};
(14,3)*{F_{3}};
(-26,6.5)*{1};
(2.5,6.5)*{1};
(30,6)*{1};
(-26,19.5)*{0};
(2,19.5)*{2};
(30,19)*{1};
\endxy$}
\quad
\text{ and }
\quad
F_4F_3^{(2)}F_4:
\scalebox{.7}{$\xy
(0,0)*{\includegraphics[scale=.75]{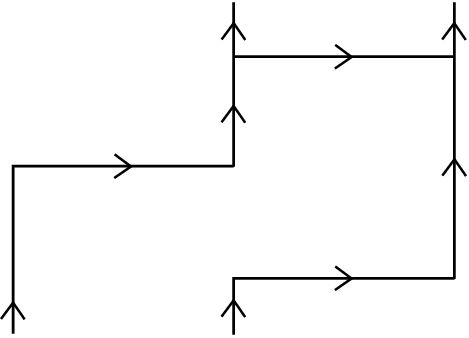}};
(14,17.5)*{F_{4}};
(-26,-20)*{2};
(2,-19.5)*{1};
(30,-19.5)*{0};
(-14,4.5)*{F_{3}^{(2)}};
(-26,-6.5)*{2};
(2.5,-7)*{0};
(30,-6.5)*{1};
(-26,6.5)*{0};
(2.5,6.5)*{2};
(30,6)*{1};
(14,-11.5)*{F_{4}};
(-26,19.5)*{0};
(2,19.5)*{1};
(30,19)*{2};
\endxy$}.
\]
The other two look like the right case in Example \ref{ex-braidop} with different numbers. We now follow the algorithm to generate for each of the four possibilities the sets of $3$-multitableaux. Note that the string of $F_i^{(j)}$ before the first braiding operator, denoted by $F_b$, opens two components that will eventually connect later. Both correspond to three possible flows and the evaluation algorithm will generate nine $3$-multitableaux. They will be
\[
\left(\;\xy (0,0)*{\begin{Young}1& - & - & -\cr $\cdot$\cr\end{Young}}\endxy\;,\;\xy (0,0)*{\begin{Young}1& - & - & -\cr $\cdot$\cr\end{Young}}\endxy\;,\;\xy (0,0)*{\begin{Young}1& - & - & -\cr $\cdot$\cr \end{Young}}\endxy\;\right),\hspace*{0.5cm}\hspace*{0.5cm}\left(\;\xy (0,0)*{\begin{Young}1& 3 & 4 & 5\cr\end{Young}}\endxy\;,\;\xy (0,0)*{\begin{Young}1\cr \end{Young}}\endxy\;,\;\xy (0,0)*{\begin{Young}1\cr 2\cr \end{Young}}\endxy\;\right),
\]
where the number $2$ is allowed to appear in the node marked $\cdot$ and the numbers $3,4$ and $5$ (in order) are allowed to appear in the nodes marked $-$. An explicit example is illustrated above.

The four possibilities how the two braiding operators can be composed will kill some of them and create new ones while we follow the evaluation algorithm. For example, the evaluation of $F_2F_3F_bv_{(3^2)}$ will raise this number to twelve $3$-multitableaux because the $F_3$ can be place in two different positions for each of the nine $3$-multitableaux. But the $F_2$ will kill some of them, since to place a node of residue $2$ is only possible if we see a hook. For example, the left of the possible two extensions of the upper right example does not have such a hook, while the right one has
\[
\left(\;\xy (0,0)*{\begin{Young}1& 3 & 4 & 5\cr\end{Young}}\endxy\;,\;\xy (0,0)*{\begin{Young}1 & 6\cr \end{Young}}\endxy\;,\;\xy (0,0)*{\begin{Young}1\cr 2\cr \end{Young}}\endxy\;\right),\hspace*{0.5cm}\hspace*{0.5cm}\left(\;\xy (0,0)*{\begin{Young}1& 3 & 4 & 5\cr\end{Young}}\endxy\;,\;\xy (0,0)*{\begin{Young}1\cr \end{Young}}\endxy\;,\;\xy (0,0)*{\begin{Young}1 & 6\cr 2 & 7\cr \end{Young}}\endxy\;\right).
\]
If we extend the string now by $F_4^{(2)}F_3^{(2)}$, then we see that the first will kill most of the possibilities. For example it is not possible to add two nodes of residue $3$ to the right $3$-multitableaux above. This is due to the fact that $F_3^{(2)}$ corresponds to a cap. The $F_4^{(2)}$, which corresponds to a cup, will then create new possibilities. Following this process to the end and calculate the degrees we see that we will get
\[
\langle\mathrm{Hopf}\rangle_3=q^{-2}[2]^2[3]-2q^{-1}[2][3]+[3]^2,
\]
which is the corresponding colored quantum polynomial.
\end{ex}
\section{Its categorification}\label{sec-cat}
\subsection{A cellular basis for matrix factorizations}\label{sec-catpart1}

\subsubsection{The dotted identities}\label{subsub-dotidem}
We start by giving the definition of the idempotent for $\vec{\lambda}$, denoted by $e(\vec{\lambda})$. Recall that we choose and fix $n$ and $\ell$ and that there is a constant $c(\vec{k})$ that only depends on the $\mathfrak{gl}_m$-weight $\vec{k}$. Note that, since $\vec{\lambda}$ corresponds to a state string $\vec{S}$ which includes the $\vec{k}$, the $\vec{\lambda}$ determines $c(\vec{k})$.

\begin{defn}\label{defn-idem}(\textbf{Idempotent associated to $\vec{\lambda}$})
Given an $n$-multipartition $\vec{\lambda}$ with $c(\vec{k})$ nodes filled with non-repeating $k\in\{1,\dots,c(\vec{k})\}$, we can associate to it a certain idempotent, denoted by $e(\vec{\lambda})$, using the following rules. Define a sequence of $F_k$ for $\vec{\lambda}$ by (with $r(\vec{\lambda})$ as in Definition \ref{defn-rsequence})
\[
\mathrm{qH}(\vec{\lambda})= \prod_{k=1}^{c(\vec{k})}F_{r(\vec{\lambda})_k}=F_{r(\vec{\lambda})_{c(\vec{k})}}\dots F_{r(\vec{\lambda})_1},\quad\text{with }r(\lambda)=(r(\lambda)_1,\dots,r(\lambda)_{c(\vec{k})}).
\]
Define a $\mathfrak{gl}_n$-web $u_{\vec{\lambda}}$ to be the $\mathfrak{gl}_n$-web generated by applying $\mathrm{qH}(\vec{\lambda})$ to a highest weight vector $v_{(n^{\ell})}$ (here $\ell$ is as in Definition \ref{defn-tabcomb}) and use $q$-skew Howe duality. Then
\[
e(\vec{\lambda})=\mathrm{id}\colon\widehat{u}_{\vec{\lambda}}\to\widehat{u}_{\vec{\lambda}},
\]
that is, the identity between two copies of the matrix factorization $\widehat{u}_{\vec{\lambda}}$ associated to $u_{\vec{\lambda}}$.
\end{defn}

Recall that $\widehat{t}_{i}$ denote homomorphisms of matrix factorizations corresponding to dots, cf. \eqref{eq-mf-homs}.

\begin{defn}\label{defn-idemdots}(\textbf{Dot placement associated to $\vec{\lambda}$})
Given a $n$-multipartition $\vec{\lambda}$ as in Definition \ref{defn-idem} together with its associated idempotent $e(\vec{\lambda})$, and denote by $m(k)=\mathsf{A}^{^{r\succ N}}(T_{\vec{\lambda}^k})$ the number of addable nodes after the node $N$ with entry $k$ in $T_{\vec{\lambda}^k}$ with the same residue $r$ as the node $N$. We define  $e(\vec{\lambda})d(\vec{\lambda})=e(\vec{\lambda})\circ d(\vec{\lambda})\colon\widehat{u}_{\vec{\lambda}}\to\widehat{u}_{\vec{\lambda}}$, where
\[
d(\vec{\lambda})=\widehat{t}_{c(\vec{k})}^{\phantom{.}m(c(\vec{k}))}\circ\dots\circ \widehat{t}_{1}^{\phantom{.}m(1)}\colon\widehat{u}_{\vec{\lambda}}\to\widehat{u}_{\vec{\lambda}}.
\]
We call it \textit{the dotted identity associated to $\vec{\lambda}$}.
\end{defn}

\begin{lem}\label{lem-welldefidem}
The dotted identity $e(\vec{\lambda})d(\vec{\lambda})$ is always non-zero, and an idempotent if and only if $d(\vec{\lambda})=\mathrm{id}$ and nilpotent otherwise. For all $n$-multipartitions $\vec{\lambda},\vec{\mu}$ we have
\[
e(\vec{\lambda})e(\vec{\mu})=e(\vec{\mu})e(\vec{\lambda})=\delta_{\vec{\lambda},\vec{\mu}}e(\vec{\lambda})=\delta_{\vec{\lambda},\vec{\mu}}e(\vec{\mu}),\text{ with }\delta_{\vec{\lambda},\vec{\mu}}=\begin{cases}1, &\text{if } r(\vec{\lambda})=r(\vec{\mu}),\\ 0,&\text{ otherwise}.\end{cases}
\]
Moreover, we have
\[
e(\vec{\lambda})\circ d(\vec{\lambda})=d(\vec{\lambda})\circ e(\vec{\lambda})\hspace*{0.2cm}\text{ and }\hspace*{0.2cm}d(\vec{\lambda})\circ d(\vec{\mu})=d(\vec{\mu})\circ d(\vec{\lambda}).
\]
That is, the dotted identities for $\vec{\lambda}$ and $\vec{\mu}$ commute.
\end{lem}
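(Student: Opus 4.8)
The plan is to dispatch the formal identities first and then isolate the single substantive point, namely the non‑vanishing of $e(\vec{\lambda})d(\vec{\lambda})$.

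\textbf{Formal identities.} First I would record that $e(\vec{\lambda})=\mathrm{id}_{\widehat{u}_{\vec{\lambda}}}$ is non‑zero: the residue normalization in Definition~\ref{defn-tabcomb} guarantees that $\mathrm{qH}(\vec{\lambda})$ does not kill $v_{(n^{\ell})}$, so $u_{\vec{\lambda}}=\mathrm{g}(T_{\vec{\lambda}})$ is an honest $\mathfrak{sl}_n$‑web by Lemma~\ref{lem-allworks2}, and the associated matrix factorization is a non‑zero object. Then $e(\vec{\lambda})e(\vec{\mu})$, being a composite of identity morphisms, vanishes unless $\widehat{u}_{\vec{\lambda}}=\widehat{u}_{\vec{\mu}}$, in which case it equals $e(\vec{\lambda})$; and $\widehat{u}_{\vec{\lambda}}=\widehat{u}_{\vec{\mu}}$ iff $u_{\vec{\lambda}}=u_{\vec{\mu}}$ iff $r(\vec{\lambda})=r(\vec{\mu})$, since by construction $u_{\vec{\lambda}}$ depends only on $\mathrm{qH}(\vec{\lambda})$, which depends only on $r(\vec{\lambda})$, and conversely the generating string of $F$'s is read off from the residue sequence — this is exactly Lemma~\ref{lem-allworks2}. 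This gives the displayed identities for the $e$'s. The relation $e(\vec{\lambda})\circ d(\vec{\lambda})=d(\vec{\lambda})\circ e(\vec{\lambda})$ is immediate because $e(\vec{\lambda})$ is an identity morphism. Finally $d(\vec{\lambda})\circ d(\vec{\mu})$ is $0$ unless $r(\vec{\lambda})=r(\vec{\mu})$ for source/target reasons, and when $r(\vec{\lambda})=r(\vec{\mu})$ both $d(\vec{\lambda})$ and $d(\vec{\mu})$ are products of the dot‑morphisms $\widehat{t}_k$; since dots at distinct positions commute (a standard relation in the KL‑R, equivalently matrix‑factorization, calculus, together with the trivial $\widehat{t}_k^{\,a}\widehat{t}_k^{\,b}=\widehat{t}_k^{\,b}\widehat{t}_k^{\,a}$), the two composites agree.

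\textbf{Non‑vanishing of $e(\vec{\lambda})d(\vec{\lambda})$.} This is the heart of the statement, and I would prove it by transport through categorified $q$‑skew Howe duality. Under the $2$‑functor $\Gamma_{m,n\ell,n}$ of Theorem~\ref{thm-qhowe}, the KL‑R idempotent for $r(\vec{\lambda})$ maps to $e(\vec{\lambda})$ and the dot $\psi_2$ at position $k$ maps to $\widehat{t}_k$; hence $e(\vec{\lambda})d(\vec{\lambda})$ is the image of the element of $R_{\Lambda}$ obtained by placing $m(k)=|\mathsf{A}^{r\succ N}(T_{\vec{\lambda}^k})|$ dots at position $k$ on the idempotent for $r(\vec{\lambda})$. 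By the very definition of Hu and Mathas this is their graded cellular basis element $\psi^{\vec{\lambda}}_{T_{\vec{\lambda}},T_{\vec{\lambda}}}$; matching the dot distribution requires the bookkeeping between the matrix‑factorization $q$‑grading and the KL‑R degree conventions of Definition~\ref{defn-degpart1} with the residue shift of Definition~\ref{defn-tabcomb}, and this is the one place I expect real care to be needed. Since the induced functor $\tilde{\Gamma}$ of Theorem~\ref{thm-qhowe} is an equivalence, it is faithful, and $\psi^{\vec{\lambda}}_{T_{\vec{\lambda}},T_{\vec{\lambda}}}\neq 0$ as an element of the HM‑basis, whence $e(\vec{\lambda})d(\vec{\lambda})\neq 0$. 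For the record I would also note a more self‑contained route: induct on $c(\vec{k})$, peeling off the last ladder $F_{r(\vec{\lambda})_{c(\vec{k})}}$, and use that $T_{\vec{\lambda}}$ is the dominant standard tableau so that the new node sits in the rightmost admissible position and $m(c(\vec{k}))$ is precisely one less than the first power of the dot operator on that facet which vanishes; but this re‑proves a fragment of Hu–Mathas, so I would present it only as a remark.

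\textbf{Idempotent versus nilpotent, and the obstacle.} If $d(\vec{\lambda})=\mathrm{id}$, i.e.\ all $m(k)=0$, then $e(\vec{\lambda})d(\vec{\lambda})=e(\vec{\lambda})$, an idempotent by the formal part. If $d(\vec{\lambda})\neq\mathrm{id}$, then $\sum_k m(k)>0$ and, since each $\widehat{t}_k$ has $q$‑degree $2$, $e(\vec{\lambda})d(\vec{\lambda})$ is homogeneous of strictly positive $q$‑degree $2\sum_k m(k)$; hence $(e(\vec{\lambda})d(\vec{\lambda}))^N=e(\vec{\lambda})d(\vec{\lambda})^N$ has $q$‑degree $2N\sum_k m(k)$, which exceeds the top degree of the finite‑dimensional $\mathbb{Z}$‑graded endomorphism space $\mathrm{EXT}(\widehat{u}_{\vec{\lambda}},\widehat{u}_{\vec{\lambda}})$ for $N\gg 0$, so the element is nilpotent. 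A non‑zero nilpotent is never idempotent, so, combined with the non‑vanishing established above, $e(\vec{\lambda})d(\vec{\lambda})$ is an idempotent exactly when $d(\vec{\lambda})=\mathrm{id}$. The main obstacle in the whole argument is the identification of $e(\vec{\lambda})d(\vec{\lambda})$ with the Hu–Mathas element, i.e.\ checking that the dot placement $m(k)$ coming from the matrix‑factorization side agrees with theirs; everything else is formal.
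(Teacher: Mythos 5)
Your proof is correct and takes essentially the same route as the paper: the non-vanishing is obtained, exactly as in the paper, by identifying $e(\vec{\lambda})d(\vec{\lambda})$ with the Hu--Mathas ``dotted'' idempotent under (cyclotomic) categorified $q$-skew Howe duality via $\Gamma_{m,n\ell,n}$ and $\tilde\Gamma$ and invoking faithfulness of the equivalence. Your explicit degree argument for nilpotency and the formal product/commutation identities are simply spelled-out versions of what the paper handles by transporting the corresponding statements from the cyclotomic KL-R side through $\tilde\Gamma$.
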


\begin{proof}
To see that $e(\vec{\lambda})d(\vec{\lambda})$ is well-defined we need two ingredients. The first ingredient is that we have to make the equivalence $\tilde{\Gamma}$ from \eqref{eq-klrequi} explicit. That is, we are going to argue that $e(\vec{\lambda})d(\vec{\lambda})$ is the image of a certain cyclotomic KLR diagram under $\tilde{\Gamma}$ as illustrated below (the numbers $i$ and colors should illustrate the corresponding $\mathcal{F}_i$).
\[
\tilde{\Gamma}\colon
\xy
(0,0)*{\includegraphics[scale=.75]{figs/intro/HM-strings-mid}};
(15,-4.5)*{\scriptstyle 2};
(6,-4.5)*{\scriptstyle 2};
(-3.5,-4.5)*{\scriptstyle 3};
(-13,-4.4)*{\scriptstyle 1};
\endxy
\mapsto
e\left(\left(\;\xy(0,0)*{\begin{Young}1\cr\end{Young}}\endxy\;,\xy(0,0)*{\begin{Young}2&3\cr 4\cr\end{Young}}\endxy\;\right)\right)d\left(\left(\;\xy(0,0)*{\begin{Young}1\cr\end{Young}}\endxy\;,\xy(0,0)*{\begin{Young}2&3\cr 4\cr\end{Young}}\endxy\;\right)\right),
\]
where the residue sequence of $r(\vec{\lambda})$ is (recall our shift) given by $(2,2,1,3)$ and only the node with entry $1$ has an addable node (the node with entry $2$).

To see that everything works out we need our second ingredient, namely the HM basis from \cite{hm}. More explicitly, we use their definition of the dotted identity given in \cite[Definitions 4.9 and 4.15]{hm}. We denote their diagram associated to $\vec{\lambda}$, by abuse of notation, also by $e(\vec{\lambda})d(\vec{\lambda})$.

We consider now the lift of $e(\vec{\lambda})d(\vec{\lambda})$ to the KLR algebra, i.e. without taking the cyclotomic quotient (and again use the same notation). Then, by comparing their definition to Definition \ref{defn-idemdots}, we see that
\[
\Gamma_{m,n\ell,n}\colon e(\vec{\lambda})d(\vec{\lambda})\mapsto e(\vec{\lambda})d(\vec{\lambda}),
\]
since the action from \eqref{eq-klr} is given explicitly: it sends a dot 
to a dot $\widehat{t}$ and an idempotent as above is sent to the identity between the $\mathfrak{gl}_n$-web that can 
be read off from $r(\vec{\lambda})$.

To see that it is also the image under the cyclotomic equivalence we note that the definition of $\tilde{\Gamma}$ comes from the equivalence given in \cite[Proposition 5.6]{rou}. Comparing Rouquier's definition (beware that he uses lowest weight notation) with our conventions shows that
\[
\tilde{\Gamma}\colon e(\vec{\lambda})d(\vec{\lambda})\mapsto e(\vec{\lambda})d(\vec{\lambda}).
\]
Thus, applying \cite[Corollary 4.16]{hm}, we have that the dotted identity is well-defined and non-zero since $\tilde{\Gamma}$ is faithful. The other statements follow now directly from the corresponding ones in the cyclotomic KLR setting using the equivalence from \eqref{eq-klrequi}.
\end{proof}

\begin{ex}\label{ex-idemfoam}
To give an explicit example assume that $n=4$ and $\ell=2$ and let us consider two $4$-multipartition $\vec{\lambda}=((2,1),(0),(0),(1))$ and $\vec{\mu}=((0),(2,1),(1),(0))$. We have
\[
T_{\vec{\lambda}}=\left(\xy(0,0)*{\begin{Young}1&2\cr3\cr\end{Young}}\endxy\;,\;\emptyset\;,\;\emptyset\;,\;\xy(0,0)*{\begin{Young}4\cr\end{Young}}\endxy \right)\hspace*{0.2cm}\text{ and }\hspace*{0.2cm}T_{\vec{\mu}}=\left(\;\emptyset\;,\xy(0,0)*{\begin{Young}1&2\cr3\cr\end{Young}}\endxy\;,\;\xy(0,0)*{\begin{Young}4\cr\end{Young}}\endxy\;,\;\emptyset\;\right)
.
\]
We get that $r(T_{\vec{\lambda}})=r(T_{\vec{\mu}})=(2,3,1,2)$ and therefore $u=u_{\vec{\lambda}}=u_{\vec{\mu}}$ will be
\[
\scalebox{.7}{$\xy
(0,0)*{\includegraphics[scale=.75]{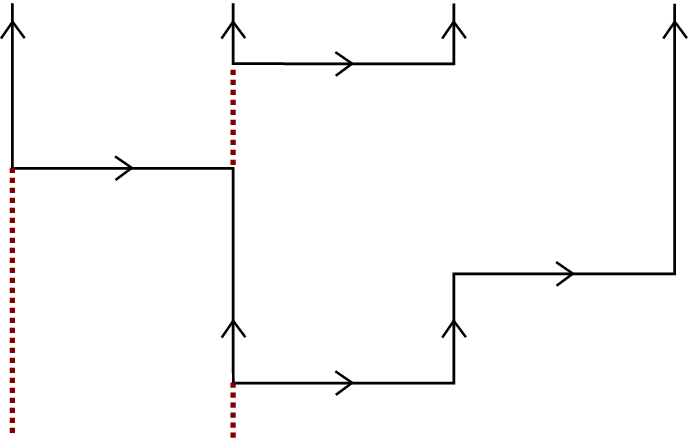}};
(0.5,-17.75)*{F_2};
(28.5,-4)*{F_3};
(0.5,22.75)*{F_2};
(-27.5,9.5)*{F_1};
(-40,-27.5)*{4};
(-12,-27.5)*{4};
(16,-27.5)*{0};
(44,-27.5)*{0};
(-40,-13.75)*{4};
(-12,-13.75)*{3};
(16,-13.75)*{1};
(44,-13.75)*{0};
(-40,0)*{4};
(-12,0)*{3};
(16,0)*{0};
(44,0)*{1};
(-40,13.75)*{3};
(-12,13.75)*{4};
(16,13.75)*{0};
(44,13.75)*{1};
(-40,27.5)*{3};
(-12,27.5)*{3};
(16,27.5)*{1};
(44,27.5)*{1};
\endxy$}.
\]
Its associated matrix factorization is $\widehat{u}=\widehat{F}_{4,2,(3,4,0,1)}\widehat{F}_{3,1,(4,3,0,1)}\widehat{F}_{2,3,(4,3,1,0)}\widehat{F}_{1,2,(4,4,0,0)}$. The idempotent for both $4$-multipartitions is therefore the identity homomorphism $\mathrm{id}\colon\widehat{u}\to\widehat{u}$. But the dot placement will be different, because $\vec{\lambda}$ has only three addable nodes for the first $F_2$, while $\vec{\mu}$ has two addable nodes for the first $F_2$ and one for the second. Thus, we have
\[
e(\vec{\lambda})d(\vec{\lambda})=\widehat{t}_{1}^{\,3}\colon\widehat{u}\to\widehat{u}\hspace*{0.1cm}\text{ and }\hspace*{0.1cm}e(\vec{\mu})d(\vec{\mu})=\widehat{t}^{\phantom{2}}_{4}\widehat{t}_{1}^{\,2}\colon\widehat{u}\to\widehat{u}.
\]
These correspond to three dots on the bottom horizontal ladder respectively to two dots on the bottom horizontal ladder and one on the top horizontal ladder.
\end{ex}

\subsubsection{The symmetric group and homomorphisms of matrix factorizations}\label{subsub-hommat}

\begin{rem}\label{rem-action}
Fix a $n$-multipartition $\vec{\lambda}$ with $c(\vec{k})$ nodes. Recall that the set $\mathrm{Std}(\vec{\lambda})$ denotes the set of all standard fillings of $\vec{\lambda}$. Now the symmetric group $S_{c(\vec{k})}$ makes its appearance because it acts on the subset $\mathrm{Std}_1(\vec{\lambda})$ of all standard fillings where every entry appears just once. The action for the simple transpositions $\tau_k$ is defined by the exchange of $k$ and $k+1$, if possible, and by doing nothing else.

Moreover, $S_{c(\vec{k})}$ acts on the set of strings of $F$ of length $c(\vec{k})$ with a fixed number of occurrences of the $F$ by defining the action of the $k$-th transposition $\tau_k$ by exchanging the neighboring entries $k$ and $k+1$ reading from right to left (as usual). In order to remember the residue as well, we denote a transposition $\tau_k$ that exchanges $F_{i}$ and $F_{i^{\prime}}$ by $\tau_k(i,i^{\prime})$. That is,
\[
\tau_k(i,i^{\prime})(F_{c(\vec{k})} \dots \underbrace{F_{i^{\prime}}F_{i}}_{\text{pos. }k} \dots F_1)=F_{c(\vec{k})}\dots \underbrace{F_{i}F_{i^{\prime}}}_{\text{pos. }k} \dots F_1.
\]
Note that these two actions agree. To see this recall that, by our discussion in Section \ref{sec-tabwebs}, an element of $\mathrm{Std}_1(\vec{\lambda})$ gives rise to a string of $F_i$ by reading the nodes ordered by their number and turn them into a string of $F_i$ by setting the $i$ for the $k$-th (from right to left) $F$ to be the residue of the node with label $k$.

We define $\tau_k(i,i^{\prime})^{*}=\tau_k(i^{\prime},i)$ and $\sigma^*=(\tau_{k_r}(i_r,i_r^{\prime})\dots\tau_{k_1}(i_1,i_1^{\prime}))^*=\tau_{k_1}(i_1^{\prime},i_1)\dots\tau_{k_r}(i_r^{\prime},i_r)$. 
\end{rem}

Before the following definition recall that we have homomorphisms of matrix factorizations $\widehat{CR}$, $\widehat{I}\widehat{D}$, and $\widehat{s}$ corresponding to zipping, cap-cup and shifting, respectively, cf. \eqref{eq-mf-homs}.

\begin{defn}\label{defn-foamLT}(\textbf{Homomorphisms between matrix factorizations}) Given two strings of $F$
\[
\mathrm{qH}_1=\prod_{k=1}^{c(\vec{k})} F_{i_k}=F_{i_{c(\vec{k})}}\dots F_{i_1}\;\;\text{ and }\;\;\mathrm{qH}_2=\prod_{k=1}^{c(\vec{k})} F_{i^{\prime}_k}=F_{i^{\prime}_{c(\vec{k})}}\dots F_{i^{\prime}_1}.
\]
Let $\widehat{u}_1$ and $\widehat{u}_2$ denote the two matrix factorizations that we associate to the corresponding $\mathfrak{gl}_n$-webs $u_1=\mathrm{qH}_1v_{(n^{\ell})}$ and $u_2=\mathrm{qH}_2v_{(n^{\ell})}$.
We assume that $\mathrm{qH}_1$ and $\mathrm{qH}_2$ differ only by a permutation $\sigma\in S_{c(\vec{k})}$ of their $F$ and that $\sigma$ is already decomposed into a string of transpositions
\[
\sigma=\tau_{k_l}\dots\tau_{k_1},
\]
such that $\sigma\cdot\mathrm{qH}_1=\mathrm{qH}_2$. Then we associate to the triple $\mathrm{qH}_1$, $\mathrm{qH}_2$ and $\sigma$ a homomorphism of matrix factorizations
\begin{equation}\label{eq-hommat}
\phi_{\sigma}(\mathrm{qH}_1,\mathrm{qH}_2)\colon \widehat{u}_1\to \widehat{u}_2,\;\;\phi_{\sigma}(\mathrm{qH}_1,\mathrm{qH}_2)=\phi(\tau_{k_l}(i_l,i^{\prime}_l))\circ\dots \circ\phi(\tau_{k_1}(i_1,i^{\prime}_1))\circ\mathrm{Id}_{\widehat{u}_1}
\end{equation}
by composing the identity $\mathrm{Id}_{\widehat{u}_1}$ on $\widehat{u}_1$ from the left with the homomorphisms of matrix factorizations
\[
\phi(\tau_{k_r}(i_r,i^{\prime}_r)) \mapsto \begin{cases}\widehat{CR}_{k_r,i_ri_r\pm 1}, &\text{if }i^{\prime}_r= i_r\pm 1,\\
\widehat{I}_{k_r,i_ri_r}\widehat{D}_{k_r,i_ri_r}, &\text{if }i_r= i^{\prime}_r,\\
\widehat{s}_{k_r,i_ri^{\prime}_r}, &\text{if }|i_r- i^{\prime}_r|>1,\end{cases}
\]
where all the other parts should be the identity.

We note that this depends on the choice of the decomposition of $\sigma$ into transpositions. We choose a certain decomposition in the following. We should point out that this choice only makes sense in a special case where the $n$-multitableaux $\vec{T}_1$ and $\vec{T}_2$ associated to $u_1$ and $u_2$ are of the same shape $\vec{T}_1,\vec{T}_2\in\mathrm{Std}_1(\vec{\lambda})$ for some $\vec{\lambda}$. Moreover, since we always want to factor through an idempotent associated with something ``canonical'', we only fix such a decomposition for the special case where $\vec{T}_2=T_{\vec{\lambda}}$ (recall that $T_{\vec{\lambda}}$ was defined in Definition \ref{defn-dominnancelambda}). 

For a fixed $n$-multipartition $\vec{\lambda}$ and a corresponding $n$-multitableau $\vec{T}\in\mathrm{Std}_1(\vec{\lambda})$, we choose a fixed permutation $\sigma\in S_{c(\vec{k})}$ satisfying 
\[
\sigma\cdot\vec{T}=T_{\vec{\lambda}}
\]
by searching for the lowest $k\in\{1,\dots,c(\vec{k})\}$ such that the node $N$ with entry $k$ in $\vec{T}$ is not the same as the node $N^{\prime}$ with entry $k$ in $T_{\vec{\lambda}}$. Apply a minimal sequence of transpositions until they match and repeat the process until $\sigma\cdot\vec{T}=T_{\vec{\lambda}}$. By construction, the permutation $\sigma\in S_{c(\vec{k})}$ will be of minimal length with respect to the property $\sigma\cdot\vec{T}=T_{\vec{\lambda}}$. We denote the homomorphism of matrix factorizations associated to this permutation $\sigma$ by $\phi_{\sigma}$.
\end{defn}

We point out that this combinatorial construction of the homomorphisms can not be read off
directly from a (cyclotomic) KLR diagram as the following example illustrates.
\[
\xy
(0,0)*{\includegraphics[scale=.75]{figs/intro/HM-strings-midcr}};
(15,-4.5)*{\scriptstyle 2};
(7,-4.5)*{\scriptstyle 2};
(-3.5,-4.5)*{\scriptstyle 3};
(-13,-4.4)*{\scriptstyle 1};
(30,8)*{\left(\;\xy(0,0)*{\begin{Young}2\cr\end{Young}}\endxy\;,\;\xy(0,0)*{\begin{Young}1 & 3\cr 4\cr\end{Young}}\endxy\;\right)};
(30,-8)*{\left(\;\xy(0,0)*{\begin{Young}1\cr\end{Young}}\endxy\;,\;\xy(0,0)*{\begin{Young}2 & 3\cr 4\cr\end{Young}}\endxy\;\right)};
(30,0)*{\uparrow};
\endxy
\]
In the example above the two $\mathfrak{gl}_2$-webs $u_1,u_2$ are the same $u_1=u_2=F_1F_3F_2F_2v_{(2^1)}$ and there is a non-trivial diagram that we can not see by just looking at the boundary. But one can associate different $2$-multitableaux to them, as illustrated above.

This procedure is well-defined, i.e. one does not run into ambiguities and the resulting homomorphism is between $\widehat{u}_1$ and $\widehat{u}_2$, by Remark \ref{rem-action}.

Furthermore, it is easy to see that
\[
\sigma\cdot\vec{T}_1=\vec{T}_2
\Leftrightarrow\sigma^{-1}\cdot\vec{T}_2=\vec{T}_1\;\;\text{for all }\vec{T}_1,\vec{T}_2\in\mathrm{Std}_1(\vec{\lambda}),\;\sigma\in S_{c(\vec{k})}.
\]

\begin{lem}\label{lem-fromcyclklr}
Given the setup from Definition \ref{defn-foamLT}, there exists an element in $R_{\Lambda}$, denoted by $\phi_{\sigma}(\mathrm{qH}_1,\mathrm{qH}_2)$, such that
$\tilde{\Gamma}\colon\phi_{\sigma}(\mathrm{qH}_1,\mathrm{qH}_2)\in R_{\Lambda}\mapsto\phi_{\sigma}(\mathrm{qH}_1,\mathrm{qH}_2)$,
i.e. all elements of the form $\phi_{\sigma}(\mathrm{qH}_1,\mathrm{qH}_2)$ come from $R_{\Lambda}$.
\end{lem}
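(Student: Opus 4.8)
The plan is to exhibit $\phi_{\sigma}(\mathrm{qH}_1,\mathrm{qH}_2)$ as the image, under $\tilde\Gamma$, of a literal KL-R diagram built from the same permutation data $\sigma=\tau_{k_l}\cdots\tau_{k_1}$ that defines the matrix factorization homomorphism. First I would recall from Theorem~\ref{thm-qhowe} that the $2$-functor $\Gamma_{m,n\ell,n}$ sends the generating $2$-morphisms of $\Ucatm$ to exactly the homomorphisms of matrix factorizations listed just before Theorem~\ref{thm-thick}: a downward crossing $\psi_3$ at position $p$ between strands labeled $i,j$ goes to $\widehat{CR}_{p,ij}$ if $j=i\pm1$, to $\widehat{I}_{p,ii}\widehat{D}_{p,ii}$ if $i=j$, and to $\widehat{s}_{p,ij}$ if $|i-j|>1$; and a dot goes to $\widehat{t}$. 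Comparing this list term-by-term with the case distinction in~\eqref{eq-hommat} of Definition~\ref{defn-foamLT}, one sees that the composite $\phi(\tau_{k_l}(i_l,i_l'))\circ\cdots\circ\phi(\tau_{k_1}(i_1,i_1'))\circ\mathrm{Id}_{\widehat u_1}$ is precisely $\Gamma_{m,n\ell,n}$ applied to the $\Ucatm$-diagram $D_\sigma$ obtained by stacking, from bottom to top, the crossing $2$-morphisms $\psi_3$ at positions $k_1,\dots,k_l$ (with the appropriate residue labels) on the $1$-morphism $\mathcal F_{i_{c(\vec k)}}\cdots\mathcal F_{i_1}\onel$.

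Next I would pass from $\Gamma_{m,n\ell,n}$ on all of $\Ucatm$ to the cyclotomic equivalence $\tilde\Gamma$ of~\eqref{eq-klrequi}. The diagram $D_\sigma$ consists only of downward-oriented strands (no cups or caps, since $\sigma$ only permutes $F$'s of a fixed string), so its image in the cyclotomic KL-R algebra $R_\Lambda$ — i.e. $D_\sigma$ read modulo the cyclotomic ideal~\eqref{eq-cyclorel}, with right-most region labeled $\Lambda$ — is a well-defined element, which I would simply call $\phi_{\sigma}(\mathrm{qH}_1,\mathrm{qH}_2)\in R_\Lambda$ (abusing notation as the statement allows). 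Since $\tilde\Gamma$ is by construction the functor induced by $\Gamma_{m,n\ell,n}$ after passing to the cyclotomic quotient (this is exactly the compatibility used already in the proof of Lemma~\ref{lem-welldefidem}, via Rouquier's Proposition 5.6 in~\cite{rou}), the square relating $\Gamma_{m,n\ell,n}$ on $\Ucatm$, the quotient map to $R_\Lambda$, and $\tilde\Gamma$ commutes on $2$-morphisms. Applying this to $D_\sigma$ gives $\tilde\Gamma(\phi_\sigma(\mathrm{qH}_1,\mathrm{qH}_2)) = \phi_\sigma(\mathrm{qH}_1,\mathrm{qH}_2)$ as homomorphisms of matrix factorizations, which is the claim.

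The only genuine thing to check is well-definedness and target-correctness, but both are already handled: Remark~\ref{rem-action} records that the two actions of $S_{c(\vec k)}$ (on strings of $F$'s and on standard fillings) agree and that applying $\tau_k(i,i')$ to $\mathrm{qH}_1$ really produces a string whose associated $\mathfrak{sl}_n$-web differs in the prescribed way, so the endpoints of $D_\sigma$ are indeed $\widehat u_1$ at the bottom and $\widehat u_2$ at the top; and the last displayed remark in Definition~\ref{defn-foamLT} guarantees the construction does not run into ambiguities. I expect the main (and essentially only) obstacle to be bookkeeping: making sure the position indices $k_r$ and residue pairs $(i_r,i_r')$ carried by the transpositions $\tau_{k_r}(i_r,i_r')$ match the region and strand labels of $\psi_3$ in $D_\sigma$ under the right-to-left/bottom-to-top reading conventions, so that the three cases of $\Gamma_{m,n\ell,n}(\psi_3)$ line up exactly with the three cases of $\phi(\tau_{k_r}(i_r,i_r'))$. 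This is routine given the conventions fixed in Section~\ref{sec-notation} and Definition~\ref{defn-KL}, and I would dispatch it by a one-line appeal to those conventions rather than writing it out.
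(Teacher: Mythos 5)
Your proposal is correct and follows essentially the same route as the paper: build the KL-R diagram by stacking the crossings dictated by the transpositions of $\sigma$ (with $\mathrm{qH}_1$ at the bottom and $\mathrm{qH}_2$ at the top), note that the explicit description of the $2$-action in Theorem~\ref{thm-qhowe} sends this diagram exactly to the composite in~\eqref{eq-hommat}, and then descend to $R_\Lambda$ via the same Rouquier-compatibility already used in the proof of Lemma~\ref{lem-welldefidem}. Your version just spells out the term-by-term comparison and the bookkeeping a bit more explicitly than the paper does.
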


\begin{proof}
The proof works essentially as the proof of Lemma \ref{lem-welldefidem}, i.e. we show that there exists an element of the KLR part of $\Ucatm$, that we denote again by the same expression, such that
\[
\Gamma_{m,n\ell,n}\colon \phi_{\sigma}(\mathrm{qH}_1,\mathrm{qH}_2)\in \Ucatm\mapsto\phi_{\sigma}(\mathrm{qH}_1,\mathrm{qH}_2).
\]
Comparing again the definition before Lemma 5.4 in \cite{rou} to our convention, we see that this proves the lemma.

The element of the KLR part of $\Ucatm$ is obtained by putting the string of $F$ for $\mathrm{qH}_1$ at the bottom and the one for $\mathrm{qH}_2$ at the top and then draw a diagram consisting of crossings given by the procedure from Definition \ref{defn-foamLT} in between. For example
\[
\xy
(0,0)*{\includegraphics[scale=.75]{figs/intro/HM-strings-topcr2}};
(15,-4.5)*{\scriptstyle 2};
(5.5,-4.5)*{\scriptstyle 3};
(-2.5,-4.5)*{\scriptstyle 2};
(-13,-4.4)*{\scriptstyle 1};
\endxy\leftrightsquigarrow\tau_2(3,2)\colon F_1F_2F_3F_2\to F_1F_3F_2F_2.
\]
This shows the existence of the $\phi_{\sigma}(\mathrm{qH}_1,\mathrm{qH}_2)\in R_{\Lambda}$ we need.
\end{proof}

\subsubsection{The categorified growth algorithm}\label{subsub-catgrowth}
We are now able to give the definition of the categorification of our extended growth algorithm.

To define the basis for the $\mathfrak{gl}_n$-web algebra ${}_vH_n(\vec{k})_u$ for any $\mathfrak{gl}_n$-webs $u$ and $v$ we need to use certain isomorphisms of matrix factorizations between the left and right sides of the square removal \eqref{eq-square1}. That is, we have to go to the thick cyclotomic KLR $\check{R}_{\Lambda}$ from Definition \ref{defn-thickcyclKLR} and have to associate something to the split and merge from Section \ref{subsub-q2alg}. 

Thus, we need to substitute all divided powers $F_i^{(j)}$ in the sequence associated to $u$ by $j$-times $F_i$. This means in pictures that we replace (here $j=2$)
\begin{equation}\label{eq-rest}
\scalebox{.7}{$\xy(0,0)*{\includegraphics[scale=.75]{figs/slnwebs/laddera.eps}};(-11,-4)*{\scriptstyle k_i};(18.5,-4)*{\scriptstyle k_{i+1}};(1,-2)*{\scriptstyle 2};(0.5,4.5)*{F_i^{(2)}};(-9.5,4)*{\scriptstyle k_i-2};(20,4)*{\scriptstyle k_{i+1}+2}\endxy$}\hspace*{0.2cm}\xy(0,3)*{\stackrel{\widehat{I}_i}{\longmapsto}};(0,-3)*{\stackrel{\widehat{D}_i}{\longmapsfrom}};\endxy\hspace*{0.2cm}
\scalebox{.7}{$\xy(0,0)*{\includegraphics[scale=.75]{figs/slnwebs/squarea.eps}};(-11,-11)*{\scriptstyle k_i};(18.5,-11)*{\scriptstyle k_{i+1}};(1.5,5)*{\scriptstyle 1};(0,10)*{F_i};(1.5,-8.5)*{\scriptstyle 1};(0,-3.5)*{F_i};(-9,0)*{\scriptstyle k_i-1};(20,0)*{\scriptstyle k_{i+1}+1};(-9.5,11)*{\scriptstyle k_i-2};(20,11)*{\scriptstyle k_{i+1}+2}\endxy$}.
\end{equation}
The morphisms $\widehat{I}_i$ and $\widehat{D}_i$ are not isomorphisms and are of $q$-degree $-1$, as the split and merge. Thus, we have to choose: starting with a flow on the right picture in \eqref{eq-rest}, we choose one flow for the left. Our choice will ensure that the whole process preserves the $q$-degree because the chosen flow will be of weight one lower than the starting flow. The precise definitions of $\widehat{I}_i$ and $\widehat{D}_i$ are not important for us (and long) and can be found for example in \cite[Definitions 8.11 and 8.12]{my}.

For $j>2$ we do literally the same, but use the image under $\check{\Gamma}_{m,n\ell,n}$ (see Theorem \ref{thm-thick}) of the splits $\mathcal{F}_i^{(j^{\prime})}\to \mathcal{F}_i^{(j^{\prime}-1)}\mathcal{F}_i$ repeatedly. We denote them by $\widehat{I}_i^{j^{\prime}}$. These are of degree $-{j^{\prime}}+1$. Thus, the full split is of degree $-(j+j-1+j-2+\dots+1)$. Our choice in this case will ensure that the whole process preserves the $q$-degree because the chosen flow will be of weight $j+j-1+j-2+\dots+1$ lower than the starting flow.

\begin{defn}\label{defn-foamLT2}(\textbf{Homomorphism of matrix factorizations for $\mathfrak{gl}_n$-webs $u_f$ with a flow})
Given a $\mathfrak{gl}_n$-web with a flow $u_f$, we associate to it a homomorphism of matrix factorizations
\[
\phi_{u_f}\colon \widehat{u}_f\to \widehat{u}_{\vec{\lambda}},
\]
where $\vec{\lambda}$ is the boundary datum/$n$-multipartition and $\widehat{u}_{\vec{\lambda}}$ is as in 
Definition \ref{defn-idem}, in the following way.

Change the $n$-multitableau $\iota(u_f)$ by replacing the lowest multiple entry $k$ of multiplicity $j_k$ of $\iota(u_f)$ increasing from left to right with consecutive numbers $k,\dots,k+j_k$ and shift all other entries by $j_k$. Repeat until no multiple entries occur and obtain $\iota(u_f)^{\prime}$. Set
\[
\phi_{u_f}=\phi_{\sigma}(\iota(u_f)^{\prime},T_{\vec{\lambda}})\circ \phi_{\mathrm{R}}\colon \widehat{u}_f\to \widehat{u}_{\vec{\lambda}},
\]
with $\phi_{\sigma}(\iota(u_f)^{\prime},T_{\vec{\lambda}})$ for the strings of $F_i$ $\mathrm{qH}_{1,2}$ corresponding to $\iota(u_f)^{\prime}$ and $T_{\vec{\lambda}}$ respectively.

The homomorphism $\phi_{\mathrm{R}}$ is given by composing an appropriate number of the $\widehat{I}$ from below. That is, the difference between the two corresponding $\mathfrak{gl}_n$-webs is $\dots F_i^{(j)}\dots$ for $\widehat{u}_f$ and $\dots F_i\dots F_i\dots$ for $\phi_{\sigma}(\iota(u_f)^{\prime},T_{\vec{\lambda}})$ which are replaced inductively by $\widehat{I}_i^{j^{\prime}}$: the order does not matter by the associativity of splits (see \cite[Proposition 2.2.4]{klms}) combined with Theorem \ref{thm-thick}.
\end{defn}

\begin{lem}\label{lem-welldef}
There is a diagram in $\Ucatmc$, denoted by the same symbol, such that
\[
\check{\Gamma}_{m,n\ell,n}\colon\phi_{u_f}\mapsto \phi_{u_f},
\]
where $\check{\Gamma}_{m,n\ell,n}$ is the extended functor from Theorem \ref{thm-thick}.
\end{lem}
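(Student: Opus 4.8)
The plan is to realise $\phi_{u_f}$ as a vertical composite of the two diagrams already visible in its definition (Definition~\ref{defn-foamLT2}) — the ``permutation part'' $\phi_{\sigma}(\iota(u_f)^{\prime},T_{\vec{\lambda}})$ and the ``splitting part'' $\phi_{\mathrm{R}}$ — to lift each of them separately into $\Ucatmc$, and then to invoke that $\check{\Gamma}_{m,n\ell,n}$ is a $2$-functor and hence respects vertical composition of $2$-morphisms. For the permutation part: as it stands, $\phi_{\sigma}(\iota(u_f)^{\prime},T_{\vec{\lambda}})$ is exactly a homomorphism of the shape treated in Definition~\ref{defn-foamLT}, with $\mathrm{qH}_1,\mathrm{qH}_2$ the strings of $F_i$'s read off from $\iota(u_f)^{\prime}$ and from $T_{\vec{\lambda}}$. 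So the construction in the proof of Lemma~\ref{lem-fromcyclklr} provides a diagram $\Phi_{\sigma}$ in the KL-R part of $\Ucatm$ with $\Gamma_{m,n\ell,n}(\Phi_{\sigma})=\phi_{\sigma}(\iota(u_f)^{\prime},T_{\vec{\lambda}})$; since $\Ucatm$ embeds into $\Ucatmd$ (fully faithfully, via $O\mapsto(O,\mathrm{id})$, as recalled in the proof of Theorem~\ref{thm-thick}) and $\check{\Gamma}_{m,n\ell,n}$ extends $\Gamma_{m,n\ell,n}$, the same $\Phi_{\sigma}$, viewed inside $\Ucatmc$, still satisfies $\check{\Gamma}_{m,n\ell,n}(\Phi_{\sigma})=\phi_{\sigma}(\iota(u_f)^{\prime},T_{\vec{\lambda}})$.

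For the splitting part, I would observe that the homomorphisms $\widehat{I}_i^{j^{\prime}}\colon\widehat{F}_i^{(j^{\prime})}\to\widehat{F}_i^{(j^{\prime}-1)}\widehat{F}_i$ out of which $\phi_{\mathrm{R}}$ is composed were introduced, in the discussion preceding Definition~\ref{defn-foamLT2}, precisely as images under $\check{\Gamma}_{m,n\ell,n}$ of the thick splitters $\mathcal F_i^{(j^{\prime})}\onel\to\mathcal F_i^{(j^{\prime}-1)}\mathcal F_i\onel$ of $\Ucatmc$ (this is where Theorem~\ref{thm-thick} is used). Composing the relevant splitters in $\Ucatmc$ therefore gives a diagram $\Sigma_{\mathrm{R}}$ with $\check{\Gamma}_{m,n\ell,n}(\Sigma_{\mathrm{R}})=\phi_{\mathrm{R}}$, and the associativity of splitters (Proposition 2.2.4 in~\cite{klms}) together with Theorem~\ref{thm-thick} guarantees, just as for $\phi_{\mathrm{R}}$ in Definition~\ref{defn-foamLT2}, that $\Sigma_{\mathrm{R}}$ does not depend on the order in which the divided powers of $u_f$ are resolved.

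Putting these together, and using that $\check{\Gamma}_{m,n\ell,n}$ is a $2$-functor, the $\Ucatmc$-diagram $\phi_{u_f}:=\Phi_{\sigma}\circ\Sigma_{\mathrm{R}}$ is sent by $\check{\Gamma}_{m,n\ell,n}$ to $\phi_{\sigma}(\iota(u_f)^{\prime},T_{\vec{\lambda}})\circ\phi_{\mathrm{R}}=\phi_{u_f}$, which is the claim. The only point requiring attention beyond this assembly is the matching of conventions across the three constructions — that a dot goes to $\widehat{t}$, a crossing of neighbouring (resp. equal, resp. distant) colours to $\widehat{CR}$ (resp. $\widehat{I}\widehat{D}$, resp. $\widehat{s}$), and a thick split to $\widehat{I}$ — but this is exactly how these matrix-factorization homomorphisms were defined via $\Gamma_{m,n\ell,n}$ and $\check{\Gamma}_{m,n\ell,n}$ in the first place, so I expect the main (and essentially only) obstacle to be the bookkeeping of positions, colours and weights along the composite rather than anything mathematically substantial.
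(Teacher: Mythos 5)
Your proposal is correct and follows essentially the same route as the paper's own proof: lift $\phi_{\sigma}(\iota(u_f)^{\prime},T_{\vec{\lambda}})$ via Lemma~\ref{lem-fromcyclklr}, lift $\phi_{\mathrm{R}}$ by observing that the $\widehat{I}_i^{j^{\prime}}$'s are images of the thick splitters under $\check{\Gamma}_{m,n\ell,n}$ from Theorem~\ref{thm-thick}, and combine the two by $2$-functoriality. The only difference is that you make explicit the embedding of $\Ucatm$ into $\Ucatmc$, which the paper leaves implicit.
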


\begin{proof}
It follows from our construction that (the color should be $i$)
\[
\check{\Gamma}_{m,n\ell,n}\colon
\xy
(0,0)*{\includegraphics[width=25px]{figs/higherstuff/split}};
(-4,2)*{\scriptstyle 1};
(4,2)*{\scriptstyle 1};
(3,-2)*{\scriptstyle 2};
\endxy\mapsto \widehat{I}_i\colon F_i^{(2)}\to F_i^{1}F_i^{1}.
\]
This induces maps for all the thick splits and shows that $\phi_R$ comes from a diagram in $\Ucatmc$. We can use Lemma \ref{lem-fromcyclklr} to see that $\phi_{\sigma}(\iota(u_f)^{\prime},T_{\vec{\lambda}})$ comes from a diagram in $\Ucatm$. Combining both we obtain the statement.
\end{proof}

We are now able go state a \textit{growth algorithm for homomorphism of matrix factorizations} which gives rise to a graded cellular basis.

\begin{defn}\label{defn-growthfoam}(\textbf{Growth algorithm for homomorphisms of matrix factorizations}) Let us denote by $B(W_n(\vec{k}))$ any monomial basis of the $\mathfrak{gl}_n$-web space $W_n(\vec{k})$. We denote by $\tilde{B}(W_n(\vec{k}))$ the set of all basis elements together with a choice of a flow line. We note that, because we have chosen a basis, none of the $\mathfrak{gl}_n$-webs in $\tilde{B}(W_n(\vec{k}))$ will be isotopic.

Given a state string $\vec{S}$, the corresponding $n$-multipartition $\vec{\lambda}$ and $u_f,v_{f^{\prime}}\in \tilde{B}(W_n(\vec{k}))$ . We define a homomorphism following Definition \ref{defn-foamLT2} by
\[
\mathcal{F}^{\vec{\lambda}}_{\iota(v_{f^{\prime}}), \iota(u_f)}\colon \widehat{u}\to \widehat{v},\hspace*{0.5cm}
\mathcal{F}^{\vec{\lambda}}_{\iota(v_{f^{\prime}}), \iota(u_f)}=\phi_{v_{f^{\prime}}}^*e(\vec{\lambda})d(\vec{\lambda})\phi_{u_f},
\]
where the ${}^*$ for $\phi_{v_{f^{\prime}}}$ is defined as
\[
\phi_{v_{f^{\prime}}}^*=(\phi_{v_{f^{\prime}}})^*=(\phi_{\sigma}(\iota(v_{f^{\prime}})^{\prime},T_{\vec{\lambda}})\circ \phi_{\mathrm{R}})^*=\phi_{\mathrm{R}}^*\circ \phi_{\sigma^{*}}(T_{\vec{\lambda}},\iota(v_{f^{\prime}})^{\prime}).
\]
Here the $\phi_{\mathrm{R}}^*$ consists of $\widehat{D}_i^{j^{\prime}}$ going in the other direction than the corresponding $\widehat{I}_i^{j^{\prime}}$, see \eqref{eq-rest}.
\end{defn}

\begin{lem}\label{lem-welldef2}
There is a diagram in $\Ucatmc$, denoted by the same symbol, such that
\[
\check{\Gamma}_{m,n\ell,n}\colon\mathcal{F}^{\vec{\lambda}}_{\iota(v_{f^{\prime}}), \iota(u_f)}\mapsto \mathcal{F}^{\vec{\lambda}}_{\iota(v_{f^{\prime}}), \iota(u_f)}.
\]
Moreover, if $\iota(u_f)=\iota(u_f)^{\prime}$ and $\iota(v_{f^{\prime}})=\iota(v_{f^{\prime}})^{\prime}$, then there is an element of the HM basis of $R_{\Lambda}$, denoted by the same symbol, such that
\[
\tilde{\Gamma}\colon\mathcal{F}^{\vec{\lambda}}_{\iota(v_{f^{\prime}}), \iota(u_f)}\mapsto \mathcal{F}^{\vec{\lambda}}_{\iota(v_{f^{\prime}}), \iota(u_f)}.
\]
This element is completely determined by $u_f,v_{f^{\prime}}$ in the sense that changing either the $\mathfrak{gl}_n$-webs or the flows will give another element of the HM basis.
\end{lem}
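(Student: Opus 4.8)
The plan is to deduce Lemma~\ref{lem-welldef2} by bootstrapping from the pieces already assembled in this subsection, exactly as in the proofs of Lemmata~\ref{lem-welldefidem}, \ref{lem-fromcyclklr}, \ref{lem-welldef}. First I would observe that $\mathcal F^{\vec{\lambda}}_{\iota(v_{f^{\prime}}), \iota(u_f)}$ is, by Definition~\ref{defn-growthfoam}, a composition
\[
\mathcal F^{\vec{\lambda}}_{\iota(v_{f^{\prime}}), \iota(u_f)}=\phi_{v_{f^{\prime}}}^*\circ e(\vec{\lambda})d(\vec{\lambda})\circ\phi_{u_f}
\]
of three homomorphisms of matrix factorizations, each of which has already been shown to lie in the image of $\check{\Gamma}_{m,n\ell,n}$: the middle factor by Lemma~\ref{lem-welldefidem} (it is the image of a dotted idempotent diagram, which visibly lives in $\Ucatmc$ since it involves no splits/merges), and $\phi_{u_f}$ and its adjoint $\phi_{v_{f^{\prime}}}^*$ by Lemma~\ref{lem-welldef}. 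Since $\check{\Gamma}_{m,n\ell,n}$ is a $2$-functor, composing the three preimage diagrams in $\Ucatmc$ produces a diagram — which we name $\mathcal F^{\vec{\lambda}}_{\iota(v_{f^{\prime}}), \iota(u_f)}$ by abuse of notation — mapping onto the desired homomorphism. This gives the first assertion essentially for free.

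For the second assertion I would restrict to the case $\iota(u_f)=\iota(u_f)^{\prime}$ and $\iota(v_{f^{\prime}})=\iota(v_{f^{\prime}})^{\prime}$. In this situation all entries of the two $n$-multitableaux are non-repeating, so no divided powers appear in the strings $\mathrm{qH}_1,\mathrm{qH}_2$, hence the ``$\phi_{\mathrm{R}}$'' and ``$\phi_{\mathrm{R}}^*$'' parts of $\phi_{u_f}$ and $\phi_{v_{f^{\prime}}}^*$ are identities and the splitters/mergers from~\ref{eq-rest} do not occur. Thus $\mathcal F^{\vec{\lambda}}_{\iota(v_{f^{\prime}}), \iota(u_f)}$ collapses to $\phi_{\sigma^{*}}(T_{\vec{\lambda}},\iota(v_{f^{\prime}}))\circ e(\vec{\lambda})d(\vec{\lambda})\circ\phi_{\sigma}(\iota(u_f),T_{\vec{\lambda}})$, a composition of a $\phi_{\sigma}$-type map, a dotted idempotent, and another $\phi_{\sigma}$-type map. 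By Lemma~\ref{lem-fromcyclklr} the $\phi_{\sigma}$'s lift to elements of $R_{\Lambda}$, and by Lemma~\ref{lem-welldefidem} so does the dotted idempotent; composing the preimages in $R_{\Lambda}$ gives an element there mapping under $\tilde\Gamma$ to $\mathcal F^{\vec{\lambda}}_{\iota(v_{f^{\prime}}), \iota(u_f)}$. The key point to verify here is that this composite is \emph{exactly} the Hu--Mathas basis element $\psi^{\vec{\lambda}}_{\iota(v_{f^{\prime}}),\iota(u_f)}$ from~\ref{defn-hmbasis}: this is a direct comparison of our Definitions~\ref{defn-idemdots}, \ref{defn-foamLT} and~\ref{defn-growthfoam} with the definition of $\psi^{\vec{\lambda}}_{\vec{T}^{\prime},\vec{T}}$ in Section~4 of~\cite{hm}, once one matches our chosen reduced decomposition of $\sigma$ (lowest-$k$-first, minimal length) with the reduced words $d(\vec{T})$ used there. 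I would cite~\cite{hm} for the fact that their basis element is built precisely as ``$d(\vec{T}^{\prime})^*$ times dotted idempotent times $d(\vec{T})$'' and note that our residue-shift convention from Definition~\ref{defn-tabcomb} is compatible with theirs.

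Finally, the injectivity-type statement ``changing either the $\mathfrak{sl}_n$-webs or the flows gives another HM-basis element'' follows from two inputs already available: the bijectivity of $\iota$ from Lemma~\ref{lem-allworks} (different $u_f$ or different flows yield genuinely different standard $n$-multitableaux $\iota(u_f)$), together with the fact that the Hu--Mathas elements $\{\psi^{\vec{\lambda}}_{\vec{T}^{\prime},\vec{T}}\}$ are indexed bijectively by pairs $(\vec{T},\vec{T}^{\prime})$ of standard $n$-multitableaux of the same shape (which is part of their being a basis, see~\cite{hm}) and that $\tilde\Gamma$ is faithful, so distinct basis elements of $R_{\Lambda}$ have distinct images. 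The main obstacle I anticipate is the bookkeeping in the explicit identification with the Hu--Mathas element: one must be careful that the \emph{choice} of reduced word for $\sigma$ made in Definition~\ref{defn-foamLT} really is the one (up to the braid and KL-R relations that hold in $R_{\Lambda}$) underlying $\psi^{\vec{\lambda}}_{\vec{T}^{\prime},\vec{T}}$, since $\phi_\sigma$ a priori depends on that choice; here I would invoke the well-definedness already recorded at the end of Definition~\ref{defn-foamLT} (via Remark~\ref{rem-action}) plus the fact that, modulo the cyclotomic ideal, Hu and Mathas's basis element is independent of the chosen reduced word up to terms more dominant in their cellular order — which is exactly what ``graded cellular basis'' in Theorem~\ref{thm-cellular} will require anyway. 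Everything else is routine assembly.
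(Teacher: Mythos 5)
Your proposal is correct and follows essentially the same route as the paper: both assemble the first two claims from Lemmata~\ref{lem-welldefidem},~\ref{lem-fromcyclklr} and~\ref{lem-welldef} via (2-)functoriality of $\check{\Gamma}_{m,n\ell,n}$ and $\tilde\Gamma$, and both obtain the last claim by identifying $\mathcal F^{\vec{\lambda}}_{\iota(v_{f^{\prime}}), \iota(u_f)}$ with the Hu--Mathas element $\psi^{\vec{\lambda}}_{\vec{T}^{\prime},\vec{T}}$ for the datum $(\vec{\lambda},\iota(u_f),\iota(v_{f^{\prime}}))$ and invoking the bijectivity of the translation from Section~\ref{sec-tabwebs}. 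Your extra care about the choice of reduced word for $\sigma$ is a welcome refinement but not a departure from the paper's argument.
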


\begin{proof}
The first and second statement are just combinations of Lemmas \ref{lem-welldefidem}, \ref{lem-fromcyclklr} and \ref{lem-welldef}. The third statement follows from our translation in Section \ref{sec-tabwebs}, i.e. the HM basis element $\psi^{\vec{\lambda}}_{\vec{T}^{\prime},\vec{T}}$ (see Definition \ref{defn-hmbasis} or, with a slightly different notation, Definition 5.1 in \cite{hm}) with the datum
\[
(\vec{\lambda},\vec{T}=\iota(u_f)^{\prime}\in\mathrm{Std}(\vec{\lambda}),\vec{T}^{\prime}=\iota(v_{f^{\prime}})^{\prime}\in\mathrm{Std}(\vec{\lambda}))
\]
will be the one for $\mathcal{F}^{\vec{\lambda}}_{\iota(v_{f^{\prime}}), \iota(u_f)}$.
\end{proof}

\begin{rem}\label{rem-hmbasis}
One can show analogously as the author has done in Lemma 4.15 of \cite{tub3} that
\[
\mathrm{deg}_q(\mathcal{F}^{\vec{\lambda}}_{\iota(v_{f^{\prime}}), \iota(u_f)})=\mathrm{deg}_{\mathrm{wt}}(u_{f})+\mathrm{deg}_{\mathrm{wt}}(v_{f^{\prime}})=\mathrm{deg}_{\mathrm{BKW}}(\iota(u_f))+\mathrm{deg}_{\mathrm{BKW}}(\iota(v_{f^{\prime}})).
\]
The main ingredient is of course the translation from Proposition \ref{prop-extgrowthdeg}. The reader should be careful, because the homomorphisms $\phi_R$ are not of degree zero. But our convention to obtain $\iota(u_f)^{\prime}$ from $\iota(u_f)$ ensures that the shift of degree is exactly the difference of the degrees of $\iota(u_f)^{\prime}$ and $\iota(u_f)$.
\end{rem}

\begin{ex}\label{ex-foam}
As a small example consider the $\mathfrak{gl}_4$-web from Example \ref{ex-idemfoam}. We use the growth algorithm to generate elements of ${}_uH_n(\vec{k})_u$. Note that we have to choose a flow in order to give an example and, in this very special case, the flow only depends on its boundary datum. Thus, everything will be symmetric and the flows can be read off from the cut line.
The two flows that belong to $T_{\vec{\lambda}}$ and $T_{\vec{\mu}}$ are given by $S_{\vec{\lambda}}=(\{3,2,1\},\{4,3,2\},\{1\},\{4\})$ and $S_{\vec{\mu}}=(\{4,2,1\},\{4,3,1\},\{2\},\{3\})$ respectively. In these two cases the corresponding elements are just given by the dotted identities from Example \ref{ex-idemfoam}, since we do not have to let the symmetric group $S_4$ act on the $4$-multitableaux.

The flow $f$, given by $S=(\{4,3,2\},\{4,3,1\},\{2\},\{1\})$, on the other hand gives rise to
\[
\iota(u_{f})=\left(\;\emptyset\;,\;\emptyset\;,\xy(0,0)*{\begin{Young}4\cr\end{Young}}\endxy\;,\;\xy(0,0)*{\begin{Young}1 & 2\cr 3\cr\end{Young}}\endxy\;\right)\hspace*{0.2cm}\text{ and }\hspace*{0.2cm}T_{\vec{\lambda}_f}=\left(\;\emptyset\;,\;\emptyset\;,\xy(0,0)*{\begin{Young}1\cr\end{Young}}\endxy\;,\;\xy(0,0)*{\begin{Young}2 & 3\cr 4\cr\end{Young}}\endxy\;\right).
\]
Thus, the permutation $\tau_1(2,2)\tau_2(3,2)\tau_3(1,2)$ gives $\tau_1(2,2)\tau_2(3,2)\tau_3(1,2)\cdot\iota(u_{f})=T_{\vec{\lambda}_f}$. In this case we see, since $\phi_R=\mathrm{id}$ and $e(\vec{\lambda}_f)d(\vec{\lambda}_f)=\widehat{t}_{1}$, that
\[
\mathcal{F}^{\vec{\lambda}_f}_{\iota(u_f),\iota(u_f)}=\widehat{CR}_{3,21}\circ \widehat{CR}_{2,23}\circ \widehat{I}_{2,22}\widehat{D}_{2,22}\circ\widehat{t}_{1}\circ\widehat{I}_{2,22}\widehat{D}_{2,22}\circ
\widehat{CR}_{3,32}\circ\widehat{CR}_{3,12}\colon\widehat{u}\to\widehat{u}
\]
where the degree is $\mathrm{deg}(\mathcal{F}^{\vec{\lambda}_f}_{\iota(u_f),\iota(u_f)})=2=\mathrm{deg}_{\mathrm{BKW}}(\iota(u_f))+\mathrm{deg}_{\mathrm{BKW}}(\iota(u_f))$. Moreover, we invite the reader to verify that the corresponding element in the (thick) cyclotomic KLR is
\[
\xy
(0,0)*{\includegraphics[scale=.75]{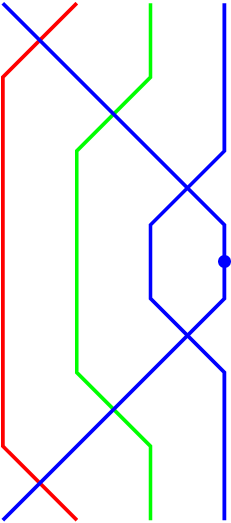}};
(15.5,0)*{\scriptstyle 2};
(6,0)*{\scriptstyle 2};
(-3.5,0)*{\scriptstyle 3};
(-13,0)*{\scriptstyle 1};
(25,0)*{\longleftrightarrow};
(40,0)*{e(\vec{\lambda}_f)d(\vec{\lambda}_f)};
(25,9)*{\longleftrightarrow};
(37.5,9)*{\tau_1(2,2)};
(25,-9)*{\longleftrightarrow};
(37.5,-9)*{\tau_1(2,2)};
(25,18)*{\longleftrightarrow};
(37.5,18)*{\tau_2(2,3)};
(25,-18)*{\longleftrightarrow};
(37.5,-18)*{\tau_2(3,2)};
(25,27)*{\longleftrightarrow};
(37.5,27)*{\tau_3(2,1)};
(25,-27)*{\longleftrightarrow};
(37.5,-27)*{\tau_3(1,2)};
\endxy
\]
\end{ex}

\subsubsection{It is a basis}\label{subsub-basis}
We are now able to prove that the growth algorithm given in Definition \ref{defn-growthfoam} gives a basis of the $\mathfrak{gl}_n$-web algebra ${}_vH_n(\vec{k})_u$. The main ingredients are the results from Section \ref{sec-tabwebs}. 

\begin{thm}\label{thm-foambasis}
The growth algorithm from Definition \ref{defn-growthfoam} gives a homogeneous basis of ${}_vH_n(\vec{k})_u$.
\end{thm}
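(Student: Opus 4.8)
The plan is to transport the Hu--Mathas graded cellular basis of the cyclotomic KL-R algebra $R_{\Lambda}$ to ${}_vH_n(\vec{k})_u$ through the equivalence $\tilde\Gamma$ of Theorem~\ref{thm-qhowe} (and its thick extension $\check\Gamma$ from Theorem~\ref{thm-thick}), using the dictionary of Section~\ref{sec-tabwebs}. First I would treat the case where $u$ and $v$ involve \emph{no} divided powers. Then $\widehat{u},\widehat{v}$ are $\tilde\Gamma$ of the projective $R_{\Lambda}$-modules $P_{\mathbf{i}_u}=R_{\Lambda}e(\mathbf{i}_u)$ and $P_{\mathbf{i}_v}$ attached to the residue sequences $\mathbf{i}_u,\mathbf{i}_v$, so ${}_vH_n(\vec{k})_u=\mathrm{EXT}(\widehat{u},\widehat{v})\cong\mathrm{HOM}_{R_{\Lambda}}(P_{\mathbf{i}_u},P_{\mathbf{i}_v})\cong e(\mathbf{i}_v)R_{\Lambda}e(\mathbf{i}_u)$ as graded vector spaces. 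The HM-basis of this corner is $\{\psi^{\vec{\lambda}}_{\vec{T}',\vec{T}}: r(\vec{T})=\mathbf{i}_u,\ r(\vec{T}')=\mathbf{i}_v\}$; by Lemma~\ref{lem-allworks} and Proposition~\ref{prop-extgrowth} the set $\{\vec{T}\in\mathrm{Std}(\vec{\lambda}):r(\vec{T})=\mathbf{i}_u\}$ is precisely $\{\iota(u_f)\}$ as $f$ runs over flows on $u$ with boundary $\vec{\lambda}$, and by Lemma~\ref{lem-welldef2} the corresponding $\mathcal F^{\vec{\lambda}}_{\iota(v_{f'}),\iota(u_f)}$ are exactly the images of these HM-basis elements. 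So in this case $\{\mathcal F^{\vec{\lambda}}\}$ is a basis, homogeneous by Remark~\ref{rem-hmbasis}.

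For general $u$ I would write $\widetilde{u}$ for the $\mathfrak{sl}_n$-web obtained by replacing every $F_i^{(j)}$ with $F_i^{\,j}$, so that $\widehat{u}$ is a direct summand of $\widehat{\widetilde{u}}$ with inclusion and projection given by the splitter/merge maps $\phi_{\mathrm{R}}^u,(\phi_{\mathrm{R}}^u)^*$ (this is the statement that $\mathcal F_i^{(j)}\onel$ is a summand of $\mathcal F_i^{\,j}\onel$, transported by $\check\Gamma$; associativity of splitters, Proposition~2.2.4 of~\cite{klms}, makes the choices coherent). Correspondingly $\widehat{u}=\tilde\Gamma(P_u)$ for a projective summand $P_u$ of $P_{\mathbf{i}_{\widetilde{u}}}$ cut out by the lift $\check e_u$ of the thick idempotent, whence ${}_vH_n(\vec{k})_u\cong\check e_v R_{\Lambda}\check e_u$, an idempotent corner of the cellular algebra $R_{\Lambda}$. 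By construction in Definitions~\ref{defn-foamLT2} and~\ref{defn-growthfoam}, under this isomorphism $\mathcal F^{\vec{\lambda}}_{\iota(v_{f'}),\iota(u_f)}$ corresponds to $\check e_v\,\psi^{\vec{\lambda}}_{\iota(v_{f'})',\iota(u_f)'}\,\check e_u$ with $\iota(u_f)',\iota(v_{f'})'$ the un-divided standard fillings from Definition~\ref{defn-foamLT2}. Since a corner $eAe'$ of a graded cellular algebra has a homogeneous basis $\{e\,c^{\vec{\lambda}}_{\vec{S}',\vec{S}}\,e'\}$ with $(\vec{S},\vec{S}')$ running over bases of the truncated cell modules $e'M^{\vec{\lambda}}$, $eM^{\vec{\lambda}}$ (choosing the thick idempotents compatibly with the Hu--Mathas anti-involution), it then remains to identify these truncated bases: I would show that $\check e_u\,\psi_{\vec{T}}\neq 0$ for exactly those $\vec{T}$ of the form $\iota(u_f)'$, $f$ a flow on $u$ with boundary $\vec{\lambda}$, and likewise for $v$. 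Granting this, the $\mathcal F^{\vec{\lambda}}$ are exactly the cellular basis of $\check e_v R_{\Lambda}\check e_u$, hence a homogeneous basis of ${}_vH_n(\vec{k})_u$.

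The hard part will be that last claim: computing the action of the thick idempotent $e_j$ (the longest-braid-word idempotent with its fixed dot placement, 2.18 of~\cite{klms}) on the Hu--Mathas cellular basis of the Specht module $M^{\vec{\lambda}}$, and checking that it collapses each $S_{j_k}$-orbit permuting the $j_k$ equal-residue nodes of a divided-power block onto a single surviving ``grouped'' filling $\iota(u_f)'$. This is precisely where thick calculus must be matched with Hu--Mathas straightening, and it is where the combinatorics of multiple-entry $n$-multitableaux and their degrees from Section~\ref{sec-tabwebs} really gets used (compare the ``dot-bursting'' phenomenon of Remark~\ref{rem-hmbasis} and Example~\ref{ex-foam}). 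As an independent consistency check — and a way to close the argument by counting rather than by a bare-hands linear-independence computation — one has that the number of $\mathcal F^{\vec{\lambda}}$ equals the number of pairs of flows on $u$ and $v$ with matching boundary, which equals the number of flows on the closed $\mathfrak{sl}_n$-web $v^*u$; by the $\mathfrak{sl}_n$-analogue of Khovanov--Kuperberg, i.e.\ matrix factorizations categorify the MOY polynomial (\cite{my}, \cite{wu}; cf.\ Theorem~\ref{thm-kkele}), $\dim_{\bC}H(\widehat{v^*u})=\#\{\text{flows on }v^*u\}$, so $\{\mathcal F^{\vec{\lambda}}\}$ has the right cardinality; since spanning already follows from projecting the HM-basis of $\mathrm{EXT}(\widehat{\widetilde{u}},\widehat{\widetilde{v}})$ down via $(\phi_{\mathrm{R}}^v)^*(-)\phi_{\mathrm{R}}^u$ together with the reduction above, this forces a basis.
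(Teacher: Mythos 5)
Your reduction to the thick idempotent corner has a genuine gap at exactly the step you flag as ``the hard part'': you never prove that the truncation by $\check e_u,\check e_v$ kills (or renders redundant) the Hu--Mathas elements $\psi^{\vec{\lambda}}_{\vec{T}',\vec{T}}$ whose fillings are not of the grouped form $\iota(u_f)'$, $\iota(v_{f'})'$. Without that claim neither of your two closing moves works: the ``cellular basis of a corner'' statement needs to know which $\check e_u\psi_{\vec{T}}$ survive \emph{and} that the survivors are independent, and your counting argument (cardinality $=\#\{\text{flows on }v^*u\}=\dim\mathrm{EXT}$, which is the same count the paper uses) only forces a basis once spanning by the proposed subset is established --- but spanning is precisely what hinges on the unproven collapse of the $S_{j_k}$-orbits under the longest-word idempotent. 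Projecting the full HM-basis through $\check e_v(-)\check e_u$ does give a spanning set, but it is indexed by \emph{all} standard fillings with the right residue sequences, and showing the non-grouped projections lie in the span of the grouped ones is a nontrivial thick-calculus/straightening computation that you only announce. A smaller, fixable issue in the same step: $\phi_{\mathrm{R}}$ and $\phi_{\mathrm{R}}^*$ are both of strictly negative degree, so $\phi_{\mathrm{R}}\circ\phi_{\mathrm{R}}^*$ is not an idempotent; realizing $\mathcal F_i^{(j)}$ as a summand of $\mathcal F_i^{\,j}$ requires the dotted splitters of~\cite{klms}, so $\check e_u$ must be defined with those dots before the corner identification even makes sense.

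Note that the paper runs the implication in the opposite, easier direction, which is why it does not need your hard claim. It observes that the elements $\mathcal F^{\vec{\lambda}}_{\iota(v_{f'}),\iota(u_f)}$ \emph{without} the $\phi_{\mathrm{R}}$-parts are images under the faithful equivalence $\tilde\Gamma$ of a (generally proper) \emph{subset} of the HM-basis, hence linearly independent; composing with the $\phi_{\mathrm{R}}$'s is the restriction of the isomorphism $\mathcal F_i^{\,j}\cong\bigoplus\mathcal F_i^{(j)}\{\cdot\}$ to a summand and so preserves independence; and then the dimension count $\dim\mathrm{EXT}(\widehat u,\widehat v)=\#\{\text{flows on }v^*u\}$ (EXT categorifies the Kuperberg/MOY evaluation, via Proposition~\ref{prop-extgrowthdeg}) upgrades the independent set to a basis. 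In other words: independence $+$ counting is cheap, spanning $+$ counting is not, and your proposal currently proves neither spanning nor independence for the divided-power case. Your first paragraph (the no-divided-powers case via the corner $e(\mathbf{i}_v)R_{\Lambda}e(\mathbf{i}_u)$) is correct and essentially the paper's treatment of $\mathfrak F'$; the argument breaks only when the thick idempotents enter.
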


\begin{proof}
We will show that the growth algorithm gives a linear independent set denoted by
\[
\mathfrak{F}=\{\mathcal{F}^{\vec{\lambda}}_{\iota(v_{f^{\prime}}), \iota(u_f)}\in {}_vH_n(\vec{k})_u\mid\,(\vec{S},u_{f},v_{f^{\prime}}),\,\vec{S}\text{ is a state string},\,u_{f},v_{f^{\prime}}\in \tilde{B}(W_n(\vec{k}))\}.
\]
By a counting argument, which heavily relies on the translation from Section \ref{sec-tabwebs}, we see that this set has the right cardinality, since we know that the set of all triples
\[
(\vec{\lambda},\iota(u_f)\in\mathrm{Std}(\vec{\lambda}),\iota(v_{f^{\prime}})\in\mathrm{Std}(\vec{\lambda}))
\]
has the same size as a possible basis of ${}_vH_n(\vec{k})_u$: the set of all possible flows on $v^*u$ has the same size as $\mathrm{dim}(\mathrm{EXT}(\widehat{u},\widehat{v}))$ since the Euler form $\mathrm{dim}(\mathrm{EXT}(\cdot,\cdot))$ categorifies the Kuperberg bracket (which can be deduced from \cite[Sections 6 to 11]{wu} or \cite[Section 3]{yo1}, i.e. that matrix factorizations satisfy the $\mathfrak{gl}_n$-web relations). Hence, we conclude that the linear independence of $\mathfrak{F}$ suffices to show that the set $\mathfrak{F}$ forms a basis.

We want to consider the additive equivalence of $2$-categories $\tilde{\Gamma}$ from Theorem \ref{thm-qhowe}. The argument goes as follows. The linear independence of the set
\[
\mathfrak{F}^{\prime}=\{\mathcal{F}^{\vec{\lambda}}_{\iota(v_{f^{\prime}})^{\prime},\iota(u_f)^{\prime}}\in {}_vH_n(\vec{k})_u\mid\,(\vec{S},u_{f},v_{f^{\prime}}),\,\vec{S}\text{ is a state string},\,u_{f},v_{f^{\prime}}\in \tilde{B}(W_n(\vec{k}))\},
\]
that is, without the removals $\phi_R$, suffices to show that $\mathfrak{F}$ is also linear independent. To see this note that the homomorphisms from \eqref{eq-rest} give rise to an isomorphism between the left side and a $q$-shifted sum of the right side (they correspond to the splitters and merges and the isomorphism can be verified as in \cite[Theorem 5.1.1]{klms}). Our choice of $\phi_R$ is a restriction of this isomorphism to a certain summand (and forget the $q$-degree shift).

But the set $\mathfrak{F}^{\prime}$ comes, by our translation from Section \ref{sec-tabwebs} and Lemma \ref{lem-welldef2}, directly from a (usually strict!) subset $\mathfrak{F}^{\prime}_{\mathrm{HM}}$ of the HM basis in some cyclotomic KLR algebra, i.e. we have
\[
\tilde{\Gamma}(\mathfrak{F}^{\prime}_{\mathrm{HM}})=\mathfrak{F}^{\prime}\quad\text{and}\quad|\mathfrak{F}^{\prime}_{\mathrm{HM}}|=|\mathfrak{F}^{\prime}|.
\]
Since $\tilde{\Gamma}$ is an additive equivalence of $2$-categories and all subsets of the HM basis are linear independent, we see that $\mathfrak{F}^{\prime}$ has to be linear independent, too.

Hence, the set $\mathfrak{F}$ is linear independent and therefore, by the counting argument mentioned above, also spanning, i.e. it is a basis. This basis is clearly homogeneous by our construction as a composition of some generators of a certain degree.
\end{proof}
We immediately obtain the following corollary, since
\[
H_n(\vec{k})=\bigoplus_{u,v\in B(W_n(\vec{k}))} {}_vH_n(\vec{k})_u\hspace*{0.25cm}\text{ and }\hspace*{0.25cm}H_n(\Lambda)=\bigoplus_{\vec{k}\in\Lambda(m,n\ell)_n}H_n(\vec{k}).
\]

\begin{cor}\label{cor-foambasis}
The growth algorithm gives a homogeneous basis of $H_n(\vec{k})$ and of $H_n(\Lambda)$ respectively.\qed
\end{cor}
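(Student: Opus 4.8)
The plan is to deduce the corollary directly from Theorem~\ref{thm-foambasis} together with the two direct sum decompositions recalled just before the statement, namely
\[
H_n(\vec{k})=\bigoplus_{u,v\in B(W_n(\vec{k}))} {}_vH_n(\vec{k})_u\quad\text{and}\quad H_n(\Lambda)=\bigoplus_{\vec{k}\in\Lambda(m,n\ell)_n}H_n(\vec{k}).
\]
First I would fix the monomial basis $B(W_n(\vec{k}))$ used in Definition~\ref{defn-growthfoam} and, for each ordered pair $u,v\in B(W_n(\vec{k}))$, invoke Theorem~\ref{thm-foambasis} to get that the set $\{\mathcal F^{\vec{\lambda}}_{\iota(v_{f^{\prime}}),\iota(u_f)}\}$ (ranging over state strings $\vec{S}$ and choices of flows $f$ on $u$ and $f^{\prime}$ on $v$) is a homogeneous basis of ${}_vH_n(\vec{k})_u$.

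Next I would take the union of these bases over all pairs $(u,v)$. Since the summands ${}_vH_n(\vec{k})_u$ are the components of an (internal) direct sum decomposition of $H_n(\vec{k})$ as a graded vector space, the union is automatically a basis of $H_n(\vec{k})$, and it is homogeneous because each of its elements lies in a single summand and is itself homogeneous. Concretely, linear independence and spanning for $H_n(\vec{k})$ reduce componentwise to the statements already established in Theorem~\ref{thm-foambasis}, and the $\bZ$-grading is respected because the decomposition into the spaces ${}_vH_n(\vec{k})_u$ is a decomposition of graded vector spaces. The same argument, now applied to the decomposition $H_n(\Lambda)=\bigoplus_{\vec{k}}H_n(\vec{k})$, yields the statement for $H_n(\Lambda)$.

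There is essentially no obstacle here: all the substance is in Theorem~\ref{thm-foambasis} (the counting argument via $\dim\mathrm{EXT}$ and the linear independence transported along the equivalence $\tilde\Gamma$), and what remains is the purely formal observation that a union of bases of the graded summands of a graded vector space is a graded basis of the whole. The only point worth a sentence of care is bookkeeping: one should note that distinct triples $(\vec{S},u_f,v_{f^{\prime}})$ with $u,v$ ranging over the fixed basis $B(W_n(\vec{k}))$ (and, for $H_n(\Lambda)$, with $\vec{k}$ also varying) produce distinct basis elements sitting in prescribed summands, so no collisions or overcounting occur when the union is formed.
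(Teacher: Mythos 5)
Your argument is correct and is essentially the paper's own: the corollary is deduced directly from Theorem~\ref{thm-foambasis} together with the direct sum decompositions $H_n(\vec{k})=\bigoplus_{u,v} {}_vH_n(\vec{k})_u$ and $H_n(\Lambda)=\bigoplus_{\vec{k}}H_n(\vec{k})$, which is exactly what the paper does (it states the corollary as immediate from these decompositions). Your extra bookkeeping remarks about homogeneity and non-collision of basis elements are fine but add nothing beyond the formal observation the paper relies on.
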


In order to connect the $\mathfrak{gl}_n$-web algebras to the thick cyclotomic KLR $\check{R}_{\Lambda}$ , we define
\[
\check{R}(\vec{k})=\bigoplus_{u,v\in B(W_n(\vec{k}))} e(\vec{\lambda}^v_c)\check{R}_{\Lambda}e(\vec{\lambda}^u_c)\hspace*{0.25cm}\text{ and }\hspace*{0.25cm}\check{R}(\Lambda)=\bigoplus_{\vec{k}\in\Lambda(m,n\ell)_n}\check{R}(\vec{k}),
\]
where $\vec{\lambda}^u_c$ denotes the canonical $n$-multipartition (see Definition \ref{defn-canflow}) associated to $u$ and $e(\vec{\lambda}^u_c)$ is the associated idempotent from Lemma \ref{lem-welldefidem}.

\begin{thm}\label{thm-iso}
Let $u,v\in W_n(\vec{k})$ be two $\mathfrak{gl}_n$-webs. Then
\[
e(\vec{\lambda}^v_c)\check{R}_{\Lambda}e(\vec{\lambda}^u_c)\cong {}_vH_n(\vec{k})_u\text{  (graded).}
\]
This gives rise to isomorphisms of graded algebras
\[
\check{R}(\vec{k})\cong H_n(\vec{k}) \hspace*{0.25cm}\text{ and }\hspace*{0.25cm} \check{R}(\Lambda)\cong H_n(\Lambda)
\]
which extends \eqref{eq-klrequi} to an additive equivalence of $2$-categories
\[
\check\Gamma\colon\RcPMOD\to\mathcal W^p_{\Lambda},
\]
i.e. from the category of finite dimensional, $\bZ$-graded, projective $\check{R}_{\Lambda}$-modules to $\mathcal W^p_{\Lambda}$.
\end{thm}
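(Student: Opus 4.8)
The plan is to deduce the theorem from the basis produced by Theorem~\ref{thm-foambasis}, together with the explicit description of the extended $2$-functor $\check{\Gamma}_{m,n\ell,n}$ from Theorem~\ref{thm-thick} and Lemma~\ref{lem-welldef2}, matching everything with the Hu--Mathas basis through the combinatorics of Section~\ref{sec-tabwebs}.

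\textbf{Step 1 (the truncated hom-spaces).} For fixed basis webs $u,v\in B(W_n(\vec{k}))$ I would take the map $e(\vec{\lambda}^v_c)\check{R}_{\Lambda}e(\vec{\lambda}^u_c)\to {}_vH_n(\vec{k})_u$ to be the restriction of $\check{\Gamma}_{m,n\ell,n}$. Since $e(\vec{\lambda}^u_c)$ is sent to the identity of $\widehat{u}$ (the thick strands coming from a divided power $F_i^{(j)}$ exactly project onto the corresponding indecomposable matrix factorization), the restriction lands in $\mathrm{EXT}(\widehat{u},\widehat{v})={}_vH_n(\vec{k})_u$. It is surjective: by Lemma~\ref{lem-welldef2} every growth-algorithm homomorphism $\mathcal{F}^{\vec{\lambda}}_{\iota(v_{f^{\prime}}),\iota(u_f)}$ is the image of an element of $\Ucatmc$ which, because the growth algorithm factors through $e(\vec{\lambda})d(\vec{\lambda})$ and because all flows on $u$ have the residue sequence $r(\vec{\lambda}^u_c)$, in fact lies in $e(\vec{\lambda}^v_c)\check{R}_{\Lambda}e(\vec{\lambda}^u_c)$; and by Theorem~\ref{thm-foambasis} these homomorphisms span ${}_vH_n(\vec{k})_u$. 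It is injective because $\check{\Gamma}_{m,n\ell,n}$ is faithful, which follows from the thin equivalence $\tilde{\Gamma}$ of Theorem~\ref{thm-qhowe} transported through the Karoubi-envelope argument that proves Theorem~\ref{thm-thick}. Hence the restriction is a vector-space isomorphism, degree-preserving by Remark~\ref{rem-hmbasis} (equivalently Proposition~\ref{prop-extgrowthdeg}).

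\textbf{Step 2 (the algebra isomorphisms).} Summing over a chosen monomial basis $B(W_n(\vec{k}))$, and then over $\vec{k}\in\Lambda(m,n\ell)_n$, yields $\check{R}(\vec{k})\to H_n(\vec{k})$ and $\check{R}(\Lambda)\to H_n(\Lambda)$. These are homomorphisms of graded algebras because $\check{\Gamma}_{m,n\ell,n}$ is a $2$-functor: vertical composition of $2$-morphisms goes to composition of homomorphisms of matrix factorizations, which by Definition~\ref{defn-slnwebalgebra} is the multiplication of $H_n$; the idempotents $e(\vec{\lambda}^u_c)$ go to the identities of the $\widehat{u}$, so units and the block decomposition are respected. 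Bijectivity is Step~1 applied summand-wise, using that distinct basis webs have distinct canonical residue sequences, so the $e(\vec{\lambda}^u_c)$ are pairwise orthogonal; this is built into the LT-type construction of a monomial basis (Remark~\ref{rem-basis}). Thus $\check{R}(\vec{k})\cong H_n(\vec{k})$ and $\check{R}(\Lambda)\cong H_n(\Lambda)$ as graded algebras.

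\textbf{Step 3 (the equivalence of $2$-categories).} Finally, the family $\{e(\vec{\lambda}^u_c)\}_{u\in B(W_n(\vec{k}))}$ sums to a full idempotent of $\check{R}_{\Lambda}$ in each block (the thick counterpart of Mackaay's Morita equivalence, using that the LT-basis decategorifies to a basis of $W_n(\vec{k})$), so $\check{R}(\Lambda)$ is Morita equivalent to $\check{R}_{\Lambda}$. Hence $\RcPMOD$ is equivalent to the category of finite dimensional graded projective $\check{R}(\Lambda)$-modules, and via the algebra isomorphism of Step~2 to graded projective $H_n(\Lambda)$-modules, which is $\mathcal{W}^p_{\Lambda}$ by Theorem~\ref{thm-qhowe}; unwinding the identifications, the resulting equivalence $\check{\Gamma}\colon\RcPMOD\to\mathcal{W}^p_{\Lambda}$ is induced by $\check{\Gamma}_{m,n\ell,n}$ and restricts to $\tilde{\Gamma}$ from~\ref{eq-klrequi} on the thin part. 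I expect the main obstacle to be Step~1: checking carefully that the canonical idempotents single out exactly the Hu--Mathas basis elements corresponding to flows on $u$ and on $v$, and that the thin/thick interface given by the splitters $\phi_{\mathrm{R}}$ disturbs neither injectivity nor the degree count, together with the completeness of $\{e(\vec{\lambda}^u_c)\}$ needed for the Morita step.
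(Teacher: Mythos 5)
Your proposal is correct and shares the paper's skeleton, but it completes the key step by a different (and arguably cleaner) mechanism. The paper also takes the map to be the restriction of the extended $2$-functor and also gets injectivity from the faithfulness of $\tilde\Gamma$ together with the fact that the $\phi_{\mathrm R}$'s are built from splitters and merges, thus obtaining an inclusion $e(\vec{\lambda}^v_c)\check{R}_{\Lambda}e(\vec{\lambda}^u_c)\hookrightarrow {}_vH_n(\vec{k})_u$; but it then finishes by a graded dimension count, matching the graded dimension of the left-hand side, known from Brundan--Kleshchev's dimension formula (Theorem 4.10 in~\cite{bk2}), against the flow-weighted count of the right-hand side via Proposition~\ref{prop-extgrowthdeg}. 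You instead prove surjectivity directly: by Lemma~\ref{lem-welldef2} every growth-algorithm element of the basis from Theorem~\ref{thm-foambasis} is hit, so the injection is onto. This avoids the external dimension-formula input and is legitimate, since Theorem~\ref{thm-foambasis} and Lemma~\ref{lem-welldef2} are proved independently of the statement; the paper's counting argument buys an independent cross-check of the combinatorics, while your route is shorter. You also spell out the ``direct consequences'' (summing over the monomial basis for the algebra isomorphisms, and a Morita-type completeness of the canonical idempotents for the $2$-category equivalence), which the paper leaves implicit; the completeness you need is indeed Mackaay's Morita-equivalence statement transported to the thick setting, so nothing new is required there.

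Two small points to tighten. First, your justification for why the preimage diagrams lie in $e(\vec{\lambda}^v_c)\check{R}_{\Lambda}e(\vec{\lambda}^u_c)$ is slightly off: it is not because the growth algorithm factors through the middle idempotent $e(\vec{\lambda})d(\vec{\lambda})$ or because ``all flows on $u$ have residue sequence $r(\vec{\lambda}^u_c)$'' (different flows on $u$ have different state strings), but simply because the bottom and top of the diagrams produced in Lemma~\ref{lem-welldef2} are the thick sequences of divided powers generating $u$ and $v$, i.e.\ exactly the canonical idempotents read as thick identities in $\Ucatmc$ --- which is also the reading under which $e(\vec{\lambda}^u_c)$ maps to $\mathrm{id}_{\widehat{u}}$. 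Second, both for your surjectivity and for the paper's inclusion one needs that the extended functor descends to the cyclotomic quotient $\check{R}_{\Lambda}$ on these hom-spaces; you leave this implicit, but so does the paper, so this is not a gap relative to the paper's own level of rigor.
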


\begin{proof}
This is just an assembling of pieces now. By Lemma \ref{lem-welldef2} the basis of ${}_vH_n(\vec{k})_u$ that we have obtained in Theorem \ref{thm-foambasis} comes from a set of the same size in $\Ucatmc$ via our extension of the categorified q-skew Howe duality from Theorem \ref{thm-thick}. By the faithfulness of $\tilde{\Gamma}$ from Theorem \ref{thm-qhowe} and the fact that the $\phi_R$ come from certain compositions of splitters and merges, we get an inclusion of graded $\bC$-vector spaces
\[
e(\vec{\lambda}^v_c)\check{R}_{\Lambda}e(\vec{\lambda}^u_c)\hookrightarrow {}_vH_n(\vec{k})_u.
\]
Thus, a counting argument can ensure again that they are isomorphic. The graded dimension of the left side is known by Theorem 4.10 in \cite{bk2}. Using our results from Proposition \ref{prop-extgrowthdeg}, we see that the graded dimensions are the same, since the right sides graded dimension (up to a shift) can be obtained by counting all weights of flows on $v^*u$ (as already explained in the proof of Theorem \ref{thm-foambasis}). Thus, we get an isomorphism.

The other statements are now just direct consequences of the first isomorphism. 
\end{proof}

\begin{rem}\label{rem-rewrite}
We should note here (already with the computation method from Section \ref{sec-catpart2} in mind) that it follows from Theorem \ref{thm-iso} that the homomorphisms $\mathcal{F}^{\vec{\lambda}}_{\iota(v_{f^{\prime}}), \iota(u_f)}$ are local in the sense that all their factors satisfy the thick cyclotomic KLR relations. One can use these local relations to re-write the homomorphisms in a (at least for a machine) not too complicated way. A list of these relations can be found in different places, e.g. either using diagrams in \cite{kl1}, \cite{kl3} or as an algebraic list in \cite{hm}. Moreover, a list of local rules for the thick cyclotomic KLR can be deduced from the ones for splitters and merges given in Section 2 of \cite{klms}.
\end{rem}

\begin{rem}\label{rem-involution}
The definition of the ${}^*$ gives rise to an antiinvolution on the $\mathfrak{gl}_n$-web algebra $H_n(\vec{k})$ by Theorem \ref{thm-foambasis} and a small calculation shows that
\[
(\mathcal{F}^{\vec{\lambda}}_{\iota(v_{f^{\prime}}), \iota(u_f)})^*=\mathcal{F}^{\vec{\lambda}}_{\iota(u_f),\iota(v_{f^{\prime}})}.
\]
This is exactly the antiinvolution Mackaay defines before Remark 7.8 in \cite{mack1} using Brundan and Kleshchev's duality on the category of finite dimensional, projective modules of the cyclotomic KLR algebra. His definition is not explicit as Mackaay points out himself. Our definition can, on the other hand, be computed explicitly.
\end{rem}

\subsubsection{Cellularity}\label{subsub-cell}
The basis $\mathfrak{F}$ is a graded cellular basis of $H_n(\vec{k})$. Let us shortly recall the definition which is in the ungraded setting due to Graham and Lehrer \cite{grle} and in the graded setting to Hu and Mathas \cite{hm}.

\begin{defn}(\textbf{Graham--Lehrer, Hu--Mathas})\label{defn-cellular}
Suppose $A$ is a $\bZ$-graded free algebra over $R$ of finite rank. A \textit{$\bZ$-graded cell datum} is an ordered quintuple $(\mathfrak{P},\mathcal{T},C,\mathrm{i},\mathrm{deg})$, where $(\mathfrak P,\rhd)$ is the \textit{weight poset}, $\mathcal{T}(\lambda)$ is a finite set for all $\lambda \in\mathfrak P$, $\mathrm{i}$ is an antiinvolution of $A$ and $C$ is an injection
\[
C\colon\coprod_{\lambda\in\mathfrak P}\mathcal{T}(\lambda)\times \mathcal{T}(\lambda)\to A,\;(s,t)\mapsto c^{\lambda}_{st}.
\]
Moreover, the \textit{degree function} deg is given by
\[
\mathrm{deg}\colon\coprod_{\lambda\in\mathfrak P}\mathcal{T}(\lambda)\to\bZ.
\] 
The whole data should be such that the $c^{\lambda}_{st}$ form a homogeneous $R$-basis of $A$ with $\mathrm{i}(c^{\lambda}_{st})=c^{\lambda}_{ts}$ and $\mathrm{deg}(c^{\lambda}_{st})=\mathrm{deg}(s)+\mathrm{deg}(t)$ for all $\lambda\in\mathfrak{P}$ and $s,t\in\mathcal{T}(\lambda)$. Moreover, for all $a\in A$
\begin{align}\label{eq-cell1}
ac^{\lambda}_{st}=\sum_{u\in \mathcal{T}(\lambda)}r_{a}(s,u)c^{\lambda}_{ut}\;(\mathrm{mod}\;A^{\rhd\lambda}).
\end{align}
(Note that the scalar $r_{a}(s,u)$ does not depend on $t$.)
Here $A^{\rhd\lambda}$ is the $R$-submodule of $A$ spanned by the set $\{c^{\mu}_{st}\mid \mu\rhd\lambda\text{ and }s,t\in\mathcal{T}(\mu)\}$.

An algebra $A$ with such a quintuple is called a \textit{graded cellular algebra} and the $c^{\lambda}_{st}$ are called a \textit{graded cellular basis} of $A$ (with respect to the antiinvolution $\mathrm{i}$).
\end{defn}

\begin{thm}\label{thm-cellular}(\textbf{Graded cellular basis}) The algebra $H_n(\vec{k})$ is a graded cellular algebra in the sense of Definition \ref{defn-cellular} with the cell datum
\begin{align}\label{eq-celldatum}
(\mathfrak{P}_{c(\vec{k})}^n,\iota(\tilde{B}(W_n(\vec{k}))),\mathfrak{F},{}^*,\mathrm{deg}_{\mathrm{BKW}}),
\end{align}
where $\mathfrak{P}_{c(\vec{k})}^n$ is the set of all $n$-multipartitions of $c(\vec{k})$ ordered by the dominance order $\vartriangleright$ from Definition \ref{defn-dominnancelambda}, $\iota(\tilde{B}(W_n(\vec{k})))$ is the image under our translation from Definition \ref{defn-webtotab}, the antiinvolution ${}^*$ is as above in Remark \ref{rem-involution} and the degree $\mathrm{deg}_{\mathrm{BKW}}$ on the $n$-multitableaux in $\iota(\tilde{B}(W_n(\vec{k})))$. These cell data (one for each $\vec{k}\in\Lambda(m,n\ell)_n$) can be extended to $H_n(\Lambda)$.
\end{thm}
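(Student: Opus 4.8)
The plan is to verify the five defining properties of a graded cellular datum from Definition~\ref{defn-cellular} for the quintuple in~\eqref{eq-celldatum}, leaning heavily on Theorem~\ref{thm-foambasis} and the identification in Theorem~\ref{thm-iso}. First I would note that $\mathfrak{P}^n_{c(\vec{k})}$ is indeed a poset under the dominance order $\vartriangleright$ (Definition~\ref{defn-dominnancelambda}), that $\iota(\tilde{B}(W_n(\vec{k})))$ decomposes as a disjoint union over $\vec{\lambda}\in\mathfrak{P}^n_{c(\vec{k})}$ of the sets of standard fillings $\mathrm{Std}(\vec{\lambda})$ meeting the boundary conditions coming from the chosen monomial basis $B(W_n(\vec{k}))$, and that the map $C\colon (\vec{T},\vec{T}^{\prime})\mapsto \mathcal F^{\vec{\lambda}}_{\vec{T}^{\prime},\vec{T}}$ is the injection $\mathfrak F$. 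That $\mathfrak F$ is a homogeneous basis is exactly Theorem~\ref{thm-foambasis} (and Corollary~\ref{cor-foambasis} for $H_n(\Lambda)$); that $\mathrm{deg}_{\mathrm{BKW}}$ is an additive degree function with $\mathrm{deg}_q(\mathcal F^{\vec{\lambda}}_{\vec{T}^{\prime},\vec{T}})=\mathrm{deg}_{\mathrm{BKW}}(\vec{T})+\mathrm{deg}_{\mathrm{BKW}}(\vec{T}^{\prime})$ is Remark~\ref{rem-hmbasis}; and that ${}^*$ is an anti-involution with $(\mathcal F^{\vec{\lambda}}_{\vec{T}^{\prime},\vec{T}})^*=\mathcal F^{\vec{\lambda}}_{\vec{T},\vec{T}^{\prime}}$ is Remark~\ref{rem-involution}. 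So the only substantial content is the cellularity relation~\eqref{eq-cell1}.

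For~\eqref{eq-cell1}, the cleanest route is transport of structure through Theorem~\ref{thm-iso}. Hu and Mathas proved in~\cite{hm} that the cyclotomic KL-R algebra $R_{\Lambda}$ is a graded cellular algebra with cellular basis $\{\psi^{\vec{\lambda}}_{\vec{T}^{\prime},\vec{T}}\}$, dominance order on multipartitions, anti-involution given by flipping diagrams, and degree $\mathrm{deg}_{\mathrm{BKW}}$. By the thick version discussed around Definitions~\ref{defn-thickcyclKLR} and the isomorphism $\check{R}(\vec{k})\cong H_n(\vec{k})$ of Theorem~\ref{thm-iso}, together with Lemma~\ref{lem-welldef2} which identifies $\mathcal F^{\vec{\lambda}}_{\iota(v_{f^{\prime}}),\iota(u_f)}$ with the (thick) HM-basis element carrying the datum $(\vec{\lambda},\iota(u_f)^{\prime},\iota(v_{f^{\prime}})^{\prime})$, the basis $\mathfrak F$ of $H_n(\vec{k})$ is literally the image of (an idempotent truncation of) the thick HM-basis. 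Since the dominance order and the anti-involution match under this identification (the latter by Remark~\ref{rem-involution}, which recovers Brundan and Kleshchev's duality), the cellularity relation~\eqref{eq-cell1} for $H_n(\vec{k})$ follows from the cellularity relation for $\check{R}_{\Lambda}$ (hence for $R_{\Lambda}$ via Brundan--Kleshchev's graded isomorphism~\cite{bk1}). One has to check that idempotent truncation $e\,\check{R}_{\Lambda}\,e'$ of a graded cellular algebra, with the cell modules restricted accordingly, remains graded cellular with the induced order; this is a standard fact (and is how Hu--Mathas and Mackaay already use such truncations), but I would state it carefully because the two-sided truncation by possibly different idempotents $e(\vec{\lambda}^v_c), e(\vec{\lambda}^u_c)$ is exactly the block decomposition appearing in $\check{R}(\vec{k})$.

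Concretely I would proceed as follows. (i) Recall the thick HM cell datum for $\check{R}_{\Lambda}$ and note it restricts to $\check{R}(\vec{k})=\bigoplus e(\vec{\lambda}^v_c)\check{R}_{\Lambda}e(\vec{\lambda}^u_c)$; (ii) invoke Theorem~\ref{thm-iso} and Lemma~\ref{lem-welldef2} to transport this datum to $H_n(\vec{k})$, observing that $\mathfrak F$ is the transported basis and $\iota(\tilde B(W_n(\vec{k})))=\coprod_{\vec{\lambda}}\mathrm{Std}(\vec{\lambda})$ is the transported index set; (iii) check compatibility of the three remaining structural pieces — the poset is $(\mathfrak{P}^n_{c(\vec{k})},\vartriangleright)$ by Definition~\ref{defn-dominnancelambda}, the anti-involution is ${}^*$ by Remark~\ref{rem-involution}, the degree is $\mathrm{deg}_{\mathrm{BKW}}$ by Remark~\ref{rem-hmbasis}; (iv) conclude~\eqref{eq-cell1} for each $\vec{k}$, and assemble over $\vec{k}\in\Lambda(m,n\ell)_n$ using $H_n(\Lambda)=\bigoplus_{\vec{k}}H_n(\vec{k})$, with the order on $\mathfrak{P}^n_{c(\vec{k})}$ extended to the disjoint union by declaring multipartitions in different $\vec{k}$-blocks incomparable (or refining arbitrarily). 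The main obstacle, and the step I would spell out most carefully, is item (iii) together with the verification that the ``prime'' renumbering $\iota(u_f)\rightsquigarrow\iota(u_f)^{\prime}$ used in Definition~\ref{defn-foamLT2} does not disturb the dominance order or the degree — Remark~\ref{rem-hmbasis} already records the degree shift, and one needs the analogous observation that the renumbering is order-preserving in the sense of Definition~\ref{defn-dominnancelambda}, so that the ideal filtration $H_n(\vec{k})^{\vartriangleright\vec{\lambda}}$ on the web side matches $\check{R}_{\Lambda}^{\vartriangleright\vec{\lambda}}$ on the KL-R side.
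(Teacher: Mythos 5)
Your verification of the structural pieces (basis via Theorem~\ref{thm-foambasis}, degree via Remark~\ref{rem-hmbasis}, involution via Remark~\ref{rem-involution}, reduction to the multiplication axiom) matches the paper, and like the paper you want to get the multiplication axiom from Hu--Mathas via Theorem~\ref{thm-iso}. But at the crucial step your argument has a gap: you invoke ``the cellularity relation for $\check{R}_{\Lambda}$'' and a ``thick HM cell datum'' that you then truncate. No such thick cellular structure is available: Hu and Mathas prove cellularity only for the non-thick algebra $R_{\Lambda}$, and the cellularity of the thick truncation is essentially the content of the theorem you are proving, so as written the appeal is circular (the parenthetical ``hence for $R_{\Lambda}$'' points the implication in the direction that is already known, not the one you need). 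The paper bridges the thick and thin worlds inside the product computation itself: after noting the product vanishes unless the middle webs agree, it observes that by the construction in Definition~\ref{defn-foamLT2} the thick portions of the two factors are mirror images, so composing them produces only split--merge composites, i.e.\ thick crossings as in~\ref{eq-createcrossing}; hence the verification of~\ref{eq-cell2} reduces to the ordinary cyclotomic KL-R algebra, where Theorem 5.8 of~\cite{hm} applies via Lemma~\ref{lem-welldef2}. Your proposal contains no substitute for this reduction.

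A second, related issue is your reliance on the ``standard fact'' that idempotent truncations of graded cellular algebras are graded cellular. Even granting a cellular structure upstream, the situation here is not a plain truncation $eAe$ with the index sets $\mathcal T(\vec{\lambda})$ left intact: the index set $\iota(\tilde{B}(W_n(\vec{k})))$ is in general a \emph{proper} subset of all standard $n$-multitableaux (the paper explicitly notes that $\mathfrak{F}^{\prime}_{\mathrm{HM}}$ is usually a strict subset of the HM-basis), so one must check that when expanding $a\,\mathcal F^{\vec{\lambda}}_{st}$ modulo higher cells the coefficients land only on tableaux belonging to this subset. This is exactly the ``technical point about which $n$-multitableaux can appear'' that the paper handles by adapting the $\mathfrak{sl}_3$ argument from the proof of Theorem 4.22 in~\cite{tub3}; your proposal does not address it, and without it the filtration statement~\ref{eq-cell1} for the datum~\ref{eq-celldatum} does not follow from cellularity upstream. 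Your final remark about the renumbering $\iota(u_f)\rightsquigarrow\iota(u_f)^{\prime}$ is a legitimate but comparatively minor compatibility check; the missing ideas are the mirror/split--merge reduction to the non-thick case and the closure of the truncated index sets under the cell filtration.
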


\begin{proof}
To shorten our notation we skip the $\iota(\cdot)$ in the following. Moreover, the scalars below should all depend on the left side of the multiplication, but not on the right.

We have to prove four statements to show that \eqref{eq-celldatum} is a graded cell datum for $H_n(\vec{k})$. The four statements are that $\mathfrak{F}$ is a basis of the graded algebra $H_n(\vec{k})$, the elements $\mathcal{F}^{\vec{\lambda}}_{v_{f^{\prime}},u_f}\in\mathfrak{F}$ are homogeneous of degree
\[
\mathrm{deg}_q(\mathcal{F}^{\vec{\lambda}}_{v_{f^{\prime}},u_f})=\mathrm{deg}_{\mathrm{BKW}}(u_f)+\mathrm{deg}_{\mathrm{BKW}}(v_{f^{\prime}}),
\]
the antiinvolution ${}^*$ satisfies
\[
(\mathcal{F}^{\vec{\lambda}}_{v_{f^{\prime}},u_f})^*=\mathcal{F}^{\vec{\lambda}}_{u_f,v_{f^{\prime}}}
\]
and the crucial one (which suffices to verify \eqref{eq-cell1} by linearity)
\begin{align}\label{eq-cell2}
\mathcal{F}^{\vec{\mu}}_{\tilde{v}_{\tilde{f}^{\prime}},\tilde{u}_{\tilde{f}}}\mathcal{F}^{\vec{\lambda}}_{v_{f^{\prime}},u_f}=\sum_{w_{f^{\prime\prime}}\in \tilde{B}(W_n(\vec{k}))}r_{v_{f^{\prime}},w_{f^{\prime\prime}}}\mathcal{F}^{\vec{\lambda}}_{w_{f^{\prime\prime}},u_f}\;(\mathrm{mod}\;H_n(\vec{k})^{\vartriangleright\lambda}).
\end{align}
The first statement is Corollary \ref{cor-foambasis}, the second one follows from Remark \ref{rem-hmbasis} (which is based on Proposition \ref{prop-extgrowthdeg}) and the third one follows almost directly from the definition of ${}^*$, see Remark \ref{rem-involution}.

To verify \eqref{eq-cell2} we note that the product is zero if the two $\mathfrak{gl}_n$-webs $\tilde u$ and $v$ are not the same. Thus, we can focus on the case $\tilde u=v$.

Since the ``thick cellularity'' can be more easily seen in the thick cyclotomic KLR setup (to which we can freely switch by Theorem \ref{thm-iso}) let us illustrate with thick cyclotomic KLR diagrams how we can prove \eqref{eq-cell2}. Note that it is enough to consider only the middle part (after the dotted identity $e(\vec{\lambda})d(\vec{\lambda})$ and before the dotted identity $e(\vec{\mu})d(\vec{\mu})$). Thus, this is the only part we illustrate below (the right diagram is the top of $e(\vec{\lambda})d(\vec{\lambda})$).
\[
\xy
(0,0)*{\includegraphics[scale=.75]{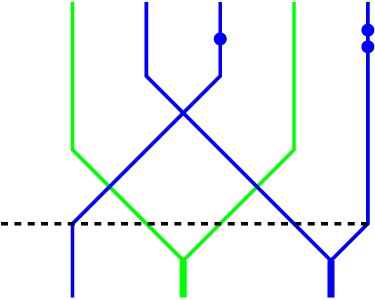}};
(-20,5)*{\text{usual}};
(-20,-15)*{\text{thick}};
\endxy\hspace*{0.5cm}\text{and}\hspace*{0.5cm}
\xy
(0,0)*{\includegraphics[scale=.75]{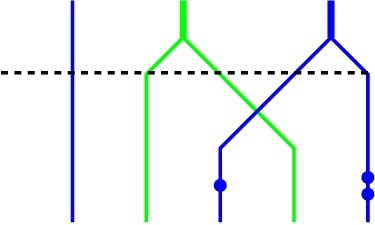}};
(-20,10)*{\text{thick}};
(-20,-5)*{\text{usual}};
\endxy
.
\]
We have illustrated two typical examples above. Everything splits into a usual and a thick part.

The main point is that, by our construction from Definition \ref{defn-foamLT2}, the assumption $\tilde{u}=v$ implies that the thick parts of both are mirrors. Thus, composing the two pictures will always create a composition of the split$\circ$merge as in \eqref{eq-createcrossing}. This will always create extra crossings which are part of the usual story. Thus, it suffices to verify \eqref{eq-cell2} in the case of the cyclotomic KLR algebra where we do not have any splits or merges at all.

We now use Lemma \ref{lem-welldef2} and the proof of cellularity by Hu--Mathas, see \cite[Theorem 5.8]{hm}, to see that \eqref{eq-cell2} holds in the usual cyclotomic KLR setup. The proof of this is essentially the same as in the $\mathfrak{sl}_3$ case and can be directly adapted from there (that is, the part after Equation 4.6 in the proof of \cite[Theorem 4.22]{tub3}). Thus, using their result and the isomorphism (which preserves the dominance order $\rhd$ by Lemma \ref{lem-welldef2}) from Theorem \ref{thm-iso}, we see that \eqref{eq-cell2} is satisfied which finishes the proof.
\end{proof}

\begin{rem}\label{rem-othermethod}
We note that there is another convention to obtain a HM basis. That is, one could also use the dual $n$-multitableau $T^*_{\vec{k}}$ of $T_{\vec{k}}$ from Definition \ref{defn-dominnancelambda}. Everything is this section works in the same vein as above. The difference is that the strings $\phi_{\sigma}$ of Definition \ref{defn-foamLT} tend to be shorter for elements of low order but longer for elements of big order. We just have chosen to take the $T_{\vec{k}}$ to stay closer to the formulation of Hu--Mathas. This basis already appears in the non-thick form in \cite[Section 6]{hm} and \cite[Theorem 6.11]{hm} shows that the dual basis is also cellular.
Let us briefly mention what the main differences in our setup of this dual basis compared to Definition \ref{defn-growthfoam} are. There are only two, namely the following.
\begin{itemize}
\item[(1)] The dotted identity $e(\vec{\lambda})d(\vec{\lambda})$ is obtained from dual $n$-multitableau $T^*_{\vec{k}}$ by counting addable boxes to the right. Same for the degree: count addable and removable nodes before (to the left), cf. Definition \ref{defn-tabcomb}.
\item[(2)] We have to rearrange our conversion from Definition \ref{defn-foamLT2} for $\iota(u_f)\to\iota(u_f)^{\prime}$ (recall that we needed this for the thick version) to $\iota(u_f)\to\tilde\iota(u_f)^{\prime}$, where latter is obtained by replacing numbers decreasing from left to right instead of increasing from left to right.
\end{itemize}
A small example for (2) is the following.
\[
\text{(usual) }\left(\;\xy(0,0)*{\begin{Young}1&3\cr\end{Young}}\endxy\;,\;\xy(0,0)*{\begin{Young}2&4\cr\end{Young}}\endxy\;\right)\leftarrow\vec{T}=\left(\;\xy(0,0)*{\begin{Young}1&2\cr\end{Young}}\endxy\;,\;\xy(0,0)*{\begin{Young}1&2\cr\end{Young}}\endxy\;\right)\rightarrow \left(\;\xy(0,0)*{\begin{Young}2&4\cr\end{Young}}\endxy\;,\;\xy(0,0)*{\begin{Young}1&3\cr\end{Young}}\endxy\;\right)\text{ (dual)}
\]
The reason for this is just that our choice has to be different for the dual since the dual turns degrees and order around. Note that $\mathrm{deg}_{\mathrm{BKW}}(\vec{T})=0$ for both conventions due to our shift.
\end{rem}

\begin{ex}\label{ex-dual}
Let us consider the following example. Compare also to Example \ref{ex-linkasF}. We want to illustrate the HM basis for $\mathrm{EXT}(\widehat{u},\widehat{v})$ for $n=2$. The $\mathfrak{gl}_2$-web $v$ should be the last one from Example \ref{ex-evaluation} which is given by $F_2F_1F_2F_1v_{(2^1)}$. The other one should be
\[
\scalebox{.7}{$\xy
(0,0)*{\includegraphics[scale=.75]{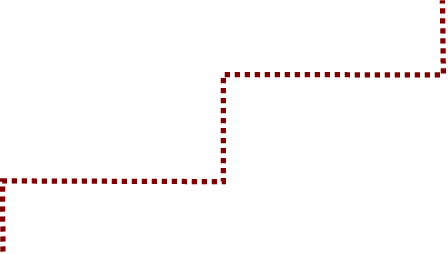}};
(-26.5,-15)*{2};
(1.5,-15)*{0};
(29,-15)*{0};
(-26.5,0)*{0};
(1.5,0)*{2};
(29,0)*{0};
(-26.5,15)*{0};
(1.5,15)*{0};
(29,15)*{2};
(-12.5,-3.5)*{F_1^{(2)}};
(15,10)*{F_2^{(2)}};
\endxy$}
\]
That is, $u=F_2^{(2)}F_1^{(2)}v_{(2^1)}$. The reader might think of elements of $\mathrm{EXT}(\widehat{u},\widehat{v})$ as dotted cups and of $\mathrm{EXT}(\widehat{v},\widehat{u})$ as dotted caps (in terms of Bar-Natan's cobordisms). As usual there is a duality: the dual of the un-dotted cup is the dotted cap. The same happens for the HM basis and its dual.

We have one $2$-multitableaux for $u$, namely $\vec{T}$ from Remark \ref{rem-othermethod}, and two for $v$, namely $\vec{T}_1$ and $\vec{T}_2$ from Example \ref{ex-linkasF}. The HM basis for $\mathrm{EXT}(\widehat{u},\widehat{v})$ is (using our isomorphism from Theorem \ref{thm-iso}) given by the two diagrams (of degree $\mathrm{deg}_{\mathrm{BKW}}(\vec{T}_1)=+1$ and $\mathrm{deg}_{\mathrm{BKW}}(\vec{T}_2)=-1$)
\[
\xy
(0,0)*{\includegraphics[scale=.75]{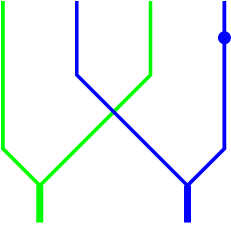}};
(-30,-10)*{\left(\;\xy(0,0)*{\begin{Young}1&2\cr\end{Young}}\endxy\;,\;\xy(0,0)*{\begin{Young}1&2\cr\end{Young}}\endxy\;\right)};
(-30,0)*{\left(\;\xy(0,0)*{\begin{Young}1&3\cr\end{Young}}\endxy\;,\;\xy(0,0)*{\begin{Young}2&4\cr\end{Young}}\endxy\;\right)};
(-30,10)*{\left(\;\xy(0,0)*{\begin{Young}1&2\cr\end{Young}}\endxy\;,\;\xy(0,0)*{\begin{Young}3&4\cr\end{Young}}\endxy\;\right)};
(-30,-5)*{\uparrow};
(-37.5,5)*{\tau_2(2,1)};
(-30,5)*{\uparrow};
(-40,-5)*{\text{unthickening}};
\endxy
\hspace*{0.25cm}\text{and}\hspace*{0.25cm}
\xy
(0,0)*{\includegraphics[scale=0.8]{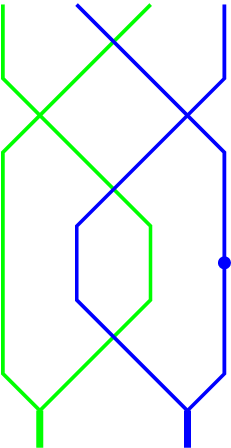}};
(31,-25)*{\left(\;\xy(0,0)*{\begin{Young}1&2\cr\end{Young}}\endxy\;,\;\xy(0,0)*{\begin{Young}1&2\cr\end{Young}}\endxy\;\right)};
(31,-15)*{\left(\;\xy(0,0)*{\begin{Young}1&3\cr\end{Young}}\endxy\;,\;\xy(0,0)*{\begin{Young}2&4\cr\end{Young}}\endxy\;\right)};
(31,-5)*{\left(\;\xy(0,0)*{\begin{Young}1&2\cr\end{Young}}\endxy\;,\;\xy(0,0)*{\begin{Young}3&4\cr\end{Young}}\endxy\;\right)};
(31,5)*{\left(\;\xy(0,0)*{\begin{Young}1&3\cr\end{Young}}\endxy\;,\;\xy(0,0)*{\begin{Young}2&4\cr\end{Young}}\endxy\;\right)};
(31,15)*{\left(\;\xy(0,0)*{\begin{Young}2&4\cr\end{Young}}\endxy\;,\;\xy(0,0)*{\begin{Young}1&3\cr\end{Young}}\endxy\;\right)};
(31,25)*{\left(\;\xy(0,0)*{\begin{Young}3&4\cr\end{Young}}\endxy\;,\;\xy(0,0)*{\begin{Young}1&2\cr\end{Young}}\endxy\;\right)};
(31,20)*{\uparrow};
(38.5,20)*{\tau_2(1,2)};
(31,10)*{\uparrow};
(49,10)*{\tau_1(1,1)\text{ and }\tau_3(2,2)};
(31,0)*{\uparrow};
(38.5,0)*{\tau_2(1,2)};
(31,-10)*{\uparrow};
(38.5,-10)*{\tau_2(2,1)};
(31,-20)*{\uparrow};
(41,-20)*{\text{unthickening}};
\endxy
,
\]
as the reader is invited to check. The left-hand side corresponds to the datum $(\vec{T},\vec{T}_1)$ and the right-hand side to $(\vec{T},\vec{T}_2)$. In the $\mathfrak{sl}_2$-cobordism language these are (up to signs) just a dotted cup (left) and a cup (right). The duals for $\mathrm{EXT}(\widehat{v},\widehat{u})$ on the other hand are given by (of dual-degree $\mathrm{deg}_{\mathrm{BKW}}(\vec{T}_1)=-1$ and $\mathrm{deg}_{\mathrm{BKW}}(\vec{T}_2)=+1$)
\[
\xy
(0,0)*{\reflectbox{\rotatebox{180}{\includegraphics[scale=0.8]{figs/catcell/dual-example2.eps}}}};
(-31,25)*{\left(\;\xy(0,0)*{\begin{Young}1&2\cr\end{Young}}\endxy\;,\;\xy(0,0)*{\begin{Young}1&2\cr\end{Young}}\endxy\;\right)};
(-31,15)*{\left(\;\xy(0,0)*{\begin{Young}2&4\cr\end{Young}}\endxy\;,\;\xy(0,0)*{\begin{Young}1&3\cr\end{Young}}\endxy\;\right)};
(-31,5)*{\left(\;\xy(0,0)*{\begin{Young}3&4\cr\end{Young}}\endxy\;,\;\xy(0,0)*{\begin{Young}1&2\cr\end{Young}}\endxy\;\right)};
(-31,-5)*{\left(\;\xy(0,0)*{\begin{Young}2&4\cr\end{Young}}\endxy\;,\;\xy(0,0)*{\begin{Young}1&3\cr\end{Young}}\endxy\;\right)};
(-31,-15)*{\left(\;\xy(0,0)*{\begin{Young}1&3\cr\end{Young}}\endxy\;,\;\xy(0,0)*{\begin{Young}2&4\cr\end{Young}}\endxy\;\right)};
(-31,-25)*{\left(\;\xy(0,0)*{\begin{Young}1&2\cr\end{Young}}\endxy\;,\;\xy(0,0)*{\begin{Young}3&4\cr\end{Young}}\endxy\;\right)};
(-31,-20)*{\downarrow};
(-38.5,-20)*{\tau_2(1,2)};
(-31,-10)*{\downarrow};
(-49,-10)*{\tau_1(1,1)\text{ and }\tau_3(2,2)};
(-31,0)*{\downarrow};
(-38.5,0)*{\tau_2(2,1)};
(-31,10)*{\downarrow};
(-38.5,10)*{\tau_2(1,2)};
(-31,20)*{\downarrow};
(-41,20)*{\text{unthickening}};
\endxy
\hspace*{0.25cm}\text{and}\hspace*{0.25cm}
\xy
(0,0)*{\reflectbox{\rotatebox{180}{\includegraphics[scale=.75]{figs/catcell/dual-example1.eps}}}};
(30,10)*{\left(\;\xy(0,0)*{\begin{Young}1&2\cr\end{Young}}\endxy\;,\;\xy(0,0)*{\begin{Young}1&2\cr\end{Young}}\endxy\;\right)};
(30,0)*{\left(\;\xy(0,0)*{\begin{Young}2&4\cr\end{Young}}\endxy\;,\;\xy(0,0)*{\begin{Young}1&3\cr\end{Young}}\endxy\;\right)};
(30,-10)*{\left(\;\xy(0,0)*{\begin{Young}3&4\cr\end{Young}}\endxy\;,\;\xy(0,0)*{\begin{Young}1&2\cr\end{Young}}\endxy\;\right)};
(30,-5)*{\downarrow};
(37.5,-5)*{\tau_2(1,2)};
(30,5)*{\downarrow};
(40,5)*{\text{unthickening}};
\endxy
.
\]
Note that composing them with the cups at the bottom gives an element of $\mathrm{EXT}(\widehat{u},\widehat{u})$ which is a number $\bQ$. Moreover, they are really duals: from the four possibilities for composition, only two give non-zero numbers.
\end{ex}

\begin{rem}\label{rem-cell}
Using the cell modules (which can be constructed explicitly from the cellular basis, see Section 2 in \cite{hm}), we get two sets
\[
\mathcal D=\{D^{\vec{\lambda}}\{k\}\mid \vec{\lambda}\in\tilde{\mathfrak{P}}_{c(\vec{k})}^n,\, k\in\bZ\}\text{  and  }\mathcal P=\{P^{\vec{\lambda}}\{k\}\mid \vec{\lambda}\in\tilde{\mathfrak{P}}_{c(\vec{k})}^n,\, k\in\bZ\},
\]
where $\tilde{\mathfrak{P}}_{c(\vec{k})}^n\subset \mathfrak{P}_{c(\vec{k})}^n$ is the subset of all $n$-multipartitions of $c(\vec{k})$ with $D^{\vec{\lambda}}\neq 0$. These form a complete set of pairwise non-isomorphic, graded, simple $H_n(\Lambda)$-modules and pairwise non-isomorphic, graded, projective indecomposable $H_n(\Lambda)$-modules respectively.

Furthermore, following the same approach as indicated in Remark 4.25 in \cite{tub3}, one can verify that these sets under the isomorphism of the (split) Grothendieck groups
\[
K^{(\oplus)}_0(\mathcal{W}_{\Lambda}^{(p)})\otimes_{\bZ[q,q^{-1}]}\bC(q)\cong W_{\Lambda}=\bigoplus_{\vec{k}\in\Lambda(m,n\ell)_n}W^{(*)}_n(\vec{k})
\]
correspond to the canonical and dual canonical basis respectively. Here the $\mathcal{W}_{\Lambda}^{(p)}$ are certain categories of modules over $H_n(\Lambda)\cong\check{R}(\Lambda)$, see Definition 7.1 in \cite{mack1}.
\end{rem}

\subsubsection{An example}\label{subsub-example2}
We conclude this section with an example - we hope that it helps the reader.

\begin{ex}\label{ex-cheating}
We will cheat a bit now in order to give a hopefully illustrating example how the graded cellular basis works. First let us fix $n=2$, $\ell=2$ and $\vec{k}=(1,1,1,1)$, i.e. we will give a $\mathfrak{gl}_2$ example with $v_h=v_{(2^2)}$. We cheat, because we do not use matrix factorizations in this example, but Bar-Natan's cobordisms \cite{bn2} (not even Blanchet's cobordisms, i.e. everything below is only true up to a sign, see \cite{bla1} and \cite{lqr1}). The reason is that the usage of these cobordisms illustrates without to many technical difficulties why the HM basis really works so well. To cheat even more: we also ignore any shifts and gradings in this example.

We use the standard arc basis which in this case consists of the two $\mathfrak{gl}_2$-webs $u=F_2F_1F_3F_2v_{(2^2)}$
\[
u=
\scalebox{.7}{$\xy
(0,0)*{\includegraphics[scale=.75]{figs/catcell/webflow-exa-b.eps}};
(0.5,-17.75)*{F_2};
(28.5,-4)*{F_3};
(0.5,22.75)*{F_2};
(-27.5,9.5)*{F_1};
(-40,-27.5)*{2};
(-12,-27.5)*{2};
(16,-27.5)*{0};
(44,-27.5)*{0};
(-40,-13.75)*{2};
(-12,-13.75)*{1};
(16,-13.75)*{1};
(44,-13.75)*{0};
(-40,0)*{2};
(-12,0)*{1};
(16,0)*{0};
(44,0)*{1};
(-40,13.75)*{1};
(-12,13.75)*{2};
(16,13.75)*{0};
(44,13.75)*{1};
(-40,27.5)*{1};
(-12,27.5)*{1};
(16,27.5)*{1};
(44,27.5)*{1};
\endxy$}
,
\]
and $v=F_1F_2F_3F_2v_{(2^2)}$
\[
v=
\scalebox{.7}{$\xy
(0,0)*{\includegraphics[scale=.75]{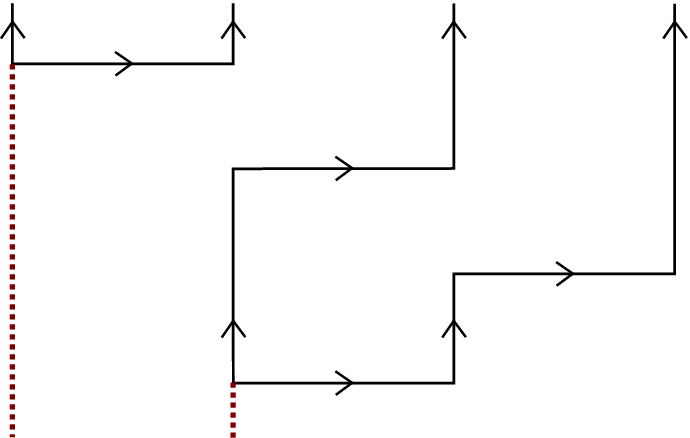}};
(0.5,-17.75)*{F_2};
(28.5,-4)*{F_3};
(0.5,9.5)*{F_2};
(-27.5,22.75)*{F_1};
(-40,-27.5)*{2};
(-12,-27.5)*{2};
(16,-27.5)*{0};
(44,-27.5)*{0};
(-40,-13.75)*{2};
(-12,-13.75)*{1};
(16,-13.75)*{1};
(44,-13.75)*{0};
(-40,0)*{2};
(-12,0)*{1};
(16,0)*{0};
(44,0)*{1};
(-40,13.75)*{2};
(-12,13.75)*{0};
(16,13.75)*{1};
(44,13.75)*{1};
(-40,27.5)*{1};
(-12,27.5)*{1};
(16,27.5)*{1};
(44,27.5)*{1};
\endxy$}.
\]
In this case, the flows on these $\mathfrak{gl}_2$-webs are completely determined by the cut line and we have six flows: the two canonical flows $S_c(u)=(\{2\},\{2\},\{1\},\{1\})$ and $S_c(v)=(\{2\},\{1\},\{2\},\{1\})$ and the two ``anticanonical'' flows $S^c(u)=(\{1\},\{1\},\{2\},\{2\})$ and $S^c(v)=(\{1\},\{2\},\{1\},\{2\})$. Moreover, the $\mathfrak{gl}_2$-web $v$ has two additional flows, namely $S_1(v)=(\{1\},\{2\},\{2\},\{1\})$ and $S_2(v)=(\{2\},\{1\},\{1\},\{2\})$.

We expect two different important idempotents $e(\vec{\lambda})$ and $e(\vec{\mu})$, since these will determine the Specht modules. And we expect different dot placements $d(\cdot)$ for them, since both, idempotent and dot placement, depend only on the cut line. And this is exactly what we get: we have six different $2$-multipartitions (one for each flow at the boundary), namely (for $S_c(u), S^c(u)$ and $S^c(v)$)
\[
\vec{\mu}=\left(\;\emptyset\;,\xy(0,0)*{\begin{Young}&\cr&\cr\end{Young}}\endxy\;\right)\hspace*{0.2cm}\vec{\mu}^{\prime}=\left(\;\xy(0,0)*{\begin{Young}&\cr&\cr\end{Young}}\endxy\;,\;\emptyset\;\right)\hspace*{0.2cm}\vec{\mu}^{\prime\prime}=\left(\;\xy(0,0)*{\begin{Young}&\cr\cr\end{Young}}\endxy\;,\xy(0,0)*{\begin{Young}\cr\end{Young}}\endxy\;\right)
,
\]
and (for $S_c(v), S_1(v)$ and $S_2(v)$ respectively)
\[
\vec{\lambda}=\left(\;\xy(0,0)*{\begin{Young}\cr\end{Young}}\endxy\;,\xy(0,0)*{\begin{Young}&\cr\cr\end{Young}}\endxy\;\right)\hspace*{0.2cm}\vec{\lambda}^{\prime}=\left(\;\xy(0,0)*{\begin{Young}\cr\cr\end{Young}}\endxy\;,\xy(0,0)*{\begin{Young}&\cr\end{Young}}\endxy\;\right)\hspace*{0.2cm}\vec{\lambda}^{\prime\prime}=\left(\;\xy(0,0)*{\begin{Young}&\cr\end{Young}}\endxy\;,\xy(0,0)*{\begin{Young}\cr\cr\end{Young}}\endxy\;\right)
,
\]
where $\vec{\mu}$, $\vec{\mu}^{\prime}$ and $\vec{\mu}^{\prime\prime}$ have the same residue sequence $r(\vec{\mu})=r(\vec{\mu}^{\prime})=r(\vec{\mu}^{\prime\prime})=(2,3,1,2)$ (recall the shift of the residue by $\ell=2$ and one fills in numbers from left to right and top to bottom with rows first). Moreover, $r(\vec{\lambda})=(2,2,3,1), r(\vec{\lambda}^{\prime})=(2,1,2,3)$ and $r(\vec{\lambda}^{\prime\prime})=(2,3,2,1)$.

Thus, we have $e(\vec{\mu})=e(\vec{\mu}^{\prime})=e(\vec{\mu}^{\prime\prime})=\mathrm{id}_{F_2F_1F_3F_2v_{(2^2)}}\neq e(\vec{\lambda})=\mathrm{id}_{F_1F_3F_2F_2v_{(2^2)}}$ and two additional $e(\vec{\lambda}^{\prime})=\mathrm{id}_{F_1F_2F_1F_2v_{(2^2)}}$ and $e(\vec{\lambda}^{\prime\prime})=\mathrm{id}_{F_1F_2F_3F_2v_{(2^2)}}$.

Moreover, since the dot placement is given by addable nodes to the right, we have no dots for $\vec{\mu}$, two dots for $\vec{\mu}^{\prime}$ and one dot for the other four $2$-multipartitions. The reader is invited to check that the Specht module for the $\vec{\mu}$, after modding out by the radical, is exactly the $P_u$. Moreover, we do not get too much: the elements for the two flows $S_1(v)$ and $S_2(v)$ will give rise to two nilpotent elements (with one dot each). Thus, they do not belong to the set $\tilde{\mathfrak{P}}_{c(\vec{k})}^n$ from Remark \ref{rem-cell} since modding out by the radical will kill them (they are ``unimportant'').

We do the other in more details now, since it illustrates how the HM basis does exactly what one would expect if one could guess the answer (as in this case), but works even if it is impossible to guess the answer (as in almost all other cases). The idempotent $e(\vec{\lambda})$ in this case is the $\mathrm{id}$ on
\[
w=\scalebox{.7}{$\xy
(0,0)*{\includegraphics[scale=.75]{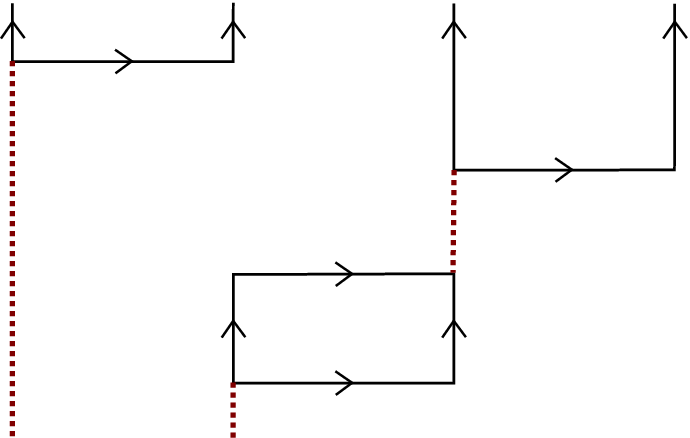}};
(0.5,-17.75)*{F_2};
(28.5,9.5)*{F_3};
(0.5,-4)*{F_2};
(-27.5,22.75)*{F_1};
(-40,-27.5)*{2};
(-12,-27.5)*{2};
(16,-27.5)*{0};
(44,-27.5)*{0};
(-40,-13.75)*{2};
(-12,-13.75)*{1};
(16,-13.75)*{1};
(44,-13.75)*{0};
(-40,0)*{2};
(-12,0)*{0};
(16,0)*{2};
(44,0)*{0};
(-40,13.75)*{2};
(-12,13.75)*{0};
(16,13.75)*{1};
(44,13.75)*{1};
(-40,27.5)*{1};
(-12,27.5)*{1};
(16,27.5)*{1};
(44,27.5)*{1};
(25,-5)*{\saddledown};
(-20,10)*{\saddleup};
(-20,-14)*{\cupcap};
\endxy$}
.
\]
The main question now can be seen as follows. The canonical flow on $v$ works not only for $v$, but also for $u$ (where it is a mixed flow). But since the dot placement and the idempotent is completely determined by the cut line and one can not distinguish between the two just on the cut line, the question is what is a ``good'' idempotent for $\vec{\lambda}$. The answer $e(\vec{\lambda})=\mathrm{id}_{F_1F_3F_2F_2v_{(2^2)}}$, that is the identity on the $\mathfrak{gl}_2$-web above, can be seen as the smallest common multiple between $u$ and $v$. That is, one can easily go from $w$ to either $u$ or $v$ by using saddle moves $F_iF_{i\pm 1}\to F_{i\pm 1}F_i$ indicated above. We note that one has to use two saddles to go to $u$: first $F_3F_2\to F_3F_2$ (bottom saddle above) and then $F_1F_2\to F_2F_1$ (top saddle above), but only the bottom one to go to $v$.

The two possible extensions of $\vec{\lambda}$ are the canonical flow on $v$ and the mixed on $u$ given by
\[
\vec{T}_c=\left(\;\xy(0,0)*{\begin{Young}3\cr\end{Young}}\endxy\;,\xy(0,0)*{\begin{Young}1&2\cr 4\cr\end{Young}}\endxy\;\right)\hspace*{0.2cm}\vec{T}_m=\left(\;\xy(0,0)*{\begin{Young}4\cr\end{Young}}\endxy\;,\xy(0,0)*{\begin{Young}1&2\cr 3\cr\end{Young}}\endxy\;\right)\hspace*{0.2cm}T_{\vec{\lambda}}=\left(\;\xy(0,0)*{\begin{Young}1\cr\end{Young}}\endxy\;,\xy(0,0)*{\begin{Young}2&3\cr 4\cr\end{Young}}\endxy\;\right),
\]
where the rightmost filling is the standard filling. Thus, in order to go from $T_{\vec{\lambda}}$ to the others, one has to use the permutations $\tau_1(2,2)\tau_2(3,2)\vec{T}_c=T_{\vec{\lambda}}$ in the first and $\tau_1(2,2)\tau_2(3,2)\tau_3(1,2)\vec{T}_m=T_{\vec{\lambda}}$ in the second case. The $\tau_k(i,j)$ correspond to a cup-cap-move (if $i=j$, see in the figure above), a \textit{saddle} (if $|i-j|=1$) or a shift (if $|i-j|>1$). Thus, if we use $\sigma=\tau_1(2,2)\tau_2(3,2)$ and $\tilde{\sigma}=\tau_1(2,2)\tau_2(3,2)\tau_3(1,2)$ as shorthand notations, we see that the four elements
\[
v^*v\rightsquigarrow\sigma^{*} e(\vec{\lambda})d(\vec{\lambda}) \sigma \hspace*{0.125cm}\hspace*{0.125cm}u^*v\rightsquigarrow \tilde{\sigma}^{*} e(\vec{\lambda})d(\vec{\lambda}) \sigma \hspace*{0.125cm}\hspace*{0.125cm}v^*u\rightsquigarrow \sigma^{*} e(\vec{\lambda})d(\vec{\lambda}) \tilde{\sigma}\hspace*{0.125cm}\hspace*{0.125cm}u^*u\rightsquigarrow \tilde{\sigma}^{*} e(\vec{\lambda})d(\vec{\lambda}) \tilde{\sigma}
\]
(here $d(\vec{\lambda})$ denotes a dot on a cylinder between the internal circle), which correspond to the four possible combinations $v^*v$, $u^*v$, $v^*u$ and $u^*u$, gives exactly the answer one would expect.

That is, all of them remove the internal circle by closing the dotted cylinder using a cap at the top and a cup at the bottom (with the Bar-Natan relations: this is a dotted sphere and hence equals $1$). Now the first one for example uses the saddle move given by $\tau_2(3,2)$ to connect the internal circle to one of the boundary sheets and the end result is just two un-dotted sheets (as one would guess). The reader is invited to draw the pictures for the other three possibilities. Note that in the last case the algorithm creates a neck (in the language of Bar-Natan's cobordism) that one can cut giving a linear combination in contrast to the case for the ``anticanonical'' which gives two dotted cylinders. Thus, they are all nilpotent except $\sigma^{*} e(\vec{\lambda})d(\vec{\lambda})\sigma$.
\end{ex}
\subsection{Connections to \texorpdfstring{$\mathfrak{gl}_n$}{gln}-link homologies}\label{sec-catpart2}

\begin{rem}\label{rem-notation} 
We will formulate everything in this section in a mixture of different notations. First we note that we freely switch between the notions $\mathfrak{gl}_n$-webs, their associated matrix factorizations, string of $F^{(j)}_i$ and string of $\mathcal{F}^{(j)}_i$. We hope that is not too confusing.

Moreover, we stay in the KLR part of $\Ucatmc$ for braids and only go to the cyclotomic quotient for the $\mathfrak{gl}_n$-link homologies. The reason is that we can not formulate the complex locally in the thick cyclotomic KLR, because, in our convention, we would have to start at a weight $(n^{\ell})$ for some $\ell$. We try to distinguish them as follows: the pictures for the KLR part of $\Ucatmc$ have orientations (in our notation they are oriented downwards) and the ones for $\check R_{\Lambda}$ do not have orientations. Finally, for the $2$-Schur quotient $\check{\mathcal{S}}(m,n\ell)_n$ (see below before Lemma \ref{lem-rickardisf}) of $\Ucatmc$ we use the same notations as for $\Ucatmc$ itself.
\end{rem}

\subsubsection{The Rickard complex}\label{subsub-rickard}

Recall that Chuang--Rouquier's version of the Rickard complex \cite{chro1} can be seen as a categorification of the quantum Weyl group action on $V_N$ from \eqref{eq-hweight} that acts by a reflection isomorphism between the $k$-th and $-k$-th weight space. We use a slightly adapted version of Cautis' variant \cite{cau1} here.

We denote by $\mathcal{T}v_{\vec{k}}$ usually the $F$-braiding complex given in Definition \ref{defn-basicT} and the Rickard version by $\mathcal{T}{\idm}_{\vec{k}}$.

Given a suitable $2$-category $\mathfrak{C}$, then the $2$-category $\Kom_{\mathrm{gr}}(\mathfrak{C})$ has the same objects as $\mathfrak{C}$, but the morphisms are complexes of $\mathfrak{C}$ and the $2$-morphisms are chain maps between these complexes. Moreover, everything should be graded and morphisms should preserve the degree.

\begin{defn}\label{defn-rickard}
For $a,b\in\bN$ let $q_k=-b+k$, if $b\leq a$, and $q_k=-a+k$, if $a<b$.
Given a $\mathfrak{gl}_m$-weight $\vec{k}$ with $a,b$ in the $i$-th and $i+1$-th entry, we define the \textit{$i$-th positive Rickard complex $\mathcal{T}^{+}_i{\idm}_{\vec{k}}$} in $\Kom_{\mathrm{gr}}(\Ucatmc)$ as
\[
\mathcal{T}^{+}_i{\idm}_{\vec{k}}=
\begin{cases}
\begin{xy}
\xymatrix{
\mathcal{F}_i^{(a-b)}{\idm}_{\vec{k}}\{q_0\} \ar[r]^/-.3em/{d^{\mathrm{R}}_0}    &   \mathcal{F}_i^{(a+1-b)}\mathcal{E}_i{\idm}_{\vec{k}}\{q_1\} \ar[r]^/-0.1em/{d^{\mathrm{R}}_1} & \mathcal{F}_i^{(a+2-b)}\mathcal{E}_i^{(2)}{\idm}_{\vec{k}}\{q_2\}\ar[r]^/1.8em/{d^{\mathrm{R}}_2} & \dots
}
\end{xy},&\text{if }b\leq a,
\\
\begin{xy}
\xymatrix{
\mathcal{E}_i^{(-a+b)}{\idm}_{\vec{k}}\{q_0\} \ar[r]^/-.5em/{d^{\mathrm{R}}_0}    &   \mathcal{E}_i^{(-a+1+b)}\mathcal{F}_i{\idm}_{\vec{k}}\{q_1\} \ar[r]^/-0.2em/{d^{\mathrm{R}}_1} & \mathcal{E}_i^{(-a+2+b)}\mathcal{F}_i^{(2)}{\idm}_{\vec{k}}\{q_2\}\ar[r]^/1.8em/{d^{\mathrm{R}}_2} & \dots
}
\end{xy},&\text{if }a<b.
\end{cases}
\]
In both cases the leftmost part is in homology degree zero. The differentials are given by
\begin{align*}
d^{\mathrm{R}}_k&=
\xy
(0,0)*{\reflectbox{\includegraphics[scale=1.0]{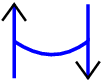}}};
(0,-1)*{\scriptstyle 1};
\endxy\colon \mathcal{F}_i^{(a+k-b)}\mathcal{E}_i^{(k)}{\idm}_{\vec{k}}\{q_k\}\to \mathcal{F}_i^{(a+k+1-b)}\mathcal{E}_i^{(k+1)}{\idm}_{\vec{k}}\{q_{k+1}\},
\\
d^{\mathrm{R}}_k&=
\xy
(0,0)*{\includegraphics[scale=1.0]{figs/linkhom/rickard}};
(0,-1)*{\scriptstyle 1};
\endxy\colon \mathcal{E}_i^{(-a+k+b)}\mathcal{F}_i^{(k)}{\idm}_{\vec{k}}\{q_k\}\to \mathcal{E}_i^{(-a+k+1+b)}\mathcal{F}_i^{(k+1)}{\idm}_{\vec{k}}\{q_{k+1}\},
\end{align*}
for the two cases, respectively. They are both invertible up to homotopy and we denote their inverses (that should correspond to our negative crossings) by ${\idm}_{\vec{k}}\mathcal{T}^{-}_i$ and call them \textit{$i$-th negative Rickard complex ${\idm}_{\vec{k}}\mathcal{T}^{-}_i$}. They are also in $\Kom_{\mathrm{gr}}(\Ucatmc)$.
\end{defn}

As an example, for $\vec{k}=(1,1)$ we have
\[
\mathcal{T}^{+}_1{\idm}_{\vec{k}}=
\begin{xy}
\xymatrix{
{\idm}_{\vec{k}}\{-1\} \ar[r]^/-.3em/{\includegraphics[width=20px]{figs/higherstuff/rightcup}}    &   \mathcal{F}_i\mathcal{E}_i{\idm}_{\vec{k}}\{0\}
}
\end{xy},
\]
which is essentially a categorification of the Kauffman bracket.

\subsubsection{The \texorpdfstring{$F$}{F}-braiding complex}\label{subsub-braiding}
In this subsection we define the categorification of the (colored) braiding operator $T_{a,b,i}^k$ from Definition \ref{defn-colbraid}. We call the categorification the (colored) $F$-braiding complex. We start with the uncolored case where we still draw the pictures. For the colored case we do not draw the $\mathfrak{gl}_n$-webs anymore but use our $F$ notation instead.

\begin{defn}\label{defn-basicT}(\textbf{Braiding complex: only $F$})
Recall that we defined in Definition \ref{defn-colbraid} the braiding operators $T_i^k$ for $k=0,1$ which acts on a weight $\vec{k}$ with $i$ and $i{+}1$ entry equal to $1$ and the $i{+}2$-th entry equal to zero. The \textit{$F$-braiding complex $\mathfrak{T}_i^{+}v_{\vec{k}}$} is then defined to be
\[
\mathfrak{T}_i^{+}v_{\vec{k}}=\begin{xy}
\xymatrix{\scalebox{.7}{$\xy
(0,0)*{\includegraphics[scale=.75]{figs/linkpoly/braid1a.eps}};
(-26,-20)*{1};
(2,-19.5)*{1};
(30,-19.5)*{0};
(-14,3.5)*{F_{i}};
(-26,-6.5)*{1};
(2.5,-7)*{0};
(30,-6.5)*{1};
(14,-10.5)*{F_{i+1}};
(-26,6.5)*{0};
(2.5,6.5)*{1};
(30,6)*{1};
(-26,19.5)*{0};
(2,19.5)*{1};
(30,19)*{1};
\endxy$}\{-1\}\ar[r]^/.4em/{d}   &    \scalebox{.7}{$\xy
(0,0)*{\includegraphics[scale=.75]{figs/linkhom/braid1c.eps}};
(14,17.5)*{F_{i+1}};
(-26,-20)*{1};
(2,-19.5)*{1};
(30,-19.5)*{0};
(-14,3.5)*{F_{i}};
(-26,-6.5)*{1};
(2.5,-7)*{1};
(30,-6.5)*{0};
(-26,6.5)*{0};
(2.5,6.5)*{2};
(30,6)*{0};
(-26,19.5)*{0};
(2,19.5)*{1};
(30,19)*{1};
\endxy$}.}
\end{xy}
\]
with differential $d=\raisebox{-0.25em}{\includegraphics[width=15px]{figs/higherstuff/downcross}}\colon F_iF_{i+1}v_{\vec{k}}\to F_{i+1}F_iv_{\vec{k}}$ and leftmost component in homology degree zero. The \textit{braiding complex $\mathfrak{T}_i^{-}v_{\vec{k}}$} is defined in the same way, but with reflected pictures, rightmost component in homology degree zero, a differential $d=\raisebox{-0.2em}{\reflectbox{\includegraphics[width=15px]{figs/higherstuff/downcross}}}\colon F_{i+1}F_iv_{\vec{k}}\to F_iF_{i+1}v_{\vec{k}}$ and a $q$-degree shift by $1$ for the rightmost component. In an algebraic notation this will be
\[
\mathfrak{T}_i^{-}v_{\vec{k}}=\begin{xy}
\xymatrix{0\ar[r] & T^0_i v_{\vec{k}}=F_{i+1}F_iv_{\vec{k}} \ar[r]^/-.5em/{d}   &    T^1_i v_{\vec{k}}=F_iF_{i+1}v_{\vec{k}}\{1\}\ar[r] & 0.}
\end{xy}
\]
We encourage the reader to draw the pictures.

Now assume that $\vec{k}$ has $a$ in the $i$-th and $b$ in the $i+1$-th entry and the $i+2$-th entry equal to zero. The colored \textit{positive $F$-braiding complex $\mathfrak{T}_{a,b,i}^{+}v_{\vec{k}}$} is then defined to be
\[
\begin{xy}
\xymatrix{
F_{i+1}^{(a-b)}F_{i}^{(a)}F_{i+1}^{(b)}v_{\vec{k}}\{q_0\} \ar[r]^/-.3em/{d_0}    &   F_{i+1}^{(a+1-b)}F_{i}^{(a)}F_{i+1}^{(b-1)}v_{\vec{k}}\{q_1\} \ar[r]^/2.3em/{d_1}  & \dots  \ar[r]^/-1.7em/{d_{b-1}}  & F_{i+1}^{(a)}F_{i}^{(a)}F_{i+1}^{(0)}v_{\vec{k}}\{q_b\}
}
\end{xy}
\]
in the case $b\leq a$, and for $a<b$ we use
\[
\begin{xy}
\xymatrix{
F_{i}^{(a)}F_{i+1}^{(a)}F_{i}^{(0)}\{q_0\} \ar[r]^/-.4em/{d_0}    &   F_{i}^{(a-1)}F_{i+1}^{(a)}F_{i}^{(1)}v_{\vec{k}}\{q_1\} \ar[r]^/2.0em/{d_1}  & \dots  \ar[r]^/-1.7em/{d_{b-1}}  & F_{i}^{(0)}F_{i+1}^{(a)}F_{i}^{(a)}v_{\vec{k}}\{q_a\}
}
\end{xy}
\]
with the leftmost term in homology degree zero. The $q$-degree shifts are $q_k=-b+k$ in the first and $q_k=-a+k$ in the second case (compare to Definition \ref{defn-colbraid}).

The differentials are given by (the thickness of the middle edge is $1$)
\[
d_k=
\xy
(0,0)*{\includegraphics[scale=1.25]{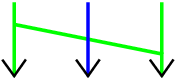}};
(6.5,0)*{\scriptstyle 1};
(2,-7)*{\scriptstyle a};
(-10,-7)*{\scriptstyle a+k-b};
(12,-7)*{\scriptstyle b-k};
(2,6.75)*{\scriptstyle a};
(-8.5,6.5)*{\scriptstyle a+k+1-b};
(11,6.5)*{\scriptstyle b-k-1};
\endxy\colon F_{i+1}^{(a+k-b)}F_{i}^{(a)}F_{i+1}^{(b-k)}v_{\vec{k}}\{q_k\}\to F_{i+1}^{(a+k+1-b)}F_{i}^{(a)}F_{i+1}^{(b-k-1)}v_{\vec{k}}\{q_{k+1}\}
\]
in the case $b\leq a$, and by (the thickness of the middle edge is $1$)
\[
d_k=
\xy
(0,0)*{\reflectbox{\includegraphics[scale=1.25]{figs/linkhom/HM-diff-KLR}}};
(-6.5,0)*{\scriptstyle 1};
(2,-7)*{\scriptstyle a};
(-12,-7)*{\scriptstyle a-k};
(13.5,-7)*{\scriptstyle k};
(2,6.75)*{\scriptstyle a};
(-10,6.5)*{\scriptstyle a-k-1};
(12.5,6.5)*{\scriptstyle k+1};
\endxy\colon F_{i}^{(a-k)}F_{i+1}^{(a)}F_{i}^{(k)}v_{\vec{k}}\{q_k\}\to F_{i}^{(a-k-1)}F_{i+1}^{(a)}F_{i}^{(k+1)}v_{\vec{k}}\{q_{k+1}\}
\] 
in the case $a<b$. (We note that the special case $a=b=1$ is the usual KLR crossing from above.)

The colored \textit{negative $F$-braiding complex $\mathfrak{T}_{a,b,i}^{-}v_{\vec{k}}$} is defined by turning 
everything around: reflected pictures, rightmost component in homology degree zero, the differentials are reflections of the ones from before and $q$-degree shifts $q_k=b-k$ in the $b\leq a$ and $q_k=a-k$ in the $a<b$ case. (The reader is encouraged to write down the complexes.)
Since the $a,b$ are encoded by $v_{\vec{k}}$ we tend not to write the $a$ and $b$ explicitly.
\end{defn}

\begin{lem}\label{lem-complex}
The $F$-braiding complex $\mathfrak{T}_{a,b,i}^{\pm}v_{\vec{k}}$ is an element of $\Kom_{\mathrm{gr}}(\Ucatm)$, i.e. the differentials preserve the degree and $d_{k+1}\circ d_k=0$.
\end{lem}

\begin{proof}
Let us skip the labels in the following. We have in the positive $b\leq a$ case
\[
\xy
(0,0)*{\includegraphics[scale=1.25]{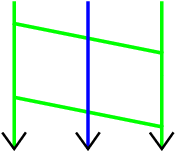}};
\endxy=\xy
(0,0)*{\includegraphics[scale=1.25]{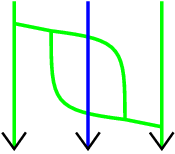}};
\endxy=\xy
(0,0)*{\includegraphics[scale=1.25]{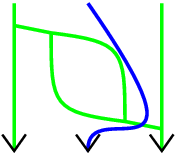}};
\endxy=0,
\]
where the first equation follows from the associativity of splitters and merges (see e.g. \cite[Proposition 2.2.4]{klms}), the second from the pitchfork relation (see e.g. \cite[Proposition 4]{st1}) and the third is a direct consequence of the definition of splitters and merges (see e.g. \cite[Equation 2.64]{klms}). We leave the positive $a<b$ case and the negative cases to the reader.

The difference between two shifts is $q_k-q_{k+1}=-1$. Thus, the differentials have to be of degree $1$ in order to be degree preserving. Recall that splits and merges are of degree $-jj^{\prime}$ (if they split $j+j^{\prime}$ into $j$ and $j^{\prime}$ or vice versa for merges). Since the middle edges are of thickness $1$, we can read off minus the degree of them by looking at the bottom left and top right boundary in the $b\leq a$ case and at the bottom right and top left boundary in the $a<b$ case. For both the sum is $a-1$. Thus, since the thick middle crossing is of degree $a$, the differentials are of degree $1$. Again, we leave the negative cases to the reader.
\end{proof}

I thank Queffelec and Rose that they pointed out that using the Rickard complex $\mathcal{T}_i^{+}{\idm}_{\vec{k}}$ is essentially equivalent to the $F$-braiding complex $\mathfrak{T}_i^{+}v_{\vec{k}}$. Part (a) can be seen as a categorification of Lemma \ref{lem-webasF}. For analogous statements, see \cite[Lemma 3.13 and Remark 3.14]{qr1}.

Before we start recall that the $q$-Schur $2$-algebra $\mathcal{S}(m,n\ell)_n$ is obtained from $\mathcal{U}(\mathfrak{gl}_n)$ by taking the quotient by setting all $2$-morphisms that have a region with a label not in $\Lambda(m,n\ell)_n$ to zero. For details see \cite{msv2}. The reader may convince herself/himself that it is in fact not a big deal to define $\check{\mathcal{S}}(m,n\ell)_n$ that we will use in the following and denote just by $\check{\mathcal{S}}$.

\begin{lem}\label{lem-rickardisf}
Denote by $\Kom^h_{\mathrm{gr}}(\check{\mathcal{S}})$ the homotopy category of complexes for $\check{\mathcal{S}}(m,n\ell)_n$ and sufficiently large $m$.
\begin{itemize}
\item[(a)] Let $u,v\in W_n(\vec{k})$ be two isotopic $\mathfrak{gl}_n$-webs with a possible different presentation under $q$-skew Howe duality $u=\mathrm{qH}v_{(n^{\ell})}$ and $v=\mathrm{qH}^{\prime}v_{(n^{\ell})}$ (here $\mathrm{qH}$ and $\mathrm{qH}^{\prime}$ consists of strings of $E_i^{(j)}$ and $F_i^{(j)}$). Then there exists an isomorphism in $\Ucatmc$ between the corresponding $\mathcal{E}_i^{(j)}$ and $\mathcal{F}_i^{(j)}$ realizing this isotopy, and all isotopies come already from isomorphisms in the KLR part of $\Ucatmc$ for sufficiently large $m$.
\item[(b)] The Rickard complex $\mathcal{F}_{i}^{(b)}\mathcal{F}_{i+1}^{(a)}\mathcal{T}^{+}_i{\idm}_{\vec{k}}$ is the same as $\mathfrak{T}^{+}_iv_{\vec{k}}$ in $\Kom^h_{\mathrm{gr}}(\check{\mathcal{S}})$ in the case $b\leq a$. Analogous statements are true for the other cases.
\end{itemize}
\end{lem}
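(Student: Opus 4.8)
The plan is to handle the two parts separately, with part~(a) being essentially a formal consequence of the structure of $\Ucatmc$ together with the explicit $\mathfrak{sl}_n$-web isotopy relations, and part~(b) being a direct comparison of the two complexes after identifying the ``extra'' $\mathcal E$'s in the Rickard complex with identity $\mathfrak{sl}_n$-webs via part~(a). For part~(a): any two presentations $u=\mathrm{qH}v_{(n^{\ell})}$ and $v=\mathrm{qH}^{\prime}v_{(n^{\ell})}$ of isotopic $\mathfrak{sl}_n$-webs are related by a finite sequence of the $\mathfrak{sl}_n$-web relations of Definition~\ref{def-spid} that are actually isotopies (the tag relation~\ref{eq-tag}, the tripod relation~\ref{eq-tripod}, and the ``ladder isotopies'' coming from bi-adjointness of $E$'s and $F$'s, cf. the discussion after Definition~\ref{def-freespid}). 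Each of these has a well-known lift to an isomorphism of $1$-morphisms in $\Ucatmc$: the tag/leash moves are the isomorphisms induced by the antipode (Remark~\ref{rem-leash}), the ladder isotopies are built from the $\sl_2$-relations (the $EF$ versus $FE$ relations, the bubble moves, and the adjunction zig-zags), and the tripod relation is the categorified Serre relation. Since all of these are visible already in the KL-R part of $\Ucatmc$ once $m$ is large enough to accommodate all the intermediate weights appearing, the ``moreover'' follows; the only subtlety is bookkeeping that no weight leaves $\Lambda(m,n\ell)_n$, which is arranged by enlarging $m$ (this is exactly the point of passing to $\check{\mathcal S}=\check{\mathcal S}(m,n\ell)_n$).

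For part~(b), the idea is that the Rickard complex $\mathcal F_i^{(b)}\mathcal F_{i+1}^{(a)}\mathcal T^+_i{\idm}_{\vec k}$, written out using Definition~\ref{defn-rickard}, has $k$-th term $\mathcal F_i^{(b)}\mathcal F_{i+1}^{(a)}\mathcal F_i^{(a+k-b)}\mathcal E_i^{(k)}{\idm}_{\vec k}$ in the case $b\le a$, whereas $\mathcal T^+_iv_{\vec k}$ has $k$-th term $F_{i+1}^{(a+k-b)}F_i^{(a)}F_{i+1}^{(b-k)}v_{\vec k}$. The plan is to show these agree term-by-term and that the differentials match, working in $\Kom^h_{\mathrm{gr}}(\check{\mathcal S})$. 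First I would apply the $q$-skew Howe dictionary: in the Schur quotient, acting on the weight $\vec k$ with entries $\dots,a,b,0,\dots$, the string $\mathcal F_i^{(b)}\mathcal F_{i+1}^{(a)}\mathcal F_i^{(a+k-b)}\mathcal E_i^{(k)}$ and the string $F_{i+1}^{(a+k-b)}F_i^{(a)}F_{i+1}^{(b-k)}$ produce the same $\mathfrak{sl}_n$-web (this is precisely the ``wiggling the braid around'' computation already carried out on the uncategorified level above Section~\ref{sec-linkswebs} and encoded in Definition~\ref{defn-colbraid}, where one uses the embedding $\dot{\Uu}_q(\mathfrak{sl}_i)\hookrightarrow\dot{\Uu}_q(\mathfrak{sl}_{i+1})$ and the higher Serre relations to rewrite $F^{(a+k-b)}_iE^{(k)}_i$). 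By part~(a), this equality of $\mathfrak{sl}_n$-webs lifts to an isomorphism of $1$-morphisms in $\check{\mathcal S}$. Second I would check that under these isomorphisms the Rickard differential $d^{\mathrm R}_k$ (a thick $EF$-crossing-type diagram, Definition~\ref{defn-rickard}) is carried to the $F$-braiding differential $d_k$ (the thick merge-then-split diagram, Definition~\ref{defn-basicT}); this is a local diagrammatic identity of the same flavor as Lemma~\ref{lem-complex}, using associativity of splitters/merges and the pitchfork relation. Finally, one observes that any sign or homotopy discrepancies between the two chain complexes are absorbed by an automorphism of the complex (rescaling terms by units), so the two are isomorphic, hence equal, in the homotopy category $\Kom^h_{\mathrm{gr}}(\check{\mathcal S})$. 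The cases $a<b$ and the negative crossings are handled by the completely symmetric computation (swapping the roles of $\mathcal F_i$ and $\mathcal F_{i+1}$, or reversing the complex), which I would leave to the reader.

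The main obstacle I expect is part~(b)'s matching of differentials: while the term-by-term identification is a clean application of the skew-Howe dictionary plus part~(a), verifying that the \emph{specific} thick KL-R $2$-morphism appearing as $d^{\mathrm R}_k$ goes to the \emph{specific} $2$-morphism $d_k$ requires unwinding the definitions of the thick crossing~\ref{eq-createcrossing} and the splitter/merge conventions of~\cite{klms}, and keeping track of the degree shifts $q_k$ and signs. There is a real danger of being off by a unit or a homotopy, which is why I would phrase the conclusion as an isomorphism in the homotopy category rather than an equality of chain complexes on the nose; as noted in the excerpt (the acknowledgement to Queffelec and Rose), this comparison is morally the statement that the $F$-braiding complex and Chuang--Rouquier's Rickard complex coincide after passing to the Schur quotient, and analogous verifications appear in~\cite{qr1} (Lemma 3.13 and Remark 3.14), which can be cited to shorten the argument.
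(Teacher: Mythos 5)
Your overall strategy is reasonable, but your route for part (a) is genuinely different from the paper's, and in part (b) you underestimate where the real work sits. For (a) the paper does \emph{not} lift the generating isotopies one by one; it argues indirectly by decategorification: by Lemma~\ref{lem-webasF} and Proposition~\ref{prop-extgrowth} isotopic webs carry the same combinatorial data, and then Theorem~\ref{thm-iso} together with Mackaay's Corollary 7.6 in~\cite{mack1} (the split Grothendieck group of $\mathcal W^p_{\Lambda}$ is the web space) forces the corresponding $1$-morphisms to be isomorphic --- already in $\check{R}(\Lambda)$ when only $F_i^{(j)}$'s occur, and in $\Ucatmc$ for a larger $m$ when $E_i^{(j)}$'s occur. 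Your plan of explicitly lifting the tag/tripod/ladder isotopies could in principle be carried out, but your assertion that each lift is ``well-known'' \emph{in} $\Ucatmc$ (rather than in the target of the $2$-representation, where it follows from Theorem~\ref{thm-qhowe}) is exactly the nontrivial content that the paper's Grothendieck-group argument is designed to bypass; if you take your route you must actually produce those isomorphisms case by case. The trade-off: your approach would be constructive, the paper's is shorter but nonexplicit.

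For (b) your outline follows the paper's, with one soft spot: the paper does not deduce the termwise identifications from part (a). Abstract existence of isomorphisms between the terms is not enough, since they must assemble into a chain map; accordingly the paper writes down \emph{specific} thick-calculus $2$-morphisms $f_k,g_k$ (realizing precisely the web isotopy you describe) and verifies, using the relations of~\cite{sto1} and~\cite{klms} (opening of a thick edge, thick R3, higher reduction of bubbles), that $g_k\circ f_k$ and $f_k\circ g_k$ are $\pm\mathrm{id}$ and that the squares commute up to sign; the signs are then repaired by rescaling the differentials inductively from the left, which terminates because the complexes are bounded in $\check{\mathcal{S}}$. Your closing claim that discrepancies are ``absorbed by an automorphism of the complex'' handles unit rescalings but not a genuine failure of commutativity that holds only up to homotopy --- in that case the termwise maps simply do not define a morphism of complexes without correction terms. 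So either verify strict commutativity up to sign via the explicit $f_k,g_k$ as the paper does, or lean on the analogous verification in~\cite{qr1} as you suggest; ``the same flavor as Lemma~\ref{lem-complex}'' understates the computation actually needed.
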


\begin{proof}
(a). This is just a consequence of the results from the previous sections. To be more precise, by Lemma \ref{lem-webasF} and Proposition \ref{prop-extgrowth} we see that each $\mathfrak{gl}_n$-web corresponds to an equivalence class of $n$-multipartitions (taking isotopies in account). By Theorem \ref{thm-iso} and \cite[Corollary 7.6]{mack1} (that the split Grothendieck group of $\mathcal W^p_{\Lambda}$ is equivalent to the $\mathfrak{gl}_n$-web space $W_n(\Lambda)$) we see that all $\mathfrak{gl}_n$-web isotopies, if only $F_i^{(j)}$ are involved, have to come from a certain $\check{R}(\Lambda)$. If $E_i^{(j)}$ are involved, then the $\mathfrak{gl}_n$-webs still give the same on the level of Grothendieck groups, but the isotopies come from $\Ucatmc$ for a suitable $m$ (rewriting $E$ in terms of $F$ increases the $m$).

(b). We note that any isomorphism is not sufficient, since it has to give rise to a chain map. We therefore give such isomorphisms below which come from the following isomorphisms between the $\mathfrak{gl}_n$-webs
\[
\scalebox{.7}{$\xy
(0,0)*{\includegraphics[scale=.75]{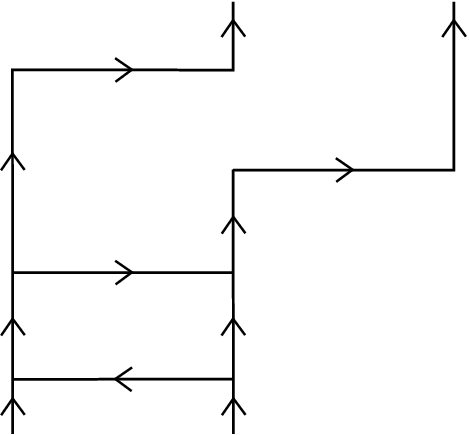}};
(-26,-26)*{a};
(2,-26)*{b};
(30,-26)*{0};
(-22,-13)*{a+k};
(6,-13)*{b-k};
(30,-13)*{0};
(-26,0)*{b};
(2,0)*{a};
(30,0)*{0};
(-26,13)*{b};
(2,13)*{0};
(30,13)*{a};
(-26,26)*{0};
(2,26)*{b};
(30,26)*{a};
(-12,22)*{F_{i}^{(b)}};
(-8.75,-3)*{F_{i}^{(a+k-b)}};
(-12,-17)*{E_{i}^{(k)}};
(15,10)*{F_{i+1}^{(a)}};
\endxy$}
\text{ and }
\scalebox{.7}{$\xy
(0,0)*{\includegraphics[scale=.75]{figs/linkpoly/colbraidb.eps}};
(17,18.5)*{F_{i+1}^{(a+k-b)}};
(-26,-20)*{a};
(2,-19.5)*{b};
(30,-19.5)*{0};
(-12,4)*{F_{i}^{(a)}};
(-26,-6.5)*{a};
(2.5,-7)*{k};
(33.5,-6.5)*{b-k};
(15,-9.5)*{F_{i+1}^{(b-k)}};
(-26,6.5)*{0};
(6.25,6.5)*{a+k};
(33.5,6)*{b-k};
(-26,19.5)*{0};
(2,19.5)*{b};
(30,19)*{a};
\endxy$},
\]
where the first $\mathfrak{gl}_n$-web is for the Rickard complex (which categorifies the rules from Definition \ref{defn-cRTpoly}) and the second is for the $F$-braiding complex (which, on the other hand, categorifies the rules from Definition \ref{defn-colbraid}).

We do not care for signs here because, if some signs for some squares as below do not work, then we can change them by multiplying with an extra sign for the right arrow for the corresponding square (starting at the leftmost). Moreover, we note that using $\check{\mathcal{S}}$ ensures that the complexes are all bounded from left and right. Thus, the sign change procedure is well-defined and terminates.

We now consider the following square where the $k$-th part of the Rickard complex is the top left and the $k$-th part of the $F$-braiding complex is the bottom left (with $\vec{k}=(,\dots,a,b,0,\dots)$).
\[
\begin{xy}
\xymatrix{
\mathcal{F}_{i}^{(b)}\mathcal{F}_{i+1}^{(a)}\mathcal{F}_{i}^{(a+k-b)}\mathcal{E}_{i}^{(k)}{\idm}_{\vec{k}}\{q_k\} \ar[r]^/-0.6em/{d^{\mathrm{R}}_k} \ar@<2pt>[d]^{g_k}    &   \mathcal{F}_{i}^{(b)}\mathcal{F}_{i+1}^{(a)}\mathcal{F}_{i}^{(a+k+1-b)}\mathcal{E}_{i}^{(k+1)}{\idm}_{\vec{k}}\{q_{k+1}\} \ar@<2pt>[d]^{g_{k+1}}  \\
F_{i+1}^{(a+k-b)}F_{i}^{(a)}F_{i+1}^{(b-k)}v_{\vec{k}}\{q_k\} \ar[r]^/-0.8em/{d_k}  \ar@<2pt>[u]^{f_k}           &   F_{i+1}^{(a+k+1-b)}F_{i}^{(a)}F_{i+1}^{(b-k-1)}v_{\vec{k}}\{q_{k+1}\} \ar@<2pt>[u]^{f_{k+1}} 
}
\end{xy}
\]
The maps $f_k$ (left) and $g_k$ (middle) are given by
\[
\xy
(0,0)*{\includegraphics[scale=.75]{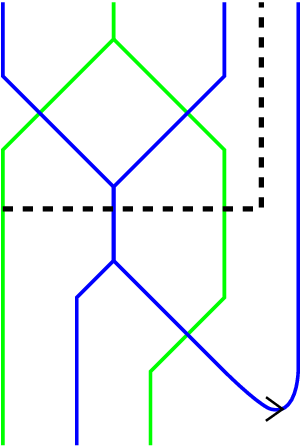}};
(21,0)*{\vec{k}};
(-14,0)*{\scriptstyle a+k-b};
(-10.5,-25)*{\scriptstyle a};
(-3,-25)*{\scriptstyle b-k};
(-1,-10)*{\scriptstyle k};
(5.5,-10)*{\scriptstyle b-k};
(17.5,-10)*{\scriptstyle k};
(-1.5,0)*{\scriptstyle a+k};
(-9.25,7.25)*{\scriptstyle b};
(3.75,7.25)*{\scriptstyle a+k-b};
(-5,18)*{\scriptstyle a+k-b};
(4.5,18)*{\scriptstyle b-k};
(-20,25)*{\scriptstyle b};
(-6,25)*{\scriptstyle a};
(4.5,25)*{\scriptstyle a+k-b};
\endxy
\hspace*{0.5cm}
\xy
(0,0)*{\includegraphics[scale=.75]{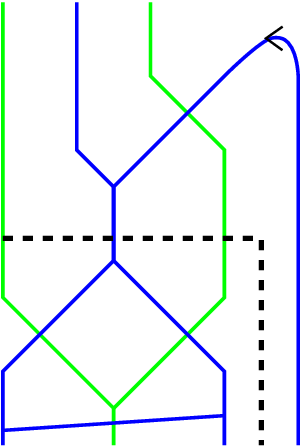}};
(21,0)*{\vec{k}};
(-14,0)*{\scriptstyle a+k-b};
(-10.5,25)*{\scriptstyle a};
(-3,25)*{\scriptstyle b-k};
(-1,10)*{\scriptstyle k};
(5.5,10)*{\scriptstyle b-k};
(17.5,10)*{\scriptstyle k};
(-1.5,0)*{\scriptstyle a+k};
(-10.5,-7.25)*{\scriptstyle b-k};
(4,-7.25)*{\scriptstyle a+2k-b};
(-5,-18)*{\scriptstyle a+k-b};
(4.5,-18)*{\scriptstyle b-k};
(-20,-27.5)*{\scriptstyle b};
(-6,-27.5)*{\scriptstyle a};
(4.5,-27.5)*{\scriptstyle a+k-b};
(4.5,-23.5)*{\scriptstyle k};
\endxy
\hspace*{0.5cm}
\xy
(0,0)*{\includegraphics[scale=.75]{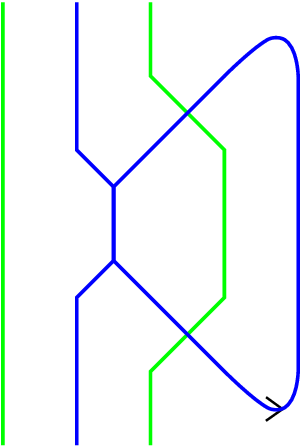}};
(21,0)*{\vec{k}};
(-14,0)*{\scriptstyle a+k-b};
(-10.5,-25)*{\scriptstyle a};
(-3,-25)*{\scriptstyle b-k};
(-1,-10)*{\scriptstyle k};
(5.5,-10)*{\scriptstyle b-k};
(17.5,-10)*{\scriptstyle k};
(-1.5,0)*{\scriptstyle a+k};
(-1,10)*{\scriptstyle k};
(-10.5,25)*{\scriptstyle a};
(-3,25)*{\scriptstyle b-k};
\endxy
\]
We have also indicated the thickness of the strands in order to help the reader. We note that part of these $2$-morphisms (the ones that we have separated) are exactly the same $2$-morphisms as in \cite[Section 4.2]{sto1}. The partition $\alpha\in P(0,k)$ in Sto\v{s}i\'{c}'s notation there will be empty. Note that the marked parts are of degree zero.

The proof that $g_k\circ f_k=\pm\mathrm{id}_{F_{i+1}^{(a+k-b)}F_{i}^{(a)}F_{i+1}^{(b-k)}v_{\vec{k}}}$ and $f_k\circ g_k=\pm\mathrm{id}_{\mathcal{F}_{i}^{(b)}\mathcal{F}_{i+1}^{(a)}\mathcal{F}_{i}^{(a+k-b)}\mathcal{E}_{i}^{(k)}{\idm}_{\vec{k}}}$ follows from calculations of Sto\v{s}i\'{c} in \cite{sto1}. For example, to see the first identity, one can use the equation in the proof of Lemma 4 in \cite{sto1} (recall that we ignore signs). This reduces the diagram to the right picture above. Then one can use the ``Opening of a thick edge'' (cf. \cite[Proposition 5]{sto1}) followed by the ``Thick R3 move'' (cf. \cite[Proposition 7]{sto1}) and apply ``Higher reduction of bubbles'' (see \cite[Proposition 5.2.9]{klms}) to see that this is just the identity (up to a sign). 
The other cases are again similar in the sense that they can be deduced from $\mathfrak{gl}_n$-web isotopies (and in the sense that they need non-trivial calculations) and left to the reader. This shows (b). 
\end{proof}

\begin{defn}\label{defn-khshoweF}(\textbf{Khovanov--Rozansky $\mathfrak{gl}_n$-braid complex only using $\mathbf{F}$}) Given an oriented, colored braid diagram $B_D$ with $\mathrm{cr}$ crossings and a fixed presentation of it using $q$-skew Howe duality
\[
B_D=\prod_{k} \tilde F^{(j_k)}_{i_k}v_{(n^{\ell})},\quad\text{with }\tilde F^{(j_k)}_{i_k}\text{ as in Lemma }\ref{lem-linkasF},
\]
with $T^{\pm}$ for the $\overcrossing$ or $\undercrossing$, we assign to it the \textit{$\mathfrak{gl}_n$-braid complex via $F$} by
\[
\llbracket B_D\rrbracket^n_{F}=\prod_{k} F^{(j_k)}_{i_k}\cdot\bigotimes_{k=1}^{\mathrm{cr}}\mathfrak{T}^{\pm}_{i_k}\cdot\prod_{k} F^{(j_k)}_{i_k}v_{(n^{\ell})},
\]
where we allow $F^{(j)}_i$ between the $T^{\pm}_{i_k}$ if they appear in the fixed presentation above between them. Moreover, the weights $\vec{k}$ for the $\mathfrak{T}$ from Definition \ref{defn-basicT} have to be suitably rearranged and the corresponding diagrams are the identities on the components $\prod_{k} F^{(j_k)}_{i_k}$.
\end{defn}

\begin{prop}\label{prop-invarianceF}
The complex $\llbracket B_D\rrbracket^n_{F}$, viewed in the corresponding homotopy category of complexes $\Kom^h_{\mathrm{gr}}(\check{S})$, gives an invariant of framed braids. That is, it does not depend on the braid moves.
\end{prop}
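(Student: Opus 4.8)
The plan is to reduce the statement to the braid invariance of the Rickard complex, which is already available through Theorem~\ref{thm-rickard}. First I would note that it suffices to verify invariance under the two braid generators' relations, namely the far commutativity $\sigma_i\sigma_j\simeq\sigma_j\sigma_i$ for $|i-j|>1$ and the braid relation $\sigma_i\sigma_{i+1}\sigma_i\simeq\sigma_{i+1}\sigma_i\sigma_{i+1}$, together with the fact that $\mathcal T^{+}_i$ and $\mathcal T^{-}_i$ are mutually inverse up to homotopy (so that $\sigma_i\sigma_i^{-1}\simeq\mathrm{id}$); the latter is part of Definition~\ref{defn-rickard} transported to the $F$-braiding complex via Lemma~\ref{lem-rickardisf}. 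Since we work with \emph{framed} braids, no Markov- or Reidemeister-I-type move needs to be checked, so these are the only local moves to treat.

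Next I would pass, using Lemma~\ref{lem-rickardisf}(b), from the $F$-braiding complexes $\mathcal T^{\pm}_iv_{\vec{k}}$ to the Rickard complexes $\mathcal T^{\pm}_i\idm_{\vec{k}}$: up to a fixed string of $\mathcal F^{(j)}_i$'s appearing on the left (on which all the relevant $2$-morphisms act as identities, as spelled out in Definition~\ref{defn-khshoweF}), the two agree in $\Kom^h_{\mathrm{gr}}(\check{\mathcal S})$ for $m$ sufficiently large. Composing these local identifications along the braid word $B_D$, the complex $\llbracket B_D\rrbracket^n_{F}$ becomes, in $\Kom^h_{\mathrm{gr}}(\check{\mathcal S})$, the tensor product of the Rickard complexes prescribed by $B_D$, pre- and post-composed with the fixed closure strings $\prod_k F^{(j_k)}_{i_k}$.

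With this in hand the far commutativity is essentially automatic: for $|i-j|>1$ the complexes $\mathcal T^{\pm}_i$ and $\mathcal T^{\pm}_j$ involve strands in disjoint positions and commute on the nose (one may also quote Lemma~\ref{lem-rickardisf}(a) to slide the strands past each other). For the braid relation I would invoke Theorem~\ref{thm-rickard} applied to the integrable $2$-representation obtained by restricting $\check\Gamma_{m,n\ell,n}$ to the Schur quotient $\check{\mathcal S}(m,n\ell)_n$ (integrability is exactly what passing to this quotient buys us, and is used throughout Section~\ref{sec-catpart2}): the images of the Rickard complexes satisfy $\mathcal T_i\mathcal T_{i+1}\mathcal T_i\simeq\mathcal T_{i+1}\mathcal T_i\mathcal T_{i+1}$ up to homotopy in $\Kom^h_{\mathrm{gr}}(\check{\mathcal S})$. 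Transporting this equivalence back along the identifications of the previous paragraph yields the braid relation for $\llbracket B_D\rrbracket^n_{F}$.

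The hard part will not be any single computation but the bookkeeping that makes the reduction legitimate: one must check that the homotopy equivalence of Lemma~\ref{lem-rickardisf}(b) is compatible with tensoring on the left by the fixed $\mathcal F$-strings and with tensoring by identity complexes, so that the several local equivalences and the attachment of the closure strings $\prod_k F^{(j_k)}_{i_k}$ assemble into one global homotopy equivalence; and that the sign adjustments flagged in the proof of Lemma~\ref{lem-rickardisf} can be carried through coherently, which is where boundedness of the complexes in $\check{\mathcal S}$ is used so that the sign-fixing procedure terminates. Once these points are settled, braid invariance of $\llbracket B_D\rrbracket^n_{F}$ is a direct consequence of Theorem~\ref{thm-rickard}.
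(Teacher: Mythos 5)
Your argument is correct and follows essentially the same route as the paper: the paper's proof is exactly the combination of Lemma~\ref{lem-rickardisf} (identifying the $F$-braiding complex with the Rickard complex in $\Kom^h_{\mathrm{gr}}(\check{\mathcal{S}})$), Theorem~\ref{thm-rickard}, and the observation that $\check{\mathcal{S}}$ is an integrable $2$-representation of $\Ucatmc$. Your additional remarks on far commutativity, inverses up to homotopy, and the sign/bookkeeping compatibility are just a more explicit unwinding of that same reduction.
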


\begin{proof}
This is Lemma \ref{lem-rickardisf} combined with the fact that \cite[Theorem 6.3]{ck3} applies to $\check{\mathcal{S}}$.
\end{proof}

\begin{rem}\label{rem-betterway}
We point out that there is a way to prove Proposition \ref{prop-invarianceF} directly in our framework and extend it to braid like tangles. The latter are more flexible then braids and satisfy additional moves called \textit{tangle braid moves} (see e.g. \cite[Figure 2]{cau1} or \cite[Lemma X.3.5]{kassel}).

This alternative proof is based on the higher quantum Serre relations and their categorification given in \cite{sto1}. Moreover, we think that these complexes can be used for a ``divide and conquer'' strategy for computations {\`a} la Bar-Natan \cite{bn3}. But we only sketch how it should work. Compare also to our proof of Theorem \ref{thm-evoflinks}.

Given the setup as in the proof of Theorem \ref{thm-evoflinks}, we get a complex (recall that $v=v_{\dots,1,1,0,0,\dots}$)
\[
\begin{xy}
\xymatrix{
& F_{i+1}F_{i+2}F_{i}F_{i+1}v\{0\}\ar[dr]|{\phantom{-}\xy
(0,0)*{\raisebox{-0.15em}{\includegraphics[width=10px]{figs/higherstuff/downcross}}};\endxy\colon F_{i}F_{i+1}\to F_{i+1}F_{i}}\ar@{.}[dd]|{\bigoplus} & \\
F_{i+2}F_{i+1}F_iF_{i+1}v\{-1\}\ar[dr]|{\phantom{-}\xy
(0,0)*{\raisebox{-0.15em}{\includegraphics[width=10px]{figs/higherstuff/downcross}}};\endxy\colon F_{i}F_{i+1}\to F_{i+1}F_{i}}\ar[ur]|{\phantom{-}\xy
(0,0)*{\raisebox{-0.15em}{\includegraphics[width=10px]{figs/higherstuff/downcross}}};\endxy\colon F_{i+2}F_{i+1}\to F_{i+1}F_{i+2}} &  & F_{i+1}F_{i+2}F_{i+1}F_iv\{+1\}\\
& F_{i+2}{\color{red}F_{i+1}F_{i+1}}F_iv\{0\}\ar[ur]|{-\xy
(0,0)*{\raisebox{-0.15em}{\includegraphics[width=10px]{figs/higherstuff/downcross}}}\endxy\colon F_{i+2}F_{i+1}\to F_{i+1}F_{i+2}} &    
}
\end{xy}
.
\]
There is an explicit isomorphism $\mathcal{F}_{i+1}\mathcal{F}_{i+1}\cong\mathcal{F}^{(2)}_{i+1}\{-1\}\oplus\mathcal{F}^{(2)}_{i+1}\{+1\}$ in $\Ucatmc$, see \cite[Theorem 5.1.1]{klms} (the same is true in $\check{\mathcal{S}}$). This, in the $n=2$ case, is just Bar-Natan's delooping from \cite[Lemma 3.1]{bn3}.

We get from this, focusing on the bottom path of the complex above, the following complex.
\[
\begin{xy}
\xymatrix{
F_{i+2}F_{i+1}F_iF_{i+1}v\{-1\}\ar[r]^/0.5em/{d_1} &  F_{i+2}F^{(2)}_{i+1}F_iv\{-1\}\ar@{.}[d]|{\bigoplus} & \\
& F_{i+2}F^{(2)}_{i+1}F_iv\{+1\}\ar[r]^/-0.5em/{d_2} &  F_{i+1}F_{i+2}F_{i+1}F_iv\{+1\}. 
}
\end{xy}
\]
The differentials will change as usual using Gauss elimination, see e.g. \cite[Lemma 3.2]{bn3}, to
\[
d_1=\xy(0,0)*{\includegraphics[width=75px]{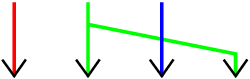}}\endxy\hspace*{0.5cm}\text{  and  }
\hspace*{0.5cm}d_2=\xy(0,0)*{\includegraphics[width=75px]{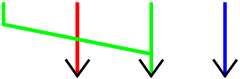}}\endxy
.
\]
The top line is part of the null-homotopic complex defined in Theorem 7 in \cite{sto1} (or a variant of it by exchanging $E$ to $F$ and indices) for $a=2,b=1$. We note that, due to weight reasons, most terms of Sto\v{s}i\'{c}'s complex will be zero. On the other hand, the bottom line is part of the null-homotopic complex defined in \cite[Theorem 6]{sto1} (again slightly rearranged).
As explained in \cite{bn3}, the complex will collapse and the starting complex is homotopic to the trivial complex which shows the invariance under the second Reidemeister move.
\end{rem}

\subsubsection{Colored \texorpdfstring{$\mathfrak{gl}_n$}{gln}-link homology using \texorpdfstring{$F$}{F}}\label{subsub-algocalc}
We are now ready to define our version of the colored Khovanov--Rozansky $\mathfrak{gl}_n$-link homology.

\begin{defn}\label{defn-cansequence}
Given a weight $(n^{\ell})$, we associate to it a \textit{canonical sequence of $F_i^{(j)}$}, denoted by $F^c_{(n^{\ell})}$, by applying $F_i^{(n)}$ to shift all $n$ to the right by shifting always the rightmost pair of the form $(\dots,n,0,\dots)$ to $(\dots,0,n,\dots)$.  
\end{defn}

\begin{ex}\label{ex-canseq}
The canonical sequence associated to $(3,3,0,0)$ is $F_2^{(3)}F_1^{(3)}F_3^{(3)}F_2^{(3)}$. Another example is given in Example \ref{ex-dual}.
\end{ex}

\begin{defn}\label{defn-khshoweF2}(\textbf{Khovanov--Rozansky $\mathfrak{gl}_n$-link homology only using $\mathbf{F}$}) Given an oriented, colored link diagram $L_D$ with $\mathrm{cr}$ crossings $c_{a,b}$ and a fixed presentation of it using $q$-skew Howe duality
\[
L_D=\prod_{k} F^{(j_k)}_{i_k}\cdot T_{i_{\mathrm{cr}}}^{\pm}\dots T_{i_1}^{\pm}\cdot\prod_{k} F^{(j_k)}_{i_k}v_{(n^{\ell})}
\]
with $T^{\pm}$ for the $\overcrossing$ or $\undercrossing$ (as before, we allow extra $F$ between the different $T^{\pm}$), we assign to it the \textit{colored Khovanov--Rozansky $\mathfrak{gl}_n$-link homology via $F$} by
\[
\llbracket L_D\rrbracket^n_{F}=\mathrm{hom}_{\check{R}}(F^c_{(n^{\ell})},\prod_{k} F^{(j_k)}_{i_k}\cdot\bigotimes_{k=1}^{\mathrm{cr}}\mathfrak{T}^{\pm}_{i_k}\cdot\prod_{k} F^{(j_k)}_{i_k}v_{(n^{\ell})})
\]
(we write shortly $\mathrm{hom}_{\check{R}}$ for $\mathrm{hom}_{\check{R}(\Lambda)}$) and
\[
\KR(L_D)^n_{\mathrm{F}}=\llbracket L_D\rrbracket^n_{\mathrm{F}}\{\mathrm{power}(q)\}
\]
where the shift in the $q$-degree $\{\mathrm{power}(q)\}$ is the same as power of the $q$ in the product from \eqref{eq-norRT}. Moreover, the weights $\vec{k}$ for the $\mathfrak{T}$ from Definition \ref{defn-basicT} have to be suitably rearranged for the tensor product to make sense.
\end{defn}

\begin{thm}\label{thm-same2}
The complex $\KR(L_D)^n_{\mathrm{F}}$ is the same as $\KR(L_D)^n$ viewed as objects in the homotopy category of complexes $\Kom^h_{\mathrm{gr}}(\mathcal W^p_{\Lambda})$. Thus, it is an invariant of colored links and therefore invariant under the three Reidemeister moves and isotopies.
\end{thm}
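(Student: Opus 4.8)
The plan is to identify $\KR(L_D)^n_{\mathrm F}$, term by term and differential by differential, with the cube complex of matrix factorizations (equivalently $\mathfrak{sl}_n$-foams) of Khovanov--Rozansky~\cite{kr1}, Wu~\cite{wu} and Yonezawa~\cite{yo2} that defines $\KR(L_D)^n$, the identification taking place in the homotopy category $\Kom^h_{\mathrm{gr}}(\mathcal W^p_{\Lambda})$; once this is established, invariance under the three Reidemeister moves and isotopy follows at once, since $\KR(L_D)^n$ is already known to be a link invariant. First I would fix a presentation of $L_D$ as in Lemma~\ref{lem-linkasF},
\[
L_D=\prod_k F^{(j_k)}_{i_k}\cdot T^{\pm}_{i_{\mathrm{cr}}}\cdots T^{\pm}_{i_1}\cdot\prod_k F^{(j_k)}_{i_k}v_{(n^{\ell})},
\]
replace each colored braiding operator $T^{\pm}_{i_k}$ by the $F$-braiding complex of Definition~\ref{defn-basicT}, so that $\bigotimes_k\mathcal T^{\pm}_{i_k}$ becomes an honest complex in $\Kom^h_{\mathrm{gr}}(\check{\mathcal{S}}(m,n\ell)_n)$ by Lemma~\ref{lem-complex}, and then use Lemma~\ref{lem-rickardisf}(b) to replace each factor $\mathcal T^{\pm}_{i_k}v_{\vec k}$, up to homotopy in the Schur quotient, by the corresponding Rickard complex $\mathcal F^{(b)}_i\mathcal F^{(a)}_{i+1}\mathcal T^{\pm}_i{\idm}_{\vec k}$ of Definition~\ref{defn-rickard}. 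This step is what lets me pass from the combinatorial complex to one whose image under categorified skew Howe duality is understood.

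Second, I would compute the image of the Rickard complex under the $2$-functor $\Gamma_{m,n\ell,n}$ of Theorem~\ref{thm-qhowe}, extended to $\check\Gamma_{m,n\ell,n}$ on $\Ucatmc$ by Theorem~\ref{thm-thick}. Since $\check\Gamma_{m,n\ell,n}$ is a monoidal $2$-functor sending $\mathcal F^{(j)}_i$ to the matrix factorization $\widehat F^{(j)}_{(k_i,k_{i+1})}$, it suffices to show that it carries the Rickard complex at a single crossing to the Khovanov--Rozansky complex of matrix factorizations attached to that crossing, up to homotopy and with matching $q$-shift; this is precisely the content of Queffelec--Rose~\cite{qr1} (and of Lauda--Queffelec--Rose~\cite{lqr1} for $n=2,3$), cf.\ Section~\ref{subsub-foam}. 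Tensoring over the cube of resolutions and composing on both sides with the matrix factorizations of the strings $\prod_k F^{(j_k)}_{i_k}$ then produces exactly the cube complex whose total complex is $\KR(L_D)^n$.

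Third, I would pass to the $\mathfrak{sl}_n$-web algebra by applying $\mathrm{hom}_{\check R}(F^c_{(n^{\ell})},-)$ and invoking Theorem~\ref{thm-iso}: because $F^c_{(n^{\ell})}$ is a string of leash-shifts, $\check\Gamma$ identifies $e(\vec\lambda^v_c)\check R_\Lambda e(\vec\lambda^u_c)$ with ${}_vH_n(\vec k)_u$, so each chain group of $\KR(L_D)^n_{\mathrm F}$ becomes the corresponding $H_n(\Lambda)$-module of the matrix-factorization homology and each differential the corresponding module map, $\check\Gamma$ being an additive equivalence of $2$-categories. It then remains to check that the normalizing shift $\{\mathrm{power}(q)\}$ of Definition~\ref{defn-khshoweF2} agrees with the normalization of $\KR(L_D)^n$, which is the same bookkeeping already carried out on the decategorified level in Definition~\ref{defn-cRTpoly} and Theorem~\ref{thm-evoflinks}. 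The invariance assertion follows formally from the identification; alternatively one may argue intrinsically, deducing Reidemeister~II and III from Proposition~\ref{prop-invarianceF} (hence from Theorem~\ref{thm-rickard}), isotopy invariance from Lemma~\ref{lem-rickardisf}(a), and Reidemeister~I directly modulo the stated $q$-shift.

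The main obstacle will be the second step: making the comparison $\check\Gamma_{m,n\ell,n}(\text{Rickard complex})\simeq(\text{Khovanov--Rozansky crossing complex})$ fully precise in the matrix-factorization language of Mackaay--Yonezawa~\cite{my}, including signs, the convention for positive versus negative crossings (which strand carries the colour $a$), and the explicit homotopies involved. In the present setup this is exactly~\cite{qr1}, so in practice I would quote that comparison; a self-contained treatment would require transporting their foam-theoretic argument across the equivalence between $\mathfrak{sl}_n$-foams and matrix factorizations. Everything else --- monoidality of $\check\Gamma$, the identification of chain groups and differentials through Theorem~\ref{thm-iso}, and the shift bookkeeping --- is routine given the results established above.
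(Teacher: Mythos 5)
Your proposal is correct and follows essentially the same route as the paper: replace the $F$-braiding complexes by Rickard complexes via Lemma~\ref{lem-rickardisf}, recognize the image of the latter under the extended skew Howe $2$-functor of Theorem~\ref{thm-thick} as the Khovanov--Rozansky crossing complex, pass to $H_n(\Lambda)$-modules through Theorem~\ref{thm-iso} after applying $\mathrm{hom}_{\check{R}}(F^c_{(n^{\ell})},-)$, and deduce invariance from that of $\KR(L_D)^n$. The only real divergence is in one sub-step: where you quote the Queffelec--Rose foam comparison (and would have to transport it across the foam/matrix-factorization equivalence), the paper instead checks the Khovanov--Rozansky/Wu/Yonezawa differentials directly against Mackaay--Yonezawa's explicit $2$-representation, and it disposes of the residual sign discrepancies by the Lauda--Queffelec--Rose/ORS-style argument that you only gesture at under ``including signs''.
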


A similar result can be concluded for the complex $\llbracket L_D\rrbracket^n_{\mathrm{F}}$, but one has to be very careful with possible degree shifts. We do not do it here. Moreover, we would like to prove the invariance directly in our setup.

\begin{proof}
One part of the argument is very similar to the one used by Lauda, Queffelec and Rose to prove that their complex agrees with the Khovanov--Rozansky $\mathfrak{gl}_n$-link homology (for $n=2,3$), see Proposition 4.3 in \cite{lqr1}. One part of their argument is that the differentials in their complex are, up to a sign, the same for both complexes. Then they use an argument similar to \cite{ors}. A very similar argument works for the complex $\KR(L_D)^n_{\mathrm{F}}$. Thus, we can ignore these signs in the following.

The rest is also easy to verify with our results from the previous sections. To be more precise, using Theorem \ref{thm-iso}, we see that our modules $\mathrm{hom}_{\check{R}}(F^c_{(n^{\ell})},\cdot)$ are graded isomorphic to modules over the $\mathfrak{gl}_n$-web algebra $H_n(\Lambda)$ defined by Mackaay. Thus, they are certain EXT-spaces of matrix factorizations associated to the underlying $\mathfrak{gl}_n$-webs (that we obtain from the string of $F_i^{(j)}$ via the translation from Section \ref{sec-tabwebs}).

Checking the definition of the differentials for $\KR(L_D)^n$ (that can be found in \cite[Section 7]{kr1} or in the colored case in \cite[Definition 12.4]{wu} or alternatively in \cite[Sections 5 and 6]{yo2}) we see that they all can be obtained by applying the extended $2$-functor from Theorem \ref{thm-thick} to the Rickard complex from \ref{defn-rickard}.

Now comes an important point that we like to prove in our setting directly. Using the isotopy invariance of $\KR(L_D)^n$ (see \cite[Theorem 2]{kr1}, or, in the colored case, see \cite[Theorem 1.1]{wu} or alternatively \cite[Theorem 1.3]{yo2}) together with Lemma \ref{lem-rickardisf} we see that this induces a homotopy between $\KR(L_D)^n_{\mathrm{F}}$ and $\KR(L_D)^n$ which shows the first statement.
Since $\KR(L_D)^n$ is invariant under the Reidemeister moves, the same holds for $\KR(L_D)^n_{\mathrm{F}}$ as well. This completes the proof.
\end{proof}

And $\KR(L_D)^n_{\mathrm{F}}$ categorifies the colored Reshetikhin--Turaev $\mathfrak{gl}_n$-link polynomial $\mathrm{RT}_n$.

\begin{cor}\label{cor-categorification}
Let $L_D$ be an oriented, colored link diagram. The	
graded Euler characteristic of the complex $\KR(L_D)^n_{\mathrm{F}}$ gives $\mathrm{RT}_n(L_D)$.
\end{cor}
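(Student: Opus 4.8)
The plan is to deduce the statement from Theorem~\ref{thm-same2} together with the (known) decategorification of colored Khovanov--Rozansky homology, and then, because our construction is purely combinatorial, to indicate a self-contained argument as well. The graded Euler characteristic of a bounded complex of finite dimensional, $\bZ$-graded objects is a homotopy invariant, so by Theorem~\ref{thm-same2} it suffices to compute it for $\KR(L_D)^n$ itself. That $\KR(L_D)^n$ categorifies $\mathrm{RT}_n(L_D)$ is Khovanov and Rozansky's theorem in the uncolored case~\cite{kr1} and Wu's~\cite{wu} (alternatively Yonezawa's~\cite{yo2}) in the colored case; since the normalization shift $\{\mathrm{power}(q)\}$ in Definition~\ref{defn-khshoweF2} is by construction the exponent of $q$ appearing in~\ref{eq-norRT}, the claim follows at once. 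The only point requiring care here is matching conventions (homological versus $q$-grading, and the global shift), which is a bookkeeping exercise.

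I would, however, also give an argument that does not invoke the known decategorification. First I would unwind the cube of resolutions underlying $\llbracket L_D\rrbracket^n_{\mathrm{F}}$. By Lemma~\ref{lem-linkasF} the presentation $L_D=\prod_k F^{(j_k)}_{i_k}\cdot\bigotimes_{k=1}^{\mathrm{cr}}\mathcal T^{\pm}_{i_k}\cdot\prod_k F^{(j_k)}_{i_k}v_{(n^{\ell})}$ expands, after choosing at each crossing a term of the $F$-braiding complex from Definition~\ref{defn-basicT}, into a sum $\sum_j(-1)^{\mathrm{sgn}_j}q^{\mathrm{sh}_j}w_j$ of closed $\mathfrak{sl}_n$-webs $w_j$, with signs and $q$-shifts exactly those recorded in Definitions~\ref{defn-colbraid} and~\ref{defn-cRTpoly}. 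The chain group in the homological degree attached to a resolution is $\mathrm{hom}_{\check{R}(\Lambda)}(F^c_{(n^{\ell})},\widehat{w}_j)$, and by Theorem~\ref{thm-iso} this is graded isomorphic, up to the shift $d(\vec{k})$, to the matrix-factorization hom-space $H(\widehat{w}_j)$, whose graded dimension is precisely our evaluation $\mathrm{ev}(w_j)$ by Theorem~\ref{thm-evaluation} (equivalently, because the Euler form of matrix factorizations categorifies the Kuperberg bracket).

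Putting this together, the graded Euler characteristic of $\KR(L_D)^n_{\mathrm{F}}$ equals $\sum_j(-1)^{\mathrm{sgn}_j}q^{\mathrm{sh}_j}\,\mathrm{ev}(w_j)$ up to the global shift $\{\mathrm{power}(q)\}$; this is exactly the expansion of $\mathrm{ev}(L_D)$ described just before Theorem~\ref{thm-evoflinks}, and that theorem identifies $\mathrm{ev}(L_D)$ with the colored Reshetikhin--Turaev polynomial $\langle L_D\rangle_n$. Applying $\{\mathrm{power}(q)\}$ and comparing with~\ref{eq-norRT} then yields $\mathrm{RT}_n(L_D)$. The main obstacle in making this rigorous is the grading bookkeeping: one must check that the homological degree in the complex is matched correctly with the summand index $j$, that the $q$-shifts $\mathrm{sh}_j$ coming from Definition~\ref{defn-basicT} agree termwise with the coefficients $\alpha(k)$ of Definition~\ref{defn-cRTpoly} (this ultimately rests on the degree computation of the braiding differentials in Lemma~\ref{lem-complex} and on Proposition~\ref{prop-extgrowthdeg}), and that the interaction of the internal $\bZ/2\bZ$-grading of the matrix factorizations with the homological grading of the complex introduces no spurious signs. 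Once these are verified, the two arguments agree and reduce the corollary to results already established in Sections~\ref{sec-linkswebs} and~\ref{sec-catpart2}.
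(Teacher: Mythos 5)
Your first paragraph is exactly the paper's proof: the corollary is deduced in one line by combining Theorem~\ref{thm-same2} with the known decategorification statement for colored Khovanov--Rozansky homology (the paper cites Theorem 1.3 of~\cite{wu}; your additional references to~\cite{kr1} and~\cite{yo2} are fine), together with the observation that the shift $\{\mathrm{power}(q)\}$ in Definition~\ref{defn-khshoweF2} was chosen to match the normalization in~\ref{eq-norRT}. Your second, self-contained argument --- expanding the cube of resolutions, identifying the graded dimension of each chain group with $q^{\mathrm{sh}_j}\mathrm{ev}(w_j)$ via Theorem~\ref{thm-iso} and Theorem~\ref{thm-evaluation}, and then invoking Theorem~\ref{thm-evoflinks} --- is a genuinely different route that the paper does not pursue; it has the virtue of staying entirely inside the combinatorial framework of Sections~\ref{sec-linkswebs} and~\ref{sec-catpart2} and not relying on the external matrix-factorization decategorification, but its cost is precisely the bookkeeping you flag: one must check that the homological degrees and $q$-shifts of Definition~\ref{defn-basicT} reproduce termwise the signs $(-1)^{\mathrm{sgn}_j}$ and coefficients of Definitions~\ref{defn-colbraid} and~\ref{defn-cRTpoly}, and that the $\bZ/2\bZ$-grading of the matrix factorizations does not interfere. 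The paper sidesteps all of this by quoting Wu, so as written your first argument is the one to keep as the proof, with the second serving as a consistency check rather than a complete replacement until those grading verifications are carried out.
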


\begin{proof}
This is just a combination of Theorem \ref{thm-same2} 
and e.g. \cite[Theorem 1.3]{wu}.
\end{proof}

\begin{rem}\label{rem-tanglehom}
An analog of Definition \ref{defn-khshoweF2} and Theorem \ref{thm-same2} can be formulated and proven for braid-like tangles (tangles with a fixed number of bottom and top boundary points) as well: just close the bottom/top of the tangle in all possible ways (one needs a bigger $m$ for this) and proceed as above. This realizes the complex as bimodules/bimodule maps over $\check{R}(\Lambda)$ as in the original formulation of Khovanov for his arc algebra, see \cite{kh4}.

A good question would be to extend Lemma \ref{lem-rickardisf} to braid-like tangles by checking the braid tangle moves (see for example \cite[Lemma X.3.5]{kassel}) in our setup.
\end{rem}

\subsubsection{The calculation algorithm}\label{subsub-algocalchom}
We now define an algorithm to compute the local differentials (that is, the ones from one resolution to another) of the complex $\KR(L_D)^n_{\mathrm{F}}$ using the HM basis. We start by simplifying the notation: since the canonical sequence from Definition \ref{defn-cansequence} is fixed by $(n^{\ell})$ and therefore by our presentation of the link diagram using $q$-skew Howe duality, we suppress to write $\mathrm{hom}_{\check{R}}(F^c_{(n^{\ell})},\cdot)$ in the following.

\begin{ex}\label{ex-hopfnew}
Let us give the complex associated to Hopf link from Example \ref{ex-Hopf} as an example. Recall that we have colored it with $1$ and $2$ and the presentation via $F_i^{(j)}$ was
\[
\mathrm{Hopf}=F_4^{(3)}F_5^{(2)}F_3^{(2)}F_2^{(2)}F_1^{(2)}T_{2,1,3}T_{1,2,2}F_5F_4F_3F_1F_2^{(3)}v_{(3^2)}.
\]
Let us shortly write $F_t$ and $F_b$ for the string of $F_i^{(j)}$ after (at the top) and before (at the bottom) the crossings $T_{2,1,3}T_{1,2,2}$ and $v$ for $v_{(3^2)}$. Then the chain complex associated to it is, in simplified notation, given by
\[
\begin{xy}
\xymatrix{
& F_t{\color{red}F^{(2)}_4F^{(2)}_3}{\color{green}F_2F_3}F_bv\{-1\}\ar[dr]|{\phantom{-}\xy
(0,0)*{\raisebox{-0.15em}{\includegraphics[width=10px]{figs/higherstuff/HM-strings1}}};\endxy\colon F_2F_3\to F_3F_2}\ar@{.}[dd]|{\bigoplus} & \\
F_t{\color{red}F_4F^{(2)}_3F_4}{\color{green}F_2F_3}F_bv\{-2\}\ar[ur]|{\phantom{-}\xy
(0,0)*{\raisebox{-0.15em}{\includegraphics[width=20px]{figs/linkhom/HM-diff}}};\endxy\colon F_4F^{(2)}_3F_4\to F^{(2)}_4F^{(2)}_3}\ar[dr]|{\phantom{-}\xy
(0,0)*{\raisebox{-0.15em}{\includegraphics[width=10px]{figs/higherstuff/HM-strings1}}};\endxy\colon F_2F_3\to F_3F_2} &  & F_t{\color{red}F^{(2)}_4F^{(2)}_3}{\color{green}F_3F_2}F_bv\{0\}\\
& F_t{\color{red}F_4F^{(2)}_3F_4}{\color{green}F_3F_2}F_bv\{-1\}\ar[ur]|{-\xy
(0,0)*{\raisebox{-0.15em}{\includegraphics[width=20px]{figs/linkhom/HM-diff}}}\endxy\colon F_4F^{(2)}_3F_4\to F^{(2)}_4F^{(2)}_3} &    
}
\end{xy}
\]
with leftmost part in homological degree zero. Moreover, there is no extra shift for the $q$-degree.
\end{ex}

We point out that every step in the following definition is given by an algorithm.

\begin{defn}\label{defn-hmcomplex}(\textbf{Computation algorithm}) Given a oriented, colored link diagram $L_D$ with $\mathrm{cr}$ crossings $c_{a,b}$ and a fixed presentation of it using $q$-skew Howe duality
\begin{align}\label{eq-linkasF}
L_D=\prod_{k} F^{(j_k)}_{i_k}\cdot T_{i_{\mathrm{cr}}}^{\pm}\dots T_{i_1}^{\pm}\cdot\prod_{k} F^{(j_k)}_{i_k}v_{(n^{\ell})}
\end{align}
with $T^{\pm}$ for the $\overcrossing$ or $\undercrossing$ (as before, we allow extra $F$ between the different $T^{\pm}$), we assign to it a complex $\KR(L_D)^n_{\mathrm{F}}$ as in Definition \ref{defn-khshoweF}.

Fix two vertices $v_1,v_2$ in the Khovanov cube associated to $L_D$ that are connected by an edge and assume that $v_1$ is in lower homological degree. For both vertices we have a string of $F_i^{(j)}$ associated to it that we denote by $F_{v_1},F_{v_2}$. We also denote the associated $\check{R}(\Lambda)$-modules by $M_1,M_2$. Then there is local differential $d\colon M_1\to M_2$ of the form as in Definition \ref{defn-basicT}. 

Then the local differential $d\colon M_1\to M_2$ of $\KR(L_D)^n_{\mathrm{F}}$ can be computed in the following way.
\begin{itemize}
\item Compute the thick HM basis for $M_1$ that we have defined in Definition \ref{defn-growthfoam}. Denote the elements of this basis by $m_1^{1},\dots,m_1^{k_1}$. These elements are given by string diagrams from $F^c_{(n^{\ell})}$ at the bottom to $F_{v_1}$ at the top.
\item Compute the dual thick HM basis for $M_2$ that we have defined in Remark \ref{rem-othermethod}. Denote the elements of this basis by $m_2^{1},\dots,m_2^{k_2}$. These elements are given by string diagrams from $F_{v_2}$ at the bottom to $F^c_{(n^{\ell})}$ at the top.
\item The differential $d$ is a diagram with $F_{v_1}$ at the bottom and $F_{v_2}$ at the top.
\item Thus, the composition $m_2^{k_{r^{\prime}}}\circ d\circ m_1^{k_r}$ for each pair $r,r^{\prime}$ is $d_{rr^{\prime}}\in\mathrm{hom}_{\check{R}}(F^c_{(n^{\ell})},F^c_{(n^{\ell})})$.
\item Define a matrix $d=(d_{rr^{\prime}})$ consisting of these $d_{rr^{\prime}}$ for $r=1\dots, k_1$ and $r^{\prime}=1,\dots,k_2$ scaled by the values that come from pairing the duals $m_2^{1},\dots,m_2^{k_2}$ with the usual basis.
\end{itemize}
\end{defn}

\begin{thm}\label{thm-algworks}
The algorithm from Definition \ref{defn-hmcomplex} gives a way to compute the homology of $\KR(L_D)^n_{\mathrm{F}}$ and thus, the colored Khovanov--Rozansky $\mathfrak{gl}_n$-link homology $\KR(L_D)^n$.
\end{thm}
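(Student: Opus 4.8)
The plan is to reduce the statement, via the identification already established in Theorem~\ref{thm-same2}, to the assertion that the matrices produced by the algorithm of Definition~\ref{defn-hmcomplex} are genuinely the matrices of the differentials of $\KR(L_D)^n_{\mathrm F}$ with respect to the thick HM-bases of the chain groups; once this is known, the homology of a finite complex of finite-dimensional $\bC$-vector spaces with explicit matrices is computed by ordinary linear algebra, and Theorem~\ref{thm-same2} transports the answer to $\KR(L_D)^n$.

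First I would fix the data. By Theorem~\ref{thm-same2} it suffices to compute the homology of $\KR(L_D)^n_{\mathrm F}=\mathrm{hom}_{\check R}(F^c_{(n^{\ell})},\bigotimes_{k}\mathcal T^{\pm}_{i_k}\cdots)$. For each vertex $v$ of the Khovanov cube the chain group is the $\check R(\Lambda)$-module $M_v=\mathrm{hom}_{\check R}(F^c_{(n^{\ell})},F_v)$; by Theorem~\ref{thm-iso} this is graded isomorphic to a hom-space inside the $\mathfrak{sl}_n$-web algebra $H_n(\vec k)$, and since the cyclotomic quotient $\check R_{\Lambda}$ is finite dimensional, so is $M_v$. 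Theorem~\ref{thm-foambasis} together with Lemma~\ref{lem-welldef2} then supplies the explicit thick HM-basis $m_v^{1},\dots,m_v^{k_v}$ of $M_v$, realized by string diagrams from $F^c_{(n^{\ell})}$ to $F_v$. The local differential $d\colon M_{v_1}\to M_{v_2}$ of the cube is, by Definitions~\ref{defn-khshoweF2} and~\ref{defn-basicT}, induced by post-composition with the ($F$-)braiding differential $d\colon F_{v_1}\to F_{v_2}$, so $d(m_{v_1}^{r})=d\circ m_{v_1}^{r}\in M_{v_2}$ and the task is to express this element in the basis $\{m_{v_2}^{s}\}$.

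The key step is the pairing argument. Since $H_n(\vec k)$ is a graded symmetric Frobenius algebra (Definition~\ref{defn-slnwebalgebra}), composition defines a nondegenerate bilinear pairing
\[
\mathrm{hom}_{\check R}(F^c_{(n^{\ell})},F_{v_2})\times\mathrm{hom}_{\check R}(F_{v_2},F^c_{(n^{\ell})})\longrightarrow\mathrm{hom}_{\check R}(F^c_{(n^{\ell})},F^c_{(n^{\ell})})\cong\bC,
\]
the target being one dimensional because $F^c_{(n^{\ell})}$ is the string of leash-shifts, i.e.\ the (essentially) trivial object. Under this pairing the dual thick HM-basis $m_{v_2}^{1},\dots,m_{v_2}^{k_2}$ of Remark~\ref{rem-othermethod} (living in $\mathrm{hom}_{\check R}(F_{v_2},F^c_{(n^{\ell})})$) pairs with the thick HM-basis of $M_{v_2}$ by a matrix $G$ which is unitriangular with respect to the dominance order $\rhd$, hence invertible; this is the content of Theorems~6.11 and~6.17 of~\cite{hm} transported through Theorem~\ref{thm-iso}, and amounts to the combinatorial evaluation of the closed $\mathfrak{sl}_n$-foam obtained by stacking the relevant diagrams, which is an instance of Theorem~\ref{thm-evaluation}. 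Writing $d\circ m_{v_1}^{r}=\sum_{s}c_{sr}\,m_{v_2}^{s}$ and composing with $m_{v_2}^{t}$ gives $d_{rt}:=m_{v_2}^{t}\circ d\circ m_{v_1}^{r}=\sum_{s}G_{ts}c_{sr}$, so the matrix $(c_{sr})$ of $d$ is recovered as $G^{-1}(d_{rt})$. Every ingredient---building the HM-bases by the growth algorithm, composing string diagrams and reducing them to normal form using the thick cyclotomic KL-R relations of Remark~\ref{rem-rewrite}, and inverting $G$---is a finite computation over $\bC$, so the homology of the resulting complex of matrices is computable, and by Theorem~\ref{thm-same2} it equals $\KR(L_D)^n$.

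The main obstacle is precisely the invertibility of the Gram matrix $G$: one must verify that the dual HM-basis of Remark~\ref{rem-othermethod} is genuinely dual with respect to the Frobenius form---equivalently that the evaluation of the pertinent closed $\mathfrak{sl}_n$-foams is nondegenerate---and, in doing so, one must carefully track the degree shifts introduced by the splitters $\phi_{\mathrm R}$ of Definition~\ref{defn-foamLT2}, since these are what make $G$ homogeneous and unitriangular. A secondary, purely technical point is the bookkeeping of Definition~\ref{defn-khshoweF2}: the weights $\vec k$ attached to the various braiding complexes in the tensor product must be rearranged consistently and the identity-on-$\prod_{k}F^{(j_k)}_{i_k}$ parts carried along, but this does not affect the structure of the argument.
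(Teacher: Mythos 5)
Your proposal is correct and follows essentially the same route as the paper: reduce to $\KR(L_D)^n_{\mathrm{F}}$ via Theorem~\ref{thm-same2}, use Theorem~\ref{thm-iso} and the Frobenius trace of Definition~\ref{defn-slnwebalgebra} so that $\mathrm{hom}_{\check R}(F^c_{(n^{\ell})},F^c_{(n^{\ell})})\cong\bC$, expand $d\circ m_1^{r}$ in the thick HM-basis of the target (Theorem~\ref{thm-foambasis}), read off coefficients by pairing with the dual basis, and finish by linear algebra. Your explicit treatment of the Gram matrix $G$ and its invertibility is just a more careful formulation of the ``scaled by the values that come from pairing the duals with the usual basis'' step already built into Definition~\ref{defn-hmcomplex}, so it is a welcome sharpening rather than a different argument.
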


\begin{proof}
To simplify notation: let us denote by $\widehat{\cdot}$ the associated matrix factorizations (for strings of $F_i^{(j)}$) or homomorphisms of matrix factorization (for $\check{R}(\Lambda)$-diagrams) using Theorem \ref{thm-iso}.

First we note that we can use the local differentials from Definition \ref{defn-hmcomplex} to define the differentials of $\KR(L_D)^n_{\mathrm{F}}$ by taking sums as usual if the local differentials of the algorithm coincide with the local ones from $\KR(L_D)^n_{\mathrm{F}}$. Then, by Theorem \ref{thm-same2}, we see that the complexes will have the same homology. The rest is linear algebra: compute the kernels and images of the matrices, keep track of the gradings and obtain this way the homology of $\KR(L_D)^n_{\mathrm{F}}$. Hence, we have to ensure that the local differential agree. But this is also linear algebra:
\begin{itemize}
\item The two $\bC$-vector spaces $M_1$ and $M_2$ are $\check{R}(\Lambda)-\check{R}(\Lambda)$-bimodules. Here the action from left (or right) is given by multiplying from the bottom (or top) by pre(or post)composing.
\item Thus, by Theorem \ref{thm-iso}, they are also $H_n(\Lambda)-H_n(\Lambda)$-bimodules and the action is given in the same way. We see this way that $\mathrm{hom}_{\check{R}}(F^c_{(n^{\ell})},F^c_{(n^{\ell})})$ is one dimensional and the $d_{rr^{\prime}}$ can therefore be seen as elements of $\bC$ by choosing the evident basis of the diagram that only points upwards.
\item The local differentials from Definition \ref{defn-basicT} are exactly given by composing the corresponding $\widehat{d}$ to the left. Hence, $\widehat{d}\circ \widehat{m}_1^{r}$ is an element of $\mathrm{EXT}(\widehat{F}^c_{(n^{\ell})},\widehat{F}_2)$.
\item Since the thick HM basis is a basis that works in this generality, see Theorem \ref{thm-foambasis},  one can re-write $\widehat{d}\circ \widehat{m}_1^{r}$ in terms of the basis for $\widehat{M}_2$.
\item But using the dual basis as in Definition \ref{defn-hmcomplex} as above is nothing else then using the trace that we have recalled in Definition \ref{defn-slnwebalgebra}. This is nothing else than taking the inner product $\langle \widehat{d}\circ \widehat{m}_1^{r},\widehat{m}_2^{r^{\prime}}\rangle$. Thus, the $d_{rr^{\prime}}$ count the multiplicity of $\widehat{m}_2^{r^{\prime}}$ if one re-writes $\widehat{d}\circ \widehat{m}_1^{r}$ in terms of the thick HM basis for $\widehat{M}$ (and scales the result as above).
\end{itemize}
Thus, we obtain the statement by Theorem \ref{thm-same2}.
\end{proof}

\begin{ex}\label{ex-honestcalc}
Recall Example \ref{ex-linkasF} from before. We note that we cheat below, since, if we would strictly follow the algorithm, then we would have to write $U_D$ using a longer string of $F^{(j)}_i$.

We write just $v=v_{(2^1)}$. We get the following chain complex for the diagram of the unknot $U_D$. 
Here the right part is homology degree zero.
\[
\begin{xy}
\xymatrix{
F_2{\color{red}F_1F_2}F_1v\{-2\}\ar[rr]^{\xy(0,0)*{\raisebox{-0.15em}{\includegraphics[width=10px]{figs/higherstuff/HM-strings1}}};\endxy\colon F_1F_2\to F_2F_1} & & F_2{\color{red}F_2F_1}F_1v\{-1\} \\    
}
\end{xy}.
\] 
Thus, we need to calculate the thick HM basis for $\mathrm{hom}_{\check{R}}(F_2^{(2)}F_1^{(2)},F_2F_1F_2F_1)$ and, analogously, the dual 
thick HM basis for $\mathrm{hom}_{\check{R}}(F_2F_2F_1F_1,F_2^{(2)}F_1^{(2)})$. We have already done the first in Example \ref{ex-dual}.

Note now that the $2$-multitableaux for $F_2^{(2)}F_1^{(2)}$ is still $\vec{T}$ from Example \ref{ex-dual}. Moreover, we have four for $F_2F_2F_1F_1$, namely $\vec{T}_{1,2}$ and $\vec{T}_{3,4}$ from Example \ref{ex-evaluation}. Recall that the dot placement is just given by the associated dual standard filling $T^*_{\vec{\lambda}}$ where $\vec{\lambda}$ is the shape of the $\vec{T}$.

From this we get a sequence of transpositions $\tau$ from $\vec{T}_k$ to $T^*_{\vec{\lambda}}$. For the first two $2$-multitableaux we have $\tau_2(1,2)\tau_3(2,2)\tau_1(1,1)\vec{T}_1=T^*_{\vec{\lambda}}$ and $\tau_2(1,2)\tau_1(1,1)\vec{T}_2=T^*_{\vec{\lambda}}$ and $\tau_2(1,2)\tau_3(2,2)\vec{T}_3=T^*_{\vec{\lambda}}$ and $\tau_2(1,2)\vec{T}_4=T^*_{\vec{\lambda}}$ for the last two. Thus, we have the four dual basis elements
\[
\xy
(0,0)*{\includegraphics[scale=.75]{figs/linkhom/dual-example7}};
(0,-26)*{\vec{T}_1=\left(\;\xy (0,0)*{\begin{Young}1& 3\cr\end{Young}}\endxy\;,\;\xy (0,0)*{\begin{Young}2& 4\cr\end{Young}}\endxy\;\right)};
\endxy\hspace*{0.25cm}
\xy
(0,0)*{\includegraphics[scale=.75]{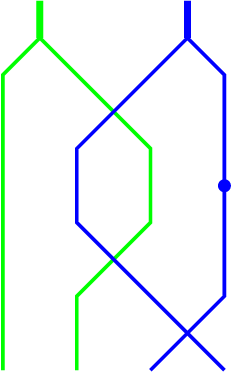}};
(0,-26)*{\vec{T}_2=\left(\;\xy (0,0)*{\begin{Young}1& 4\cr\end{Young}}\endxy\;,\;\xy (0,0)*{\begin{Young}2& 3\cr\end{Young}}\endxy\;\right)};
\endxy\hspace*{0.25cm}
\xy
(0,0)*{\includegraphics[scale=.75]{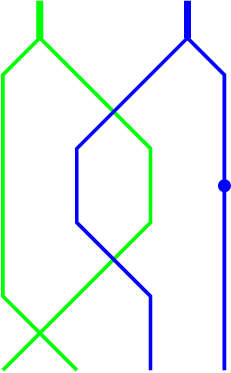}};
(0,-26)*{\vec{T}_3=\left(\;\xy (0,0)*{\begin{Young}2& 3\cr\end{Young}}\endxy\;,\;\xy (0,0)*{\begin{Young}1& 4\cr\end{Young}}\endxy\;\right)};
\endxy\hspace*{0.25cm}
\xy
(0,0)*{\includegraphics[scale=.75]{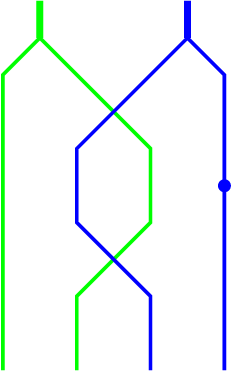}};
(0,-26)*{\vec{T}_4=\left(\;\xy (0,0)*{\begin{Young}2& 4\cr\end{Young}}\endxy\;,\;\xy (0,0)*{\begin{Young}1& 3\cr\end{Young}}\endxy\;\right)};
\endxy
.
\]
Applying the isomorphism to the $\mathfrak{gl}_2$-web algebra (and cheating again using Bar-Natan's cobordisms as in Example \ref{ex-cheating}) we see that these corresponds from right to left to a pair of undotted caps, a pair of caps where one has a dot and a pair of caps where both have a dot. To make connections to Definition \ref{defn-hmcomplex}, let us denote them by $m_2^1,m_2^2,m_2^3$ and $m_2^4$.
Moreover, the basis of the source from Example \ref{ex-dual} can be read as a cup with a dot (denoted by $m_1^1$) and an undotted cup (denoted by $m_1^2$) and the differential $d$ is the usual comultiplication. Thus, we expect that $d\circ m_1^1$ will pair with everything except one element of the dual basis to zero.

So let us evaluate the pictures which are just given by stacking now. We have
\[
\xy
(0,0)*{\includegraphics[scale=.75]{figs/linkhom/dual-example7}};
(0,-29)*{\includegraphics[scale=.75]{figs/linkhom/dual-example8}};
(0,-50)*{\includegraphics[scale=.75]{figs/catcell/dual-example1}};
(0,-24)*{\circ};
(0,-35)*{\circ};
(10,-40)*{m_1^1};
(10,-29)*{d};
(10,0)*{m_2^1};
\endxy\hspace*{0.35cm}
\xy
(0,0)*{\includegraphics[scale=.75]{figs/linkhom/dual-example6}};
(0,-29)*{\includegraphics[scale=.75]{figs/linkhom/dual-example8}};
(0,-50)*{\includegraphics[scale=.75]{figs/catcell/dual-example1}};
(0,-24)*{\circ};
(0,-35)*{\circ};
(10,-40)*{m_1^1};
(10,-29)*{d};
(10,0)*{m_2^2};
\endxy\hspace*{0.35cm}
\xy
(0,0)*{\includegraphics[scale=.75]{figs/linkhom/dual-example5}};
(0,-29)*{\includegraphics[scale=.75]{figs/linkhom/dual-example8}};
(0,-50)*{\includegraphics[scale=.75]{figs/catcell/dual-example1}};
(0,-24)*{\circ};
(0,-35)*{\circ};
(10,-40)*{m_1^1};
(10,-29)*{d};
(10,0)*{m_2^3};
\endxy\hspace*{0.35cm}
\xy
(0,0)*{\includegraphics[scale=.75]{figs/linkhom/dual-example4}};
(0,-29)*{\includegraphics[scale=.75]{figs/linkhom/dual-example8}};
(0,-50)*{\includegraphics[scale=.75]{figs/catcell/dual-example1}};
(0,-24)*{\circ};
(0,-35)*{\circ};
(10,-40)*{m_1^1};
(10,-29)*{d};
(10,0)*{m_2^4};
\endxy
.
\]
Note that it is exactly as we expected: all of the diagrams above give a $\bC$ multiple of the trivial diagram with only two upwards pointing thick strands. And all with the exception of the left one are zero. To see this note that the rightmost two diagrams are on the nose zero because of two dots on the same strand (we are in $n=2$). The second is zero which can be deduced from the thick calculus rules (see e.g. \cite{klms} or \cite{sto1}). That is, opening the bottom Reidemeister 2 moves gives two terms: $\pm$ one with a dot on the green (left) strand $\mp$ one with a dot on the blue (right) strand. The second term is always zero, since the middle crossing is a composition of a split$\circ$merge. Thus, at the bottom we have a merge$\circ$split with two dots - this is always zero for $n=2$.
But the same holds for the top now: only a dot on the green (left) strand can survive after opening the Reidemeister 2 move. But then we have two dots on the green (left) strand which is zero in $n=2$. Thus, the whole composition is zero.

The first one on the other hand gives $\pm 1$: only one term survives the opening of the Reidemeister 2 moves and it has exactly one dot between each merge$\circ$split-pair. Thus, they can be reduced to a line (up to a sign), see e.g. \cite[Corollary 2.4.2]{klms}. This shows that $d(m_1^1)=\pm m_2^1$.

Doing the same for $m_1^2$ (which has two surviving, namely $m_2^2$ and $m_2^3$) we see that $d$ is given (up to a sign) by Khovanov's original comultiplication map which comes from the algebra $\bC[X]/X^2$, see \cite{kh1}, namely $1\mapsto 1\otimes X+X\otimes 1$ and $X\mapsto X\otimes X$. This map is injective which shows that the homology is trivial.
\end{ex}


\begin{thebibliography}{}
\bibitem{bn3} D.~Bar-Natan, 
\textit{Fast Khovanov homology computations}, 
J. Knot Theory Ramifications \textbf{16} (2007), no. 3, 24--255, arXiv:math/0606318.
\bibitem{bn2} D.~Bar-Natan, 
\textit{Khovanov's homology for tangles and cobordisms},
Geom. Topol. \textbf{9} (2005), 1443--1499, arXiv:math/0410495.
\bibitem{blm} A.~Beilinson, G.~Lusztig and R.~MacPherson, 
\textit{A geometric setting for the quantum deformation of $\mathrm{GL}_n$},
Duke Math. J. \textbf{61} (1990), no. 2, 655--677.
\bibitem{bezw} A.~Berenstein and S.~Zwicknagl, 
\textit{Braided symmetric and exterior algebras}, Trans. Amer. Math. Soc. \textbf{360} (2008), no. 7, 3429--3472, arXiv:math/0504155.
\bibitem{bla1} C.~Blanchet, 
\textit{An oriented model for Khovanov homology}, J. Knot Theory Ramifications \textbf{19} (2010), no. 2, 291--312, arXiv:1405.7246.
\bibitem{bk1} J.~Brundan and A.~Kleshchev, 
\textit{Blocks of cyclotomic Hecke algebras and Khovanov--Lauda algebras}, Invent. Math. \textbf{178} (2009), no. 3, 451--484, arXiv:0808.2032.
\bibitem{bk2} J.~Brundan and A.~Kleshchev, 
\textit{Graded decomposition numbers for cyclotomic Hecke algebras}, Adv. Math. \textbf{222} (2009), 1883--1942, arXiv:0901.4450.
\bibitem{bkw} J.~Brundan, A.~Kleshchev and W.~Wang, 
\textit{Graded Specht modules}, 
J. Reine und Angew. Math. \textbf{655} (2011), 61--87, arXiv:0901.0218.
\bibitem{bs1} J.~Brundan and C.~Stroppel, 
\textit{Highest weight categories arising from Khovanov's diagram algebra I: cellularity}, Mosc. Math. J. 
\textbf{11:4} (2011), 685--722, arXiv:0806.1532.
\bibitem{bs2} J.~Brundan and C.~Stroppel, 
\textit{Highest weight categories arising from Khovanov's diagram algebra II: Koszulity}, 
Transform. Groups \textbf{15} (2010), 1--45, arXiv:0806.3472.
\bibitem{bs3} J.~Brundan and C.~Stroppel, 
\textit{Highest weight categories arising from Khovanov's diagram algebra III: category $\mathcal{O}$}, 
Represent. Theory \textbf{15} (2011), 170--243, arXiv:0812.1090.
\bibitem{bs4} J.~Brundan and C.~Stroppel, 
\textit{Highest weight categories arising from Khovanov's diagram algebra IV: the general linear supergroup}, 
J. Eur. Math. Soc. \textbf{14} (2012), 373--419, arXiv:0907.2543.
\bibitem{bs5} J.~Brundan and C.~Stroppel, 
\textit{Gradings on walled Brauer algebras and Khovanov's arc algebra}, Adv. Math. \textbf{231} (2012), 709--773, arXiv:1107.0999.
\bibitem{cm1} N.~Carqueville and D.~Murfet, 
\textit{Computing Khovanov--Rozansky homology and defect fusion}, Algebr. Geom. Topol. \textbf{14} (2014), 489--537 (electronic), arXiv:1108.1081.
\bibitem{cau1} S.~Cautis, 
\textit{Clasp technology to knot homology via the affine Grassmannian},
Math. Ann. \textbf{363} (2015), no. 3-4, 1053--1115, arXiv:1207.2074.
\bibitem{ck3} S.~Cautis and J.~Kamnitzer, 
\textit{Braiding via geometric Lie algebra actions}, 
Compos. Math. \textbf{148} (2012), 464-506, arXiv:1001.0619.
\bibitem{ck1} S.~Cautis and J.~Kamnitzer, 
\textit{Knot homology via derived categories of coherent sheaves I, sl(2) case}, Duke Math. J. \textbf{142} (2008), 511--588, arXiv:math/0701194.
\bibitem{ck2} S.~Cautis and J.~Kamnitzer, 
\textit{Knot homology via derived categories of coherent sheaves II, sl(m) case}, Invent. Math. \textbf{174} (2008), 165--232, arXiv:0710.3216.
\bibitem{ckm} S.~Cautis, J.~Kamnitzer and S.~Morrison, 
\textit{Webs and quantum skew Howe duality}, Math. Ann. \textbf{360} (2014), no. 1-2, 351--390, arXiv:1210.6437.
\bibitem{cala} S.~Cautis and A.D.~Lauda, 
\textit{Implicit structure in 2-representations of quantum groups}, 
Selecta Math. (N.S.) \textbf{21} (2015), no. 1, 201--244, arXiv:1111.1431.
\bibitem{chkh} Y.~Chen and M.~Khovanov, 
\textit{An invariant of tangle cobordisms via subquotients of arc rings}, Fund. Math. \textbf{225} (2014), 23--44, arXiv:math/0610054.
\bibitem{chro1} J.~Chuang and R.~Rouquier, 
\textit{Derived equivalences for symmetric groups and $\mathfrak{sl}_2$-categorification}, 
Ann. of Math. (2) \textbf{167} (2008), 245--298, arXiv:math/0407205.
\bibitem{djm} R.~Dipper, G.~James and A.~Mathas, 
\textit{Cyclotomic q-Schur algebras}, Math. Z. \textbf{229} (1999), 385--416.
\bibitem{ehst1} M.~Ehrig and C.~Stroppel, 
\textit{$2$-row Springer fibres and Khovanov diagram algebras for type D}, Canad. J. Math. \textbf{68} (2016), no. 6, 1285--1333, arXiv:1209.4998.
\bibitem{ehst2} M.~Ehrig and C.~Stroppel, 
\textit{Diagrammatic description for the categories of perverse sheaves on isotropic Grassmannians}, Selecta Math. (N.S.) \textbf{22} (2016), no. 3, 1455--1536,
arXiv:1306.4043.
\bibitem{ehst3} M.~Ehrig and C.~Stroppel,
\textit{Nazarov--Wenzl algebras, coideal subalgebras and categorified skew Howe duality}, Adv. Math. \textbf{331} (2018), 58--142,
arXiv:1310.1972.
\bibitem{fon} B.~Fontaine, 
\textit{Generating basis webs for $\mathfrak{sl}_n$}, Adv. Math. \textbf{229} (2012), 2792--2817, arXiv:1108.4616.
\bibitem{grle} J.J.~Graham and G.I.~Lehrer, 
\textit{Cellular algebras}, Invent. Math. \textbf{123} (1996), 1--34.
\bibitem{hm} J.~Hu and A.~Mathas, 
\textit{Graded cellular bases for the cyclotomic 
Khovanov--Lauda--Rouquier algebras of type A}, Adv. Math. \textbf{225-2} (2010), 598--642, arXiv:0907.2985.
\bibitem{hm1} J.~Hu and A.~Mathas, 
\textit{Quiver Schur algebras for linear quivers}, 
Proc. Lond. Math. Soc. (3) \textbf{110} (2015), no. 6, 1315--1386, arXiv:1110.1699.
\bibitem{kassel} C.~Kassel, 
\textit{Quantum Groups}, Springer-Verlag (1995).
\bibitem{kh3} M.~Khovanov, 
\textit{$\mathfrak{sl}_3$ link homology}, 
Algebr. Geom. Topol. \textbf{4} (2004), 1045--1081, arXiv:math/0304375.
\bibitem{kh1} M.~Khovanov, 
\textit{A categorification of the Jones polynomial}, 
Duke Math. J. \textbf{101} (2000), 359--426, arXiv:math/9908171.
\bibitem{kh4} M.~Khovanov, 
\textit{A functor-valued invariant of tangles}, 
Algebr. Geom. Topol. \textbf{2} (2002), 665--741 (electronic), arXiv:math/0103190.
\bibitem{kh5} M.~Khovanov, 
\textit{Crossingless matchings and the cohomology 
of $(n,n)$ Springer varieties}, Comm. Contemp. Math. \textbf{6-2} (2004), 561--577, arXiv:math/0202110.
\bibitem{kk} M.~Khovanov and G.~Kuperberg, 
\textit{Web bases for $\mathfrak{sl}_3$ are not dual canonical}, 
Pacific J. Math. \textbf{188:1} (1999), 129--153, arXiv:q-alg/9712046.
\bibitem{kl1} M.~Khovanov and A.D.~Lauda, 
\textit{A diagrammatic approach to categorification of quantum groups I}, Represent. Theory \textbf{13} (2009), 309--347, arXiv:0803.4121.
\bibitem{kl3} M.~Khovanov and A.D.~Lauda, 
\textit{A diagrammatic approach to categorification of quantum groups II}, Trans. Amer. Math. Soc. \textbf{363} (2011), 2685--2700, arXiv:0804.2080.
\bibitem{kl5} M.~Khovanov and A.D.~Lauda, 
\textit{A categorification of quantum $\mathfrak{sl}_n$}, Quantum Topol. \textbf{2-1} (2010), 1--92, arXiv:0807.3250.
\bibitem{kl4} M.~Khovanov and A.D.~Lauda, 
\textit{Erratum: ``A categorification of quantum $\mathfrak{sl}_n$''}, Quantum Topol. 2-1 (2011), 97-99.
\bibitem{klms} M.~Khovanov, A.D.~Lauda, M.~Mackaay and M.~Sto\v{s}i\'c, 
\textit{Extended Graphical Calculus for Categorified Quantum} $\mathfrak{sl}_2$, Mem. Am. Math. Soc. Vol. 219-1029 (2012), arXiv:1006.2866.
\bibitem{kr1} M.~Khovanov and L.~Rozansky, 
\textit{Matrix factorizations and link homology I}, 
Fund. Math. \textbf{199} (2008), 1--91, arXiv:math/0401268v2.
\bibitem{kup} G.~Kuperberg, 
\textit{Spiders for rank $2$ Lie algebras}, Comm. Math. Phys. \textbf{180} (1996), 
no. 1, 109--151, arXiv:q-alg/9712003.
\bibitem{lqr1} A.D.~Lauda, H.~Queffelec and D.E.V.~Rose, 
\textit{Khovanov homology is a skew Howe $2$-representation of categorified quantum  $\mathfrak{sl}_m$}, 
Algebr. Geom. Topol. \textbf{15} (2015), no. 5, 2517--2608, arXiv:1212.6076.
\bibitem{leto} B.~Leclerc and P.~Toffin, 
\textit{A simple algorithm for computing the global crystal basis of an irreducible $U_q(\mathfrak{sl}_n)$-module}, Int. J. Algeb. Comput. \textbf{10} (2000), 191--208.
\bibitem{le1} L.~Lewark, 
\textit{sl3-foam homology calculations}, Algebr. Geom. Topol. \textbf{13} (2013), 3661--3686 (electronic), arXiv:1212.2553.
\bibitem{li1} G.~Li, 
\textit{Integral Basis Theorem of cyclotomic Khovanov--Lauda--Rouquier Algebras of type A}, Ph.D. thesis, published online by University Sydney, Sydney digital thesis, 2012, http://hdl.handle.net/2123/8844.
\bibitem{lu} G.~Lusztig, 
\textit{Introduction to Quantum Groups}, Progress in Mathematics, Birkh\"{a}user (1993).
\bibitem{mack1} M.~Mackaay, 
\textit{The sl(N)-web algebras and dual canonical bases}, J. Algebra \textbf{409} (2014), 54--100, arXiv:1308.0566. 
\bibitem{mpt} M.~Mackaay, W.~Pan and D.~Tubbenhauer, 
\textit{The $\mathfrak{sl}_3$-web algebra}, Math. Z. \textbf{277} (2014), 401--479, arXiv:1206.2118.
\bibitem{msv2} M.~Mackaay, M.~Sto\v{s}i\'{c} and P.~Vaz, 
\textit{A diagrammatic categorification of the q-Schur algebra}, Quantum Topol. \textbf{4-1} (2013), 1--75, arXiv:1008.1348.
\bibitem{msv} M.~Mackaay, M.~Sto\v{s}i\'{c} and P.~Vaz, 
\textit{Sl(N) link homology using foams and the Kapustin--Li formula}, Geom. Topol. \textbf{13-2} (2009), 1075--1128, arXiv:0708.2228.
\bibitem{mv2} M.~Mackaay and P.~Vaz, 
\textit{The foam and the matrix factorization $\mathfrak{sl}_3$ link homologies are equivalent}, 
Algebr. Geom. Topol. \textbf{8} (2008), 309--342 (electronic), arXiv:0710.0771.
\bibitem{my} M.~Mackaay and Y.~Yonezawa, 
\textit{The sl(N) web categories and categorified skew Howe duality}, 
J. Pure Appl. Algebra \textbf{223} (2019), no. 5, 2173--2229, arXiv:1306.6242.
\bibitem{mano1} C.~Manolescu, 
\textit{Link homology theories from symplectic geometry}, Adv. Math. \textbf{211} (2007), 363--416, arXiv:math/0601629.
\bibitem{mast} V.~Mazorchuk and C.~Stroppel, 
\textit{A combinatorial approach to functorial quantum sl(k) knot invariants}, Amer. J. Math. \textbf{131-6} (2009), 
1679--1713, arXiv:0709.1971.
\bibitem{moy} H.~Murakami, T.~Ohtsuki and S.~Yamada, 
\textit{HOMFLY polynomial via an invariant of colored plane graph}, 
Enseign. Math. \textbf{2-44} (1998), 325--360.
\bibitem{ors} P.~Ozsv\'{a}th, J.~Rasmussen and Z.~Szab\'{o}, \textit{Odd Khovanov homology}, Algebr. Geom. Topol. \textbf{13} (2013), 1465--1488, arXiv:0710.4300.
\bibitem{qr1} H.~Queffelec and D.E.V.~Rose, 
\textit{The sln foam $2$-category: A combinatorial formulation of 
Khovanov--Rozansky homology via categorical skew-Howe duality}, Adv. Math. \textbf{302} (2016), 1251--1339, arXiv:1405.5920.
\bibitem{ras1} J.~Rasmussen, 
\textit{Khovanov--Rozansky homology of two-bridge knots and links}, Duke Math. J. \textbf{136} (2007), 
551--583, arXiv:math/0508510.
\bibitem{rob2} L.-H.~Robert, 
\textit{A characterisation of indecomposable web-modules over Khovanov--Kuperberg Algebras}, 
Algebr. Geom. Topol. \textbf{15} (2015), no. 3, 1303--1362, arXiv:1309.2793.
\bibitem{rob1} L.-H.~Robert, 
\textit{A large family of indecomposable projective 
modules for the Khovanov--Kuperberg algebra of $\mathfrak{sl}_3$ webs},
J. Knot Theory Ramifications \textbf{22} (2013), 25 pages (electronic), arXiv:1207.6287.
\bibitem{rou} R.~Rouquier, 
\textit{2-Kac-Moody algebras}, arXiv:0812.5023.
\bibitem{rtw} G.~Rumer, E.~Teller and H.~Weyl, 
\textit{Eine f\"ur die Valenztheorie geeignete Basis der bin\"aren Vektorinvarianten}, 
Nachrichten von der Ges. der Wiss. Zu G\"ottingen. Math.-Phys. Klasse, pages 498-504, 1932.
\bibitem{sar} A.~Sartori, 
\textit{Categorification of tensor powers of the vector representation of $U_q(gl(1|1))$}, 
Selecta Math. (N.S.) \textbf{22} (2016), no. 2, 669--734,
arXiv:1305.6162.
\bibitem{sesm1} P. Seidel and I.~Smith, 
\textit{A link invariant from the symplectic geometry of nilpotent slices}, Duke Math. J. \textbf{134} (2006), 453--514, arXiv:math/0405089.
\bibitem{sto1} M.~Sto\v{s}i\'{c}, 
\textit{Indecomposable objects and Lusztig's canonical basis}, 
Math. Res. Lett. \textbf{22} (2015), no. 1, 245--278, arXiv:1105.4458.
\bibitem{st1} C.~Stroppel, \textit{Categorification of the Temperley-Lieb category, tangles and cobordisms via projective functors}, 
Duke Math. J. \textbf{126} (2005), 547--596.
\bibitem{st2} C.~Stroppel, 
\textit{Parabolic category $\mathcal{O}$, 
perverse sheaves on Grassmannians, Springer fibres and Khovanov homology}, Compos. Math. \textbf{145} (2009), 945--992, arXiv:math/0608234.
\bibitem{sw} C.~Stroppel and B.~Webster, 
\textit{2-Block Springer fibers: convolution algebras 
and coherent sheaves}, Comment. Math. Helv. \textbf{87} (2012), 477--520, arXiv:0802.1943.
\bibitem{su1} J.~Sussan, 
\textit{Category $\mathcal{O}$ and $\mathfrak{sl}(k)$ link invariants}, arXiv:math/0701045. 
\bibitem{tub2} D.~Tubbenhauer, 
\textit{Categorification and applications in topology and representation theory}, 
Ph.D. thesis, published online by Universit\"at G\"ottingen, Univ., Diss., 2013, DISS 2013 B 9481, arXiv:1307.1011.
\bibitem{tub3} D.~Tubbenhauer, 
\textit{$\mathfrak{sl}_3$-web bases, intermediate crystal bases and categorification},
J. Algebraic Combin. \textbf{40} (2014), no. 4, 1001--1076, arXiv:1310.2779.
\bibitem{web3} B.~Webster, 
\textit{Canonical bases and higher representation theory}, 
Compos. Math. \textbf{151} (2015), no. 1, 121--166, arXiv:1209.0051.
\bibitem{web1} B.~Webster, 
\textit{Knot invariants and higher representation theory}, 
Mem. Amer. Math. Soc. \textbf{250} (2017), no. 1191, v+141 pp., arXiv:1309.3796.
\bibitem{wed1} P.~Wedrich, 
\textit{Categorified $sl^N$ invariants of colored rational tangles}, 
Algebr. Geom. Topol. \textbf{16} (2016), no. 1, 427--482, arXiv:1404.2736.
\bibitem{wu} H.~Wu, 
\textit{Matrix factorizations and colored MOY graphs}, arXiv:0803.2071.
\bibitem{yo1} Y.~Yonezawa, 
\textit{Matrix factorizations and intertwiners of the fundamental representations of quantum group $U_q(\mathfrak{sl}_n)$}, arXiv:0806.4939.
\bibitem{yo2} Y.~Yonezawa, 
\textit{Quantum ($\mathfrak{sl}_n, \wedge V_n$) link invariant and matrix factorizations}, Nagoya Math. J. Volume \textbf{204} (2011), 69--123, arXiv:0906.0220.
\vspace{0.1in}

\noindent D.T.: { \sl \small Institut f{\"u}r Mathematik, Universit{\"a}t Z{\"u}rich, Winterthurerstrasse 190, Campus Irchel, Office Y27J32, CH-8057 
Z{\"u}rich, Switzerland} 
\noindent {\small \textbf{email: danieltubbenhauer@math.uzh.ch}}
\end{thebibliography}
\end{document}